\definecolor{forestgreen}{cmyk}{0.91,0,0.88,0.12}
\definecolor{light-gray1}{gray}{0.90}
\definecolor{light-gray2}{gray}{0.80}
\definecolor{light-gray3}{gray}{0.60}
\newcommand{\B}{\beta }
\newcommand{\R} {\mathbb R}
\newcommand{\cuad}{{\sqcap\kern-.68em\sqcup}}
\newcommand{\ve}{\varepsilon}
\definecolor{darkgreen}{rgb}{0.2,0.7,0.1}
\newcommand{\sech}{\mathop{\mbox{\normalfont sech}}\nolimits}
\newcommand{\px}{\partial_x}
\newcommand{\pt}{\partial_t}
\newcommand{\bd}[1]{\boldsymbol{#1}}
\renewcommand{\Im}{\mathrm{Im}}
\newcommand{\al}{\alpha}
\newcommand{\bt}{\beta}
\def\bm{\left( \begin{array}{cc}}
\def\endm{\end{array}\right)}
\providecommand{\abs}[1]{|#1 |}
\providecommand{\norm}[1]{\left\| #1 \right\|}
\newcommand{\be}{\begin{equation}}
\newcommand{\ee}{\end{equation}}
\newcommand{\bp}{\begin{pmatrix}}
\newcommand{\ep}{\end{pmatrix}}
\newcommand{\ba}{\begin{aligned}}
\newcommand{\ea}{\end{aligned}}
\newcommand{\qq}{\qquad}
\newcommand{\bee}{\begin{eqnarray*}}
\newcommand{\eee}{\end{eqnarray*}}
\newcommand{\ben}{\begin{enumerate}}
\newcommand{\een}{\end{enumerate}}
\numberwithin{equation}{section}
\newtheorem{theorem}{Theorem}[section]
\newtheorem{proposition}[theorem]{Proposition}
\newtheorem{lemma}[theorem]{Lemma}
\newtheorem{definition}[theorem]{Definition}
\newtheorem*{theorem*}{Theorem}
\theoremstyle{remark}
\newtheorem{remark}{Remark}[section]
\title[Variable bottom $abcd$ solitary wave dynamics]{Dynamics of generalized abcd Boussinesq solitary waves under a slowly variable bottom}
\author[de Laire]{Andr\'e de Laire}
\address{Univ.\ Lille, CNRS, Inria, UMR 8524 - Laboratoire Paul Painlev\'e, F-59000 Lille, France.}
\email{andre.de-laire@univ-lille.fr}
\thanks{}
\author[Goubet]{Olivier Goubet}
\address{Univ.\ Lille, CNRS, Inria, UMR 8524 - Laboratoire Paul Painlev\'e, F-59000 Lille, France.}
\email{olivier.goubet@univ-lille.fr}
\thanks{A.\ d.L. and O.\ G.\  acknowledge the support of the CDP C2EMPI, as well as the French State under the France-2030 programme, the University of Lille, the Initiative of Excellence of the University of Lille, the European Metropolis of Lille for their funding and support of the R-CDP-24-004-C2EMPI project.
 O.\ G. acknowledges the support of the University of Lille AAS-Internationalisation 2025, project LICHI}
\author[Mart\'inez Martini]{M. Eugenia Mart\'inez Martini}
\address{Departamento de Ingenier\'{\i}a Matem\'atica and Centro
de Modelamiento Matem\'atico (UMI 2807 CNRS), Universidad de Chile, Casilla
170 Correo 3, Santiago, Chile.}
\email{maria.martinez.m@uchile.cl}
\thanks{M.E.M.M.'s work was partly funded by Inria Lille PANDA, Chilean research grant Centro de Modelamiento Matemático (CMM) BASAL fund FB210005 for center of excellence from ANID-Chile and the project CRISIS (ANR-20-CE40-0020-01), operated by ANR.}
\author[Mu\~noz]{Claudio Mu\~noz*}
\address{CNRS and Departamento de Ingenier\'{\i}a Matem\'atica and Centro
de Modelamiento Matem\'atico (UMI 2807 CNRS), Universidad de Chile, Casilla
170 Correo 3, Santiago, Chile.}
\email{cmunoz@dim.uchile.cl}
\thanks{* Corresponding author. C.\ M.'s work was partly funded by Inria Lille PANDA and Chilean research grants FONDECYT 1231250, Centro de Modelamiento Matemático (CMM) BASAL fund FB210005 for center of excellence from ANID-Chile, and Grant PID2022-137228OB-I00 funded by the Spanish Ministerio de Ciencia, Innovaci\'on y Universidades, MICIU/AEI/10.13039/501100011033.}
\author[Poblete]{Felipe Poblete}
\address{Universidad Austral de  Chile, Facultad de Ciencias, Instituto de Ciencias F\'isicas y Matem\'aticas,  Valdivia, CHILE.}
\email {felipe.poblete@uach.cl}
\thanks{F.\ P.'s was partially supported by ANID 2022 Exploration project 13220060, ANID project FONDECYT 1221076, MathAmSud WAFFLE 23-MATH-18 and Inria PANDA}
\thanks{C.\ M.,\, M.\ E.\ M.\ and F.\ P.\ would like to thank Inria Lille and Univ.\ Lille, where part of this work was written.
 Part of this work was done while M.\ E. \ M. and C.\ M. were present at BIRS New Synergies in Partial Differential Equations (25w5403) workshop at Banff, whose support is greatly acknowledged}
\subjclass{35Q35,35Q51}
\keywords{Boussinesq, abcd, solitary wave, collision, variable bottom}
\begin{document}

\begin{abstract}
The Boussinesq $abcd$ system is a 4-parameter set of equations posed in $\R_t\times\R_x$, originally derived by Bona, Chen and Saut \cite{BCS1,BCS2} as first-order 2-wave approximations of the incompressible and irrotational, two-dimensional water wave equations in the shallow water wave regime, in the spirit of the original Boussinesq derivation \cite{Bous}. Among the various particular regimes, each determined by the values of the parameters $(a, b, c, d)$ appearing in the equations, the \emph{generic} regime is characterized by the conditions $b, d > 0$ and $a, c < 0$. If additionally $b=d$, the $abcd$ system is Hamiltonian.

In this paper, we investigate the existence of generalized solitary waves and the corresponding collision problem in the physically relevant \emph{variable bottom regime}, introduced by M.\ Chen \cite{MChen}.  More precisely, the bottom is represented by a smooth space-time dependent function $h=\varepsilon h_0(\varepsilon t,\varepsilon x)$, where $\varepsilon$ is a small parameter and $h_0$ is a fixed smooth profile. This formulation allows for a detailed description of weak long-range interactions and the evolution of the solitary wave without its destruction. We establish this result by constructing a new approximate solution that captures the interaction between the solitary wave and the slowly varying bottom.
\end{abstract}

\maketitle
\tableofcontents

\section{Introduction and Main Results}

\medskip

\subsection{Setting of the problem} This paper concerns the study of the physically motivated problem of interaction of solitary waves of the one-dimensional $abcd$ system in the presence of variable bottom:
\begin{equation}\label{boussinesq_0}
\begin{cases}
(1- b\,\partial_x^2)\partial_t \eta  + \partial_x\!\left( a\, \partial_x^2 u +u + (\eta +h) u \right) =  (-1 +a_1 \partial_x^2) \partial_th  , \quad (t,x)\in \R\times\R, \\
(1- d\,\partial_x^2)\partial_t u  + \partial_x\! \left( c\, \partial_x^2 \eta + \eta  + \frac12 u^2 \right) =c_1  \partial_t^2 \partial_x h.
\end{cases}
\end{equation}
Here, $u=u(t,x)$ and $\eta=\eta(t,x)$ are real-valued scalar functions. The variable bottom is represented by the function $h=h(t,x)$, which is assumed to have dependence on time and space. Equation \eqref{boussinesq_0} assumes that the local pressure on the free surface $\eta$ is constant for simplicity. In the case where we have a fixed bottom, the equation was originally derived by Bona, Chen, and Saut \cite{BCS1,BCS2} as a first-order, one-dimensional asymptotic regime model of the water waves equation, in the vein of the Boussinesq original derivation \cite{Bous}, but maintaining all possible equivalences between the involved physical variables, and taking into account the shallow water regime. The physical perturbation parameters under which the expansion is performed are
\[
\al := \frac Ah \ll 1, \quad \bt := \frac{h^2}{\ell^2} \ll 1, \quad \al \sim \bt,
\]
where $A$ and $\ell$ are typical wave amplitude and wavelength, respectively, and $h$ is the constant depth.

The constants in \eqref{boussinesq_0} are not arbitrary and follow the conditions \cite{BCS1}
\be\label{Conds0}
\begin{aligned}
& a=\frac12 \left(\theta^2-\frac13\right)\lambda, \quad b=\frac12 \left(\theta^2-\frac13\right)(1-\lambda), \\
& c=\frac12 \left(1- \theta^2 \right)\mu, \quad d=\frac12 \left(1-\theta^2\right)(1-\mu), \\
& a_1 =\frac12 \left((1-\lambda)\left(\theta^2-\frac13\right)+1-2\theta\right), \quad c_1=1-\theta, \\
\end{aligned}
\ee
for some $\theta\in [0,1]$ and $\lambda,\mu\in\mathbb R$. Notice that  $a+b =\frac12\left(\theta^2-\frac13\right)$ and $c+d =\frac12 (1-\theta^2)\geq 0$. Moreover, $a+b+c+d = \frac13$ is independent of $\theta$. (This case is referred to as the regime without surface tension $\tau\geq 0$, otherwise we have parameters $(a,b,c,d)$ such that $a+b+c+d = \frac13-\tau$.)

As mentioned earlier, the Boussinesq system was developed to address the need for describing shallow water, small-amplitude models. Indeed, it was observed early on by Russell \cite{Russell} that the length of a water wave increases directly with the depth of the fluid, but not with the height of the wave. In fact, as the height of the wave increases, its length tends to decrease. This extension in length is accompanied by a reduction in height, and vice versa, indicating that changes in wave length and height reflect changes in water depth. As a consequence, when studying ``long" surface wave of finite amplitude ($\alpha\ll 1$ and $\beta\ll 1$), it is important to distinguish three physical conditions \cite{Ursell}:
\[
\frac\alpha\beta \ll 1, \quad \frac\alpha \beta \gg 1 \quad \text{or} \quad \frac\alpha \beta \sim 1.
\]

These key physical parameters help characterize the relative importance of dispersion and nonlinearity. While dispersion dominates in the deep ocean, nonlinearity becomes more significant in shallow coastal areas \cite{Wu}. The model introduced by Boussinesq \cite{Bous} deals with the case $\alpha\sim \beta$, and many similar versions have since been developed, commonly referred to as Boussinesq models. A full justification of these models in the one-dimensional case, assuming flat bottoms and small initial data, was first provided in \cite{Craig, Kano}. Since then, Boussinesq-type models have received considerable attention; see, for instance, \cite{BLS,BCL,LPS,Saut} and references therein for a detailed overview of the existing results. In this context, system \eqref{boussinesq_0}, as derived by Bona, Chen, and Saut in \cite{BCS1,BCS2}, involves the parameters $a, b, c, d$, which represent the interplay between dispersion and the nonlinear behavior of waves. A rigorous justification for \eqref{boussinesq_0} with flat bottom from the free surface Euler equations (as well as extensions to higher dimensions) was given by Bona, Colin, and Lannes \cite{BCL}. Later, Alvarez-Samaniego and Lannes provided further improvements \cite{ASL}. For a more than detailed account on Boussinesq models, see Klein-Saut \cite{KS2}.

The generalization of \emph{abcd} Boussinesq system introduced in \cite{BCS1,BCS2} to a variable bottom topography (uneven or moving bottoms), precisely equation \eqref{boussinesq_0}, was given by Chen in \cite{MChen}. Chen carried out a formal analysis of the water waves problem over uneven bottoms with small amplitude variations in one spatial dimension and derived a class of asymptotic models, drawing inspiration from the work of Bona, Chen, and Saut. Later, Chazel provided a rigorous justification of the model derived in \cite{MChen} for bottoms with small spatial variations in \cite{Chazel}. Generalizations to variable bottom topography for the two-dimensional case can be found in \cite{Mitsotakis, Mitsotakis2}, where the authors proposed a variable bottom \emph{abcd} Boussinesq model that allows for the conservation of suitable energy functionals in some cases and enables the description of water waves in closed basins with well-justified slip-wall boundary conditions. Other Bousinesq-type models with variable bottoms can be found in \cite{DD, Iguchi, MMS, MS, Nwogu, Peregrine}.

Ocean surface waves cover a wide range of scales, from tiny capillary waves to long waves like tsunamis with wavelengths up to thousands of kilometers. Tsunamis, often caused by tectonic events (e.g., earthquakes, landslides, volcanic eruptions), are especially significant due to their destructive potential. Although their initial amplitude is small, tsunamis carry massive energy and travel across oceans at high speeds, gradually evolving due to weak dispersion. As tsunamis approach coastal regions, their amplitude increases significantly due to shoaling effects, and the impact is strongly influenced by the shape of the coastline. After striking land, tsunamis can reflect and propagate back across the ocean with slow attenuation \cite{MaMe, Wu}. This highlights the importance of understanding the interaction between surface waves and the shape of the bottom of the fluid (see \cite{Johnson} for early results and \cite{Mu1, Mu2, Mu3, Mu4} for late developments). Although there are studies in the literature addressing tsunami generation using the Water Waves equations and the \emph{abcd} Boussinesq system, they primarily focus on modeling the limiting cases  \cite{Iguchi2, Mitsotakis}, rather than exploring the role the changing medium plays in the wave dynamics. It is our goal to address the interaction between a variable bottom and a solitary wave for the \emph{abcd} Boussinesq system \eqref{boussinesq_0}.

\subsection{The Cauchy problem}
As for the low regularity Cauchy problem associated with \eqref{boussinesq_0} and its generalizations to higher dimensions, Saut et.\ al.\ \cite{SX,SWX} studied in great detail the long time existence problem by focusing in the small physical parameter $\ve$ appearing from the asymptotic expansions. They showed well-posedness (on a time interval of order $1/\ve$) for \eqref{boussinesq_0}. Previous results by Schonbek \cite{Schonbek} and Amick \cite{Amick} considered the case $a=c=b=0$, $d=1$, a version of the original Boussinesq system, proving global well-posedness under a non-cavitation condition, and via parabolic regularization. Linares, Pilod and Saut \cite{LPS} considered existence in higher dimensions for time of order $O(\ve^{-1/2})$, in the KdV-KdV regime $(b=d=0)$. Additional low-regularity well-posedness results can be found in the work by Burtea \cite{Burtea}. On the other hand, ill-posedness results and blow-up criteria (for the case $b=1$, $a=c=0$), are proved in \cite{CL}, following the ideas in Bona and Tzvetkov \cite{BT}.

In a previous work \cite{KMPP} dealing with the flat bottom case, the \emph{Hamiltonian generic} case \cite[p. 932]{BCS2}
\begin{equation}
   \label{Conds2}
b,d >0, \quad a,~c<0, \quad b=d.
\end{equation}
was considered, and it was shown decay of small solutions on compact sets. Later, this result was improved in \cite{KM20} to consider even more $abcd$ models. See also \cite{Munoz_Rivas} for a detailed decay proof for higher powers of the nonlinearity using weighted norms. The result in \cite{KM20} is sharp in the sense that it reaches the values of $(a,b,c)$ where one starts having nonzero frequency nondecaying linear waves. Always in the flat bottom case, under \eqref{Conds2} it is well-known \cite{BCS2} that \eqref{boussinesq_0} is globally well-posed in $H^s \times H^s$, $s\geq 1$, for small data, thanks to the preservation of the energy/hamiltonian and momentum
\be\label{Energy}
\begin{aligned}
H[\eta , u](t):= &~{} \frac12\int \left( -a (\partial_x u)^2 -c (\partial_x \eta)^2  + u^2+ \eta^2 + u^2\eta \right)(t,x)dx,\\
P[\eta , u](t):= &~{} \int \left( \partial_x \eta \partial_x u + b\eta u \right)(t,x)dx.
\end{aligned}
\ee
\par
Since now, we will identify $H^1 \times H^1$ as the standard \emph{energy space} for \eqref{boussinesq_0}. However, in our case, the energy \eqref{Energy} is not conserved anymore. Indeed, consider the modified energy for the variable bottom case
\be\label{Energy_new}
H_h[\eta,u ](t):= \frac12\int \left( -a (\partial_x u)^2 -c (\partial_x \eta)^2  + u^2+ \eta^2 + u^2(\eta + h) \right)(t,x)dx.
\ee
Then, at least formally, the following is satisfied: from \eqref{Energy_new} and \eqref{boussinesq_0},
\be\label{Energy_new_11}
\begin{aligned}
\frac{d}{dt} H_h[\eta,u ](t) = &~{} c_1 \int \left( a   \partial_{x}^2 u  + u   +  u  (\eta + h)  \right) (1- \partial_x^2)^{-1}  \partial_t^2 \partial_x h \\
&~{} + \int \left( c \partial_{x}^2  \eta  +  \eta  + \frac12 u^2 \right)(1- \partial_x^2)^{-1} (-1 +a_1 \partial_x^2) \partial_th   + \frac12 \int u^2  \partial_t h.
\end{aligned}
\ee
Notice that the influence of the uneven bottom is important in the long-time behavior through \eqref{Energy_new_11}, meaning that  previous results proved in \cite{KMPP} are not easily translated to the uneven bottom regime. In the simpler case where $h$ only depends on $x$, we can see that $H_h$ is conserved; however, we shall consider the more general case where $h$ non-trivially depends on time as well. In that sense, we will choose data and conditions on $h$ that ensure globally well-defined solutions with bounded in time energy, a naturally physical condition. See also the recent work \cite{DGMMP}, where the decay properties established in \cite{KMPP,KM20} are extended to the case of variable bottom.

Assume \eqref{Conds0} and \eqref{Conds2}. Coming back to the general model \eqref{boussinesq_0}, by considering the new stretching of variables $ u(t/\sqrt{b},x/\sqrt{b})$, $\eta(t/\sqrt{b},x/\sqrt{b})$, and $h(t/\sqrt{b},x/\sqrt{b})$, we can assume from now on that  $b=d=1$ and rewrite \eqref{boussinesq_0} as the slightly simplified model
\begin{equation}\label{boussinesq}
\begin{cases}
(1- \partial_x^2)\partial_t \eta  + \partial_x \! \left( a\, \partial_x^2 u +u + u (\eta+ h) \right) = (-1 +a_1 \partial_x^2) \partial_th  , \quad (t,x)\in \R\times\R, \\
(1- \partial_x^2)\partial_t u  + \partial_x \! \left( c\, \partial_x^2 \eta + \eta  + \frac12 u^2 \right) = c_1  \partial_t^2 \partial_x h,
\end{cases}
\end{equation}
and where $a_1,c_1 $ have been properly redefined. Precisely, \eqref{boussinesq} will be the model worked in this paper.

\subsection{Solitary waves} Many $abcd$ Boussinesq models are characterized by having solitary waves, namely special solutions describing a fixed profile moving with a fixed speed. More precisely, for any $\omega, x_0\in\mathbb R$, we look for solutions in  $H^1\times H^1$  of the form
\begin{equation}\label{sol_wave}
\begin{pmatrix} \eta \\ u \end{pmatrix}:= {\bd Q}_\omega (x-\omega t-x_0), \quad {\bd Q}_\omega(s) := \begin{pmatrix} R_{\omega} \\ Q_{\omega} \end{pmatrix} (s).
\end{equation}
This amounts to consider solutions $(R_{\omega }, Q_{\omega })\in H^1\times H^1$ to
\begin{equation}\label{exact0}
\begin{aligned}
& 0=  - \omega (1-\partial^2_x)R_{\omega }  + a Q''_{\omega }+Q_{\omega }+R_{\omega }Q_{\omega } \\
& 0=  -\omega  (1-\partial^2_x)Q_{\omega } + c R''_{\omega }+R_{\omega }+\frac12Q_{\omega }^2.
\end{aligned}
\end{equation}
In \cite{BCL0}, the authors investigate the existence of solutions  ${\bd Q}_\omega$ to \eqref{exact0},
 where ${\bd Q}_\omega$ obeys a variational characterization. Among other results, they prove the existence of nontrivial ground states ${\bd Q}_\omega \in H^\infty\times H^\infty$ as long as
\[
a,\, c<0,  \quad |\omega|<  \min\{1,\sqrt{ac}\},
\]
which is the so-called subsonic regime. Note that the speed $\omega$ never reaches the sonic speed, equal to $1$.\footnote{The case $\omega=1$ is particularly interesting, see \cite{DGM} for a recent development in this direction.} The construction in \cite{BCL0} is based on a minimization approach in a Nehari manifold. Assuming moreover that $1\leq \min\{\abs{a},\abs{c}\}$, these solutions are even, up a translation. See also \cite{CNS1,CNS2,Olivera} for further results on the existence of solitary waves for \eqref{boussinesq}. Although not explicit in general, some solitary waves profiles (not necessarily ground states) are sometimes explicit. For instance, M. Chen found in \cite{MChen0} several exact solutions of \eqref{exact0}.
In the particular case  $a=c=-1$, a family of solutions indexed by a parameter $\alpha  \in (-3,\infty)\setminus\{0\} $,
is given by
 \begin{equation}
 \label{TW-chen}
 \begin{aligned}
 & R_\omega(x)= \alpha \sech^2\Big (\frac{x}2\Big), \\ & Q^{\pm}_\omega(x)=\pm \alpha \sqrt{\frac{3}{3+\alpha}} \sech^2\Big (\frac{x}2\Big),
 \text{ with  }
 \omega=\pm \frac{ 3+2\alpha }{ \sqrt{ 3(3+\alpha)} }.
 \end{aligned}
 \end{equation}
 In addition, Hakkaev, Stanislavova, Stefanov proved in \cite{HSS} that the solutions in \eqref{TW-chen} are spectrally stable if $\alpha \in(-9/4,0)$,
 which corresponds to speeds in the subsonic regime $\abs{\omega}<1$.
%

\subsection{Main result}
In this work, our main objective is to understand the weak interaction between stable $abcd$ solitary waves and a slowly varying bottom. Specifically, we construct and analyze a solitary-wave-like solution to the modified $abcd$ system that evolves in a regime characterized by small amplitude and slow variation in both space and time of the bottom.
In order to state our main results, we present the main hypotheses required for the perturbation of the bottom: 

\medskip

\noindent
{\bf Hypotheses on the bottom.} Let $\varepsilon>0$ be a small parameter. Let $h_0: \mathbb R^2 \to \mathbb R$, $h_0=h_0(s,y)$, be a fixed function in $C^\infty(\mathbb R^2)$ such that there are constants $C_{k,l}>0$ and  $k_0,l_0>0$ such that 
\begin{equation}\label{hypoH}
\begin{aligned}
|\partial_s^k \partial_y^l h_0(s,y)| \leq C_{k,l} e^{-k_0 |s|}e^{-l_0 |y|}, \quad  h(t,x) = \varepsilon h_0(\varepsilon t,\varepsilon x), \quad 
\text{ for all } k,l\geq 0. 
\end{aligned}
\end{equation}
This means that the bottom variation is small amplitude and varies slowly in both space and time. The exponential decay of $h_0$ in both variables is not sharp, and can be replaced by a suitable polynomial decay. In addition, our results should hold if \eqref{hypoH} is relaxed to $k\geq 1$, to include, for instance, a small bump bottom, with some minor modifications. However, we prefer  
to use hypothesis \eqref{hypoH} to simplify the computations in the paper.

In any case, the important physical case described by a compactly supported in space and time perturbation of the bottom, modeling a temporal modification of the bottom in a particular region of space, is included in \eqref{hypoH}. Additionally, \eqref{hypoH} is coherent with the exponential decay of unperturbed $abcd$ solitary waves, preserving that property along calculations.  

\medskip

\noindent
{\bf Hypotheses on the solitary wave.}
From now on, we assume that $a,c<0$, $b=d=1$ and that the speed satisfies
\begin{equation}\label{velo}
\omega \in (0,\omega^*),\quad \text{ where }\omega^*:=\min\{ \sqrt{ac},1\},
\end{equation}
We consider an \emph{even} solitary-wave solution ${\bd Q}_\omega $ to $abcd$ \eqref{boussinesq} in $H^1\times H^1$. In this manner, $\bd Q_\omega$ is smooth and decays exponentially, as well as its derivatives (see Lemma~\ref{lem:decay:solitons}), and
the linear operator
$\mathcal L=\mathcal L(\omega)$ given by
\begin{equation}\label{def_L}
\begin{aligned}
\mathcal L
 := &~{}
 \begin{pmatrix}
 c \partial_x^2 +1  &   -\omega (1-\partial^2_x) +Q_\omega   \\
  -\omega (1-\partial^2_x)  +Q_\omega & a \partial_x^2 +1 +R_\omega
\end{pmatrix}.
\end{aligned}
\end{equation}
is unbounded self-adjoint in $L^2\times L^2$, with dense domain $H^2\times H^2$.
We also assume that $\bd Q_\omega$ is \emph{stable}, in the following sense:
\begin{enumerate}
\item[$(i)$] (Nondegeneracy of the kernel) The function $\bd{Q}_\omega':=(R_{\omega}',Q_{\omega}')^T$ generates the kernel of $\mathcal L$, i.e.
 \begin{equation}\label{kernel}
\textup{ker}\{\mathcal L \} =\textup{span}\{\bd{Q}_\omega'\}.
 \end{equation}
\item[$(ii)$] (Negative eigenvalue). The operator $\mathcal L$ has a unique negative eigenvalue $-\mu_0$ with associated eigenfunction generated by a given function
$\Phi_{-1}\in H^2\times H^2$.
\item[$(iii)$] (Slope condition)
There is an open neighborhood $\Omega\subset (0,\omega^*)$ of $w$ such that
the map $w \in\Omega\to \bd Q_{w}\in H^2\times H^2$ is differentiable and the derivative of the momentum satisfies:
\begin{equation}\label{mom}
 \frac{d}{d\omega }P[\bd Q_\omega]<0,
 \end{equation}
for all $\omega \in \Omega$.  This sign condition is commonly referred to as the Vakhitov--Kolokolov condition \cite{vakhitov}.

\end{enumerate}
Let us make some comments on these hypotheses.
Condition \eqref{velo} implies that the operator with constant coefficients:
\begin{equation}\label{def_L0}
\begin{aligned}
{\mathcal L}_0
 := &~{}
 \begin{pmatrix}
 c \partial_x^2 +1  &   -\omega (1-\partial^2_x)     \\
  -\omega (1-\partial^2_x)   & a \partial_x^2 +1
\end{pmatrix},
\end{aligned}
\end{equation}
is coercive in $H^1\times H^1$.
Indeed, using the elementary identity,
\begin{equation}
\label{ineq:abcd}
\alpha x_1^2 + \beta x_2^2 -2\kappa x_1x_2=\left( 1-\frac\kappa{\sqrt{\alpha\beta}}\right) \big( \alpha x_1^2+\beta x_2^2\big)+
\frac{\kappa}{\sqrt{\alpha\beta}}\big( \sqrt{\alpha}x_1-\sqrt{\beta}x_2 \big)^2,
\end{equation}
for all $x_1,x_2\in\R$, $\alpha,\beta$, $\kappa\in\R$, we deduce that for all $\bd \eta=(\eta,u)\in H^2\times H^2,$
$$
\langle \mathcal L_0 \eta,\eta\rangle \geq \left( 1-\frac\omega{\sqrt{\alpha\beta}} \right)(\norm{\eta'}_{L^2}^2+\norm{u'}_{L^2}^2)+(1-\omega)
(\norm{\eta}_{L^2}^2+\norm{u}_{L^2}^2).
$$
Combining with the exponential decay of $\bd Q_\omega$, this also implies that the infimum of the essential spectrum of $\mathcal L$ is strictly positive.

Condition $(i)$ provides the existence of $\mathcal L^{-1}$ on orthogonal to $\textup{ker}\{\mathcal L \}$, as stated in Lemma~\ref{lem:L-1}.
\begin{lemma}
\label{lem:L-1}
Let {\rm Ker}$\mathcal{L}=\textup{span}\{ \bd Q_\omega'\}$.
For any ${\bf \eta}=(\eta,u)$ in R$(\mathcal{L})= {\rm Ker}\mathcal{L}^\perp$
there exists a unique $(\tilde \eta, \tilde u)$ in $H^2\times H^2 \cap {\rm Ker}\mathcal{L}^\perp$
such that $\mathcal{L}(\tilde \eta, \tilde u)=(\eta, u)$. We set $\mathcal{L}^{-1}(\eta,u)=(\tilde \eta, \tilde u)$.
\end{lemma}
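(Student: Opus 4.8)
The plan is to establish this as a standard consequence of the self-adjoint Fredholm theory for $\mathcal L$, using the spectral information assembled just before the statement: $\mathcal L$ is unbounded self-adjoint on $L^2\times L^2$ with domain $H^2\times H^2$, its essential spectrum has a strictly positive infimum (hence $0$ is at most an isolated eigenvalue of finite multiplicity), and by hypothesis $(i)$ its kernel is exactly $\spawn\{\bd Q_\omega'\}$. First I would record that since $\mathcal L$ is self-adjoint with $0$ isolated in the spectrum, the range $R(\mathcal L)$ is closed and coincides with $(\ker\mathcal L)^\perp$; this is the abstract fact that makes the word "uniqueness" meaningful, so it should be stated explicitly with a one-line justification (closedness of the range of a self-adjoint operator with a spectral gap at $0$, e.g.\ via the spectral projection $\mathbbm 1_{\{0\}}(\mathcal L)$).

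Next, given $(\eta,u)\in R(\mathcal L)=(\ker\mathcal L)^\perp$, I would produce a preimage. The cleanest route is functional calculus: let $\Pi$ be the orthogonal projection onto $(\ker\mathcal L)^\perp$ and define $(\tilde\eta,\tilde u):=\Pi\, g(\mathcal L)(\eta,u)$ where $g(\lambda)=1/\lambda$ on the spectrum away from $0$ and $g(0)=0$; then $\mathcal L(\tilde\eta,\tilde u)=(\eta,u)$ because $(\eta,u)$ has no component in $\ker\mathcal L$, and $(\tilde\eta,\tilde u)\in(\ker\mathcal L)^\perp$ by construction. That $(\tilde\eta,\tilde u)$ lies in the domain $H^2\times H^2$ follows from elliptic regularity: the equation $\mathcal L(\tilde\eta,\tilde u)=(\eta,u)$ reads, after using that the matrix of leading symbols $\mathrm{diag}(c\partial_x^2,a\partial_x^2)$ plus the $\omega\partial_x^2$ off-diagonal terms is elliptic (here condition \eqref{velo} on $\omega$, via the coercivity of $\mathcal L_0$, is exactly what guarantees invertibility of the principal symbol), as an elliptic system with smooth coefficients and right-hand side in $L^2\times L^2$, whence $(\tilde\eta,\tilde u)\in H^2\times H^2$; in fact the smoothness and exponential decay of $\bd Q_\omega$ from Lemma~\ref{lem:decay:solitons} give $H^2$-regularity directly by a Neumann-series / bootstrap argument off the constant-coefficient operator $\mathcal L_0$.

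For uniqueness: if $\mathcal L(\tilde\eta_1,\tilde u_1)=\mathcal L(\tilde\eta_2,\tilde u_2)$ with both in $H^2\times H^2\cap(\ker\mathcal L)^\perp$, then their difference lies in $\ker\mathcal L=\spawn\{\bd Q_\omega'\}$ by $(i)$ and simultaneously in $(\ker\mathcal L)^\perp$, so it is $0$. This gives a well-defined linear map $\mathcal L^{-1}:R(\mathcal L)\to H^2\times H^2\cap(\ker\mathcal L)^\perp$, and one can add (if wanted later) that it is bounded, by the closed graph theorem or directly from the spectral gap $\|\mathcal L^{-1}\|\le 1/\mathrm{dist}(0,\sigma(\mathcal L)\setminus\{0\})$.

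The only genuinely delicate point is the regularity claim $(\tilde\eta,\tilde u)\in H^2\times H^2$, i.e.\ checking that the solution handed to us by abstract functional calculus actually sits in the operator domain and that the system is elliptic in the relevant sense. This is where \eqref{velo} must be invoked carefully: the leading part of $\mathcal L$ is $\begin{pmatrix} c\partial_x^2 & \omega\partial_x^2\\ \omega\partial_x^2 & a\partial_x^2\end{pmatrix}$ whose symbol determinant is $(ac-\omega^2)\xi^4$, which is nonvanishing for $\xi\ne0$ precisely because $\omega^2<ac$. Everything else — producing a preimage, showing it is orthogonal to the kernel, and uniqueness — is essentially formal once the closed-range/spectral-gap structure is in place, so I would keep those parts brief and spend the bulk of the argument on the elliptic-regularity/ domain bookkeeping, possibly just citing the coercivity of $\mathcal L_0$ proved in the excerpt together with a standard perturbation argument for the exponentially decaying zeroth-order terms $Q_\omega$, $R_\omega$.
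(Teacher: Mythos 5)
Your argument is correct, but it routes through abstract spectral theory (functional calculus and the general fact that a self-adjoint operator with a spectral gap at $0$ has closed range), whereas the paper invokes a more concrete, Kato-style orthogonal decomposition
\[
L^2\times L^2=\R\,\Phi_{-1}\oplus\R\,\bd Q_\omega'\oplus H_+,
\]
in which $\Phi_{-1}$ spans the negative eigenspace (hypothesis $(ii)$), $\bd Q_\omega'$ spans the kernel (hypothesis $(i)$), and $\mathcal L$ is coercive on $H_+$; the inverse is then written down explicitly as $\mathcal L^{-1}(a\Phi_{-1}+p)=-\tfrac{a}{\mu_0}\Phi_{-1}+(\mathcal L|_{H_+})^{-1}p$. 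The two proofs buy slightly different things: your functional-calculus argument is shorter to state rigorously (once you know $0$ is isolated in the spectrum, existence, orthogonality, uniqueness, and boundedness of $\mathcal L^{-1}$ all fall out at once, with the bound $\|\mathcal L^{-1}\|\le 1/\operatorname{dist}(0,\sigma(\mathcal L)\setminus\{0\})$ for free), while the paper's decomposition is more hands-on and mirrors the coercivity computations used later in Lemma~\ref{coercivitydetailled}, so it keeps the exposition self-contained. One small remark on your write-up: the detour through ellipticity of the principal symbol is not actually needed for the domain bookkeeping, since $g(\mathcal L)(\eta,u)$ already lies in $D(\mathcal L)=H^2\times H^2$ by construction of the functional calculus for unbounded self-adjoint operators; the elliptic-symbol computation (with $\det = (ac-\omega^2)\xi^4\ne0$) would only be needed to promote $L^2$-range data to \emph{higher} regularity, not to reach $H^2$.
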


\begin{proof}
According to \cite{Kato},  the following orthogonal decomposition holds
$$L^2\times L^2=\R \Phi_{-1}\oplus\R {\bf}Q'_\omega \oplus H_+,$$
 and for all $z\in  H_+$, we have $\langle \mathcal{L}z,z\rangle \geq c||z||^2_{L^2}$.
Hence, we deduce that $\mathcal{L}: D(\mathcal{L})\cap H_+ \mapsto H_+$ is one-to-one. Therefore, for ${\bf \eta}=a\Phi_{-1}+p$
in ${\rm Ker}\mathcal{L}^\perp$, we can set $\mathcal{L}^{-1}{\bf \eta}=-\frac{a}{\mu_0}\Phi_{-1}+\mathcal{L}^{-1}z$.
\end{proof}
Noticing that,
$$
\left\langle   (1-\partial^2_x)  \bd Q_\omega
  ,  \bd Q_\omega'
   \right\rangle=0,
 $$
it follows that $\mathcal L^{-1} (1-\partial^2_x) \bd Q_\omega  $ is well-defined,
and condition \eqref{mom} can be recast as
\be\label{coer_10}
\left\langle  J(1-\partial^2_x) \bd Q_\omega   ,   \mathcal L^{-1} J(1-\partial^2_x) \bd Q_\omega  \right\rangle < 0,
\ee
where
\be
\label{def:J}
J=  \begin{pmatrix} 0 & 1  \\  1 & 0 \end{pmatrix}.
\ee
 Indeed, denoting $\Lambda \bd Q_\omega=\frac{\partial}{\partial \omega} \bd Q_\omega$ and
differentiating \eqref{exact0} with respect to $\omega$, we deduce that
\begin{equation*}
\mathcal{L} \Lambda {\bd Q}_\omega = J(1-\partial^2_x)\bd Q_\omega.
\end{equation*}
Therefore, since the map $w \in\Omega\to \bd Q_{w}\in H^2\times H^2$ is differentiable, we get
\[
\left\langle J(1-\partial^2_x) \bd Q_\omega   ,   \mathcal L^{-1} J(1-\partial^2_x) \bd Q_\omega  \right\rangle =
 \frac{d}{d\omega } \int R_\omega  (1-\partial^2_x)  Q_\omega.
\]
Finally, integrating by parts, we obtain 
$$
     \int R_\omega  (1-\partial^2_x)  Q_\omega=\int R_\omega   Q_\omega+\partial_x R_\omega   \partial_x Q_\omega=P[R_\omega,Q_\omega],
$$
and thus we conclude that  \eqref{mom} and \eqref{coer_10}  are equivalent.
The slope condition \eqref{mom} also appears in the classical Grillakis-–Shatah-–Strauss conditions,
 and is essential to prove the strong coercivity property, stated in Lemma~\ref{coercivitydetailled}.

Let us highlight that the solitary waves in \eqref{TW-chen} with positive speeds in $(0,1)$ satisfy all hypotheses stated above. More precisely, defining
 $$G_{\pm} (\omega)=\frac38\left( \omega^2 -4\pm \omega \sqrt{\omega^2+8}\right), \text{ for }w\in \R,
 $$
 we see that the functions $G_\pm$ are smooth, $G_+$ is increasing, $G_-$ is decreasing, and $\Im (G_\pm)=(-3,\infty)$.
Thus, we can recast the two branches of solitary waves in  \eqref{TW-chen}  as:
\begin{equation*}
 \label{TW:w}
 \bd Q^{\pm}_\omega(x)=\left(
  G_{\pm}(\omega)
   \sech^2\Big (\frac{x}2\Big), \pm G_{\pm}(\omega) \sqrt{\frac{3}{3+G_{\pm}(\omega)}} \sech^2\Big (\frac{x}2\Big)
   \right),
\end{equation*}
for all $\omega \in \R$, except $\omega = 1$ for the plus sign, and $\omega = -1$ for the minus sign, which correspond to the trivial zero functions.
 From \eqref{TW:w}, we deduce that the maps $ \omega\in (0,1) \mapsto \bd Q^\pm_\omega\in H^2$ are differentiable, and that the energy and momentum are given by
\begin{align*}
E[\bd Q_\omega^\pm]&=\frac{18}5\big(1+\omega^2( G_\pm(\omega)-1\big),\\
P[\bd Q_\omega^+]&=\frac{16}5G_+(\omega)^2\left(1+ \frac{G_+(\omega)}{3} \right)^{-1/2}, \text{ and }\\
P[\bd Q_\omega^-]&=-\frac{32}5G_-(\omega)^2 \left(1+ \frac{G_+(\omega)}{3}\right)^{1/2},
\end{align*}
which we illustrate in Figure~\ref{fig:solitons}.  Also,  because of the Hamilton group relation:
$$\frac{d}{d\omega}{E[\bd Q_\omega^\pm]}=\omega\frac{d}{d\omega}{P[\bd Q_\omega^\pm]},$$
we observe that the slope of the curve in the right plot of Figure~\ref{fig:solitons} corresponds to the speed~$\omega$. Finally, from the expression for the momenta, we deduce that $\frac{d}{d\omega}P[\bd Q_\omega^\pm]<0$, for all $\omega \in (0,1)$, so that condition (iii) is satisfied.
In addition, in Proposition~2 in \cite{HSS}, it was established that the solitary waves $\bd Q^{\pm}_\omega$ also fulfill conditions (i) and (ii), for $\omega\in (0,1)$,  so that our main theorem applies, in particular, to these solutions.

\begin{figure}[h!]
    \centering
    \begin{subfigure}[t]{0.48\textwidth}
        \centering
        \includegraphics[width=\textwidth,height=5cm]{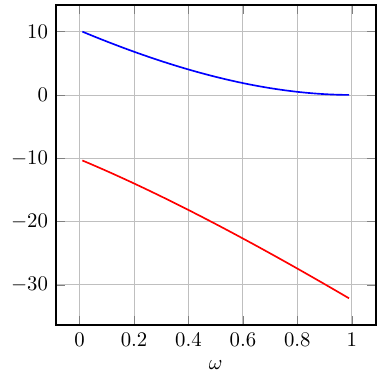}
    \end{subfigure}
    \hfill
    \begin{subfigure}[t]{0.48\textwidth}
        \centering
        \includegraphics[width=\textwidth,height=5cm]{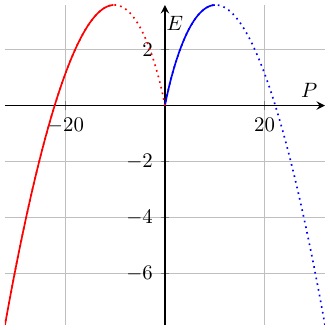}
    \end{subfigure}

    \caption{Left: The momenta $P(\bd Q^-_\omega)$ (in red) and $P(\bd Q^+_\omega)$ (in blue), as functions of $\omega\in (0,1)$. Right: Parametric curves $(P(\bd Q^-_\omega), E(\bd Q^-_\omega))$ (in red) and $(P(\bd Q^+_\omega), E(\bd Q^+_\omega))$
    (in blue), plotted in solid lines for  $\omega \in (0, 1)$, and in dotted lines for $\omega \in (-1,0)$.}
    \label{fig:solitons}
\end{figure}


To state the theorem, we fix a small constant $\delta_0 > 0$ and define $T_\varepsilon := \varepsilon^{-1 - \delta_0}$. Our main result is as follows.
\begin{theorem}\label{MT}
Let $a,c<0$, $\omega>0$ and $x_0\in \mathbb R$ be fixed parameters satisfying \eqref{velo}, and consider a solitary wave ${\bd Q}_\omega$ of the flat bottom $abcd$ system \eqref{boussinesq}, which is stable in the sense of $(i)$-$(iii)$ above.
There exist $C_0,\nu_0,\varepsilon_0>0$ such that, for all $\varepsilon\in (0,\varepsilon_0)$,  if  $h$ satisfies  \eqref{hypoH}, then
 there exists $\rho(t) \in\mathbb R$ such that the following hold.
\begin{enumerate}
\item Existence of a generalized solitary wave. There exists
\[
(\eta,u) \in C(\mathbb R, H^1\times H^1)\cap L^\infty(\mathbb R, H^1\times H^1)
\]
a solution  to \eqref{boussinesq} such that
\begin{equation}\label{Construction}
\lim_{t\to-\infty} \| (\eta,u) (t)- {\bd Q}_\omega (\cdot -\omega t-x_0) \|_{H^1\times H^1} =0.
\end{equation}
Moreover, one has
\item Pre-interaction regime. For all times $t\leq  -T_\varepsilon$,
\begin{equation}\label{PreInteraction}
\left\| (\eta,u) (t)- {\bd Q}_\omega (\cdot - \omega t-x_0) \right\|_{H^1\times H^1} \leq C_0  e^{\nu_0 \varepsilon t}.
\end{equation}
\item Interaction regime. At the time $t= T_\varepsilon$, for some $\rho_\varepsilon\gg \frac1\varepsilon,$
\begin{equation}\label{Interaction}
\left\| (\eta,u) (T_\varepsilon)- {\bd Q}_\omega (\cdot -\rho_\varepsilon) \right\|_{H^1\times H^1} \leq C_0 \varepsilon^{\frac12}.
\end{equation}
\item Exit regime. For all time $t\geq  T_\varepsilon$, 
\begin{equation}\label{Exit}
\sup_{t\geq T_\varepsilon} \left\| (\eta,u) (t)- {\bd Q}_\omega (\cdot -\rho(t)) \right\|_{H^1\times H^1} \leq C_0 \varepsilon^{\frac12},
\end{equation}
and
\begin{equation}\label{est_rho}
| \rho'(t) -\omega | \leq C C_0 \varepsilon^{\frac12}.
\end{equation}
\end{enumerate}
\end{theorem}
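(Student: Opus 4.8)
The proof follows the energy--modulation method for a solitary wave meeting a slowly varying medium, in four moves: (a) build an approximate solution of \eqref{boussinesq} by modulating $\bd Q_\omega$ along a trajectory $\rho(t)$ and a scaling $\sigma(t)$ and adding a bottom-induced corrector; (b) for a genuine solution close to this ansatz, set up a modulation decomposition, derive the laws for $\rho,\sigma$, and close a bootstrap on a coercive Lyapunov functional over $[-T_\varepsilon,T_\varepsilon]$; (c) obtain the solution of item~(1), asymptotic to $\bd Q_\omega$ as $t\to-\infty$, as a weak limit of solutions issued from the exact soliton at times $t_n\to-\infty$; (d) for $t\geq T_\varepsilon$, where $h$ is super-polynomially small since $e^{-k_0|\varepsilon t|}\leq e^{-k_0\varepsilon^{-\delta_0}}$, conclude from the flat-bottom orbital stability guaranteed by $(i)$--$(iii)$. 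As a preliminary, \eqref{Energy_new_11} and \eqref{hypoH} give $|\tfrac{d}{dt}H_h[\eta,u]|\lesssim\varepsilon^2(1+\|u\|^2_{L^2})$ on the (time-localized) support of $\partial_t h$, so the modified energy \eqref{Energy_new} stays bounded along the solutions considered; combined with the coercivity of $H_h-\sigma P$ near $\bd Q_\omega$ this yields global well-posedness in $H^1\times H^1$.

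\textbf{The approximate solution.} In the frame $y=x-\rho(t)$, look for $\tilde{\bd Q}=\bd Q_{\sigma(t)}(y)+\varepsilon\,\bd A(\varepsilon t,y)$ with $\bd A(s,\cdot)$ exponentially decaying in $y$, and postulate $\rho'(t)=\sigma(t)+\varepsilon m_1(\varepsilon t)$, $\sigma'(t)=\varepsilon m_2(\varepsilon t)$. Inserting into \eqref{boussinesq}, using \eqref{exact0}, the identity $\mathcal L\,\Lambda\bd Q_\sigma=J(1-\partial^2_x)\bd Q_\sigma$, and the sizes $h=O(\varepsilon)$, $\partial_t h=O(\varepsilon^2)$, $\partial^2_xh=O(\varepsilon^3)$, the $O(\varepsilon)$ part of the residual cancels provided $\bd A(s,\cdot)$ solves a linear elliptic system governed by $\mathcal L$ whose source combines the localized term $Q_\sigma' h_0$ (from the coupling $\partial_x(u(\eta+h))$) with $m_1(1-\partial^2_x)\bd Q_\sigma'$ and $m_2(1-\partial^2_x)\Lambda\bd Q_\sigma$. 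By Lemma~\ref{lem:L-1}, solvability holds once this source is orthogonal to $\textup{ker}\,\mathcal L=\textup{span}\{\bd Q_\sigma'\}$ and the $\Lambda\bd Q_\sigma$-direction is matched; the two resulting conditions determine $m_1(s),m_2(s)$, which are $O(1)$ and exponentially decaying in $s$. The remaining residual $\bd S$ satisfies $\|\bd S(t)\|_{H^1\times H^1}\lesssim\varepsilon^2e^{-c\varepsilon|t|}$ with exponential spatial localization (one more corrector lowers the power of $\varepsilon$ if needed).

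\textbf{Modulation and energy estimates.} Write $(\eta,u)(t)=\bd Q_{\sigma(t)}(\cdot-\rho(t))+\varepsilon\bd A(\varepsilon t,\cdot-\rho(t))+\bd w(t)$ and fix $\rho(t),\sigma(t)$ by imposing $\langle\bd w(t),\bd Q_{\sigma(t)}'\rangle=0$ and $\langle\bd w(t),(1-\partial^2_x)J\bd Q_{\sigma(t)}\rangle=0$, which the implicit function theorem makes solvable and which yield $|\rho'-\sigma-\varepsilon m_1|+|\sigma'-\varepsilon m_2|\lesssim\varepsilon\|\bd w\|+\|\bd w\|^2$. Consider $\mathcal H(t):=H_h[\eta,u](t)-\sigma(t)P[\eta,u](t)+G(\sigma(t))+\varepsilon\langle\bd Z(\varepsilon t,\cdot-\rho(t)),\bd w(t)\rangle$, where $G'(\sigma)=P[\bd Q_\sigma]$ and $\bd Z$ is an exponentially localized corrector chosen so that $\tfrac{d}{dt}$ of the last term cancels the $O(\varepsilon e^{-c\varepsilon|t|}\|\bd w\|)$ contributions of $\varepsilon\bd A$ and of the bottom terms in \eqref{Energy_new_11}. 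Using \eqref{Energy_new_11}, the second orthogonality (which kills the $O(\|\bd w\|)$ part of $P[\eta,u]-P[\bd Q_\sigma]$) and the modulation laws, one gets $\tfrac{d}{dt}\mathcal H(t)\lesssim\varepsilon^2e^{-c\varepsilon|t|}\|\bd w\|+\varepsilon e^{-c\varepsilon|t|}\|\bd w\|^2+\varepsilon^2e^{-c\varepsilon|t|}$, while the strong coercivity (Lemma~\ref{coercivitydetailled}), resting on \eqref{kernel}, the simplicity of $-\mu_0$ and the Vakhitov--Kolokolov condition \eqref{mom}, gives $\mathcal H(t)-\mathcal H_{\mathrm{ref}}\geq c\,\|\bd w(t)\|^2_{H^1\times H^1}-C\varepsilon^2$. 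Since $\int_{\R}\varepsilon e^{-c\varepsilon|t|}\,dt$ is a fixed constant, a Gronwall argument gives $\|\bd w(t)\|^2\lesssim\|\bd w(-T_\varepsilon)\|^2+\varepsilon$ on $[-T_\varepsilon,T_\varepsilon]$, so the bootstrap closes with $\|\bd w(t)\|_{H^1\times H^1}\lesssim\varepsilon^{1/2}$; for $t\leq-T_\varepsilon$, where the forcing is already $\lesssim e^{-c\varepsilon|t|}$, this upgrades to $\|\bd w(t)\|\lesssim e^{\nu_0\varepsilon t}$.

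\textbf{Conclusion and main difficulty.} For $t_n\to-\infty$, let $(\eta_n,u_n)$ solve \eqref{boussinesq} with data $\bd Q_\omega(\cdot-\omega t_n-x_0)$ at $t_n$; as the bottom is $\lesssim e^{-k_0\varepsilon^{-\delta_0}}$ on $t\leq-T_\varepsilon$, the previous step applies uniformly in $n$ on $[t_n,\infty)$, and a weak-$\ast$ limit in $L^\infty_t(H^1\times H^1)$, together with local well-posedness and strong convergence on compact sets, yields the solution of item~(1) satisfying \eqref{Construction}--\eqref{PreInteraction}; \eqref{Interaction} is the energy estimate read at $t=T_\varepsilon$, with $\rho_\varepsilon=\rho(T_\varepsilon)\approx\omega T_\varepsilon\gg\varepsilon^{-1}$. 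For $t\geq T_\varepsilon$ the bottom is negligible, so the flat-bottom orbital stability of $\bd Q_\omega$ (same coercivity, now with $P$ conserved) keeps $\|\bd w(t)\|\lesssim\|\bd w(T_\varepsilon)\|\lesssim\varepsilon^{1/2}$ and $|\sigma(t)-\sigma(T_\varepsilon)|\lesssim\varepsilon^{1/2}$; together with $|\sigma(T_\varepsilon)-\omega|\lesssim\varepsilon^{1/2}$, $\|\bd Q_{\sigma(t)}-\bd Q_\omega\|\lesssim|\sigma(t)-\omega|$ and $|\rho'-\sigma|\lesssim\|\bd w\|$, this gives \eqref{Exit} and \eqref{est_rho}. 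The crux is the construction and the energy estimate together: one must design $\bd A$ and the modulation unknowns so that every solvability obstruction of the coupled, time-dependent linearization is removed — precisely where \eqref{kernel} and the slope condition \eqref{mom} enter — while keeping all error terms exponentially localized in $y$, since $\mathcal H$ integrates over the whole line and a non-localized tail of size $\varepsilon$ would, over time $\sim\varepsilon^{-1}$, destroy the $\varepsilon^{1/2}$ bound; the accompanying and most computational obstacle is that $H$ and $P$ are no longer conserved, so $\tfrac{d}{dt}\mathcal H$ must be handled through \eqref{Energy_new_11} via the explicit correctors $G$ and $\bd Z$, whose role is to absorb all $O(\varepsilon\|\bd w\|)$ terms and leave only quadratic-in-$\bd w$, exponentially time-localized remainders.
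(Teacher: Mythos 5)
Your four-move strategy matches the paper's architecture, and many details are right: the role of $(i)$--$(iii)$ through Lemma~\ref{coercivitydetailled}, the Gronwall closure on $[-T_\varepsilon,T_\varepsilon]$, the weak-limit construction of the minimal solution in item~(1), and the flat-bottom orbital stability argument for $t\geq T_\varepsilon$. The idea of adding a linear-in-$\bd w$ corrector to the Lyapunov functional is also the paper's idea (the term $-m_0(\tau)\int Q_\omega u_2$ in \eqref{def_F2}). Nevertheless there is a concrete gap at the heart of the construction step, which is exactly the point the paper singles out as the main difficulty.

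\textbf{The residual of your ansatz is not $O(\varepsilon^2)$ and is not localized.} You posit $\tilde{\bd Q}=\bd Q_\sigma(y)+\varepsilon\bd A(\varepsilon t,y)$ with $\bd A$ exponentially decaying in $y$ and claim the leftover residual is $\|\bd S\|_{H^1\times H^1}\lesssim\varepsilon^2 e^{-c\varepsilon|t|}$ with exponential spatial localization, adding that ``one more corrector lowers the power of $\varepsilon$ if needed.'' Neither claim survives inspection. At second order the right-hand side of \eqref{boussinesq} contributes $(-1+a_1\partial_x^2)\partial_t h = -\varepsilon^2\,\partial_s h_0(\varepsilon t,\varepsilon x)+O(\varepsilon^4)$. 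This source is centred at $x\sim 0$, not at $y=x-\rho(t)\sim 0$; in the moving frame it is a \emph{non-localized} forcing, and its $L^2_y$ norm is $\varepsilon^2\cdot\varepsilon^{-1/2}=\varepsilon^{3/2}$, not $\varepsilon^2$. Moreover, attempting to remove it with a further corrector leads to solving $\mathcal L\bd A_2 = -\,\partial_z^{-1}\partial_s h_0(\varepsilon t,\varepsilon x)\,(0,1)^T+\cdots$ (see \eqref{Syst_2}), and $\partial_z^{-1}\partial_s h_0$ is only bounded, not $L^2$ or decaying, as $z\to-\infty$. So Lemma~\ref{lem:L-1} does not apply: the corrector cannot be chosen exponentially localized, and in fact the paper's $(A_2,B_2)$ satisfies only $\|\chi_\varepsilon(A_2,B_2)\|_{L^2}\sim\varepsilon^{-3/2-\delta_0/2}e^{-k_0\varepsilon|t|}$ after a cutoff \eqref{chi_ve}, yielding $\|\bd W^\sharp\|_{L^2}\lesssim\varepsilon^{1/2}e^{-k_0\varepsilon|t|}$ in \eqref{estW} and a \emph{final} residual $\|\bd R^\sharp\|_{H^2}\lesssim\varepsilon^{3/2}e^{-k_0\varepsilon|t|}+\varepsilon^{10}$ in \eqref{Cota_R}. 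Thus the $\varepsilon^{1/2}$ in Theorem~\ref{MT} is not merely a by-product of integrating $\varepsilon^2$ over $\varepsilon^{-1}$ time; it is forced by a corrector of $L^2$-size $\varepsilon^{1/2}$ and a residual of size $\varepsilon^{3/2}$ that no amount of further expansion will reduce. Your claim that careful design keeps ``all error terms exponentially localized in $y$'' is therefore the step that fails, and it is precisely what Section~\ref{Sec:3} of the paper is built to handle (cutoff $\chi_\varepsilon$, bounds \eqref{mejorada0}--\eqref{mejorada}, and the error estimates for $\bd R^\sharp$).

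\textbf{A secondary difference worth flagging.} You impose two orthogonality conditions on $\bd w$ (against $\bd Q_\sigma'$ and against $J(1-\partial_x^2)\bd Q_\sigma$), modulating both $\rho$ and $\sigma$. The paper fixes the modulation ODE \eqref{DS} for $(\omega,\rho)$ in advance (a deliberate choice to cancel secular terms in the approximate solution), keeps a single free shift $\rho_2$ with the single condition \eqref{ModulationTesis1}, and \emph{bounds} the second pairing $\langle\bd\eta_2, J(1-\partial_x^2)\bd Q_\omega\rangle$ by the almost-conserved energy via \eqref{cotaQ} rather than annihilating it. Both routes can in principle work, but the paper's version meshes with the time-reparametrization $t\mapsto\tau+\widehat\rho_2(\tau)$ that lets it compare the true solution at a shifted time with the approximate solution at the original time. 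You should decide which you are doing and carry it through consistently; as written, your two orthogonality conditions plus two modulation laws plus the Lyapunov corrector is a consistent but different bookkeeping, and it would need its own verification that the correctors $m_1,m_2,\bd Z$ can be chosen to eliminate the right obstructions given the non-localized source.
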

%
%

Estimate \eqref{Exit} is the main result of this paper: the solitary wave collides and survives the interaction, with an error of order $\varepsilon^{1/2}$. We believe that this strange order is universal, by natural reasons. Physically, \eqref{Exit} states that the variable bottom will have an important influence on the solitary wave, even if in the model its influence is much smaller. We expect that this result is universal if one modifies the $abcd$ model by other fluid models such as Serre-Green-Naghdi, Boussinesq-Peregrine, and other models of water waves. If the solitary wave is unstable, it is highly probable that the nonlinear object will not survive after the interaction, therefore, the hypotheses are in some sense necessary and sufficient. However, if the solitary wave is mildly unstable, we believe that this set of techniques can be extended with suitable changes to other models. 

\subsection{Novelty and scope of the present work}
The main contribution of this article is to provide a comprehensive (dispersive) analysis for the $abcd$ equation in the presence of an inhomogeneous background, a regime that has not been previously addressed in the literature, at least from the point of view of soliton dynamics. It can be recast as the understanding of long-time behavior of nonlinear waves in natural shallow dynamics. See \cite{Saut0, Saut1,Saut2} for recent works dealing with the Cauchy problem in highly nontrivial $abcd$ moving bottom models. While the homogeneous case has been extensively studied using Fourier methods, pseudo-differential tools, and virial-type identities, extending these ideas to coefficients depending on the spatial variable requires several new ingredients. In particular, the loss of translation invariance destroys the usual spectral decomposition and precludes the direct use of standard dispersive techniques, especially in the case of soliton dynamics. Here we need different techniques, more related to the interaction of nonlinear waves. Our approach overcomes these difficulties by developing a background-adapted set of energy and virial identities tailored to the $abcd$ model, establishing a family of monotonicity formulas suited to the variable-coefficient structure, and deriving refined energy estimates that capture the interaction between the solution and the inhomogeneous medium. Taken together, these techniques yield the first global solitary wave dynamics statements for the $abcd$ model with variable background, and show that the dispersive behavior persists under a broad class of nontrivial spatial profiles. The analysis developed in this work suggests several natural directions for further research. We believe that these methods may be applicable to a wider family of non-homogeneous dispersive equations arising in optics, fluid mechanics, and relativistic field theory, such as the Serre-Green-Naghdi, and shallow water waves under small dispersive perturbations.

\subsection*{Idea of proof}

The proof, as the statement of the theorem establishes, proceeds via a three-step description of the dynamics, composed of a first part where a generalized solitary wave for the uneven bottom resembling the one $\bd{Q}_\omega$ of the flat-bottom $abcd$ system \eqref{boussinesq} is constructed, a second part where the constructed generalized solitary wave experiments a strong adiabatic interaction that dominates the dynamics, leading to a nontrivial change in shifts and scaling; and finally, a third regime where the influence of the nontrivial bottom decreases and one can find a stability property that establishes that the perturbed solitary wave survives for all time. The slowly varying bottom, given by $h$ and satisfying  \eqref{hypoH}, induces small perturbations parameterized by $\varepsilon > 0$. This allows the use of modulated approximations and asymptotic expansions. This three-step construction follows similar approaches established first by Martel and Merle while studying the collisions of generalized KdV solitons \cite{Martel,MMcol,MMcol2}, expanded later to the case of solitary waves passing through adiabatic linear and nonlinear potentials \cite{Mu1,Mu2,Mu3,Mu4}.

More precisely, in the first regime, the main drawback is the lack of conserved quantities (Lemma \ref{Lem2p3}), a natural consequence of the variable bottom, which poses several problems along the full proof. Since the evolution is infinite in time, we need a reasonable way to measure the lack of conservation. This is obtained by decomposing the solution into two parts: the soliton one, and the error part, and performing bootstrap estimates (Subsection \ref{sub3p3}). The second regime is the interaction. Here there are several differences with respect to previous works on collision. Indeed, the most important part is the existence of a fixed error term appearing in the collision dynamics
\[
\partial_z^{-1} \partial_s h_0 (\varepsilon t, \varepsilon (\rho +z) )  \begin{pmatrix}
0 \\
 1
 \end{pmatrix}
\]
that interacts at a long distance with the solitary wave (see \eqref{Syst_2}). Therefore, even if we are able to construct an approximate solution with a high degree of accuracy, such a property is lost due to the significant influence of the fixed-in-space error term that modifies the error and makes the problem have a similar error as in previous works \cite{Mu1}. Therefore, the classical first-order approximate solution is not good enough in our case.

Probably the most difficult part in the proof of Theorem~\ref{MT} is to find the correct energy functional that justifies the approximate dynamics for a large amount of time. This is not simple, and the fact that the equations change with time makes this problem particularly challenging, especially from the perspective of modulation analysis. Since modulations modify the solution, and the equation depends on time, we need precise corrections on the Lyapunov functional that take into account this uneven behavior. The new functional presented in
\eqref{def_F2}
\[
\begin{aligned}
\mathfrak F_2(\tau)= &~{} \frac12\int \left( -a (\partial_x u_2)^2 -c (\partial_x \eta_2)^2  + u_2^2+ \eta_2^2 \right) (\tau,x)dx \\
&~{} + \frac12\int \left( 2U_2 \eta_2 u_2 +  U_1 u_2^2 \right)(\tau,x) dx + \frac12\int u_2^2  \left( \eta_2 + h\right)(\tau,x)dx\\
&~{} - \omega  \int \left( \partial_x \eta_2 \partial_x u_2 + \eta_2 u_2 \right)(\tau,x)dx \\
&~{} -m_0(\tau) \int Q_\omega u_2(\tau,x)dx. 
\end{aligned}
\]
with $m_0$ given, is of proper interest, and leads to the key estimate \eqref{boussinesq_final_25}. Notice that this functional contains the additional term $-m_0(\tau) \int Q_\omega u_2(\tau,x)dx$ only depending on the  variable $u_2$, and not $\eta_2$.

The final step in the proof is a stability step, and it is carried out using standard arguments. The only problem is to ensure that almost conservation laws are sufficiently small in variation, and this is ensured by the hypotheses presented in the main result. More effective bottom variations may lead to destroying the solitary wave, and will be treated elsewhere.

\subsection*{Organization of this paper}
This paper is organized as follows. Section~\ref{Sec:0} presents preliminary results on energy estimates that will be used throughout the paper. Section~\ref{2} is devoted to the construction of the solitary wave solution, namely equation~\eqref{Construction}. Section~\ref{Sec:3} addresses the construction of an approximate solution in the interaction region. In Section~\ref{Sec:4}, we rigorously justify the interaction regime. Section~\ref{Sec:5} contains the end of proof of Theorem~\ref{MT}, including estimates~\eqref{Interaction} and~\eqref{Exit}.


\section{Preliminaries}\label{Sec:0}

\subsection{Properties of the operator $(1-\px^2)^{-1}$} The following results are well-known in the literature, see El Dika \cite{ElDika2005-2} for further details and proofs.

\begin{definition}[Canonical variable]\label{Can_Var}
Let $u=u(x) \in L^2$ be a fixed function. We say that $f$ is canonical variable for $u$ if $f$ uniquely solves the equation
\be\label{Canonical}
f- \partial_x^2 f  = u, \quad f\in H^2(\R).
\ee
In this case, we simply denote $f=  (1-\partial_x^2)^{-1} u.$
\end{definition}

Canonical variables are standard in equations where the operator $(1-\partial_x^2)^{-1}$ appears; one of the well-known example is given by the Benjamin-Bona-Mahony BBM equation, see e.g. \cite{ElDika2005-2}.
%


\begin{lemma}[Equivalence of local $L^2$ and $H^1$ norms, \cite{KMPP}]\label{lem:L2 comparable}
Let $\phi$ be a smooth, bounded positive weight satisfying $|\phi''| \le \lambda \phi$ for some small but fixed $0 < \lambda \ll1$.  Let $f$ be a canonical variable for $u$, as introduced in Definition \ref{Can_Var} and \eqref{Canonical}.  The following are satisfied:

\begin{itemize}
\item If $u\in L^2$, then for any $d_1,d_2,d_3 > 0$, there  exist $c_0, C_0 >0$, depending on $d_j$ ($j=1,2,3$) and $\lambda >0$, such that
\begin{equation}\label{eq:L2_est}
c_0  \int \phi \, u^2 \le \int \phi\left(d_1f^2+d_2(\partial_x f)^2+d_3(\partial_x^2 f)^2\right) \le C_0 \int \phi \, u^2.
\end{equation}
\item If $u\in H^1$, then for any $d_1,d_2,d_3 > 0$, there  exist $c_0, C_0 >0$ depending on $d_j$, $j=1,2,3$, and $\lambda >0$ such that
\begin{equation}\label{eq:H1_est}
c_0\int \phi (\partial_x u)^2 \le  \int \phi\left(d_1(\partial_x f)^2+d_2(\partial_x^2 f)^2+d_3(\partial_x^3 f)^2\right) \le C_0 \int \phi (\partial_x u)^2.
\end{equation}
\end{itemize}
\end{lemma}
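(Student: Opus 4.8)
The whole statement reduces to a single integration-by-parts identity. Write $u=f-\partial_x^2 f$ and expand, for $u\in L^2$ (so $f\in H^2$ by definition of the canonical variable):
\begin{equation*}
\int \phi\, u^2 = \int \phi f^2 - 2\int \phi\, f\, \partial_x^2 f + \int \phi (\partial_x^2 f)^2.
\end{equation*}
Since $\phi$ is bounded with $|\phi''|\le\lambda\phi\le\lambda\|\phi\|_{L^\infty}$, the derivative $\phi'$ is bounded as well, and $f\in H^2$ decays at infinity; hence all boundary terms in the following manipulations vanish. Integrating by parts twice in the middle term,
\begin{equation*}
-2\int \phi\, f\, \partial_x^2 f = 2\int \phi (\partial_x f)^2 + 2\int \phi'\, f\, \partial_x f = 2\int \phi (\partial_x f)^2 - \int \phi'' f^2,
\end{equation*}
which yields the exact identity
\begin{equation*}
\int \phi\, u^2 = \int \phi f^2 + 2\int \phi (\partial_x f)^2 + \int \phi (\partial_x^2 f)^2 - \int \phi'' f^2.
\end{equation*}

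\textbf{First bullet.} Using $|\phi''|\le\lambda\phi$ with $0<\lambda<1$, the term $\int\phi f^2-\int\phi'' f^2$ lies between $(1-\lambda)\int\phi f^2$ and $(1+\lambda)\int\phi f^2$; therefore
\begin{equation*}
(1-\lambda)\!\int\! \phi\big(f^2+(\partial_x f)^2+(\partial_x^2 f)^2\big) \le \int \phi\, u^2 \le (1+\lambda)\!\int\! \phi\big(f^2+2(\partial_x f)^2+(\partial_x^2 f)^2\big),
\end{equation*}
so $\int\phi u^2$ is comparable to $\int\phi\big(f^2+(\partial_x f)^2+(\partial_x^2 f)^2\big)$ with constants depending only on $\lambda$. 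Finally, for arbitrary $d_1,d_2,d_3>0$ one trivially has $\min_j d_j\le \frac{d_1f^2+d_2(\partial_x f)^2+d_3(\partial_x^2 f)^2}{f^2+(\partial_x f)^2+(\partial_x^2 f)^2}\le\max_j d_j$ pointwise, which upon multiplying by $\phi$ and integrating gives \eqref{eq:L2_est} with $c_0=\frac{\min_j d_j}{1+2\lambda}$-type and $C_0=\frac{\max_j d_j}{1-\lambda}$-type constants (the precise values are immaterial).

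\textbf{Second bullet.} If $u\in H^1$, elliptic regularity for $(1-\partial_x^2)^{-1}$ lifts $f=(1-\partial_x^2)^{-1}u$ to $H^3$, hence $g:=\partial_x f\in H^2$ satisfies $g-\partial_x^2 g=\partial_x u\in L^2$; that is, $\partial_x f$ is precisely the canonical variable of $\partial_x u$. Applying the first bullet with $u$ replaced by $\partial_x u$ and $f$ replaced by $\partial_x f$ gives exactly \eqref{eq:H1_est}.

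\textbf{Main obstacle.} There is essentially no serious difficulty: the only points requiring care are the justification of the integrations by parts (immediate from $\phi,\phi',\phi''\in L^\infty$ and $f\in H^2$), the use of the smallness of $\lambda$ to keep $1-\lambda>0$, and — for the $H^1$ statement — the observation that $u\in H^1$ forces $f\in H^3$ so that $\partial_x f$ is an admissible canonical variable. All constants depend only on $\lambda$ and the $d_j$, as claimed.
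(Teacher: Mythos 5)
Your argument is correct, and it is the natural one: expand $u=f-\partial_x^2 f$, integrate by parts twice, and use $|\phi''|\le\lambda\phi$ to absorb the error term, then observe that $\partial_x f$ is the canonical variable of $\partial_x u$ to bootstrap the $H^1$ version. The paper itself does not prove this lemma but cites \cite{KMPP}, and the argument there is essentially the one you give; your remarks about boundedness of $\phi'$ (via Landau's inequality from $\phi,\phi''\in L^\infty$) and the vanishing of boundary terms are the only points needing care, and you have handled them correctly.
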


Lemma \ref{lem:L2 comparable} states that canonical variables can be translated into standard variables even in the presence of weights. To estimate nonlinear terms, we shall need the following set of estimates.

\begin{lemma}[\cite{ElDika2005-2,KMPP}]\label{Dika1}
The operator $ (1-\px^2)^{-1}$ satisfies the following properties:
\begin{itemize}
\item Comparison principle: for any $u,v\in H^1$,
\begin{equation}\label{eq:inverse op1}
v \le w \quad  \Longrightarrow \quad (1-\px^2)^{-1} v \le (1-\px^2)^{-1} w.
\end{equation}
\item Suppose that $\phi =\phi(x)$ is such that
\[
(1-\px^2)^{-1}\phi(x) \lesssim \phi(x), \quad x\in \R,
\]
for $\phi(x) > 0$ satisfying $|\phi^{(n)}(x)| \lesssim \phi(x)$, $n \ge 0$. Then, for $v,w,h \in H^1$, we have
\begin{equation}\label{eq:nonlinear1-1}
\int \phi^{(n)} v (1-\px^2)^{-1}(wh)_x ~\lesssim ~ \norm{v}_{H^1} \int \phi (w^2 + w_x^2 +h^2 + h_x^2),
\end{equation}
and
\begin{equation}\label{eq:nonlinear1-2}
\int\phi^{(n)} v (1-\px^2)^{-1}(wh) ~\lesssim ~\norm{v}_{H^1} \int \phi(w^2 +h^2).
\end{equation}
\item Under the previous conditions, we have
\begin{equation}\label{eq:nonlinear1-3}
\int (\phi v_x)_x (1-\px^2)^{-1}(wh) \lesssim \norm{v}_{H^1} \int \phi (w^2 + w_x^2 +h^2 + h_x^2).
\end{equation}
and
\begin{equation}\label{eq:nonlinear1-4}
\int \phi v_x (1-\px^2)^{-1} (wh)_x \lesssim \norm{v}_{H^1} \int \phi (w^2 + w_x^2 +h^2 + h_x^2).
\end{equation}
\end{itemize}
\end{lemma}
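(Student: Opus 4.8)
The plan is to reduce every assertion to two elementary facts about the convolution kernel $K(x)=\tfrac12 e^{-\abs{x}}$ of $(1-\px^2)^{-1}$, namely that $K>0$ and that $\abs{K'}=K$. The comparison principle is then immediate: if $v\le w$ then $(1-\px^2)^{-1}(w-v)=K*(w-v)\ge 0$; equivalently, $f:=(1-\px^2)^{-1}(w-v)\in H^2$ solves $f-f''=w-v\ge 0$ and, being continuous and vanishing at infinity, cannot have a negative interior minimum, so $f\ge0$ by the maximum principle. Before turning to the bilinear estimates I would isolate one auxiliary pointwise bound: if $\psi$ is a weight with $\abs{\psi}\lesssim\phi$ and $\phi$ satisfies $(1-\px^2)^{-1}\phi\lesssim\phi$, then for every $v\in H^1$,
\[
\bigl|(1-\px^2)^{-1}(\psi v)(x)\bigr|\le \int K(x-y)\abs{\psi(y)}\,\abs{v(y)}\,dy\le \norm{v}_{L^\infty}\,(1-\px^2)^{-1}\phi(x)\lesssim \norm{v}_{H^1}\,\phi(x),
\]
using positivity of $K$ and the one-dimensional Sobolev embedding $\norm{v}_{L^\infty}\lesssim\norm{v}_{H^1}$.

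Combining this auxiliary bound with the kernel inequality $\bigl|(1-\px^2)^{-1}g_x\bigr|=\abs{K'*g}\le K*\abs{g}=(1-\px^2)^{-1}\abs{g}$, the self-adjointness of $(1-\px^2)^{-1}$, and $\abs{wh}\le\tfrac12(w^2+h^2)$, estimates \eqref{eq:nonlinear1-1} and \eqref{eq:nonlinear1-2} follow at once. For instance, for \eqref{eq:nonlinear1-2},
\[
\Bigl|\int\phi^{(n)} v\,(1-\px^2)^{-1}(wh)\Bigr|\le \int (1-\px^2)^{-1}\bigl(\abs{\phi^{(n)}}\,\abs{v}\bigr)\,\abs{wh}\lesssim \norm{v}_{H^1}\int\phi\,(w^2+h^2),
\]
where I used $\abs{\phi^{(n)}}\lesssim\phi$ together with the auxiliary bound applied to $\psi=\phi^{(n)}$; estimate \eqref{eq:nonlinear1-1} is identical after bounding $(1-\px^2)^{-1}(wh)_x$ by $(1-\px^2)^{-1}\abs{wh}$.

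The two remaining estimates carry a derivative on $v$, which is the only delicate point, because $\norm{v_x}_{H^1}$ is not controlled by $\norm{v}_{H^1}$; the remedy is never to let two derivatives reach $v$ and to commute $\px$ through $(1-\px^2)^{-1}$ while keeping the weight attached. For \eqref{eq:nonlinear1-4} I would move $(1-\px^2)^{-1}$ onto the factor $\phi v_x$, expand $(wh)_x=w_xh+wh_x$, and write $\phi v_x=(\phi v)_x-\phi_x v$; using $(1-\px^2)^{-1}(\phi v)_x=\px(1-\px^2)^{-1}(\phi v)$, the kernel inequality, and the auxiliary bound (with $\psi=\phi$ and $\psi=\phi_x$, the latter legitimate since $\abs{\phi_x}\lesssim\phi$) one gets $\bigl|(1-\px^2)^{-1}(\phi v_x)\bigr|\lesssim\norm{v}_{H^1}\phi$; pairing this against $w_xh+wh_x$ and using $\abs{w_xh}\le\tfrac12(w_x^2+h^2)$, $\abs{wh_x}\le\tfrac12(w^2+h_x^2)$ yields \eqref{eq:nonlinear1-4}. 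Finally, \eqref{eq:nonlinear1-3} reduces to \eqref{eq:nonlinear1-4} by a single integration by parts,
\[
\int (\phi v_x)_x\,(1-\px^2)^{-1}(wh)=-\int \phi v_x\,\px(1-\px^2)^{-1}(wh)=-\int \phi v_x\,(1-\px^2)^{-1}(wh)_x.
\]
I expect the main obstacle to be precisely this derivative bookkeeping in \eqref{eq:nonlinear1-3}--\eqref{eq:nonlinear1-4}, namely organizing the commutations of $\px$ with $(1-\px^2)^{-1}$ so that at most one derivative ever lands on $v$, and checking that the weight hypotheses $\abs{\phi^{(n)}}\lesssim\phi$ and $(1-\px^2)^{-1}\phi\lesssim\phi$ are exactly what keeps every intermediate weight comparable to $\phi$; everything else is routine.
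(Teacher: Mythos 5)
Your proof is correct and uses the natural elementary approach based on the positive convolution kernel $K(x)=\tfrac12 e^{-|x|}$ and the identity $|K'|=K$. The paper itself does not include a proof of this lemma---it cites \cite{ElDika2005-2,KMPP}---but the kernel-positivity argument you give (comparison principle via $K>0$, the auxiliary pointwise bound $|(1-\px^2)^{-1}(\psi v)|\lesssim\|v\|_{H^1}\phi$, commuting $\px$ through the resolvent so that only one derivative ever lands on $v$, and reducing \eqref{eq:nonlinear1-3} to \eqref{eq:nonlinear1-4} by a single integration by parts) is the standard argument in those references and is exactly what the statement requires.
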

Later, Lemma \ref{Dika1} will be useful to prove an energy estimate allowing one to prove \eqref{PreInteraction}.

\subsection{Exponential decay of solitons and the  linearized system}
As mentioned in the introduction, condition \eqref{velo} guarantees that the solutions of the system \eqref{exact0} have exponential decay, as well as their derivatives. Although this fact may be known to specialists, we provide here a self-contained proof, following the strategy used in Theorem~8.1.1 in  \cite{Caz} and in \cite{Olivera}, which will also be useful for establishing the decay of solutions of the linearized operator.

\begin{lemma}
\label{lem:decay:solitons}
If  $\bd Q_\omega=(R_\omega,Q_\omega)^T \in H^1\times H^1$ is a solution of the system \eqref{exact0},
then $Q_\omega$ and $R_\omega$ belong to  $H^\infty=\cap_k H^k$ and have exponential decay, as well as their derivatives of any order. In particular,
there exist $C_0,\mu_0(\omega)>0$ such that for all $x\in \mathbb R$,
\begin{equation}\label{decay_Q}
|\partial_x^{k}\bd{Q}_\omega(x)| \leq  C_0 e^{-\mu_0 |x|}, \quad k=0,1,2,3.
\end{equation}
\end{lemma}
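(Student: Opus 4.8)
The statement asserts smoothness and exponential decay of solitary-wave profiles solving \eqref{exact0}. The strategy, following Theorem~8.1.1 in \cite{Caz}, splits naturally into a regularity step and a decay step. For regularity, I would rewrite \eqref{exact0} as a fixed-point-type system by inverting the Helmholtz operator: since $b=d=1$, the first equation gives $\omega(1-\partial_x^2)R_\omega = aQ_\omega'' + Q_\omega + R_\omega Q_\omega$, so that $R_\omega = \tfrac1\omega(1-\partial_x^2)^{-1}(aQ_\omega''+Q_\omega+R_\omega Q_\omega)$, and similarly $Q_\omega = \tfrac1\omega(1-\partial_x^2)^{-1}(cR_\omega''+R_\omega+\tfrac12 Q_\omega^2)$. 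A bootstrap argument then applies: starting from $(R_\omega,Q_\omega)\in H^1\times H^1\hookrightarrow L^\infty$, the products $R_\omega Q_\omega$ and $Q_\omega^2$ lie in $H^1$, and since $(1-\partial_x^2)^{-1}$ gains two derivatives while $\partial_x^2$ applied to an $H^k$ function lands in $H^{k-2}$, one checks that $(R_\omega,Q_\omega)\in H^k\times H^k \Rightarrow (R_\omega,Q_\omega)\in H^{k+1}\times H^{k+1}$; hence $(R_\omega,Q_\omega)\in H^\infty\times H^\infty$, and in particular all derivatives tend to $0$ at infinity by Sobolev embedding.

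\textbf{Decay step.} For the exponential decay, the key point is that \eqref{velo}, i.e. $\omega<\omega^*=\min\{1,\sqrt{ac}\}$, makes the linear symbol nondegenerate. I would decouple the system at the level of the linear part: applying suitable operators to \eqref{exact0} one obtains, schematically, a fourth-order ODE of the form $P(\partial_x^2)\binom{R_\omega}{Q_\omega} = N(R_\omega,Q_\omega)$ where $P$ is a polynomial in $\partial_x^2$ whose characteristic roots are real and bounded away from the imaginary axis precisely when $\omega<\sqrt{ac}$ and $\omega<1$, and $N$ collects the quadratic nonlinearities, which are already known to decay (they are bounded by $\|Q_\omega\|_{L^\infty(|y|\ge|x|/2)}$ type quantities after the regularity step, or more simply one runs the argument iteratively). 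Concretely, I prefer the softer route used in \cite{Caz,Olivera}: fix $0<\mu<\mu^*$ where $\mu^*$ is the distance from the spectrum of the linear operator to $0$ measured in the exponential scale, multiply the equations by $e^{2\mu|x|}R_\omega$, $e^{2\mu|x|}Q_\omega$ (or their regularized versions $e^{2\mu|x|}\min(1,\delta e^{\mu|x|})^{-1}$ to stay in $L^2$), integrate, and use the coercivity of $\mathcal L_0$ — established in the excerpt via the identity \eqref{ineq:abcd} — together with the fact that the nonlinear terms $R_\omega Q_\omega$, $Q_\omega^2$ carry an extra factor of $\|(R_\omega,Q_\omega)\|_{L^\infty(|x|\ge r)}\to0$, which can be absorbed for $|x|$ large. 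This yields $e^{\mu|x|}(R_\omega,Q_\omega)\in L^2$, and then an $L^\infty$ bound $|(R_\omega,Q_\omega)(x)|\le C e^{-\mu|x|}$ follows from the integral representation via $(1-\partial_x^2)^{-1}$, whose kernel $\tfrac12 e^{-|x-y|}$ convolved against an exponentially decaying right-hand side produces an exponentially decaying output (here one needs $\mu<1$, guaranteed by \eqref{velo}).

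\textbf{Derivatives.} Once $|(R_\omega,Q_\omega)(x)|\le C_0 e^{-\mu|x|}$ is known, decay of the derivatives up to order $3$ (and in fact all orders) is obtained by differentiating the fixed-point identities: $\partial_x^k R_\omega = \tfrac1\omega(1-\partial_x^2)^{-1}\partial_x^k(aQ_\omega''+Q_\omega+R_\omega Q_\omega)$ and similarly for $Q_\omega$, and again using that convolution with $\tfrac12 e^{-|\cdot|}$ preserves exponential decay at rate $\mu<1$, while the Leibniz expansion of $\partial_x^k(R_\omega Q_\omega)$ only produces terms that are products of exponentially decaying factors. After finitely many iterations one reaches \eqref{decay_Q} with a possibly smaller $\mu_0$ and larger $C_0$, uniformly for $k=0,1,2,3$.

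\textbf{Main obstacle.} The delicate point is the decay step: one must justify the multiplier computation rigorously despite not knowing a priori that $e^{\mu|x|}R_\omega$ is integrable, which requires the truncation-and-limit device (cutting $e^{\mu|x|}$ at level $n$, obtaining bounds independent of $n$, then letting $n\to\infty$ by monotone convergence), and one must verify that the admissible exponential rate is indeed controlled by $\omega^*$ — i.e. that the linearized constant-coefficient operator $\mathcal L_0$ has no spectrum touching $0$ and that its resolvent set contains a strip, which is exactly where the hypothesis $\omega<\min\{1,\sqrt{ac}\}$ enters and where the identity \eqref{ineq:abcd} is used to extract a quantitative coercivity constant. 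The nonlinear absorption is then routine because the nonlinearity is quadratic and the profile is already bounded and vanishing at infinity.
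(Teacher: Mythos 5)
Your decay and derivatives steps match the paper closely: the paper also multiplies the equations by a regularized exponential weight $\phi_{\varepsilon,\delta}(x)=\exp\bigl(\tfrac{\varepsilon|x|}{1+\delta|x|}\bigr)$ times $Q_\omega$, $R_\omega$, integrates by parts, uses $|\phi'|\le\varepsilon\phi$ and the algebraic identity \eqref{ineq:abcd} to absorb the cross and derivative terms, and beats down the cubic term by exploiting smallness of $Q_\omega,R_\omega$ outside a compact set; then Fatou ($\delta\to0$) gives $e^{\varepsilon|x|}(R_\omega^2+Q_\omega^2)\in L^1$, and pointwise decay follows from Hölder continuity. Your truncation-and-limit device plays exactly the role of the $\delta$-regularization, and your ``distance-to-spectrum'' heuristic is what \eqref{ineq:abcd} makes quantitative. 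The derivatives are then treated by differentiating, as you propose (the paper defers details to Lemma~\ref{lem:exp:decay}).

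The one real gap is in your regularity bootstrap. As written, $R_\omega=\tfrac1\omega(1-\partial_x^2)^{-1}(aQ_\omega''+Q_\omega+R_\omega Q_\omega)$ does \emph{not} gain a derivative: $(1-\partial_x^2)^{-1}Q_\omega''$ is exactly as rough as $Q_\omega$, so ``$H^k\Rightarrow H^{k+1}$'' does not follow from ``$(1-\partial_x^2)^{-1}$ gains two while $\partial_x^2$ loses two.'' You must first extract the principal part: $(1-\partial_x^2)^{-1}Q_\omega''=-Q_\omega+(1-\partial_x^2)^{-1}Q_\omega$, which turns the two identities into $R_\omega+\tfrac{a}{\omega}Q_\omega\in H^{k+2}$ and $Q_\omega+\tfrac{c}{\omega}R_\omega\in H^{k+2}$, and then invert the matrix $\bigl(\begin{smallmatrix}1&a/\omega\\ c/\omega&1\end{smallmatrix}\bigr)$, whose determinant $1-ac/\omega^2$ is nonzero precisely because $\omega^2<ac$. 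This is where the subsonic condition \eqref{velo} enters the regularity step, and it is the same nondegeneracy the paper exploits in a cleaner form by diagonalizing the symmetric negative-definite leading matrix $\bigl(\begin{smallmatrix}a&\omega\\ \omega&c\end{smallmatrix}\bigr)$ to obtain two decoupled scalar elliptic equations $(1-\lambda_i\partial_x^2)\tilde Q_i=F_i$. Once you supply that inversion, your bootstrap closes (in fact with a gain of two per step) and the two routes are equivalent.
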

\begin{proof}
For simplicity, we denote $Q=Q_\omega$ and $R=R_\omega$.
To establish the regularity, it is enough to notice that the matrix $A=\begin{pmatrix} a & w\\w &
c\end{pmatrix}$ is negative-definite, since $\omega $ satisfies  \eqref{velo}. Therefore, we can diagonalize the
$A$ as $P^T D P$, with $D=\textup{diag}(-\lambda_1,-\lambda_2)$, with $\lambda_1,\lambda_2>0$. Hence, setting the new variable $(\tilde Q, \tilde R)^T=P(\tilde Q, \tilde R)$, system \eqref{exact0} can be recast in the simpler  form
\[
(1-\lambda_1\partial_{x}^2)\tilde Q=F_1,\\
 (1-\lambda_2\partial_{x}^2)\tilde R=F_2,
\]
where $F_1$ and $F_2$ are linear combinations of $Q,R$, $RQ$ and $Q^2$, and therefore $F_1$ and $F_2$ belong to $H^1$.
Hence, from the elliptic regularity of scalar equations, we deduce that $\tilde Q$ and $\tilde R$ belong to $H^3$. A bootstrap argument yields that  $\tilde Q,\tilde R\in H^\infty$, which proves the regularity of solitons.

To prove the exponential decay, let us set the positive function
\begin{equation}
\label{def:phi}
\phi_{\ve,\delta}(x)=\exp\left( \frac{\ve \abs{x}}{1+\delta \abs{x}}\right), \quad \text{ for all } x\in \R,
\end{equation}
where $\ve,\delta >0$ are small constants.
     In this manner, $\phi_{\ve,\delta}$ belongs to $L^\infty(\R)\cap H^1_{\textup{loc}}(\R),$ and $\abs{\phi'_{\ve,\delta}(x)}\leq \ve \phi_{\ve,\delta}(x)$, for all $x\in \R\setminus \{0\}$. For the sake of simplicity, we now drop the subscripts $\ve$ and $\delta$, and denote the function by $\phi.$
Multiplying the first equation in \eqref{exact0} by $\phi Q$,  the second one by  $\phi R$, and  integrating by parts, we obtain:
\begin{align*}
-\omega \int Q R \phi -\omega \int R'(Q'\phi+Q\phi')-a\int Q'(Q'\phi+Q\phi')+\int Q^2\phi+\int  R Q^2\phi=0,\\
-c\int R'^2 \phi-c\int R'R \phi'+\int R^2\phi-\omega \int Q R \phi-\omega \int Q'(R' \phi +R \phi')+\frac12 \int  R Q^2\phi=0.
\end{align*}
Adding these equations and  recalling that  $\abs{\phi'}\leq \ve \phi$, we get
\begin{equation}
\begin{aligned}
&\int (R^2\phi + Q^2\phi-2\omega  Q R \phi  ) \leq
\int (cR'^2 \phi+
a Q'^2 )\phi
\\
  &
+2\omega \int Q'R' \phi
+\ve \int (\abs{c}\abs{R'}\abs{R}
+ \abs{a} \abs{Q'}\abs{Q} )\phi
+\ve \omega \int (\abs{Q'}\abs{R} +\abs{Q}\abs{R'})\phi\\
\label{eq:dem2}
&+\frac32
\int Q^2\abs{R} \phi.
\end{aligned}
\end{equation}
The last term is easy to estimate.  Indeed, since $Q\in H^1$,   for every $\epsilon>0$, there is $r_\epsilon>0$ such that $\abs{Q(x)}\leq \epsilon$, for all
$\abs{x}\geq r_\epsilon$, so that
\begin{equation*}
    \int_\R Q^2\abs{R} \phi\leq \epsilon \int_{\abs{x}\geq r_\epsilon}\abs{R}\abs{Q}\phi +\int_{\abs{x}\leq r_\epsilon} Q^2\abs{R}\phi
\leq \frac{\epsilon}{2} \int_{\R}({R}^2+{Q}^2)\phi +\int_{\abs{x}\leq r_\epsilon} Q^2\abs{R}\phi,
\end{equation*}
where we used the Cauchy inequality $2x_1x_2\leq x_1^2+x_2^2$, for $x_1,x_2\in\R$.
Invoking again  the Cauchy inequality for the other terms in \eqref{eq:dem2}, we can gather all the terms depending on $Q$ and $R$ on the left-hand side, and leave  the terms depending on $Q'$, $R'$ on the right-hand side, as follows:
\begin{align}
\label{eq:dem3}
\int (C_1R^2 + C_2 Q^2)\phi \leq
-\int (C_3R'^2 + C_4 Q^2 -2\omega R'Q')\phi+\int_{\abs{x}\leq r_\epsilon} Q^2\abs{R}.
\end{align}
where $C_1=1-\omega-\abs{c}\ve/2-\ve \omega/2-3\epsilon/4$,
$C_2=1-\omega-\abs{a}\ve/2-\ve \omega/2-3\epsilon/4$,
$C_3=\abs{c}-\abs{c}\ve/2-\ve \omega/2$,
$C_4=\abs{a}-\abs{a}\ve/2-\ve \omega/2$.
Since $\omega \in [0,1)$, we  can take $\ve$ and $\epsilon$ small so that  $C_1,C_2,C_3$ and $C_4$ are strictly positive. In addition, since $\omega<\sqrt{\abs{a}\abs{c}}$, decreasing the value of $\ve$, we can also assume that $w<\sqrt {C_3C_4}$.
Thus, using the identity \eqref{ineq:abcd}, with $\alpha=C_3$, $\beta=C_4$ and $\kappa =\omega$,
we conclude that the first integral on the right-hand side of \eqref{eq:dem3} is nonnegative, so that
\begin{align*}
\int (C_1R^2 + C_2 Q^2)\exp\Big(\frac{\ve \abs{x}}{1+\delta \abs{x}}\Big) \leq
\frac32\int_{\abs{x}\leq r_\epsilon} Q^2\abs{R}\exp(\ve \abs{r_\epsilon}) ,
\end{align*}
for all $\delta>0$. Therefore, by the Fatou lemma, we can pass to the limit as $\delta\to0$ to deduce that
$(C_1 R^2 + C_2 Q^2)e^{\ve \abs{x}} \in L^1(\R).$
Since $Q$ and $R$ belong to $C^{0,1/2}(\R)$, by the Sobolev embedding theorem, this implies that $\eta$ and $u$ decay exponentially, as desired.
The exponential decay of derivatives can be deduced by differentiating the equation and using a similar reasoning. We give the details in the next lemma, in a more general setting.
\end{proof}
 In the proof of the main theorem, we will need the exponential decay of solutions of the linearized problem, as established in the next result.
\begin{lemma}
\label{lem:exp:decay}
Let $F, G \in L^\infty(\R)$ be functions satisfying that $\abs{F(x)}+\abs{G(x)}\leq Ce^{-\alpha \abs{x}}$, for all $x\in \R$, for some constants $C,\alpha>0$.
Assume that $(\eta,u) \in H^1\times H^1$ is a solution 	of the linear system
\begin{equation}
\label{system:linear0}
\mathcal L ( \eta , u )^T = ( F, G )^T,
\end{equation}
with $\omega$ satisfying \eqref{velo}.
Then, $(\eta,u) \in
H^2\times H^2$ and there are some constants $\beta \in (0,\alpha]$ and $\tilde C>0$ such that $\abs{\eta(x)}+\abs{u(x)}\leq \tilde Ce^{-\beta \abs{x}}$, for all $x\in \R$.
\end{lemma}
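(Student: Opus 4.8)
The plan is to run the weighted‑energy argument already used for Lemma~\ref{lem:decay:solitons}, now with two additional features: the two components are genuinely coupled through the off‑diagonal second‑order term $-\omega(1-\partial_x^2)$ of $\mathcal L$, and there is an exponentially decaying source $(F,G)$ on the right‑hand side. Nothing deep is needed beyond a careful bookkeeping of constants.

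First I would record the $H^2$ regularity. Expanding \eqref{system:linear0} as a second‑order system, one has $M(\eta'',u'')^T=(F,G)^T-(\eta,u)^T+\omega(u,\eta)^T-(Q_\omega u,\,Q_\omega\eta+R_\omega u)^T$, where $M=\left(\begin{smallmatrix} c & \omega\\ \omega & a\end{smallmatrix}\right)$. The matrix $M$ is invertible since $\det M=ac-\omega^2>0$ by \eqref{velo}, and the right‑hand side lies in $L^2\times L^2$ because $F,G$ decay exponentially (hence are in $L^2$), $\eta,u\in L^2$, and $Q_\omega,R_\omega\in L^\infty$. Thus $(\eta'',u'')\in L^2\times L^2$, so $(\eta,u)\in H^2\times H^2$; in particular $\eta,u\in C^1(\R)$ and both they and their derivatives vanish at $\pm\infty$.

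For the decay, fix small $\ve,\delta>0$ and let $\phi=\phi_{\ve,\delta}$ be the weight from \eqref{def:phi}, which is bounded (for fixed $\delta$), Lipschitz, and satisfies $|\phi'|\le\ve\phi$. I would pair the first equation of \eqref{system:linear0} with $\phi\eta$ and the second with $\phi u$, add them, and integrate by parts (the boundary terms vanish since $\phi$ is bounded and $\eta,u,\eta',u'\to0$ at $\pm\infty$). The resulting identity has left‑hand side $\int(\eta^2+u^2-2\omega\eta u)\phi+\int(|c|\eta'^2+|a|u'^2-2\omega\eta'u')\phi$, which, by the elementary identity \eqref{ineq:abcd} (with $\alpha=\beta=1$, $\kappa=\omega$ for the first integral, and $\alpha=|c|$, $\beta=|a|$, $\kappa=\omega$ for the second), is bounded below by $(1-\omega)\int(\eta^2+u^2)\phi+(1-\omega/\sqrt{ac})\int(|c|\eta'^2+|a|u'^2)\phi$, both coefficients being strictly positive thanks to \eqref{velo}. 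The right‑hand side splits into three pieces, each handled as in Lemma~\ref{lem:decay:solitons}: (a) the terms carrying $\phi'$, bounded via $|\phi'|\le\ve\phi$ and Cauchy's inequality by $\ve C_\omega\int(\eta^2+u^2+\eta'^2+u'^2)\phi$ and hence absorbed once $\ve$ is small; (b) the soliton terms $\int(2Q_\omega\eta u+R_\omega u^2)\phi$, split at $|x|=r$ where $|Q_\omega|,|R_\omega|\le\ve_1$ by Lemma~\ref{lem:decay:solitons}, the tail being absorbed and the part over $|x|\le r$ bounded by $e^{\ve r}(2\|Q_\omega\|_\infty+\|R_\omega\|_\infty)\|(\eta,u)\|_{L^2}^2$, a constant independent of $\delta$; and (c) the source $\int(F\eta+Gu)\phi$, bounded by Cauchy--Schwarz and Young's inequality by $\tfrac{1-\omega}{4}\int(\eta^2+u^2)\phi+C\int(F^2+G^2)e^{\ve|x|}$, where the last integral is finite and independent of $\delta$ provided $\ve<2\alpha$. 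Collecting terms and choosing $\ve,\ve_1$ small yields $\int(\eta^2+u^2)\,\phi_{\ve,\delta}\le C_\star$ with $C_\star$ independent of $\delta$.

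Finally, letting $\delta\downarrow0$ and using monotone convergence ($\phi_{\ve,\delta}\uparrow e^{\ve|x|}$) gives $\int e^{\ve|x|}(\eta^2+u^2)<\infty$; since $\eta,u$ are bounded and Hölder continuous (being in $H^1(\R)$), the same elementary pointwise argument that closes the proof of Lemma~\ref{lem:decay:solitons} upgrades this to $|\eta(x)|+|u(x)|\le\tilde C e^{-\beta|x|}$ with $\beta=\ve/4$, and shrinking $\ve$ so that $\ve\le4\alpha$ forces $\beta\in(0,\alpha]$. Since the computation is entirely parallel to that of Lemma~\ref{lem:decay:solitons}, the only genuinely new input is the treatment of $(F,G)$; the point requiring care throughout is that every constant entering the estimate be uniform in $\delta$ — in particular the source contribution, which is the reason for the restriction $\ve<2\alpha$ — so that the limit $\delta\to0$ may be taken. (Applied iteratively to the successively $\partial_x$‑differentiated versions of \eqref{exact0}, whose right‑hand sides are finite sums of products of exponentially decaying functions, this lemma also delivers the exponential decay of all derivatives of the solitons asserted in Lemma~\ref{lem:decay:solitons}.)
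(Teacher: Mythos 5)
Your proof is correct and follows essentially the same weighted-energy strategy as the paper: the same weight family $\phi_{\ve,\delta}$ from \eqref{def:phi}, the same coercivity via \eqref{ineq:abcd}, the same passage $\delta\to 0$ by Fatou/monotone convergence, and the same $H^1$-to-pointwise upgrade at the end. The only organizational variation is that the paper first transfers the potential terms $Q_\omega u$ and $Q_\omega\eta+R_\omega u$ to the right-hand side and bounds the resulting source by $\|(\eta,u)\|_{L^\infty}\, e^{-\tilde\alpha|x|}$ with $\tilde\alpha=\min\{\alpha,\mu_0\}$, whereas you keep them in the quadratic form and absorb them using the smallness of $Q_\omega,R_\omega$ for $|x|\ge r$ together with Young's inequality on $\int(F\eta+Gu)\phi$ — both are valid and lead to the same conclusion.
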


\begin{proof}
Notice that $\eta$ and $u$ are bounded by the Sobolev embedding theorem. Bearing in mind the exponential decay of $Q_\omega$ and $R_\omega$ in \eqref{decay_Q},
we can recast as \eqref{system:linear0} as
\begin{align}
\label{system:linear}
c\eta''+\eta-\omega u+\omega u''=\tilde F,\\
\label{system:linear2}
-\omega \eta+\omega \eta''+au''+u=\tilde G,
\end{align}
with $\tilde F=F-Q_\omega\eta$ and $\tilde G=G-Q_\omega\eta-R_\omega u$, which satisfy  $\abs{\tilde F(x)}+\abs{\tilde G(x)}\leq K_0 e^{-\tilde \alpha \abs{x}}$, for all $x\in \R$, with $ \tilde \alpha=\min\{\alpha,\mu_0\},$  and $K_0>0$.

The regularity statement follows as in Lemma~\ref{lem:decay:solitons}.
To prove the exponential decay, we use the function $\phi= \phi_{\ve,\delta}$ in \eqref{def:phi}. Indeed, multiplying \eqref{system:linear} by $\phi\eta$, and \eqref{system:linear2} by  $\phi u$, and  integrating by parts, and adding these equations as in the proof of Lemma~\ref{lem:decay:solitons}, we obtain:
\begin{equation}
\begin{aligned}
&\int (\eta^2\phi + u^2\phi-2\omega  u \eta \phi  ) \leq
\int (c\eta'^2 \phi+
a u'^2 )\phi
\\
  &
+2\omega \int u'\eta' \phi
+\ve \int (\abs{c}\abs{\eta'}\abs{\eta}
+ \abs{a} \abs{u'}\abs{u} )\phi
+\ve \omega \int (\abs{u'}\abs{\eta} +\abs{u}\abs{\eta'})\phi\\
\label{eq:dem22}
&+K_1
\int e^{-\tilde \alpha \abs{x}} \phi,
\end{aligned}
\end{equation}
where $K_1=K_0 (\norm{\eta}_{L^\infty  }+\norm{u}_{L^\infty}).$
The last term is easy to estimate.  Indeed,
assuming  that $\ve\leq \tilde \alpha/4$, we deduce that
$$e^{-\tilde \alpha \abs{x}} \phi =e^{-\tilde \alpha \abs{x}/2} \phi \leq e^{-\tilde \alpha \abs{x}/2} e^{(-\tilde \alpha \abs{x}/4-\delta\tilde \alpha\abs{x}^2)/(1+\delta\abs{x})}\leq e^{-\tilde \alpha \abs{x}/2},$$
so that the last integral term in \eqref{eq:dem22} can be bounded by $K_1/\tilde \alpha.$ Handling the other terms  as in the proof of Lemma~\ref{lem:decay:solitons}, we conclude that
\[
\int (C_1\eta^2 + C_2 u^2)\phi \leq
-\int (C_3\eta'^2 + C_4 u^2 -2\omega u'\eta')\phi +K_1/\tilde \alpha.
\]
where $C_1=1-\omega-\abs{c}\ve/2-\ve \omega/2$,
$C_2=1-\omega-\abs{a}\ve/2-\ve \omega/2$,
$C_3=\abs{c}-\abs{c}\ve/2-\ve \omega/2$,
$C_4=\abs{a}-\abs{a}\ve/2-\ve \omega/2$.
Taking $\ve>0$ small enough, we deduce that
\begin{align*}
\int (C_1\eta^2 + C_2 u^2)\exp\Big(\frac{\ve \abs{x}}{1+\delta \abs{x}}\Big) \leq \tilde K_1/\tilde \alpha,
\end{align*}
for all $\delta>0$. Therefore, by the Fatou lemma, we can pass to the limit as $\delta\to0$ to conclude that
$(C_1\eta^2 + C_2 u^2)e^{\ve \abs{x}} \in L^1(\R)$,
which implies that $\eta$ and $u$ decay exponentially, as desired.
\end{proof}

\subsection{Local well-posedness}

In this paragraph we discuss the well-posedness of the model \eqref{boussinesq}. It is well-known (see \cite{BCS1,BCS2}) that in the case of flat bottom, namely $h\equiv 0$, there is local well-posedness in the generic case $a,c<0$, $b,d>0$.
Global well-posedness is ensured at least in the case of small data thanks to the conservation of the Hamiltonian \eqref{Energy}. The case of global well-posedness for large data remains open, except in the vicinity of solitary waves.

Following the ideas in \cite{BCS2}, we will prove local well-posedness for \eqref{boussinesq}, under the assumptions \eqref{hypoH}.

\begin{lemma}
The system \eqref{boussinesq} is locally well-posed in $H^s\times H^s$, $s\geq 0$.
\end{lemma}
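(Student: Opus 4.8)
The system \eqref{boussinesq} has the schematic structure of a first-order-in-time evolution equation after inverting the elliptic operator $(1-\partial_x^2)$ on both equations. The plan is to rewrite \eqref{boussinesq} as an integral (Duhamel) equation in $H^s\times H^s$ and run a standard contraction-mapping argument. First I would apply $(1-\partial_x^2)^{-1}$ to each equation of \eqref{boussinesq}, using the fact — recalled in Definition~\ref{Can_Var} and Lemma~\ref{lem:L2 comparable} — that $(1-\partial_x^2)^{-1}$ is a bounded, smoothing operator: it maps $H^{s}$ to $H^{s+2}$, and in particular $(1-\partial_x^2)^{-1}\partial_x$ and $(1-\partial_x^2)^{-1}\partial_x^3$ are bounded on every $H^s$. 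This turns \eqref{boussinesq} into
\[
\partial_t \begin{pmatrix}\eta\\ u\end{pmatrix} = \mathcal A \begin{pmatrix}\eta\\ u\end{pmatrix} + \mathcal N(\eta,u) + \mathcal S_h(t),
\]
where $\mathcal A$ is the linear, constant-coefficient, skew-type operator $\mathcal A = -(1-\partial_x^2)^{-1}\partial_x\bigl(\begin{smallmatrix} 0 & a\partial_x^2+1\\ c\partial_x^2+1 & 0\end{smallmatrix}\bigr)$, the nonlinearity is $\mathcal N(\eta,u) = -(1-\partial_x^2)^{-1}\partial_x\bigl(u(\eta+h),\tfrac12 u^2\bigr)^T$, and $\mathcal S_h(t)$ collects the bottom forcing terms $(1-\partial_x^2)^{-1}(-1+a_1\partial_x^2)\partial_t h$ and $c_1(1-\partial_x^2)^{-1}\partial_t^2\partial_x h$, which under \eqref{hypoH} are smooth and bounded in time in every $H^s$.

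\textbf{The linear group.} The key structural point is that $\mathcal A$ is a bounded operator on $H^s\times H^s$ for each fixed $s\ge 0$, because every term carries $(1-\partial_x^2)^{-1}$ against at most three derivatives, hence a net gain of at least one derivative (in fact, one can check $\mathcal A$ is bounded from $H^s\times H^s$ to itself, not merely to $H^{s-1}$, since $(1-\partial_x^2)^{-1}\partial_x^3$ is an order-$0$ Fourier multiplier, bounded on $L^2$, and likewise $(1-\partial_x^2)^{-1}\partial_x$). Therefore $e^{t\mathcal A}$ is a uniformly bounded $C_0$-group on $H^s\times H^s$ by the bounded-operator exponential series, with $\|e^{t\mathcal A}\|_{\mathcal B(H^s\times H^s)}\le e^{C|t|}$; one can sharpen this to a uniform bound on compact time intervals, which is all that is needed. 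Then Duhamel gives the fixed-point map
\[
\Phi(\eta,u)(t) = e^{t\mathcal A}(\eta_0,u_0) + \int_0^t e^{(t-t')\mathcal A}\bigl(\mathcal N(\eta,u)(t') + \mathcal S_h(t')\bigr)\,dt',
\]
to be solved in the ball of radius $2\|(\eta_0,u_0)\|_{H^s\times H^s}+1$ of $C([-T,T],H^s\times H^s)$ for $T$ small.

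\textbf{Nonlinear estimate and closing the argument.} The only nontrivial input is the bound on $\mathcal N$. Writing $\mathcal N(\eta,u)-\mathcal N(\tilde\eta,\tilde u)$ and using that $(1-\partial_x^2)^{-1}\partial_x$ is bounded on $H^s$, it suffices to control products $u(\eta+h)$ and $u^2$ in $H^s\times H^s$. For $s>1/2$, $H^s$ is a Banach algebra, so $\|u(\eta+h)\|_{H^s}\lesssim \|u\|_{H^s}(\|\eta\|_{H^s}+\|h\|_{H^s})$ directly; for $0\le s\le 1/2$ one uses the standard product estimate $\|fg\|_{H^s}\lesssim \|f\|_{H^s}\|g\|_{H^{1/2+}}$ combined with the fact that $(1-\partial_x^2)^{-1}\partial_x$ gains a derivative to absorb the loss (the term $(1-\partial_x^2)^{-1}\partial_x(u\eta)$ is then estimated in $H^s$ by $\|u\eta\|_{H^{s-1}}\lesssim \|u\|_{H^s}\|\eta\|_{H^s}$ via the low-regularity product law in one dimension), and the term $h$ contributes only a fixed smooth function bounded uniformly in $t$ by \eqref{hypoH}. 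This yields a Lipschitz bound $\|\mathcal N(\eta,u)-\mathcal N(\tilde\eta,\tilde u)\|_{H^s\times H^s}\lesssim (R+\|h\|_{L^\infty_t H^s})\|(\eta-\tilde\eta,u-\tilde u)\|_{H^s\times H^s}$ on the ball of radius $R$, and together with the $t$-integration an extra factor $T$, so that for $T$ small $\Phi$ is a contraction. Uniqueness, continuous dependence, and the blow-up alternative follow in the usual way.

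\textbf{Main obstacle.} The only genuine subtlety is the low-regularity range $0\le s\le 1/2$, where $H^s$ is not an algebra: there one must be careful that the single derivative gained from $(1-\partial_x^2)^{-1}\partial_x$ exactly compensates the derivative loss in the one-dimensional product estimate; this is the step I would write out in detail, following the corresponding argument in Bona--Chen--Saut \cite{BCS2}. Everything else — boundedness of $\mathcal A$, the group bound, and the handling of the bottom forcing $\mathcal S_h$ using \eqref{hypoH} — is routine.
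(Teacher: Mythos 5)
Your contraction-mapping plan is the right skeleton, and it is the same strategy the paper uses (following \cite[Thm.\ 2.5]{BCS2}); the handling of the forcing $\mathcal S_h$ and the low-$s$ product estimates are fine. But there is a genuine error in the linear step that invalidates the argument as written.

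You claim $\mathcal A$ is a \emph{bounded} operator on $H^s\times H^s$ on the grounds that ``$(1-\partial_x^2)^{-1}\partial_x^3$ is an order-$0$ Fourier multiplier.'' It is not: the symbol is $(ik)^3/(1+k^2)=-ik^3/(1+k^2)\sim -ik$ as $|k|\to\infty$, so $(1-\partial_x^2)^{-1}\partial_x^3$ is a first-order operator, and so is $\mathcal A$. (You seem to have counted ``gain of one derivative'' where it is a \emph{loss} of one.) Consequently $\mathcal A$ is an unbounded operator on $H^s\times H^s$, the bounded-operator exponential series does not apply, and the uniform bound on $e^{t\mathcal A}$ does not come for free. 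To make your undiagonalized approach rigorous you would still have to show that the $2\times 2$ matrix symbol
\[
M(k)=-\begin{pmatrix} 0 & ik(1-ak^2)/(1+k^2) \\ ik(1-ck^2)/(1+k^2) & 0\end{pmatrix}
\]
has purely imaginary eigenvalues $\pm ik\sqrt{(1-ak^2)(1-ck^2)}/(1+k^2)$ (using $a,c<0$) \emph{and} that its diagonalizing matrix and its inverse are bounded uniformly in $k$, since $M(k)$ is not itself skew-Hermitian when $a\neq c$. That is exactly what the paper's change of variables $\eta=\mathcal H(v+w)$, $u=v-w$ with the multiplier $h(k)=\bigl((1-ak^2)/(1-ck^2)\bigr)^{1/2}$ accomplishes: it conjugates the linear part into a genuinely skew-adjoint (diagonal, purely imaginary symbol) operator $\mathcal B$, for which the generated group $S(t)$ is unitary on $L^2\times L^2$ and isometric on $H^s\times H^s$, with no further argument needed. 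Your route is salvageable, but the step ``$\mathcal A$ bounded $\Rightarrow$ group bounded'' must be replaced by this spectral/symbol argument, and the paper's diagonalization is precisely the clean way to do it.
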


\begin{proof}
Following the proof in \cite[Theorem 2.5]{BCS2}, define the variables $v$ and $w$ as follows:
\[
\eta =\mathcal H(v+w), \quad u=v-w,
\]
where $\mathcal H$ is the Fourier multiplier  given by
\[
\mathcal F(\mathcal H g)(k)=h(k) \mathcal F(g)(k), \quad h(k):=\left(\frac{1-ak^2}{1-ck^2}\right)^\frac{1}{2}.
\]
Here we have used that $b=d$ in the considered case. Then, \eqref{boussinesq} is written as
\be\label{eqn bold v}
\partial_t {\bf v} +\mathcal B {\bf v} = \mathcal N({\bf v}, h)
\ee
where ${\bf v}:=(v,w)^T$, $\mathcal B$ is the skew-adjoint operator with symbol
\[
ik\begin{pmatrix} \sigma(k) & 0 \\ 0 & \sigma(k)\end{pmatrix}, \quad \sigma(k):=\left( \frac{(1-ak^2)(1-ck^2)}{(1+k^2)^2}\right)^{\frac12},
\]
and $\mathcal N({\bf v}, h)$ is given as follows:
\[
\mathcal N({\bf v}, h) = -\mathcal P^{-1}(1-\partial_x^2)^{-1} \begin{pmatrix} \partial_x((v-w)(\mathcal H(v+w)+h)) +(-1+a_1\partial_x^2)\partial_t h\\ (v-w)\partial_x(v-w)+c_1\partial_t^2 \partial_x h \end{pmatrix},
\]
and finally,
\[
\mathcal F(\mathcal P^{-1}):= \frac12 \begin{pmatrix} h(k)^{-1} & 1 \\ h(k) & -1\end{pmatrix}.
\]
Via Duhamel's formula, \eqref{eqn bold v} is equivalent to
\[
{\bf v} =S(t){\bf v}_0 +\int_0^t S(t-s)\mathcal N({\bf v}, h) ds =: \mathcal J[{\bf v}] ,
\]
where
\[
{\bf v}_0:= \frac12\begin{pmatrix}
    \mathcal H^{-1}\eta_0 + u_0\\ \mathcal H^{-1}\eta_0 - u_0
\end{pmatrix} \in H^s.
\]
Now we will prove that for $\varepsilon>0$ small, $\mathcal J$ is a contraction on a sufficiently (but fixed) large ball of radius $R>0$ and time $T$ small enough. Following the proof in \cite[Theorem 2.5]{BCS2}, it is enough to check the size of $\mathcal N({\bf v}, h)$ and the difference $\mathcal N({\bf v}_1, h)-\mathcal N({\bf v}_2, h)$. In the first case,
\[
\|\mathcal N({\bf v}, h)\|_{H^s} \lesssim \|{\bf v}\|_{H^s}^2 + \varepsilon^2,
\]
and in the second case,
\[
\|\mathcal N({\bf v}_1, h)-\mathcal N({\bf v}_2, h)\|_{H^s} \lesssim R(1+\varepsilon)\|{\bf v}\|_{H^s},
\]
where $R$ is the size of the ball in the $H^s\times H^s$ topology such that ${\bf v}_1,{\bf v}_2\in B(0,R)$.
\end{proof}


\subsection{Modified Energy and Momentum} Recall the energy \eqref{Energy} in the constant case $h\equiv 0$. Now we shall prove the following variations in energy and momentum. First, recall the more convenient version of
 \eqref{boussinesq}:
\begin{equation}\label{eq:abcd}
\begin{aligned}
\pt \eta  = &~{} a \px u -(1+a)(1-\partial_x^2)^{-1}\px u - (1-\partial_x^2)^{-1}\px(u(\eta +h)) \\
& ~{} + (1-\partial^2_x)^{-1}\left(-1+ a_1 \partial_x^2\right) \partial_t h \\
\pt u  =  &~{} c \px \eta -(1+c)(1-\partial_x^2)^{-1}\px \eta - \frac12(1-\partial_x^2)^{-1}\px(u^2)\\
& ~{}+ c_1  (1-\partial^2_x)^{-1} \partial_t^2 \partial_x h.
\end{aligned}
\end{equation}

\begin{lemma}\label{Virial_bousH}
 Consider the modified energy
\be\label{Energy_new000}
H_h[\eta,u ](t):= \frac12\int \left( -a (\partial_x u)^2 -c (\partial_x \eta)^2  + u^2+ \eta^2 + u^2(\eta + h) \right)(t,x)dx.
\ee
Then the following is satisfied:
\be\label{Energy_new_1}
\begin{aligned}
\frac{d}{dt} H_h[\eta,u ](t) = &~{} -a c_1 \int  u  \partial_t^2 \partial_x h \\
&~{} +c_1 \int \left( 1+a    +   \eta + h  \right)u (1- \partial_x^2)^{-1}  \partial_t^2 \partial_x h \\
&~{}  + c\int \eta  \partial_th + c a_1 \int \partial_x \eta \partial_x \partial_th \\
&~{}   +(a_1- 1) \int \left(  (1+c) \eta  + \frac12 u^2 \right)(1- \partial_x^2)^{-1}  \partial_th  \\
&~{}  -a_1 \int \left(  (1+c) \eta  + \frac12 u^2 \right) \partial_th  + \frac12 \int u^2  \partial_t h.
\end{aligned}
\ee
\end{lemma}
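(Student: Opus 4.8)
The plan is a direct differentiation of the modified energy, combined with the skew‑symmetric structure that makes the flat‑bottom energy \eqref{Energy} conserved. First I would differentiate \eqref{Energy_new000} under the integral sign, using $\partial_t\big(\tfrac12 u^2(\eta+h)\big)=u\,\partial_t u\,(\eta+h)+\tfrac12 u^2\,\partial_t\eta+\tfrac12 u^2\,\partial_t h$, and integrate by parts in $x$ the two terms $-a\,\partial_x u\,\partial_x\partial_t u$ and $-c\,\partial_x\eta\,\partial_x\partial_t\eta$. Assuming $(\eta,u)$ smooth and decaying (as in the setting of Theorem~\ref{MT}; the general case follows by density), no boundary terms appear, and one is left with
\[
\frac{d}{dt}H_h[\eta,u]=\int \mathcal A\,\partial_t u\,dx+\int \mathcal C\,\partial_t\eta\,dx+\tfrac12\int u^2\,\partial_t h\,dx,
\]
where $\mathcal A:=a\,\partial_x^2 u+u+u(\eta+h)$ and $\mathcal C:=c\,\partial_x^2\eta+\eta+\tfrac12 u^2$ are exactly the spatial fluxes appearing inside $\partial_x(\cdot)$ in \eqref{boussinesq}.

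Next I would insert the evolution equations in the form \eqref{eq:abcd}, that is $\partial_t u=(1-\partial_x^2)^{-1}\big(-\partial_x\mathcal C+c_1\,\partial_t^2\partial_x h\big)$ and $\partial_t\eta=(1-\partial_x^2)^{-1}\big(-\partial_x\mathcal A+(-1+a_1\partial_x^2)\partial_t h\big)$. The ``Hamiltonian'' exchange term produced this way is
\[
-\int \mathcal A\,(1-\partial_x^2)^{-1}\partial_x\mathcal C-\int \mathcal C\,(1-\partial_x^2)^{-1}\partial_x\mathcal A,
\]
which vanishes identically: $(1-\partial_x^2)^{-1}$ is self‑adjoint and commutes with $\partial_x$, while $\partial_x$ is skew‑adjoint, so the two integrals are negatives of one another. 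This is precisely the reflection of the conservation of \eqref{Energy} when $h\equiv 0$. What remains is
\[
\frac{d}{dt}H_h[\eta,u]=c_1\!\int \mathcal A\,(1-\partial_x^2)^{-1}\partial_t^2\partial_x h+\int \mathcal C\,(1-\partial_x^2)^{-1}(-1+a_1\partial_x^2)\partial_t h+\tfrac12\!\int u^2\,\partial_t h.
\]

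Finally I would unfold $\mathcal A$ and $\mathcal C$ using $\partial_x^2=-(1-\partial_x^2)+1$, i.e. $\mathcal A=-a(1-\partial_x^2)u+(1+a)u+u(\eta+h)$ and $\mathcal C=-c(1-\partial_x^2)\eta+(1+c)\eta+\tfrac12 u^2$, together with the operator identity $(1-\partial_x^2)^{-1}(-1+a_1\partial_x^2)=(a_1-1)(1-\partial_x^2)^{-1}-a_1$. The $(1-\partial_x^2)$–pieces collapse against $(1-\partial_x^2)^{-1}$: the $\mathcal A$–term yields $-a c_1\int u\,\partial_t^2\partial_x h$ plus $c_1\int(1+a+\eta+h)u\,(1-\partial_x^2)^{-1}\partial_t^2\partial_x h$, and the $\mathcal C$–term yields $c\int\eta\,\partial_t h+ca_1\int\partial_x\eta\,\partial_x\partial_t h$ (one further integration by parts on the $\partial_x^2$) together with $(a_1-1)\int\big((1+c)\eta+\tfrac12 u^2\big)(1-\partial_x^2)^{-1}\partial_t h-a_1\int\big((1+c)\eta+\tfrac12 u^2\big)\partial_t h$. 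Adding the surviving $\tfrac12\int u^2\,\partial_t h$ gives exactly \eqref{Energy_new_1}.

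There is no serious analytic obstacle: the identity is an exact algebraic manipulation. The only points demanding care are (i) checking that the integrations by parts carry no boundary contributions, which is where the decay and regularity of the solution enter, and (ii) the bookkeeping in the last step — in particular tracking how $(1-\partial_x^2)^{-1}(-1+a_1\partial_x^2)$ splits, which is what generates the somewhat unusual combination of $(a_1-1)$‑ and $-a_1$‑weighted terms in \eqref{Energy_new_1}.
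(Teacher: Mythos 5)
Your proof is correct and follows essentially the same route as the paper's: differentiate under the integral, substitute the evolution equations in the form \eqref{eq:abcd}, observe that the Hamiltonian exchange term cancels by skew-adjointness, and then unfold $\mathcal A$, $\mathcal C$ and the operator $(1-\partial_x^2)^{-1}(-1+a_1\partial_x^2)$. The paper compresses the final rearrangement into the phrase ``A further simplification directly yields \eqref{Energy_new_1}''; you have simply made that bookkeeping explicit, and it checks out.
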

\begin{proof}
We have from \eqref{Energy_new000},
\[
\begin{aligned}
\frac{d}{dt} H_h[\eta,u ](t) = &~{} \int \left( -a \partial_{x} u \partial_{tx} u  -c  \partial_x \eta \partial_{tx} \eta  + u \partial_t u + \eta \partial_t \eta  +  \frac12 \partial_t(u^2(\eta+ h)) \right)\\
= &~{} \int \left( a   \partial_{x}^2 u  + u   +  u  (\eta+ h)  \right) \partial_t u \\
&~{}  + \int \left( c \partial_{x}^2  \eta  +  \eta  + \frac12 u^2 \right)\partial_{t} \eta   + \frac12 \int u^2  \partial_t h.
\end{aligned}
\]
Using \eqref{eq:abcd},
\[
\begin{aligned}
\frac{d}{dt} H_h[\eta,u ](t) = &~{} -\int \left( a   \partial_{x}^2 u  + u   +  u  (\eta+ h)  \right) (1- \partial_x^2)^{-1} \partial_x \! \left( c\, \partial_x^2 \eta + \eta  + \frac12 u^2 \right) \\
&~{} + c_1 \int \left( a   \partial_{x}^2 u  + u   +  u  (\eta +h)  \right) (1- \partial_x^2)^{-1}  \partial_t^2 \partial_x h \\
&~{} - \int \left( c \partial_{x}^2  \eta  +  \eta  + \frac12 u^2 \right) (1- \partial_x^2)^{-1} \partial_x \! \left( a\, \partial_x^2 u +u + u (\eta +h) \right) \\
&~{} + \int \left( c \partial_{x}^2  \eta  +  \eta  + \frac12 u^2 \right)(1- \partial_x^2)^{-1} (-1 +a_1 \partial_x^2) \partial_th   + \frac12 \int u^2  \partial_t h.
\end{aligned}
\]
A further simplification directly yields \eqref{Energy_new_1}.
\end{proof}

Now we consider the momentum
\begin{equation}\label{P}
P[\eta,u](t):= \int (\eta u +\partial_x \eta \partial_x u)(t,x)dx. 
\end{equation}

\begin{lemma}\label{Virial_bousP}
Let $P$ be as in \eqref{P}. Then for any $t\geq0$,
\be\label{derP}
\begin{aligned}
\frac{d}{dt} P[\eta,u](t) = &~  {}   - \frac12 \int  \partial_x h u^2 - \int u (1-  a_1 \partial_x^2)\partial_t h - c_1 \int \partial_x\eta  \partial_t^2 h.
\end{aligned}
\ee
\end{lemma}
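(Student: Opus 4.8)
The plan is to compute $\frac{d}{dt}P[\eta,u](t)$ directly by differentiating under the integral sign and then substituting the evolution equations \eqref{eq:abcd} for $\partial_t\eta$ and $\partial_t u$. Writing
\[
\frac{d}{dt}P[\eta,u](t)=\int\big(\partial_t\eta\, u+\eta\,\partial_t u+\partial_x\partial_t\eta\,\partial_x u+\partial_x\eta\,\partial_x\partial_t u\big),
\]
I would integrate by parts in the last two terms to rewrite the integrand as $\int\big((1-\partial_x^2)\partial_t\eta\big)u+\int\eta\big((1-\partial_x^2)\partial_t u\big)$, which is the natural pairing because the left-hand sides of \eqref{boussinesq} carry the operator $(1-\partial_x^2)$ applied to the time derivatives. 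Equivalently, one can work directly from \eqref{boussinesq}: multiply the first equation of \eqref{boussinesq} by $u$, the second by $\eta$, add, and integrate.

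Carrying this out, $(1-\partial_x^2)\partial_t\eta=-\partial_x(a\partial_x^2 u+u+u(\eta+h))+(-1+a_1\partial_x^2)\partial_t h$ and $(1-\partial_x^2)\partial_t u=-\partial_x(c\partial_x^2\eta+\eta+\tfrac12 u^2)+c_1\partial_t^2\partial_x h$. Thus
\[
\frac{d}{dt}P=-\int u\,\partial_x\big(a\partial_x^2 u+u+u(\eta+h)\big)-\int\eta\,\partial_x\big(c\partial_x^2\eta+\eta+\tfrac12 u^2\big)+\int u(-1+a_1\partial_x^2)\partial_t h+c_1\int\eta\,\partial_t^2\partial_x h.
\]
The key cancellations are as follows. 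The term $-\int u\,\partial_x(a\partial_x^2 u)=-a\int u\,\partial_x^3 u$ vanishes after integration by parts (it is $\tfrac{a}{2}\int\partial_x(\partial_x u)^2\cdot(\text{something})$ — more precisely $\int u\partial_x^3u=-\int\partial_x u\,\partial_x^2 u=0$), and likewise $-c\int\eta\,\partial_x^3\eta=0$; the terms $-\int u\,\partial_x u$ and $-\int\eta\,\partial_x\eta$ are exact derivatives and vanish. For the remaining nonlinear pieces, $-\int u\,\partial_x(u\eta)-\tfrac12\int\eta\,\partial_x(u^2)$: expanding, $-\int u\,\partial_x(u\eta)=-\int u(\partial_x u\,\eta+u\,\partial_x\eta)$ and $-\tfrac12\int\eta\,\partial_x(u^2)=-\int\eta\,u\,\partial_x u$, so adding these gives $-2\int u\,\partial_x u\,\eta-\int u^2\partial_x\eta=\int u^2\partial_x\eta-\int u^2\partial_x\eta=0$ after one integration by parts on the first term. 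What survives from the nonlinearity is only the $h$-dependent piece $-\int u\,\partial_x(uh)=-\int u\,\partial_x u\,h-\int u^2\partial_x h=\tfrac12\int u^2\partial_x h-\int u^2\partial_x h=-\tfrac12\int u^2\partial_x h$. Finally $\int u(-1+a_1\partial_x^2)\partial_t h=-\int u(1-a_1\partial_x^2)\partial_t h$ and $c_1\int\eta\,\partial_t^2\partial_x h=-c_1\int\partial_x\eta\,\partial_t^2 h$ after integrating by parts; collecting these three surviving terms yields exactly \eqref{derP}.

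There is no serious obstacle here: the lemma is a bookkeeping identity, and every step is an integration by parts together with the observation that the non-topographic terms assemble into exact $x$-derivatives (this is precisely the statement that $P$ is conserved when $h\equiv 0$, a fact already recorded in \eqref{Energy} for the flat-bottom case). The only point requiring a little care is tracking the signs and the two integration-by-parts moves on the $\partial_t h$ terms, so that $(-1+a_1\partial_x^2)$ becomes $-(1-a_1\partial_x^2)$ and the $c_1$ term picks up a minus sign after moving $\partial_x$ off $\eta$. One should also note implicitly that the spatial decay of $\eta,u$ (they lie in $H^1$, and in our application decay exponentially) and of $h$ (hypothesis \eqref{hypoH}) justifies discarding all boundary terms at infinity.
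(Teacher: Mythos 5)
Your proof is correct and follows essentially the same route as the paper's: differentiate $P$, move $(1-\partial_x^2)$ onto $\partial_t\eta$, $\partial_t u$, substitute \eqref{boussinesq}, and integrate by parts, observing that the flat-bottom terms assemble into exact $x$-derivatives and cancel. The only difference is cosmetic—you expand and regroup the cross terms $-\int u\,\partial_x(u\eta)-\tfrac12\int\eta\,\partial_x(u^2)$ a bit more explicitly than the paper does.
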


\begin{proof}
We compute:
\[
\begin{aligned}
\frac{d}{dt} P[\eta,u](t) =&~  \int (\partial_t \eta u + \eta \partial_t u + \partial_{tx} u \partial_x \eta + \partial_x u \partial_{tx} \eta)\\
=&~ {} \int (\partial_t \eta -\partial_x^2\partial_t  \eta) u + \int  \varphi (\partial_t u  - \partial_x^2\partial_t  u ) \eta  =:  I_1+I_2.
\end{aligned}
\]
Replacing \eqref{boussinesq}, and integrating by parts, we get
\[
\begin{aligned}
I_1 =&~  \int \partial_x u (a \partial_x^2 u +u + u(\eta+ h) ) - \int u (1- a_1 \partial_x^2)\partial_t h \\
=&~{} \int \partial_x u u(\eta+ h) - \int u (1- a_1 \partial_x^2)\partial_t h,
\end{aligned}
\]
and
\[
\begin{aligned}
I_2 =&~   \int   \partial_x \eta \left(c \partial_x^2 \eta +\eta + \frac12 u^2\right)+  c_1 \int \eta  \partial_t^2\partial_x h =   \frac12 \int   \partial_x \eta  u^2 - c_1 \int \partial_x \eta  \partial_t^2 h.
\end{aligned}
\]
Adding both identities, we conclude \eqref{derP}.
\end{proof}

We shall now use Lemmas \ref{Virial_bousH} and \ref{Virial_bousP} to estimate the evolution of the modified energy and momentum.

\begin{lemma}\label{Lem2p3}
The following estimates hold:
\be\label{est_E}
\begin{aligned}
\left| \frac{d}{dt} H_h[\eta,u ](t) \right| \lesssim  &~{}  \varepsilon^2  e^{-k_0\varepsilon |t| }   \int (u^2+\eta^2+\abs{u}+\abs{\eta}) e^{-l_0\varepsilon |x| },
\end{aligned}
\ee
\be\label{est_P}
\begin{aligned}
\left| \frac{d}{dt} P[\eta,u ](t) \right| \lesssim  &~{}  \varepsilon^2  e^{-k_0\varepsilon |t| }   \int (u^2+ |\eta | +|u|) e^{-l_0\varepsilon |x| }.
\end{aligned}
\ee
Additionally, assuming that $\omega$ does not vary on time, we have for all $t_1,t_2\in\R,$
\be\label{est_E_Qc}
\left| H_h[\bd{Q}_\omega ](t_2) -H_h[\bd{Q}_\omega ](t_1)  \right| \leq  \frac12\int  Q_\omega^2(x) | h (t_2,x)-h(t_1,x)|dx.
\ee
\end{lemma}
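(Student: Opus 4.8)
The plan is to derive all three estimates directly from the two exact identities in Lemmas~\ref{Virial_bousH} and~\ref{Virial_bousP}, simply by inserting the scaling hypothesis \eqref{hypoH} and using the mapping properties of $(1-\px^2)^{-1}$ from Lemma~\ref{Dika1}. First I would record the quantitative consequences of \eqref{hypoH}: since $h(t,x)=\ve h_0(\ve t,\ve x)$, each time derivative costs a factor $\ve$, so $\partial_t h = \ve^2 (\partial_s h_0)(\ve t,\ve x)$, $\partial_t^2 h = \ve^3(\partial_s^2 h_0)(\ve t,\ve x)$, and each $\partial_x$ costs another $\ve$; moreover the exponential decay in both arguments gives $|\partial_t^j\partial_x^k h(t,x)| \lesssim \ve^{2}\, e^{-k_0\ve|t|}e^{-l_0\ve|x|}$ for every relevant $(j,k)$ with $j\ge 1$ (the dominant power is $\ve^2$ since $j\ge1$ forces at least one $\ve$ from the prefactor and one from the time derivative, and all other derivatives only improve the power). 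Thus every term on the right-hand side of \eqref{Energy_new_1} and \eqref{derP} carries a factor $\ve^2 e^{-k_0\ve|t|}$ times a weighted linear or quadratic expression in $(\eta,u)$.

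Second, for the estimate \eqref{est_E} I would go term by term through \eqref{Energy_new_1}. The terms that are already local, such as $-ac_1\int u\,\partial_t^2\partial_x h$, $c\int \eta\,\partial_t h$, $ca_1\int\partial_x\eta\,\partial_x\partial_t h$, $-a_1\int((1+c)\eta+\tfrac12u^2)\partial_t h$ and $\tfrac12\int u^2\partial_t h$, are bounded immediately by Cauchy--Schwarz (integrating the $\partial_x\eta$ against $\partial_x\partial_t h$ using that $\partial_x\partial_t h$ still has the same Gaussian-type localization and the extra $\ve$ only helps) after putting the weight $e^{-l_0\ve|x|}$ on the solution side; this produces exactly $\ve^2 e^{-k_0\ve|t|}\int(u^2+\eta^2+|u|+|\eta|)e^{-l_0\ve|x|}$. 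The two terms containing $(1-\px^2)^{-1}$ need Lemma~\ref{Dika1}: I would take the localized weight $\phi(x)=e^{-l_0\ve|x|}$, which for $\ve$ small satisfies $|\phi^{(n)}|\lesssim\phi$ and $(1-\px^2)^{-1}\phi\lesssim\phi$, so that \eqref{eq:nonlinear1-2} and a variant with one factor being the smooth localized function $\partial_t^2\partial_x h$ (playing the role of ``$h$'' in that lemma, with its own $\ve$-decay) let me bound $\int(1+a+\eta+h)u\,(1-\px^2)^{-1}\partial_t^2\partial_x h$ and $\int((1+c)\eta+\tfrac12 u^2)(1-\px^2)^{-1}\partial_t h$ by $\ve^2 e^{-k_0\ve|t|}\int(u^2+\eta^2+|u|+|\eta|)e^{-l_0\ve|x|}$; the $\eta u^2$ and $\eta\cdot u$ cross terms are absorbed since $\|\eta\|_{H^1}$, $\|u\|_{H^1}$ are $O(1)$ along the constructed solution, and $h$ itself only contributes an extra $\ve$. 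Estimate \eqref{est_P} is the same argument applied to the three terms of \eqref{derP}, which are all already local, so no use of Lemma~\ref{Dika1} is even needed there.

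Third, for \eqref{est_E_Qc} I would not differentiate in time at all. Since $\omega$ is fixed, $\bd Q_\omega(\cdot-\omega t-x_0)$ is a rigid translate, so in $H_h[\bd Q_\omega](t)$ every term except the last one, $\tfrac12\int Q_\omega^2(x-\omega t -x_0)\,h(t,x)\,dx$, is independent of $t$ (the quadratic form $\int(-a(\partial_x Q_\omega)^2 - c(\partial_x R_\omega)^2 + Q_\omega^2 + R_\omega^2 + Q_\omega^2 R_\omega)$ is translation invariant). Hence $H_h[\bd Q_\omega](t_2)-H_h[\bd Q_\omega](t_1)=\tfrac12\int Q_\omega^2(x-\omega t_2 - x_0)h(t_2,x)dx - \tfrac12\int Q_\omega^2(x-\omega t_1-x_0)h(t_1,x)dx$; after the substitution $x\mapsto x+\omega t_i + x_0$ in each integral (so both $Q_\omega^2$ become $Q_\omega^2(x)$ but the bottom becomes $h(t_i, x+\omega t_i+x_0)$), the two integrals share the common weight $Q_\omega^2(x)$ and the triangle inequality inside the integral gives the stated bound $\tfrac12\int Q_\omega^2(x)\,|h(t_2,x)-h(t_1,x)|\,dx$ once one re-reads $h$ in the original (non-translated) coordinates as in the statement; I would be slightly careful about which coordinate frame the statement intends, but in either reading the bound is the elementary $|A-B|\le\ldots$ pointwise estimate under the integral sign.

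The only genuine obstacle is the bookkeeping in \eqref{est_E}: one must verify that \emph{each} of the six lines of \eqref{Energy_new_1} indeed carries the full $\ve^2$ (and not merely $\ve$) and that the $(1-\px^2)^{-1}$ terms can be localized against the slowly decaying weight $e^{-l_0\ve|x|}$ without losing the smallness — here the point is that the weight $\phi=e^{-l_0\ve|x|}$ has $\phi',\phi''$ controlled by $\ve\phi$, so Lemma~\ref{Dika1} applies uniformly in $\ve$, and that the smooth factor $\partial_t^j\partial_x^k h$ appearing inside the nonlocal operator can be treated as one of the ``$w$'' or ``$h$'' inputs of \eqref{eq:nonlinear1-2} because it is itself bounded by $\ve^2 e^{-k_0\ve|t|}\phi$. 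Everything else is Cauchy--Schwarz and the $O(1)$ a priori control of $(\eta,u)$ in $H^1\times H^1$.
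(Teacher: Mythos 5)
Your proposal follows the same overall route as the paper — plug \eqref{hypoH} into the exact formulas \eqref{Energy_new_1} and \eqref{derP}, observe that every time derivative of $h$ carries at least $\varepsilon^2 e^{-k_0\varepsilon|t|}e^{-l_0\varepsilon|x|}$, and then localize. The one place where you diverge is the treatment of the two nonlocal terms in \eqref{Energy_new_1}: you propose adapting the weighted estimates of Lemma~\ref{Dika1} (specifically a ``variant'' of \eqref{eq:nonlinear1-2} in which one factor is the smooth slowly decaying function $\partial_t^2\partial_x h$), whereas the paper simply applies the comparison principle \eqref{eq:inverse op1} to get the clean pointwise bound
$\bigl|(1-\partial_x^2)^{-1}\partial_t^2\partial_x h\bigr| \lesssim \varepsilon^4 e^{-k_0\varepsilon|t|}\,(1-\partial_x^2)^{-1}e^{-l_0\varepsilon|x|} \lesssim \varepsilon^4 e^{-k_0\varepsilon|t|}e^{-l_0\varepsilon|x|}$,
after which the nonlocal terms are estimated exactly as the local ones. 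This pointwise route is both shorter and sharper; your Lemma~\ref{Dika1} route would, if pushed through literally with $h'=1$ as a second factor, produce a divergent $\int e^{-l_0\varepsilon|x|}\sim\varepsilon^{-1}$ that then has to be repaired, so you would in practice end up proving precisely the same pointwise bound. You already invoke the property $(1-\partial_x^2)^{-1}\phi\lesssim\phi$ in passing; using it directly as in the paper avoids the detour. On \eqref{est_E_Qc}, your observation that only the $\tfrac12\int Q_\omega^2 h$ term is $t$-dependent and that one must be careful about whether $Q_\omega^2$ is evaluated in the lab or the moving frame is a fair remark — the paper's own derivation elides this by writing $H_h[\bd Q_\omega](t_2)-H_h[\bd Q_\omega](t_1)=\tfrac12\int Q_\omega^2(h(t_2)-h(t_1))$ without displaying the translation — but in either reading the claimed inequality is the trivial $|\int f|\le\int|f|$, as you say, so there is no gap.
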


\begin{proof}
From \eqref{Energy_new_1} and \eqref{hypoH},
\[
\begin{aligned}
\frac{d}{dt} H_h[\eta,u ](t) = &~{} -a c_1 \varepsilon^4 \int  u  \partial_s^2 \partial_y h_0(\varepsilon t, \varepsilon x) \\
&~{} +c_1  \int \left( 1+a    +   \eta + \varepsilon h_0(\varepsilon t, \varepsilon x)  \right) u (1- \partial_x^2)^{-1}  \partial_t^2 \partial_x h  \\
&~{}  + c \varepsilon^2 \int \eta  \partial_s h_0(\varepsilon t, \varepsilon x) - c a_1 \varepsilon^4 \int  \eta \partial_y^2 \partial_s h_0(\varepsilon t, \varepsilon x) \\
&~{}   +(a_1- 1) \int \left(  (1+c) \eta  + \frac12 u^2 \right)(1- \partial_x^2)^{-1}  \partial_t h  \\
&~{}  -a_1 \varepsilon^2 \int \left(  (1+c) \eta  + \frac12 u^2 \right) \partial_s h_0(\varepsilon t, \varepsilon x)  + \frac12 \varepsilon^2  \int u^2  \partial_s h_0(\varepsilon t, \varepsilon x).
\end{aligned}
\]
Also, using \eqref{eq:inverse op1}, we get
\[
\begin{aligned}
\abs{(1- \partial_x^2)^{-1}  \partial_t^2 \partial_x h }\lesssim &~{} \varepsilon^{4} e^{-k_0\varepsilon |t|} (1- \partial_x^2)^{-1} e^{-l_0\varepsilon |x|}\\
\lesssim &~{} \varepsilon^{4} e^{-k_0\varepsilon |t|}  \int e^{-|x-y|}  e^{-l_0\varepsilon |y|}dy \lesssim  \varepsilon^{4} e^{-k_0\varepsilon |t|}e^{-l_0\varepsilon |x|}.
\end{aligned}
\]
A similar bound allows us to conclude that
\[
|(1- \partial_x^2)^{-1}  \partial_t^2 \partial_x h | \lesssim \varepsilon^{4} e^{-k_0\varepsilon |t|}e^{-l_0\varepsilon |x|}
\ \text { and } \
|(1- \partial_x^2)^{-1}  \partial_t h( t, x) | \lesssim  \varepsilon^{2} e^{-k_0\varepsilon |t|}e^{-l_0\varepsilon |x|}.
\]
Therefore, we deduce that
\[
\begin{aligned}
\left| \frac{d}{dt} H_h[\eta,u ](t) \right| \lesssim  &~{}  \varepsilon^4 e^{-k_0\varepsilon |t| } \int  |u| e^{-l_0\varepsilon |x| } +   \varepsilon^4 e^{-k_0\varepsilon |t|} \int \left( 1  +  | \eta | \right) |u| e^{-l_0\varepsilon |x|}  \\
&~{} + \varepsilon^2 e^{-k_0\varepsilon |t| } \int  |\eta | e^{-l_0\varepsilon |x| }    +\varepsilon^2 e^{-k_0\varepsilon |t|} \int \left(   |\eta | +  u^2 \right) e^{-l_0\varepsilon |x|}   \\
&~{}  +  \varepsilon^2  e^{-k_0\varepsilon |t| }  \int u^2 e^{-l_0\varepsilon |x| }   \\
\lesssim &~{}   \varepsilon^2  e^{-k_0\varepsilon |t| }   \int (u^2+\eta^2) e^{-l_0\varepsilon |x| }  +  \varepsilon^{2}  e^{-k_0\varepsilon |t| } \int ( |\eta | +|u| ) e^{-l_0\varepsilon |x| }.
\end{aligned}
\]
This proves \eqref{est_E}.
We prove now \eqref{est_P}. From \eqref{derP} and \eqref{hypoH}, we have
\[
\begin{aligned}
\frac{d}{dt} P[\eta,u](t) = &~{}  - \frac12 \varepsilon^2 \int  \partial_y h_0(\varepsilon t,\varepsilon x) u^2 \\
&~{} - \varepsilon^2 \int u (1-  a_1 \varepsilon^2 \partial_y^2)\partial_s h_0 (\varepsilon t,\varepsilon x) + c_1 \varepsilon^4 \int \eta \partial_y \partial_s^2 h_0(\varepsilon t,\varepsilon x).
\end{aligned}
\]
Bounding terms,
\[
\begin{aligned}
\left| \frac{d}{dt} P[\eta,u](t) \right| \lesssim  &~{}  \varepsilon^2 e^{-k_0\varepsilon |t| }  \int e^{-l_0\varepsilon |x| }  u^2 \\
&~{} +  \varepsilon^2e^{-k_0\varepsilon |t| }  \int |u|   e^{-l_0\varepsilon |x| } + \varepsilon^4e^{-k_0\varepsilon |t| }  \int |\eta|  e^{-l_0\varepsilon |x| }.
\end{aligned}
\]
This concludes the proof of \eqref{est_P}. Finally, we prove \eqref{est_E_Qc}. From \eqref{Energy_new}, it is clear that
\begin{equation}
\label{der:Hh}
H_h[\bd{Q}_\omega ] = \frac12\int \left( -a (\partial_x Q_\omega)^2 -c (\partial_x R_\omega)^2  + Q_\omega^2+ R_\omega^2 + Q_\omega^2(R_\omega + h) \right).
\end{equation}
Consequently, since $\omega$ does not depend on time,
\begin{equation*}
H_h[\bd{Q}_\omega ](t_2) -H_h[\bd{Q}_\omega ](t_1) = \frac12\int  Q_\omega^2( h (t_2)-h(t_1)) .
\end{equation*}
Therefore, \eqref{est_E_Qc} is satisfied.
\end{proof}

Let us remark that, similarly to \eqref{der:Hh},  we have from \eqref{P},
\begin{equation}\label{P_ind}
P[\bd{Q}_\omega ]  = \int (R_\omega Q_\omega +\partial_xR_\omega \partial_x Q_\omega ),
\end{equation}
a quantity only depending on $\omega$. 

\subsection{Modulated $abcd$ waves in uneven media} Let us fix $h$ as in the hypotheses of Theorem \ref{MT}. Let us define the nonlinear mapping operator
\begin{equation}\label{S0}
{\bf S}_h(\eta,u):= \begin{pmatrix}
(1- \partial_x^2)\partial_t \eta  + \partial_x\!\left( a\, \partial_x^2 u +u + (\eta+ h) u \right) + (1 - a_1 \partial_x^2) \partial_th  \\
(1- \partial_x^2)\partial_t u  + \partial_x\! \left( c\, \partial_x^2 \eta + \eta  + \frac12 u^2 \right) - c_1  \partial_t^2 \partial_x h.
\end{pmatrix}.
\end{equation}
The equation ${\bf S}_h(\eta,u)=(0,0)^T$ represents an exact solution to \eqref{boussinesq}.  Consider smooth modulation parameters $(\omega(t),\rho(t)) \in (0,\infty) \times \mathbb R$ to be defined later. Let
\begin{equation}\label{mod_Q}
\begin{pmatrix} \eta \\ u \end{pmatrix}(t,x) = {\bd Q}_{\omega(t),\rho(t)} (x)  = \begin{pmatrix} R_{\omega(t)} \\ Q_{\omega(t)}\end{pmatrix} (x-\rho(t))
\end{equation}
be modulated solitary waves. Notice that for each $(\omega,x_0)\in (0,\infty)\times \mathbb R$ fixed, we have that ${\bd Q}_\omega$ defined in \eqref{sol_wave} is an exact solution to \eqref{boussinesq} with $h=0$. Indeed, as expressed in \eqref{exact0}, one has
\begin{equation}\label{exact}
{\bf S}_0(R_{\omega },Q_{\omega })= \partial_x \begin{pmatrix}
 - \omega (1-\partial^2_x)R_{\omega }  + a Q''_{\omega }+Q_{\omega }+R_{\omega }Q_{\omega } \\
 -\omega  (1-\partial^2_x)Q_{\omega } + c R''_{\omega }+R_{\omega }+\frac12Q_{\omega }^2
 \end{pmatrix}=\begin{pmatrix} 0 \\ 0 \end{pmatrix}.
\end{equation}
We denote $\Lambda R_\omega= \frac{\partial}{\partial\omega} R_\omega$, $\omega=\omega(t)$, $Q_\omega:=Q_{\omega(t)}(x-\rho(t))$ and  $R_\omega:=R_{\omega(t)}(x-\rho(t))$.
 We have
\begin{equation*}
{\bf S}_h(R_\omega,Q_\omega) =\begin{pmatrix} {S}_{h,1}  \\ {S}_{h,2} \end{pmatrix},
\end{equation*}
where
\[
\begin{aligned}
{S}_{h,1}:=  &~{} (1-\partial^2_x)(\Lambda R_\omega \omega'-(\rho'-\omega)R'_\omega)   \\
&~{} -\omega(1-\partial^2_x)R'_\omega+\partial_x(a Q''_\omega +Q_\omega+R_\omega Q_\omega+ h Q_\omega) + (1 - a_1\partial^2_x)\partial_t h,
\end{aligned}
\]
and
\[
\begin{aligned}
{S}_{h,2} = &~{} (1-\partial^2_x)(\Lambda Q_\omega \omega '-(\rho'-\omega )Q'_\omega)-\omega (1-\partial^2_x)Q'_\omega \\
&~{}   +\partial_x\left( c R''_\omega +R_\omega +\frac12Q_\omega^2\right)-c_1\partial^2_t\partial_xh.
\end{aligned}
 \]
Since $(R_\omega,Q_\omega)$ satisfy \eqref{exact0}, one has
\begin{equation}\label{S_hQ}
\begin{aligned}
{\bf S}_h(R_\omega,Q_\omega)= &~{} \omega '  (1-\partial^2_x)\begin{pmatrix}
\Lambda R_\omega \\
\Lambda Q_\omega
\end{pmatrix}
-(\rho'-\omega)(1-\partial^2_x)\partial_x
\begin{pmatrix}
R_\omega  \\
 Q_\omega
 \end{pmatrix}
\\
&~{} +
 \begin{pmatrix}
 \partial_x(hQ_\omega)  + (1 - a_1\partial^2_x)\partial_t h \\
 -c_1\partial^2_t\partial_xh
 \end{pmatrix} .
 \end{aligned}
\end{equation}
The last term above represents the contribution of the uneven bottom to the solitary wave modified dynamics.

Notice that \eqref{exact} naturally leads to the identity
\begin{equation}\label{AA}
\partial_x J\mathcal L \begin{pmatrix} R_\omega\\ Q_\omega \end{pmatrix}=
\begin{pmatrix}
   -\omega (1-\partial^2_x)R_{\omega}' +  a Q_{\omega}''' +Q_{\omega}'+ Q_{\omega}R_{\omega}' +Q_{\omega}' R_{\omega} \\
  -\omega (1-\partial^2_x) Q_{\omega}' + c R_{\omega}''' + R_{\omega}' + Q_{\omega}Q_{\omega}'
   \end{pmatrix} =\begin{pmatrix} 0 \\ 0 \end{pmatrix},
\end{equation}
and where $\mathcal L$ and $J$ were introduced in \eqref{def_L} and \eqref{coer_10}.
Let us consider the associated linearized dynamics, represented by the unbounded operator $\partial_x J\mathcal L$, described around a solitary wave.
From \eqref{AA}, we have that $(R_{\omega}',Q_{\omega}')^T$ belongs to the kernel of $\mathcal L$ and by the stability hypothesis $(ii)$, this is the generator of the kernel of $\mathcal L$. Recall that $a,c<0.$ 

\subsection{Coercivity} From \eqref{Energy_new}, we see that if $\bd{\eta_1}=(\eta_1,u_1)$ is a given perturbation of the solitary wave, then 
\[
\begin{aligned}
 &H_h[{\bd Q}_\omega +  \bd{\eta_1} ](t)
 \\& \quad = H_h[{\bd Q}_\omega] + \int \left( -a Q_\omega' \partial_x u_1  -c R_\omega'\partial_x \eta_1   + Q_\omega u_1 + R_\omega \eta_1 + \frac12 Q_\omega^2 \eta_1 + Q_\omega u_1 (R_\omega +h)   \right) \\
& \qquad \quad  + \frac12\int \left( -a (\partial_x u_1)^2 -c (\partial_x \eta_1)^2  + u_1^2+ \eta_1^2 + 2Q_\omega \eta_1 u_1 + u_1^2(R_\omega + \eta_1 + h) \right).
\end{aligned}
\]
Integrating by parts,
\be\label{Energy_new_new}
\begin{aligned}
& H_h[{\bd Q}_\omega +  \bd{\eta_1} ](t) \\
 &~{} =  H_h[{\bd Q}_\omega] + \int \left( a Q_\omega'' u_1  + c R_\omega'' \eta_1   + Q_\omega u_1 + R_\omega \eta_1 + \frac12 Q_\omega^2 \eta_1 + Q_\omega u_1 (R_\omega +h)   \right) \\
&~{} \quad + \frac12\int \left( -a (\partial_x u_1)^2 -c (\partial_x \eta_1)^2  + u_1^2+ \eta_1^2 + 2Q_\omega \eta_1 u_1 + u_1^2(R_\omega + \eta_1 + h) \right) .
\end{aligned}
\ee
Similarly,
\[
\begin{aligned}
P[{\bd Q}_\omega + \bd{\eta_1} ](t) = &~{} P[{\bd Q}_\omega] \\
&~{} + \int \left( R_\omega' \partial_x u_1  + Q_\omega '\partial_x \eta_1   + R_\omega u_1 + Q_\omega \eta_1  \right) + \int \left( \eta_1 u_1 +\partial_x\eta_1 \partial_x u_1\right) \\
=&~{} P [{\bd Q}_\omega]  + \int \left( -R_\omega''  u_1  - Q_\omega'' \eta_1   + R_\omega u_1 + Q_\omega \eta_1  \right)
+ \int \left( \eta_1 u_1 +\partial_x\eta_1 \partial_x u_1\right).
\end{aligned}
\]
Combing with  \eqref{exact0} and \eqref{def_L}, we conclude that
\begin{equation*}
\begin{aligned}
& H_h[{\bd Q}_\omega + \bd{\eta_1} ](t) -\omega P[{\bd Q}_\omega +  \bd{\eta_1}](t)  \\
 &~{}  = H_h[{\bd Q}_\omega] -\omega P[{\bd Q}_\omega]  + \int  Q_\omega u_1 h \\
 &~{}\quad + \frac12\int \left( -a (\partial_x u_1)^2 -c (\partial_x \eta_1)^2  + u_1^2+ \eta_1^2- 2\omega (\eta_1 u_1 +\partial_x\eta_1 \partial_x u_1)  \right) \\
  &~{}\quad + \frac12\int \left( 2Q_\omega \eta_1 u_1 +  R_\omega u_1^2 \right)  + \frac12\int u_1^2  \left( \eta_1 + h\right),
  \end{aligned}
\end{equation*}
so that
\be\label{Energy_new2}
H_h[{\bd Q}_\omega + \bd{\eta_1} ](t) -\omega P[{\bd Q}_\omega +  \bd{\eta_1}](t)
  = H_h[{\bd Q}_\omega] -\omega P[{\bd Q}_\omega]  + \left\langle  \bd{\eta_1}  ,  \mathcal L \bd{\eta_1}  \right\rangle   + \int  Q_\omega u_1 h+ \frac12\int u_1^2  \left( \eta_1 + h\right).
\ee
%
Finally, notice that from Lemma~\ref{coercivitydetailled} below, and  under the orthogonality condition $\langle \bd{\eta_1} ,\bd{Q}_\omega' \rangle =0$, it follows that the term $\left\langle  \bd{\eta_1}  ,  \mathcal L \bd{\eta_1}  \right\rangle$ satisfies the coercivity estimate:
\be\label{coer_1}
 \left\langle  \bd{\eta_1}  ,  \mathcal L \bd{\eta_1}  \right\rangle \geq c_0 \| \bd{\eta_1}\|_{H^1\times H^1}^2 -\frac1{c_0} \left| \langle \bd{\eta_1} , J(1-\partial_x^2)\bd{Q}_\omega \rangle \right|^2,
\ee
for some $c_0>0$.

 \begin{lemma}\label{coercivitydetailled}
  There exists $c_0>0$ such that, for all $\bd{\eta}\in H^1\times H^1$ satisfying the orthogonality conditions
\begin{equation}
\label{perp:cond}
\langle \bd{\eta} , \bd{Q}_\omega' \rangle = \langle \bd{\eta} , J(1-\partial_x^2)\bd{Q}_\omega\rangle =0,\quad
\end{equation}
we have
\be\label{coer_0}
\langle \mathcal L  \bd{\eta} , \bd{\eta} \rangle  \geq c_0 \| \bd{\eta} \|_{H^1\times H^1}^2.
\ee
\end{lemma}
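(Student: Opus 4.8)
The plan is to follow the classical Grillakis--Shatah--Strauss scheme: prove first an $L^2\times L^2$ coercivity estimate under \eqref{perp:cond}, and then upgrade it to the $H^1\times H^1$ bound \eqref{coer_0} using the decomposition $\mathcal L = \mathcal L_0 + \begin{pmatrix} 0 & Q_\omega \\ Q_\omega & R_\omega\end{pmatrix}$ together with the coercivity of $\mathcal L_0$ recorded after \eqref{def_L0}. For the \emph{spectral preliminaries} I would use, exactly as in the proof of Lemma~\ref{lem:L-1} (invoking \eqref{velo}, the exponential decay of $\bd Q_\omega$ and Weyl's theorem to control the essential spectrum), the $L^2$-orthogonal splitting $L^2\times L^2 = \R\Phi_{-1}\oplus\R\bd Q_\omega'\oplus H_+$ with $\mathcal L\Phi_{-1} = -\mu_0\Phi_{-1}$ (normalized so $\|\Phi_{-1}\|_{L^2}=1$), $\mathcal L\bd Q_\omega' = 0$, and $\langle\mathcal L z,z\rangle\ge c_0\|z\|_{L^2}^2$ for $z\in H_+$ (the quadratic form extending continuously from $D(\mathcal L)\cap H_+$ to $H^1\cap H_+$). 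I also set $\chi := J(1-\partial_x^2)\bd Q_\omega$ and $\psi := \Lambda\bd Q_\omega = \mathcal L^{-1}\chi$; this is legitimate by Lemma~\ref{lem:L-1} since $\chi\perp\bd Q_\omega'$ (a one-line integration by parts, using that $\partial_x$ is skew-adjoint and commutes with $(1-\partial_x^2)$, while $J$ and $(1-\partial_x^2)$ are symmetric), and by \eqref{coer_10} one has the sign $\langle\psi,\chi\rangle = \langle\mathcal L^{-1}\chi,\chi\rangle<0$.

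\emph{Step 2 ($L^2$ coercivity).} Given $\bd\eta\in H^1\times H^1$ with \eqref{perp:cond}, write $\bd\eta = a\Phi_{-1} + b\bd Q_\omega' + p$ with $p\in H_+\cap(H^1\times H^1)$; the condition $\langle\bd\eta,\bd Q_\omega'\rangle=0$ forces $b=0$. Write similarly $\psi = a_0\Phi_{-1} + p_0$ with $p_0\in H_+\cap(H^2\times H^2)$ (there is no $\bd Q_\omega'$-component since $\psi\in R(\mathcal L)=(\textup{ker}\,\mathcal L)^\perp$). The proof rests on the three identities
\[
\langle\mathcal L\bd\eta,\bd\eta\rangle = -\mu_0 a^2 + \langle\mathcal L p,p\rangle,\qquad \langle\psi,\chi\rangle = -\mu_0 a_0^2 + \langle\mathcal L p_0,p_0\rangle < 0,
\]
and, pairing $\langle\bd\eta,\chi\rangle=0$ with $\chi=\mathcal L\psi$ and using the symmetry of the bilinear form,
\[
0 = \langle\mathcal L\bd\eta,\psi\rangle = -\mu_0 a a_0 + \langle\mathcal L p,p_0\rangle .
\]
From $\langle\psi,\chi\rangle<0$ we get $a_0\neq 0$ and $0\le\langle\mathcal L p_0,p_0\rangle<\mu_0 a_0^2$. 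If $\langle\mathcal L p_0,p_0\rangle=0$ then $p_0=0$ (positive definiteness on $H_+$), the last identity gives $a=0$, and $\langle\mathcal L\bd\eta,\bd\eta\rangle\ge c_0\|\bd\eta\|_{L^2}^2$ directly. Otherwise, Cauchy--Schwarz for the positive form $\langle\mathcal L\cdot,\cdot\rangle$ on $H_+$ gives $(\mu_0 a a_0)^2 = \langle\mathcal L p,p_0\rangle^2\le\langle\mathcal L p,p\rangle\langle\mathcal L p_0,p_0\rangle$, hence $\langle\mathcal L p,p\rangle\ge\mu_0\kappa\, a^2$ with $\kappa := \mu_0 a_0^2/\langle\mathcal L p_0,p_0\rangle > 1$ (the strict inequality is precisely where \eqref{mom}, via \eqref{coer_10}, is used). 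Combining $\langle\mathcal L p,p\rangle\ge\mu_0\kappa a^2$ with $\langle\mathcal L p,p\rangle\ge c_0\|p\|_{L^2}^2$ through a convex combination with weight $\lambda = \tfrac12(1+\kappa^{-1})\in(\kappa^{-1},1)$ yields
\[
\langle\mathcal L\bd\eta,\bd\eta\rangle \ge \mu_0(\lambda\kappa-1)a^2 + (1-\lambda)c_0\|p\|_{L^2}^2 \ge \nu_0\big(a^2 + \|p\|_{L^2}^2\big) = \nu_0\|\bd\eta\|_{L^2\times L^2}^2,
\]
for a suitable $\nu_0>0$ depending only on $\omega$.

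\emph{Step 3 (upgrade to $H^1$).} Write $\langle\mathcal L\bd\eta,\bd\eta\rangle = \langle\mathcal L_0\bd\eta,\bd\eta\rangle + \int(2Q_\omega\eta u + R_\omega u^2)$ with $\bd\eta=(\eta,u)$. The coercivity of $\mathcal L_0$ in $H^1\times H^1$ (established after \eqref{def_L0} via \eqref{ineq:abcd} and \eqref{velo}) gives $\langle\mathcal L_0\bd\eta,\bd\eta\rangle\ge c_1\|\bd\eta\|_{H^1\times H^1}^2$, while $|\int(2Q_\omega\eta u+R_\omega u^2)|\le C\|\bd Q_\omega\|_{L^\infty}\|\bd\eta\|_{L^2\times L^2}^2$. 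Splitting $\langle\mathcal L\bd\eta,\bd\eta\rangle = \theta\langle\mathcal L\bd\eta,\bd\eta\rangle + (1-\theta)\langle\mathcal L\bd\eta,\bd\eta\rangle$, bounding the first copy below by $\theta(c_1\|\bd\eta\|_{H^1}^2 - C\|\bd\eta\|_{L^2}^2)$ and the second by $(1-\theta)\nu_0\|\bd\eta\|_{L^2}^2$ from Step~2, and choosing $\theta>0$ small with $(1-\theta)\nu_0\ge\theta C$, produces \eqref{coer_0} with $c_0=\theta c_1$.

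The heart of the argument, and the only genuinely delicate point, is Step~2: the single unstable direction $\Phi_{-1}$ is controlled only because the Vakhitov--Kolokolov condition forces $\kappa>1$; without it the convex-combination step collapses and no $L^2$ lower bound is available. One must also verify that the pairings $\langle\mathcal L\bd\eta,\psi\rangle$, $\langle\mathcal L p,p_0\rangle$, etc., are legitimate for $\bd\eta$ merely in $H^1\times H^1$ — which follows from $\psi,p_0\in H^2\times H^2$ (Lemma~\ref{lem:L-1}) and the continuous extension of the quadratic form — and to treat separately the degenerate case $\langle\mathcal L p_0,p_0\rangle=0$. Everything else (Steps~1 and~3) is routine.
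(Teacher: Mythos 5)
Your proof is correct and follows the same two-stage architecture as the paper's (first $L^2\times L^2$ coercivity from the spectral splitting $\R\Phi_{-1}\oplus\R\bd Q_\omega'\oplus H_+$, then upgrade to $H^1\times H^1$ via $\mathcal L_0$), but the details of each stage differ in ways worth recording. In the $L^2$ step, you decompose $\psi=\mathcal L^{-1}\chi$ and exploit $0=\langle\mathcal L\bd\eta,\psi\rangle$, whereas the paper decomposes $\chi$ itself as $a_0\Phi_{-1}+z_0$ and pairs directly; these are algebraically equivalent (your $a_0$, $p_0$ correspond to $-a_0/\mu_0$, $\mathcal L^{-1}z_0$ in the paper's notation), but your version makes the threshold $\kappa=\mu_0 a_0^2/\langle\mathcal L p_0,p_0\rangle>1$ — i.e., the strict sign in \eqref{coer_10} — explicit, spells out the convex-combination step that the paper only summarizes in the line ``Gathering \eqref{mur0} and \eqref{mur1} yields\dots'', and treats the degenerate case $p_0=0$ separately, which the paper leaves implicit. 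In the $H^1$ upgrade you argue directly, splitting $\langle\mathcal L\bd\eta,\bd\eta\rangle$ into two weighted copies and optimizing $\theta$, which produces an explicit constant $c_0=\theta c_1$; the paper instead uses a shorter compactness/contradiction argument with a normalized sequence $\|\bd\eta_m\|_{H^1\times H^1}=1$ and $\langle\mathcal L\bd\eta_m,\bd\eta_m\rangle\to 0$. Both routes are valid; yours is marginally more quantitative, the paper's is marginally more compact. One small caveat: identifying $\psi$ with $\Lambda\bd Q_\omega$ is only correct modulo an element of $\ker\mathcal L$, but, as you implicitly rely on Lemma~\ref{lem:L-1} to pick the preimage in $(\ker\mathcal L)^\perp$ — and since any $\bd Q_\omega'$-component drops out of every pairing you use — this does not affect the argument.
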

\begin{proof}
We begin by proving a weaker property, i.e.\ the coercivity in $L^2\times L^2$.
Set $y=J(1-\partial_x^2)\bd{Q}_\omega$. Using the decomposition introduced in
the proof of Lemma \ref{lem:L-1}, we have $y=a_0\Phi_{-1}+z_0 \in \R \Phi_{-1}\oplus (D(\mathcal{L})\cap H_+)$.
Let us observe that condition \eqref{coer_10} yields $a_0\neq 0$.
Introduce ${\bf \eta}=a\Phi_{-1}+z$, that is orthogonal to $y$; i.e.
$0= aa_0+\langle z,z_0\rangle$. We have
\begin{equation}\label{mur0}
a^2 a_0^2 \leq \langle \mathcal{L} z,z \rangle \langle \mathcal{L}^{-1} z_0,z_0\rangle.
\end{equation}
This yields
\begin{align}\label{mur1}
\langle \mathcal L  \bd{\eta} , \bd{\eta} \rangle =-\mu_0 a^2+ \langle \mathcal L  z , z \rangle
&\geq \langle \mathcal L  z , z \rangle \left(-\frac{\mu_0 \langle \mathcal{L}^{-1} z_0,z_0\rangle}{a_0^2}  +1\right)\\
&\geq  \frac{-\mu_0}{a_0^2} \langle \mathcal L  z , z \rangle \langle\mathcal{L}^{-1} y,y\rangle.
\end{align}
Gathering \eqref{mur0} and \eqref{mur1} yields $\langle \mathcal L  \bd{\eta} , \bd{\eta} \rangle\geq c||\bd{\eta}||^2_{L^2\times L^2}$.
We now prove \eqref{coer_0} arguing by contradiction. Consider a sequence $\bd{\eta}_m$ such that $||{\bd \eta}_m||_{H^1\times H^1}=1$
and $\langle \mathcal L  \bd{\eta}_m , \bd{\eta}_m \rangle < \frac 1 m$.
Due to the $L^2\times L^2$ coercivity, $\bd{\eta}_m$ converges to $0$.
Recall that the bilinear form $\langle {\mathcal L}_0\cdot , \cdot \rangle$ introduced in \eqref{def_L0}
defines a scalar product on $H^1\times H^1$. Since $\mathcal{L}-\mathcal{L}_0$ is a continuous operator in $L^2\times L^2$, we have $\langle {\mathcal L}_0 \bd{\eta}_m, \bd{\eta}_m\rangle$
goes to $0$, which is a contradiction. Therefore, the proof is complete.
\end{proof}

\section{Construction of the generalized solitary wave}\label{2}

In this section, our objective is to prove the existence of a solution $\bd{\eta}$ to \eqref{boussinesq} such that \eqref{Construction} and \eqref{PreInteraction} are satisfied. Clearly, \eqref{PreInteraction} implies \eqref{Construction}. To construct the exact solitary-wave solution, we shall follow standard procedures, see \cite{Martel,Mu1} for early developments in the pure gKdV case, and in the variable medium cases, respectively. Recall the definition of ${\bf S}_h$ introduced in \eqref{S0}. In this section, we will provide a detailed description of the interaction between the constructed solitary wave and the moving bottom.
Let $\omega>0$ be a fixed parameter. Consider the exact solitary wave
\begin{equation}\label{SW_fixed}
{\bd Q}_{\omega,x_0}(t,x):= \begin{pmatrix} R_{\omega}(x-\omega t -x_0)\\ Q_{\omega}(x-\omega t -x_0) \end{pmatrix},
\end{equation}
exact solution to \eqref{boussinesq} with $h\equiv 0$. Denote
\begin{equation}\label{profile}
{\bd Q}_{\omega}(x):= {\bd Q}_{\omega,0}(0,x),
\end{equation}
as the profile associated with the solitary wave \eqref{SW_fixed}. Let $T_n>0$ be an increasing sequence of times tending to infinity, such that always $T_0\geq \frac12T_\varepsilon$. Let $(\eta_n,u_n)$ be the solution to the Cauchy problem associated with  \eqref{boussinesq} with an exact profile \eqref{profile} at $t=-T_n$, of the form
\begin{equation}\label{Cauchy}
{\bf S}_h(\eta_n,u_n)=(0,0)^T, \quad \hbox{such that}\quad (\eta_n,u_n)(-T_n,x)={\bd Q}_{\omega}(x + \omega T_n).
\end{equation}
Let us denote $I_n:= (-T_{n,-}, T_{n,+}) \ni -T_n$ the maximal interval of existence of each $(\eta_n,u_n)$. Notice that it is not clear if $I_n$ is bounded or unbounded.  However, one easily has $\|{\bd Q}_{\omega,x_0}(-T_n)\|_{H^1\times H^1} \leq C$ uniform in $n$. As usual, we shall establish uniform estimates at a fixed time $t<- \frac12T_\varepsilon$.

\subsection{Uniform estimates}
Let us introduce the notation  $$\bd{\eta}_n(t):=(\eta_n,u_n)\in C(I_n, H^1\times H^1)$$ for  the solution to \eqref{Cauchy} with initial data $(\eta_n,u_n)(-T_n)={\bd Q}_{\omega}(-T_n)$.
We will establish the following proposition.
\begin{proposition}\label{unif_est}
There exist $C_0,\mu_1,\varepsilon_0>0$ such that, for all $0<\varepsilon<\varepsilon_0$, and all $n\geq 0$, we have the inclusion $[-T_n,-\frac12T_\varepsilon] \subseteq I_n$ and the estimate
\begin{equation}\label{unif_est0}
\left\| (\eta_n,u_n)(t) - {\bd Q}_{\omega}(\cdot -\omega t)\right\|_{H^1 \times H^1} \leq C_0  e^{\mu_1 \varepsilon t},
\end{equation}
for all $t \in [-T_n,-\frac12T_\varepsilon]$.
\end{proposition}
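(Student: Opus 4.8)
The plan is to run the standard modulation $+$ Lyapunov functional scheme for constructing a solitary wave at $t=-\infty$ (as in \cite{Martel,Mu1}), adapted to the fact that here neither $H_h$ nor $P$ is conserved. Fix $n$, and work on the largest subinterval $[-T_n,\tau^*]\subseteq I_n\cap(-\infty,-\tfrac12 T_\varepsilon]$ on which $(\eta_n,u_n)(t)$ stays inside a small $H^1\times H^1$ tube around $\{\bd Q_w(\cdot-y):w\in\Omega,\,y\in\R\}$. On such an interval the implicit function theorem together with $(i)$--$(iii)$ gives a decomposition
\[
(\eta_n,u_n)(t)=\bd Q_{\omega(t)}(\cdot-\rho(t))+\bd\eta_1(t),\qquad \bd\eta_1(t)=(\eta_1,u_1)(t),
\]
with $C^1$ parameters $\omega(t)\in\Omega$, $\rho(t)\in\R$ satisfying the two orthogonality conditions of \eqref{perp:cond} centered at $\rho(t)$ and with $\omega$ replaced by $\omega(t)$. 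Since the data at $-T_n$ is an exact soliton, $\omega(-T_n)=\omega$, $\rho(-T_n)=-\omega T_n$, $\bd\eta_1(-T_n)=0$. The core is a bootstrap: assuming $\|\bd\eta_1(t)\|_{H^1\times H^1}\le C_0 e^{\mu_1\varepsilon t}$ on $[-T_n,\tau^*]$, I would improve this to $\le \tfrac12 C_0 e^{\mu_1\varepsilon t}$; this forces $\tau^*=-\tfrac12 T_\varepsilon$, and the uniform $H^1\times H^1$ bound, combined with the local Cauchy theory, gives $[-T_n,-\tfrac12 T_\varepsilon]\subseteq I_n$. Passing back to \eqref{unif_est0} is then immediate once one also controls $|\omega(t)-\omega|+|\rho(t)-\omega t|$.

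\textbf{Modulation equations and harmlessness of the bottom.} Differentiating \eqref{perp:cond} in $t$ and using the evolution of $\bd\eta_1$, whose forcing is read from \eqref{S_hQ}, I would obtain
\[
|\omega'(t)|+|\rho'(t)-\omega(t)|\ \lesssim\ \|\bd\eta_1(t)\|_{H^1\times H^1}^2+\varepsilon^2 e^{-c\varepsilon|t|},
\]
the second term bounding the projection of the bottom contribution $\bigl(\partial_x(hQ_\omega)+(1-a_1\partial_x^2)\partial_t h,\ -c_1\partial_t^2\partial_x h\bigr)^T$ onto the exponentially localized modulation directions. The decisive geometric remark, valid on all of $[-T_n,-\tfrac12 T_\varepsilon]$, is that $h$ and its derivatives concentrate near $x=0$ while the soliton sits near $x=\rho(t)\approx\omega t$ with $\varepsilon|\omega t|\ge\tfrac12\omega\varepsilon^{-\delta_0}\to\infty$; hence every overlap of $h$, $\partial_t h$, $\partial_t^2 h$ with a soliton-localized weight is $\lesssim\varepsilon^k e^{-c\varepsilon|t|}$, smaller than any power of $\varepsilon$. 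Integrating from $-T_n$ and using the bootstrap, $|\omega(t)-\omega|+|\rho(t)-\omega t|\lesssim \varepsilon^{-1}C_0^2 e^{2\mu_1\varepsilon t}+\varepsilon e^{-c\varepsilon|t|}$, which stays far below $e^{\mu_1\varepsilon t}$ throughout; in particular $\omega(t)$ never leaves $\Omega$, so the coercivity constant of Lemma~\ref{coercivitydetailled} is uniform.

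\textbf{The Lyapunov functional and closing the bootstrap.} I would take $\mathcal F_n(t):=H_h[\eta_n,u_n](t)-\omega(t)\,P[\eta_n,u_n](t)$. Translating \eqref{Energy_new2} so as to be centered at $\rho(t)$ and using the orthogonality conditions --- the kernel one plus $\langle\bd\eta_1,J(1-\partial_x^2)\bd Q_{\omega(t)}(\cdot-\rho)\rangle=0$, which annihilates the bad term in \eqref{coer_1} and, since $\nabla P[\bd Q_\omega]=J(1-\partial_x^2)\bd Q_\omega$, makes $P[\eta_n,u_n]-P[\bd Q_{\omega(t)}]=O(\|\bd\eta_1\|^2)$ --- one gets, with $G(s):=H_h[\bd Q_{\omega(s)}(\cdot-\rho(s))](s)-\omega(s)P[\bd Q_{\omega(s)}]$,
\[
\|\bd\eta_1(t)\|_{H^1\times H^1}^2\ \lesssim\ \bigl|\mathcal F_n(t)-G(t)\bigr|+\|\bd\eta_1(t)\|_{H^1\times H^1}^3+\varepsilon e^{-c\varepsilon|t|}.
\]
Since $\bd\eta_1(-T_n)=0$ one has $\mathcal F_n(-T_n)=G(-T_n)$, so $\mathcal F_n(t)-G(t)=\int_{-T_n}^t(\tfrac{d}{ds}\mathcal F_n-\tfrac{d}{ds}G)\,ds$. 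Here $\tfrac{d}{ds}\mathcal F_n=\tfrac{d}{ds}H_h-\omega\tfrac{d}{ds}P-\omega'P[\eta_n,u_n]$ is controlled by Lemmas~\ref{Virial_bousH}, \ref{Virial_bousP} and \ref{Lem2p3}: the first two pieces are $\lesssim\varepsilon^2 e^{-k_0\varepsilon|s|}\int(u^2+\eta^2+|u|+|\eta|)e^{-l_0\varepsilon|x|}$, which splits into a soliton piece $\lesssim\varepsilon^2 e^{-c\varepsilon|s|}$ and a piece $\lesssim\varepsilon^2 e^{-k_0\varepsilon|s|}(\|\bd\eta_1(s)\|^2+\varepsilon^{-1/2}\|\bd\eta_1(s)\|)$, while the Hamilton relation $\tfrac{d}{d\omega}H[\bd Q_\omega]=\omega\,\tfrac{d}{d\omega}P[\bd Q_\omega]$ gives $\tfrac{d}{ds}G=-\omega'(s)P[\bd Q_{\omega(s)}]+O(\text{bottom}\times\text{soliton})$, so the $\omega'$ terms combine into $|\omega'|\,|P[\eta_n,u_n]-P[\bd Q_\omega]|\lesssim\|\bd\eta_1\|^4+(\text{super-small})$. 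Choosing $\mu_1$ with $0<2\mu_1<k_0$ (and below the other exponential rates that appear) so that $e^{-k_0\varepsilon|s|}\le e^{2\mu_1\varepsilon s}$ for $s<0$, integrating in $s$ and using the bootstrap, all contributions take the form $\bigl(\varepsilon+C_0^2\varepsilon+C_0\varepsilon^{1/2}\bigr)e^{2\mu_1\varepsilon t}$ up to super-small remainders; and since $\|\bd\eta_1\|^3\le C_0^3 e^{-\mu_1\varepsilon^{-\delta_0}/2}e^{2\mu_1\varepsilon t}$ on $[-T_n,-\tfrac12 T_\varepsilon]$, one ends with $\|\bd\eta_1(t)\|^2\le K\bigl(\varepsilon+C_0^2\varepsilon+C_0\varepsilon^{1/2}\bigr)e^{2\mu_1\varepsilon t}$. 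Taking $C_0$ large and then $\varepsilon_0$ small closes the bootstrap, and \eqref{unif_est0} follows from $\|(\eta_n,u_n)(t)-\bd Q_\omega(\cdot-\omega t)\|_{H^1\times H^1}\le\|\bd\eta_1(t)\|+C\bigl(|\omega(t)-\omega|+|\rho(t)-\omega t|\bigr)$ after adjusting constants.

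\textbf{Main difficulty.} The delicate point is the third step: one is forced to work with the Lyapunov functional $H_h-\omega(t)P$ for a genuinely time-dependent $\omega(t)$, and to control its total variation over a time window of unbounded length. This succeeds only because (a) the non-conservation defects of Lemmas~\ref{Virial_bousH}--\ref{Lem2p3} carry the factor $e^{-k_0\varepsilon|t|}$, integrable on $(-\infty,-\tfrac12 T_\varepsilon]$ with total mass $O(\varepsilon e^{-c\varepsilon^{-\delta_0}})$; (b) in the pre-interaction window the soliton and the bottom are disjointly supported up to super-small errors, killing the soliton part of those defects; and (c) both orthogonality conditions (not only the kernel one) are essential --- for coercivity via \eqref{coer_1} and to absorb the $\omega'P$ contribution. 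Getting the powers of $\varepsilon$ and the hierarchy $2\mu_1<k_0$ right, so that the estimate strictly improves the bootstrap rather than merely reproducing it, is the technical heart of the argument.
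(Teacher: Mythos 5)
Your argument is correct, but it departs from the paper on the central structural choice: you modulate \emph{both} $\omega(t)$ and $\rho(t)$ so as to impose the two orthogonality conditions of \eqref{perp:cond}, invoke the two-constraint coercivity of Lemma~\ref{coercivitydetailled}, and then absorb the extra $\omega'P$ term in $\tfrac{d}{ds}\bigl(H_h-\omega(s)P\bigr)$ via the Hamilton relation together with the second orthogonality (which makes $P[\eta_n,u_n]-P[\bd Q_{\omega(s)}]=O(\|\bd\eta_1\|^2)$). The paper instead modulates \emph{only} the shift $\rho_{1,n}$ --- cf.~\eqref{z1}--\eqref{Ortho1}, where there is a single orthogonality $\langle \bd\eta_{1,n},\bd Q'_\omega\rangle=0$ and $\omega$ stays frozen at its asymptotic value --- and then uses the one-constraint coercivity \eqref{coer_1}, which leaves a residual $-|\langle\bd\eta_1,J(1-\partial_x^2)\bd Q_\omega\rangle|^2$. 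That residual is estimated separately: since $\bd Q_\omega$ is a critical point of $H-\omega P$, expanding $H_h$ around $\bd Q_\omega$ isolates the linear term $\omega\langle\bd\eta_1,J(1-\partial_x^2)\bd Q_\omega\rangle$, and the near-conservation of $H_h$ (Lemma~\ref{Lem2p3}) together with $\bd\eta_1(-T_n)=0$ pins down this scalar to the required accuracy (the chain \eqref{est11}--\eqref{dif_NL}). In other words, where you annihilate the bad direction by modulating the speed, the paper \emph{measures} the bad direction using the almost-conserved energy. Both close the bootstrap with the same hierarchy $0<2\mu_1<k_0$ and the same disjoint-support observation that kills all soliton--bottom overlaps on $(-\infty,-\tfrac12 T_\varepsilon]$. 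What your version buys is the textbook Grillakis--Shatah--Strauss picture and a cleaner quadratic-form inequality; what the paper's version buys is a lighter modulation step (one scalar parameter, no need to worry about $\omega(t)$ drifting out of $\Omega$ or the coercivity constant degrading, and no $\omega'P$ bookkeeping). The one additional box you should tick explicitly is the invertibility of the $2\times2$ modulation Jacobian for $(\omega,\rho)$: by parity it is diagonal, with entries $\|\bd Q_\omega'\|^2$ and $-\tfrac{d}{d\omega}P[\bd Q_\omega]$, so condition $(iii)$ is exactly what makes your IFT work --- worth stating, since it is the point where your route uses the slope condition structurally rather than only through the coercivity lemma.
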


In order to prove Proposition \ref{unif_est}, we shall use a bootstrap argument. For all $n\geq 0$, the following is true. If for some $T_{n,*} \in [\frac12T_\varepsilon, T_n]$ and for all $t \in [-T_n,-T_{n,*}]$, we have
\begin{equation}\label{unif_est0_0}
\left\| (\eta_n,u_n)(t) - {\bd Q}_{\omega}(\cdot -\omega t)\right\|_{H^1 \times H^1} \leq C_0 e^{\mu_1 \varepsilon t},
\end{equation}
then for all $t \in [-T_n,-T_{n,*}]$,
\begin{equation}\label{unif_est0_1}
\left\| (\eta_n,u_n)(t) - {\bd Q}_{\omega}(\cdot -\omega t)\right\|_{H^1 \times H^1} \leq \frac12C_0   e^{\mu_1 \varepsilon t}.
\end{equation}
A classical argument reveals that $ (\eta_n,u_n)(-T_n)={\bd Q}_{\omega,0}(-T_n)$, and \eqref{unif_est0_1} prove \eqref{unif_est0}. Let us assume \eqref{unif_est0_0} for $t \in [-T_n,-T_{n,*}]$.

\subsection{Modulation} Thanks to \eqref{unif_est0_0} we are in a regime where the solution is close to the exact flat-bottom $abcd$ solitary wave. Using this fact, we shall now prove a modulation result.

\begin{lemma}\label{mod1}
There exist $C_0>1$, $\mu_1,\varepsilon_0>0$ such that, if $T_0$ is sufficiently large and \eqref{unif_est0_0} holds  for all $0<\varepsilon<\varepsilon_0$, and all $n\geq 0$, then
there exist $C>0$ and  a $C^1$-modulation shift $\rho_{1,n}:[-T_n,-T_{n,*}] \to \mathbb R$ such that
\begin{equation}\label{z1}
\bd{\eta}_{1,n} := \bd{\eta}_n(t) - {\bd Q}_{\omega}(\cdot - \omega t -\rho_{1,n}(t)),
\end{equation}
satisfies, for all $t \in [-T_n,-T_{n,*}]$,
\begin{equation}\label{Ortho1}
\langle \bd{\eta}_{1,n}, {\bd Q}'_{\omega}(\cdot -\omega t- \rho_{1,n}(t)) \rangle =0 \ \text{ and }\ \|\bd{\eta}_{1,n}\|_{H^1\times H^1}\leq CC_0  e^{\mu_1 \varepsilon t}.
\end{equation}
Moreover, $\bd{\eta}_{1,n} = (\eta_{1,n},u_{1,n})$ in \eqref{z1} satisfies the system
\begin{equation}\label{eq_z1}
\begin{aligned}
&(1- \partial_x^2)\partial_t \eta_{1,n}  + \partial_x \! \left( a\, \partial_x^2 u_{1,n} +u_{1,n} + u_{1,n} R_\omega + \eta_{1,n} Q_\omega + (Q_\omega + u_{1,n}) h \right) \\
& \qquad = (-1 +a_1 \partial_x^2) \partial_th  + \rho_{1,n}' (1- \partial_x^2)R_\omega' , \\
& (1- \partial_x^2)\partial_t u_{1,n}  + \partial_x \! \left( c\, \partial_x^2 \eta_{1,n} + \eta_{1,n} +  Q_\omega u_{1,n} + \frac12 u_{1,n}^2 \right)
\\
& \qquad = c_1  \partial_t^2 \partial_x h + \rho_{1,n}' (1- \partial_x^2)Q_\omega' ,
\end{aligned}
\end{equation}
and
\begin{equation}\label{der_rho1}
| \rho_{1,n}'(t)| \leq C \|\bd{\eta}_{1,n}\|_{H^1\times H^1} + C \varepsilon e^{\varepsilon ( k_0 t + \frac9{10}l_0 t )}.
\end{equation}
\end{lemma}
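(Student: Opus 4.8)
The plan is the classical modulation argument, adapted to the moving bottom. \textbf{Step 1: construction of the shift.} For each fixed $t$, I apply the implicit function theorem to the map
\[
(\bd v,r)\in (H^1\times H^1)\times\R\ \longmapsto\ \big\langle \bd v-{\bd Q}_\omega(\cdot-\omega t-r),\, {\bd Q}'_\omega(\cdot-\omega t-r)\big\rangle .
\]
This vanishes at $(\bd v,r)=({\bd Q}_\omega(\cdot-\omega t),0)$, and its $r$-derivative there equals $\|{\bd Q}'_\omega\|_{L^2\times L^2}^2\neq 0$; by translation invariance the size of the resulting neighbourhood and the constant in $|r(\bd v)|\leq C\|\bd v-{\bd Q}_\omega(\cdot-\omega t)\|$ are independent of $t$. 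Taking $\varepsilon_0$ small and $T_0$ large so that the bound $C_0e^{\mu_1\varepsilon t}$ from \eqref{unif_est0_0} stays below the admissible radius on $[-T_n,-T_{n,*}]$, I set $\rho_{1,n}(t):=r(\bd\eta_n(t))$. This produces the orthogonality in \eqref{Ortho1} together with $|\rho_{1,n}(t)|\leq CC_0e^{\mu_1\varepsilon t}$; then \eqref{z1}, the triangle inequality and the exponential decay of ${\bd Q}_\omega$ (Lemma~\ref{lem:decay:solitons}) give $\|\bd\eta_{1,n}(t)\|_{H^1\times H^1}\leq CC_0e^{\mu_1\varepsilon t}$. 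Since $\bd\eta_n(-T_n)={\bd Q}_\omega(\cdot+\omega T_n)$ exactly, uniqueness forces $\rho_{1,n}(-T_n)=0$ and $\bd\eta_{1,n}(-T_n)=0$.

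\textbf{Step 2: the equation for $\bd\eta_{1,n}$.} I insert $\bd\eta_n=\bd\eta_{1,n}+{\bd Q}_\omega(\cdot-\omega t-\rho_{1,n})$ into \eqref{boussinesq}. With $z:=x-\omega t-\rho_{1,n}(t)$ one has $\partial_t[{\bd Q}_\omega(z)]=-(\omega+\rho_{1,n}'){\bd Q}'_\omega(z)$; the terms carrying the factor $-\omega$ cancel against the spatial contribution by virtue of \eqref{exact0} (equivalently \eqref{exact}), leaving precisely the forcing $\rho_{1,n}'(1-\partial_x^2){\bd Q}'_\omega$ on the right-hand side. The bottom terms $(-1+a_1\partial_x^2)\partial_t h$ and $c_1\partial_t^2\partial_x h$ are untouched, the product $u_n(\eta_n+h)$ produces the new piece $\partial_x(hQ_\omega)$, and regrouping the remaining cross terms yields \eqref{eq_z1}.

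\textbf{Step 3: $C^1$ regularity of $\rho_{1,n}$ and the bound \eqref{der_rho1}.} Because $\bd\eta_n$ is a priori only continuous in $t$ with values in $H^1\times H^1$, I derive the ODE satisfied by $\rho_{1,n}$. Differentiating $\langle\bd\eta_{1,n}(t),{\bd Q}'_\omega(z(t))\rangle=0$ in $t$ and substituting \eqref{eq_z1} for $\partial_t\bd\eta_{1,n}$ (after inverting $1-\partial_x^2$), the terms proportional to $\rho_{1,n}'$ have coefficient $\|{\bd Q}'_\omega\|_{L^2\times L^2}^2+O(\|\bd\eta_{1,n}\|_{H^1\times H^1})$, which is bounded below by a positive constant for $\varepsilon_0$ small and $T_0$ large; solving expresses $\rho_{1,n}'(t)$ as an explicit quotient, continuous in $t$, and a routine density argument (testing the distributional form of \eqref{eq_z1} against smooth functions) confirms this is the genuine derivative, so $\rho_{1,n}\in C^1$. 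For the size of $\rho_{1,n}'$: the numerator contains terms linear in $\bd\eta_{1,n}$ — and quadratic ones, which are $\lesssim\|\bd\eta_{1,n}\|$ since $\|\bd\eta_{1,n}\|\ll 1$ — accounting for the $C\|\bd\eta_{1,n}\|$ in \eqref{der_rho1}; and the bottom contributions $\langle\partial_x(hQ_\omega),{\bd Q}'_\omega(z)\rangle$, $\langle(1-a_1\partial_x^2)\partial_t h,{\bd Q}'_\omega(z)\rangle$, $\langle c_1\partial_t^2\partial_x h,{\bd Q}'_\omega(z)\rangle$, which I estimate by combining \eqref{hypoH} (each derivative of $h$ costs a power of $\varepsilon$, and $h=\varepsilon h_0(\varepsilon t,\varepsilon x)$ decays like $\varepsilon e^{-k_0\varepsilon|t|}e^{-l_0\varepsilon|x|}$) with the exponential localization of ${\bd Q}_\omega,{\bd Q}'_\omega$ about $x=\omega t+\rho_{1,n}(t)$. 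Since $\rho_{1,n}(t)$ is small on $[-T_n,-T_{n,*}]$ and the soliton core sits far from the region where $h$ is not negligible, these overlap integrals are bounded by $C\varepsilon e^{\varepsilon(k_0+\frac9{10}l_0)t}$ — very far from sharp, but exactly the form needed in \eqref{der_rho1}.

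\textbf{Main obstacle.} Conceptually the argument is routine; the two points requiring care are the passage from a merely continuous-in-time solution to a genuinely $C^1$ shift — handled by first producing the ODE for $\rho_{1,n}'$ and only afterwards identifying it with a derivative — and the bookkeeping of the $h$-terms in Step~3, where one must track both the $\varepsilon$-gains of \eqref{hypoH} and the spatial separation $|\omega t|\gg\varepsilon^{-1}$ between the soliton and the perturbed part of the bottom in order to reach a clean exponentially small bound. In the pre-interaction window $t\leq-\tfrac12T_\varepsilon$ this separation is enormous, so the latter step is organizational rather than difficult.
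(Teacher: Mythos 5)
Your proposal is correct and follows essentially the same three steps as the paper: implicit function theorem on the orthogonality functional $\Gamma(\bd{\eta},\rho_1)$ to produce the shift, direct substitution of the decomposition \eqref{z1} into \eqref{boussinesq} to obtain \eqref{eq_z1}, and pairing of \eqref{eq_z1} against ${\bd Q}'_\omega(\cdot-\omega t-\rho_{1,n})$ with the orthogonality to bound $\rho_{1,n}'$. Your Step~3 is a bit more explicit than the paper about how the $C^1$ regularity of $\rho_{1,n}$ is justified when the solution is only $C^0$ in time valued in $H^1\times H^1$, and you correctly give the positive sign $+\|\bd{Q}'_\omega\|_{L^2\times L^2}^2$ for $\partial_{\rho_1}\Gamma$ where the paper writes it with an erroneous sign and a different (but also nonvanishing) norm; neither discrepancy affects the argument.
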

Note that a standard output from $\bd \eta_n(-T_n)=(\eta_n,u_n)(-T_n)={\bd Q}_{\omega}(-T_n)$ is that $\bd{\eta}_{1,n}(-T_n) = 0$, $\rho_{1,n}(-T_n)=0$ and ${\bd Q}_{\omega}(\cdot + \omega T_n -\rho_{1,n}(-T_n)) ={\bd Q}_{\omega}(\cdot + \omega T_n)$. These facts will be used several times below.

\begin{proof}[Proof of Lemma \ref{mod1}]
The proof is classical by now. We only sketch the main steps, see \cite{MM,MM1,MM2} for detailed proofs. Let $t \in [-T_n,-T_{n,*}]$ be a fixed time. Under \eqref{unif_est0_0}, and $T_0$ larger if necessary, we can apply the Implicit Function Theorem on the orthogonality defined in \eqref{Ortho1}. For the sake of simplicity, we drop the dependence on $n$. Indeed, let
\[
\begin{aligned}
& H^1\times H^1\times \mathbb R \ni (\bd{\eta},\rho_1) = (\eta,u,\rho_1)
\\
& \qquad \longrightarrow \Gamma( \bd{\eta},\rho_1) := \langle \bd{\eta} - {\bd Q}_{\omega}(\cdot -\omega t- \rho_1 ), {\bd Q}'_{\omega}(\cdot -\omega t- \rho_1) \rangle \in \mathbb R.
\end{aligned}
\]
It is clear that this defines a $C^1$ functional in the above variables, and for all $t$, we have the identity $ \Gamma( {\bd Q}_{\omega}(\cdot -\omega t) , 0)  =0$. Additionally, $ \partial_{\rho_1}\Gamma|_{( {\bd Q}_{\omega}(\cdot -\omega t) , 0)} =-\|{\bd{Q}}_{\omega}' \|_{H^1\times H^1}^2\neq 0.$ Therefore, for each $t$ and $\bd{\eta}(t)$ small enough, there is $\rho_1(t)$ satisfying $ \Gamma(\bd{\eta}(t), \rho_1(t))  =0$. This proves the first part of \eqref{Ortho1}. The second part follows easily from the first part. The proof of \eqref{eq_z1} is direct after replacing \eqref{z1} in \eqref{boussinesq} and using the fact that ${\bd Q}'_{\omega}(\cdot -\omega t - x_0)$ solves \eqref{boussinesq} for any fixed $x_0$. Finally, to prove \eqref{der_rho1}, testing \eqref{eq_z1} against ${\bd Q}'_{\omega}(\cdot -\omega t- \rho_{1,n}(t))$ and using \eqref{Ortho1} one gets
\[
| \rho_{1,n}'(t)| \| \bd{Q}_\omega' \|_{L^2}^2  \leq C \|\bd{\eta}_{1,n}\|_{H^1\times H^1} + C \| \bd{Q}_{\omega} (|h| +| \partial_th | + |\partial_t^2 h| )\|_{L^1\times L^1}.
\]
Using \eqref{hypoH}, one finally gets \eqref{der_rho1}.
\end{proof}

\subsection{Energy and momentum estimates}\label{sub3p3} Suppose $C_0>1.$ Since $\eta_1$ satisfies \eqref{Ortho1},
\begin{equation}
\label{est:eta1}
\|\bd{\eta}_{1}\|_{L^\infty\times L^\infty }\leq  \|\bd{\eta}_{1}\|_{H^1\times H^1}\leq CC_0  e^{\mu_1 \varepsilon t}\leq C C_0\varepsilon,
\end{equation}
for $t\leq -T_\varepsilon.$
Recalling  that
$$
 \langle \bd{\eta}_1, J(1-\partial_x^2)\bd{Q}_\omega \rangle  =\int \left( (R_{\omega }-R_{\omega }'')  u_1  + (Q_{\omega }-Q_{\omega }'')  \eta_1  \right),
$$
and using \eqref{exact0}, it follows from  \eqref{Energy_new_new} that
\[
\begin{aligned}
& H_h[{\bd Q}_\omega +  \bd{\eta}_1 ](t)
\\
 &~{}  = H_h[{\bd Q}_\omega] + \omega  \langle \bd{\eta}_1, J(1-\partial_x^2)\bd{Q}_\omega \rangle + \int  Q_\omega u_1 h  \\
&~{} \quad + \frac12\int \left( -a (\partial_x u_1)^2 -c (\partial_x \eta_1)^2  + u_1^2+ \eta_1^2 + 2Q_\omega \eta_1 u_1 + u_1^2(R_\omega + \eta_1 + h) \right) .
\end{aligned}
\]
Since $\omega>0$, we deduce that
\[
\begin{aligned}
&\omega \left| \langle \bd{\eta}_1, J(1-\partial_x^2)\bd{Q}_\omega \rangle (t) \right| \\
&~{}  \leq \omega \left| \langle \bd{\eta}_1 , J(1-\partial_x^2)\bd{Q}_\omega \rangle (-T_n) \right|  + \left| H_h[{\bd Q}_\omega +  \bd{\eta}_1 ](t) -H_h[{\bd Q}_\omega +  \bd{\eta}_1 ](-T_n)\right|\\
&~{} \quad  +\left|\int Q_\omega u_1 h(t)\right|+\left|\int Q_\omega u_1 h(-T_n)\right|             + \left| H_h[{\bd Q}_\omega](-T_n)-  H_h[{\bd Q}_\omega](t)\right| \\
&~{} \quad  +\left|  \frac12\int \left( -a (\partial_x u_1)^2 -c (\partial_x \eta_1)^2  + u_1^2+ \eta_1^2 + 2Q_\omega \eta_1 u_1 + u_1^2(R_\omega + \eta_1 + h) \right)(t) \right| \\
&~{} \quad  + \left| \frac12\int \left( -a (\partial_x u_1)^2 -c (\partial_x \eta_1)^2  + u_1^2+ \eta_1^2 + 2Q_\omega \eta_1 u_1 + u_1^2(R_\omega + \eta_1 + h) \right)(-T_n) \right| .
\end{aligned}
\]
Since $\bd{\eta}_1(-T_n)=0$, the first and last terms in the right-hand side of this inequality are equal to zero. In addition, using \eqref{est:eta1}, we conclude that
\be\label{est11}
\begin{aligned}
& \left| \langle \bd{\eta}_1, J(1-\partial_x^2)\bd{Q}_\omega \rangle (t) \right| \\
&~{} \leq  \left| H_h[{\bd Q}_\omega +  \bd{\eta}_1 ](t) -H_h[{\bd Q}_\omega +  \bd{\eta}_1 ](-T_n)\right|\\
&~{} \quad + \left| H_h[{\bd Q}_\omega](-T_n)-  H_h[{\bd Q}_\omega](t)\right| + C\|\bd{\eta}_1(t)\|_{H^1\times H^1}^2+ C  \int u_1^2 h(t).
\end{aligned}
\ee
In view of, \eqref{Ortho1},
Since $-T_n<t<-T_\varepsilon,$ we deduce from \eqref{decay_Q} and \eqref{est_E},
\[
\begin{aligned}
\left| \frac{d}{dt} H_h[{\bd Q}_\omega +  \bd{\eta}_1  ](t) \right| \lesssim  &~{}  \varepsilon^2  e^{-k_0\varepsilon |t| }   \int ((Q_\omega + u_1)^2 + (R_\omega+ \eta_1)^2) e^{-l_0\varepsilon |x| }  \\
&~{} +  \varepsilon^{2}  e^{-k_0\varepsilon |t| } \int ( |R_\omega+ \eta_1 | +|Q_\omega + u_1 | ) e^{-l_0\varepsilon |x| } \\
\lesssim &~{}  \varepsilon^{2}  e^{-k_0\varepsilon |t| } \int ( |R_\omega| + |\eta_1 | +\abs{Q_\omega} + |u_1 | ) e^{-l_0\varepsilon |x| } \\
\lesssim &~{}  \varepsilon^{2}  e^{-k_0\varepsilon |t| } \int  e^{-\mu_0 |x -\omega t-\rho_1(t)|}   e^{-l_0\varepsilon |x| } + \varepsilon^{3/2}  e^{-k_0\varepsilon |t| } \| \bd{\eta}\|_{H^1\times H^1}, 
\end{aligned}
\]
since $\norm{e^{-l_0 \ve {x}}}\lesssim \ve^{-1/2}.$
From \eqref{der_rho1}, and the fact that $t<-T_\varepsilon$, we get
\[
\omega t + \rho_1(t) < \frac{9}{10} \omega t.
\]
Then, using \eqref{unif_est0_0},
\[
\left| \frac{d}{dt} H_h[{\bd Q}_\omega +  \bd{\eta}_1  ](t) \right| \lesssim \varepsilon^{2}  e^{-(k_0 + \frac{9}{10} l_0 \omega)\varepsilon |t| } + C_0 \varepsilon^{5/2}  e^{-(k_0+ \mu_1)\varepsilon |t| },
\]
and choosing $\mu_1:= \min\{k_0, \frac12(k_0 + \frac9{10}l_0 \omega)\}$, we get by integration,
\begin{equation}\label{est_H_1}
\left| H_h[{\bd Q}_\omega +  \bd{\eta}_1 ](t) -H_h[{\bd Q}_\omega +  \bd{\eta}_1 ](-T_n)\right| \lesssim (1+C_0  \varepsilon^{\frac12})\varepsilon e^{ 2 \mu_1 \varepsilon t } \lesssim \varepsilon e^{ 2 \mu_1 \varepsilon t },
\end{equation}
 for $-T_n<t<-T_\varepsilon$. Following similar computations, from \eqref{est_P} and \eqref{unif_est0_0}, one gets
\begin{equation}\label{est_P_1}
\left| P[{\bd Q}_\omega +  \bd{\eta}_1 ](t) - P[{\bd Q}_\omega +  \bd{\eta}_1 ](-T_n)\right| \lesssim (1+C_0  \varepsilon^{\frac12})\varepsilon e^{ 2 \mu_1 \varepsilon t } \lesssim \varepsilon e^{ 2 \mu_1 \varepsilon t }.
\end{equation}
Additionally, using \eqref{est_E_Qc},
\[
\begin{aligned}
\left| H_h[{\bd Q}_\omega](-T_n)-  H_h[{\bd Q}_\omega](t)\right|   \lesssim  &~{} \int  Q_\omega^2 (\abs{ h (-T_n)}+\abs{h(t)}\\
 \lesssim  &~{}  \varepsilon e^{k_0 \varepsilon t } \int  Q_\omega^2(x-\rho_1(t)) e^{-l_0 \varepsilon |x| } \\
 \lesssim &~{} \varepsilon  e^{k_0 \varepsilon t } \int  e^{-2\mu_0 |x -\rho_1(t)|}   e^{-l_0\varepsilon |x| }.
\end{aligned}
\]
Therefore,
 \begin{equation}\label{dif_H}
\left| H_h[{\bd Q}_\omega](-T_n)-  H_h[{\bd Q}_\omega](t)\right|  \lesssim  \varepsilon e^{-(k_0 + \frac{9}{5} l_0 \omega)\varepsilon |t| } \lesssim   \varepsilon e^{ 2 \mu_1 \varepsilon t }.
\end{equation}
Finally, \eqref{unif_est0_0} gives
 \begin{equation}\label{dif_NL}
\left|  \int u_1^2 h(t) \right| \lesssim \varepsilon e^{ k_0 \varepsilon t } \int u_1^2 e^{- l_0 \varepsilon |x| } \lesssim C_0^2  \varepsilon e^{ (k_0 +2\mu_1) \varepsilon t } \lesssim C_0^2  \varepsilon e^{ 3\mu_1 \varepsilon t }.
\end{equation}
Collecting the previous four estimates, namely \eqref{est11}, \eqref{est_H_1}, \eqref{est_P_1} and \eqref{dif_NL}, we have
\[
\left| \langle \bd{\eta}_1, J(1-\partial_x^2)\bd{Q}_\omega \rangle (t) \right| \lesssim C_0^2 \varepsilon e^{ 3\mu_1 \varepsilon t } + \varepsilon e^{ 2 \mu_1 \varepsilon t } + C_0^2 \varepsilon^2 e^{ 2\mu_1 \varepsilon t } \lesssim  \varepsilon e^{ 2\mu_1 \varepsilon t } .
\]
Now, for a fixed constant $C>0$, \eqref{coer_1} and  \eqref{Energy_new2} lead to
\[
\begin{aligned}
&\| \bd{\eta}_1(t) \|_{H^1\times H^1}^2  \\
& \quad \leq C \left\langle  \bd{\eta}_1 ,  \mathcal L \bd{\eta}_1  \right\rangle (t) + C \left| \langle \bd{\eta}_1 , J(1-\partial_x^2)\bd{Q}_\omega \rangle (t) \right|^2\\
& \quad \leq C \left( \left\langle  \bd{\eta}_1   ,  \mathcal L \bd{\eta}_1  \right\rangle(t)  - \left\langle  \bd{\eta}_1  ,  \mathcal L \bd{\eta}_1  \right\rangle (-T_n)  \right) \\
& \quad \quad  +C \left\langle  \bd{\eta}_1 ,  \mathcal L \bd{\eta}_1  \right\rangle (-T_n)    +C C_0^4 \varepsilon^2 e^{4\mu_1 \varepsilon t } \\
& \quad \leq C\left| H_h[{\bd Q}_\omega + \bd{\eta}_1 ](t) -\omega P[{\bd Q}_\omega +  \bd{\eta}_1 ](t)  -H_h[{\bd Q}_\omega + \bd{\eta}_1 ](-T_n) + \omega P[{\bd Q}_\omega +  \bd{\eta}_1](-T_n)  \right| \\
& \quad \quad + C\left| H_h[{\bd Q}_\omega](-T_n) -\omega P[{\bd Q}_\omega] (-T_n) -H_h[{\bd Q}_\omega](t) + \omega P[{\bd Q}_\omega] (t) \right|  \\
& \quad \quad + C\left|     \int  Q_\omega u_1 h (-T_n)  -  \int  Q_\omega u_1 h (t) \right|  \\
& \quad \quad + C \left| \int u_1^2  \left( \eta_1 + h\right)(-T_n) -\int u_1^2  \left( \eta_1 + h\right)(t) \right| \\
& \quad \quad  +C \left\langle  \bd{\eta}_1 ,  \mathcal L \bd{\eta}_1  \right\rangle (-T_n)    +C  \varepsilon^2 e^{4\mu_1 \varepsilon t }.
\end{aligned}
\]
Using that $\bd{\eta}_1(-T_n)=0$ and that $P[{\bd Q}_\omega] (-T_n) =P[{\bd Q}_\omega] (t)$, by \eqref{P_ind}, this simplifies to
\[
\begin{aligned}
&\| \bd{\eta}_1(t) \|_{H^1\times H^1}^2  \\
& \quad \leq C\left| H_h[{\bd Q}_\omega + \bd{\eta}_1 ](t) -\omega P[{\bd Q}_\omega +  \bd{\eta}_1](t)  -H_h[{\bd Q}_\omega + \bd{\eta}_1 ](-T_n) + \omega P[{\bd Q}_\omega +  \bd{\eta}_1](-T_n)  \right| \\
& \quad \quad + C\left| H_h[{\bd Q}_\omega](-T_n) -H_h[{\bd Q}_\omega](t) \right|  \\
& \quad \quad + C\left|  \int  Q_\omega u_1 h (t) \right| + C \left| \int u_1^2  \left( \eta_1 + h\right)(t) \right|   +C  \varepsilon^2 e^{4\mu_1 \varepsilon t }.
\end{aligned}
\]
Also, from \eqref{est_H_1}-\eqref{est_P_1}, if $-T_n<t<-T_\varepsilon$,
\[
\begin{aligned}
&\| \bd{\eta}_1(t) \|_{H^1\times H^1}^2  \\
& \quad \leq  C\left| H_h[{\bd Q}_\omega](-T_n) -H_h[{\bd Q}_\omega](t) \right|  \\
& \quad \quad + C\left|  \int  Q_\omega u_1 h (t) \right| + C \left| \int u_1^2  \left( \eta_1 + h\right)(t) \right|   +C( \varepsilon^2 +\varepsilon C_0) e^{2\mu_1\varepsilon t }.
\end{aligned}
\]
Finally, from \eqref{dif_H}
\[
\left| H_h[\bd{Q}_\omega ](-T_n) -H_h[\bd{Q}_\omega ](t)  \right| \leq  C \varepsilon e^{2\mu_1 \varepsilon t } .
\]
Hence, using also \eqref{dif_NL},
\[
\begin{aligned}
&\| \bd{\eta}_1(t) \|_{H^1\times H^1}^2  \\
& \quad \lesssim  C\left|  \int  Q_\omega u_1 h (t) \right| + C \left| \int u_1^2  \eta_1 (t) \right|   +C(\varepsilon C_0 +\varepsilon + \varepsilon C_0^2 e^{\mu_1\varepsilon t }) e^{2\mu_1\varepsilon t }.
\end{aligned}
\]
Now, we get
\[
 \left| \int u_1^2  \eta_1 (t) \right| \lesssim CC_0^3  e^{3\mu_1 \varepsilon t},
\]
and from the $H^1(\R) \hookrightarrow L^\infty(\R)$  embedding,
\[
\begin{aligned}
\left|  \int  Q_\omega u_1 h (t) \right| \lesssim &~{}  \varepsilon  e^{-k_0\varepsilon |t| } \int  e^{-\mu_0 |x -\rho_1(t)|}   e^{-l_0\varepsilon |x| } |u_1| \\
\lesssim  &~{} C_0 \varepsilon  e^{(k_0 + \frac9{10} l_0\omega + \mu_1)\varepsilon t } \lesssim C_0 \varepsilon e^{3\mu_1 \varepsilon t}.
\end{aligned}
\]
We conclude, for $-T_n<t<-T_\varepsilon,$
\[
\begin{aligned}
&\| \bd{\eta}_1(t) \|_{H^1\times H^1}^2  \\
& \quad  \le C( \varepsilon C_0 +\varepsilon + (C_0^3+ \varepsilon C_0^2 )e^{\mu_1\varepsilon t }) e^{2\mu_1\varepsilon t } <\frac1{16C^2}  C_0^2  e^{2\mu_1\varepsilon t },
\end{aligned}
\]
if $C_0$ is chosen large enough, and $\varepsilon$ sufficiently small, and where $C$ is the constant appearing in \eqref{der_rho1} multiplied by the size of the solitary wave. Finally, using \eqref{der_rho1} and the fact that $\rho_1(-T_n)=0$, we get
\[
|\rho_1(t) | \le \frac{C_0  e^{\mu_1\varepsilon t }}{4\|\bd{Q}_\omega\|_{H^1\times H^1}} .
\]
Then, gathering the two previous estimates, \eqref{unif_est0_1} is proved. Consequently, \eqref{unif_est0} is proved. Moreover, the fact that the maximal interval of existence $I_n$ of each $(\eta_n,u_n)$ obeys the inclusion $[-T_n,-\frac12T_\varepsilon] \subseteq I_n$ follows directly from \eqref{unif_est0}.

\subsection{Proof of existence} Proof of \eqref{Construction}. Now we are ready to prove the first part of Theorem \ref{MT}, dealing with the existence part. Let us come back to the dependence on $n$ for the solution $\bd{\eta}$. As a consequence of \eqref{unif_est0}, we have $\left\| (\eta_n,u_n)(t) \right\|_{H^1 \times H^1} \leq C_0$. Moreover, notice from Proposition \ref{unif_est} that the maximal interval of existence $I_n$ of each $(\eta_n,u_n)$ obeys the inclusion $[-T_n,-\frac12T_\varepsilon] \subseteq I_n$. We claim that for each $\delta>0$, there exists $R_0>0$ such that if $0<\varepsilon<\varepsilon_0$,
\begin{equation}\label{compa}
\int_{|x|>R_0} (|\eta_n|^2 +|\partial_x \eta_n|^2 +|u_n|^2 +|\partial_x u_n|^2)\left( -\frac12T_\varepsilon ,x \right)dx \leq \delta.
\end{equation}
Since $\left\| (\eta_n,u_n)(-\frac12T_\varepsilon) \right\|_{H^1 \times H^1} \leq C_0$, as $n$ tends to infinity, it follows that $(\eta_n,u_n)(-\frac12T_\varepsilon) $ weakly converges in $H^1(\mathbb R)^2$ to a $(\eta_{*,0},u_{*,0})$, and $(\eta_n,u_n)(-\frac12T_\varepsilon) $ strongly converges to $(\eta_{*,0},u_{*,0})$ in $L^2_\text{loc} (\mathbb R)^2.$  Thanks to \eqref{compa}, we have strong convergence in $H^1(\mathbb R)^2$.

Let $(\eta_{*},u_{*}) =(\eta_{*},u_{*})(t)$ be the solution to $abcd$ system \eqref{boussinesq} with initial data $(\eta_{*,0},u_{*,0})$ at time $t= -\frac12T_\varepsilon$. From local well-posedness, we have $(\eta_{*},u_{*})$ well-defined in the interval $(-T_{*,-}, T_{*,+})$ containing $-\frac12T_\varepsilon$. Now we use the continuous dependence of the initial value problem. First of all, we have $(-T_{*,-}, T_{*,+}) \subseteq I_n$ for each $n$, as a classical argument shows. Also, for each $-T_{*,-} < t\leq -\frac12T_\varepsilon$, $(\eta_{n},u_{n})(t)$ strongly converges to $(\eta_{*},u_{*})(t)$ in $H^1(\mathbb R)^2$. Passing to the limit in  \eqref{unif_est0} we obtain
\[
\left\| (\eta_*,u_*)(t) - {\bd Q}_{\omega}(\cdot -\omega t)\right\|_{H^1 \times H^1} \leq C_0  e^{\mu_0 \varepsilon t}.
\]
Since the solution at time $t=-T_{*,-}$ is bounded, necessarily $T_{*,-} =\infty$. The proof of Theorem \ref{MT}, property \eqref{Construction} and estimate \eqref{PreInteraction} is complete.

\medskip

We finally prove \eqref{compa}. This follows from a virial estimate proved in \cite{DGMMP}. Let $\psi = \psi(t,x)$ be a smooth, nonnegative and bounded function, to be chosen in the sequel. Again, we drop the dependence on $n$, since it is not important. We consider the localized energy functional defined by
\begin{equation}\label{eq:local energy}
E_\text{loc}(t) = \frac12 \int \psi(t,x) \left( - a(\px u)^2 - c(\px \eta)^2 + u^2 + \eta^2 + u^2(\eta+h)\right)(t,x)dx.
\end{equation}

The following results, proved in \cite{DGMMP}, provide the time derivative of this local energy.
\begin{lemma}[Variation of local energy $E_\text{loc}$]\label{lem:energy1}
Let $(u,\eta)$ be a solution  of \eqref{boussinesq}, and set  $f=(1-\partial_x^2)^{-1} \eta$ and $g=(1-\partial_x^2)^{-1} u$. The time derivative of the local energy in  \eqref{eq:local energy} is given by:
\begin{equation}\label{eq:energy1000}
\begin{aligned}
\frac{d}{dt} E_\emph{loc}(t) =&~{} \int \psi' fg + (1-2(a+c)) \int \psi' \partial_x f \partial_x g  \\
&+ (3ac-2(a+c))\int \psi'  \partial_x^2 f \partial_x^2 g + 3ac\int \psi'  \partial_x^3 f\partial_x^3 g\\
&+ \emph{SNL}_0(t) +\emph{SNL}_1(t),
\end{aligned}
\end{equation}
where the small nonlinear parts $\emph{SNL}_0$ and $\emph{SNL}_1$ are given by
\[
\emph{SNL}_0(t) := \frac12 \int \partial_t \psi \left( - a(\px u)^2 - c(\px \eta)^2 + u^2 + \eta^2 + u^2(\eta+h)\right),
\]
\begin{equation}\label{eq:E1E2E3E4_new2}
\begin{aligned}
& \emph{SNL}_1(t)\\
&~{}  := a(c-1)\int\psi''\partial_x^2 f \partial_x g +c(a-1)\int\psi''\partial_x f \partial_x^2 g\\
&\qquad -a\int \psi''\partial_x f  g -c\int \psi'' f \partial_x g  -a\int \psi''\partial_x^2 f \partial_x g  -c\int \psi''\partial_x f \partial_x^2 g \\
&\qquad +\frac{a}{2} \int \psi'\partial_x^2 f  (1-\px^2)^{-1}(u^2) +\frac12\int \psi'f(1-\px^2)^{-1}(u^2)\\
&\qquad +c\int \psi'\partial_x^2 g(1-\px^2)^{-1}(u(\eta+h)) + \int \psi'g(1-\px^2)^{-1}(u(\eta+h)) \\
&\qquad +\frac12\int \psi'(1-\px^2)^{-1}(u(\eta+h))(1-\px^2)^{-1}(u^2)\\
&\qquad +\frac{a}{2} \int \psi'\partial_x^3 f (1-\px^2)^{-1}\partial_x (u^2) +\frac12\int \psi'\partial_x f (1-\px^2)^{-1}\partial_x (u^2)\\
&\qquad +c\int \psi'\partial_x^3 g(1-\px^2)^{-1}\partial_x (u(\eta+h)) + \int \psi'\partial_x g (1-\px^2)^{-1}\partial_x (u(\eta+h)) \\
&\qquad +\frac12\int \psi'(1-\px^2)^{-1}\partial_x (u(\eta+h)) (1-\px^2)^{-1}\partial_x (u^2)\\
&\qquad  +\frac12 a\int \partial_x(\psi' \partial_xu)(1-\px^2)^{-1}(u^2)   + c\int \partial_x(\psi' \partial_x\eta)(1-\px^2)^{-1}(u(\eta+h))  \\
&\qquad  + ac_1 \int \psi' \partial_xu  (1-\partial^2_x)^{-1}  \partial_t^2 \partial_x h  + c\int \psi'   \partial_x \eta (1-\partial^2_x)^{-1}\left(-1+ a_1 \partial_x^2\right) \partial_t h .
\end{aligned}
\end{equation}
\end{lemma}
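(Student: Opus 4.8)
The plan is to establish \eqref{eq:energy1000} by differentiating the localized energy \eqref{eq:local energy} directly in $t$, inserting the evolution equations \eqref{eq:abcd}, and then reorganizing every resulting term by repeated integration by parts, converting systematically to the canonical variables $f=(1-\px^2)^{-1}\eta$ and $g=(1-\px^2)^{-1}u$ of Definition~\ref{Can_Var}. Differentiating \eqref{eq:local energy} splits $\frac{d}{dt}E_\text{loc}$ into the piece carrying $\partial_t\psi$, which is precisely $\emph{SNL}_0(t)$, and the piece where $\partial_t$ lands on the integrand. In the latter I would integrate by parts $-a\int\psi\,\px u\,\px\pt u$ and $-c\int\psi\,\px\eta\,\px\pt\eta$, moving one derivative onto $\psi\,\px u$ and $\psi\,\px\eta$; this produces the diagonal factors $a\px^2 u$, $c\px^2\eta$ together with a first batch of weight-derivative terms $a\int\psi'\,\px u\,\pt u$ and $c\int\psi'\,\px\eta\,\pt\eta$, so that
\[
\begin{aligned}
\frac{d}{dt}E_\text{loc}(t) =&~{} \emph{SNL}_0(t) + \int\psi\big(a\px^2 u+u+u(\eta+h)\big)\pt u + \int\psi\big(c\px^2\eta+\eta+\tfrac12 u^2\big)\pt\eta\\
&~{} + a\int\psi'\,\px u\,\pt u + c\int\psi'\,\px\eta\,\pt\eta + \tfrac12\int\psi\,u^2\,\pt h.
\end{aligned}
\]
This is the weighted analogue of the computation in the proof of Lemma~\ref{Virial_bousH}.

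Next I would substitute \eqref{eq:abcd} for $\pt u$ and $\pt\eta$ and split each into three groups: the linear local/nonlocal part ($c\px\eta$, $a\px u$, $(1-\px^2)^{-1}\px\eta$, $(1-\px^2)^{-1}\px u$), the genuinely nonlinear part ($(1-\px^2)^{-1}\px(u^2)$ and $(1-\px^2)^{-1}\px(u(\eta+h))$), and the bottom part ($c_1(1-\px^2)^{-1}\pt^2\px h$ and $(1-\px^2)^{-1}(-1+a_1\px^2)\pt h$). For the linear part I would use $a\px^2 u+u=-a(1-\px^2)u+(1+a)u$ and $c\px^2\eta+\eta=-c(1-\px^2)\eta+(1+c)\eta$, rewrite all factors in terms of $f,g$ (so $\eta=(1-\px^2)f$, $u=(1-\px^2)g$), and integrate by parts repeatedly. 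Exactly as in Lemma~\ref{Virial_bousH}, all terms in which no integration by parts has differentiated $\psi$ cancel; the terms in which $\psi$ has been differentiated once assemble into the four transport integrals of \eqref{eq:energy1000}, while the commutator $[\psi,(1-\px^2)^{-1}]$, whose leading behavior involves $\psi''+2\psi'\px$, produces the $\psi''$-lines of $\emph{SNL}_1$. The two lower-order terms $a\int\psi'\px u\,\pt u+c\int\psi'\px\eta\,\pt\eta$ only feed into $\emph{SNL}_1$ once the equations are substituted. The nonlinear part, after moving derivatives and converting the slow factors to canonical variables, produces all the lines of $\emph{SNL}_1$ containing $(1-\px^2)^{-1}(u^2)$ and $(1-\px^2)^{-1}(u(\eta+h))$ (and their $\px$-variants), and the bottom part similarly produces the last two lines of $\emph{SNL}_1$. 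Collecting everything yields \eqref{eq:energy1000}; the complete computation is the one carried out in \cite{DGMMP}.

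The conceptual picture is thus exactly that of the global identity in Lemma~\ref{Virial_bousH}: the one new phenomenon is that commuting the cutoff $\psi$ through $\px$ and through $(1-\px^2)^{-1}$ leaves behind transport terms proportional to $\psi'$ and $\psi''$. \textbf{The main obstacle is the bookkeeping:} one must verify that the four ``clean'' transport integrals appear with exactly the coefficients written in \eqref{eq:energy1000}, which requires expanding products such as $\int\psi\,(a\px^2 u+u)(c\px\eta)$ fully in $f$ and $g$ — where up to three spatial derivatives are generated — and tracking precisely which of the numerous integrations by parts actually hits $\psi$. No delicate estimate is needed at this stage: every remaining term is simply placed, without cancellation, into $\emph{SNL}_0$ or $\emph{SNL}_1$, the analysis of their size being postponed to the later sections that use this lemma.
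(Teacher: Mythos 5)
Your overall strategy is the correct one and is exactly the weighted analogue of the proof of Lemma~\ref{Virial_bousH}; note that the paper itself gives no argument for this lemma, stating only that it is proved in \cite{DGMMP}, so you are supplying a proof where the paper supplies a citation. Your intermediate identity after the first integration by parts,
$\frac{d}{dt}E_\text{loc}=\text{SNL}_0+\int\psi(a\px^2u+u+u(\eta+h))\pt u+\int\psi(c\px^2\eta+\eta+\tfrac12u^2)\pt\eta+a\int\psi'\px u\,\pt u+c\int\psi'\px\eta\,\pt\eta+\tfrac12\int\psi u^2\pt h$,
is correct, and the organizing principle — the linear bulk cancels by skew-symmetry as in the unweighted case, the commutators of $\psi$ with $\px$ and $(1-\px^2)^{-1}$ produce the four transport integrals and the $\psi''$ lines, and the nonlinear and bottom pieces feed $\text{SNL}_1$ — is sound.

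There is, however, a concrete gap your plan does not address. The forcings $c_1(1-\px^2)^{-1}\pt^2\px h$ and $(1-\px^2)^{-1}(-1+a_1\px^2)\pt h$ in \eqref{eq:abcd} enter your intermediate formula through two channels. Through the residual integrals $a\int\psi'\px u\,\pt u$ and $c\int\psi'\px\eta\,\pt\eta$ they give exactly the two $\psi'$-weighted bottom terms in the last line of \eqref{eq:E1E2E3E4_new2}. But they also enter through the bulk integrals $\int\psi(a\px^2u+u+u(\eta+h))\pt u$ and $\int\psi(c\px^2\eta+\eta+\tfrac12u^2)\pt\eta$. Unlike the linear pairing, the bottom forcing has no skew partner to cancel against — the analogous bulk contributions survive in the unweighted identity \eqref{Energy_new_1} — so your recipe would produce additional terms carrying the undifferentiated weight $\psi$, e.g.\ $c_1\int\psi(a\px^2u+u+u(\eta+h))(1-\px^2)^{-1}\pt^2\px h$, which are not present in the displayed $\text{SNL}_1$. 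Your closing remark that "every remaining term is simply placed, without cancellation, into $\text{SNL}_1$" therefore does not reproduce \eqref{eq:E1E2E3E4_new2} as stated: you must either exhibit the cancellation that removes these $\psi$-weighted bottom terms, or observe that the stated identity needs to be augmented by them. This is precisely the kind of step one cannot defer as "bookkeeping".
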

 We shall use Lemma \ref{lem:energy1} as follows. First of all, let us fix $\delta>0$ and $T_0>0$ such that $C\varepsilon e^{-\mu_1\varepsilon T_0}<\delta.$ Choose $\psi $ and $L$ large such that
\[
\psi=\psi_0 \left( \frac{|x| -R_0}{L}\right), \quad \psi_0 \in C^\infty(\mathbb R), \quad \psi_0 (s\leq 1)=0, \quad  \psi_0 (s\geq 2)=1, \quad \psi'_0\geq 0.
\]
Then from \eqref{eq:energy1000} and \eqref{eq:L2_est}-\eqref{eq:H1_est}, and since $\psi$ does not depend on time,
\begin{equation}\label{eq:energy1}
\begin{aligned}
\left| \frac{d}{dt} E_\emph{loc}(t)\right|  \lesssim  \frac1{L}\int \psi_0' (|\eta|^2 +|\partial_x \eta|^2 +|u|^2 +|\partial_x u|^2) +|\emph{SNL}_1(t)| .
\end{aligned}
\end{equation}
Following \eqref{eq:L2_est}-\eqref{eq:H1_est}, and estimates \eqref{eq:nonlinear1-1}, \eqref{eq:nonlinear1-2}, \eqref{eq:nonlinear1-3} and \eqref{eq:nonlinear1-4}, we bound $|\text{SNL}_1(t)|$ in \eqref{eq:E1E2E3E4_new2} as follows:
\begin{equation}\label{eq:E1E2E3E4_new2_0}
\begin{aligned}
& |\text{SNL}_1(t)| \\
& \qquad  \lesssim \frac1{L}\int \psi_0' (|\eta|^2 +|\partial_x \eta|^2 +|u|^2 +|\partial_x u|^2 +|h|^2 +|\partial_t h|^2 +|\partial_x h|^2 +|\partial_t^2 \partial_xh|^2).
\end{aligned}
\end{equation}
Indeed, we have from \eqref{eq:L2_est},
\begin{equation}\label{eq:E1E2E3E4_new2_1}
\begin{aligned}
&~{}  \Big| a(c-1)\int\psi''\partial_x^2 f \partial_x g +c(a-1)\int\psi''\partial_x f \partial_x^2 g \Big|  \\
& \qquad \lesssim   \frac1{L^2}\int \psi_0' (|\eta|^2  +|u|^2 ).
\end{aligned}
\end{equation}
Second, using again \eqref{eq:H1_est},
\begin{equation}\label{eq:E1E2E3E4_new2_2}
\begin{aligned}
&~{}  \Big|  -a\int \psi''\partial_x f  g -c\int \psi'' f \partial_x g  -a\int \psi''\partial_x^2 f \partial_x g  -c\int \psi''\partial_x f \partial_x^2 g \Big|  \\
& \qquad \lesssim  \frac1{L^2}\int \psi_0' (|\eta|^2  +|u|^2 ).
\end{aligned}
\end{equation}
Third, using \eqref{eq:nonlinear1-2},
\begin{equation}\label{eq:E1E2E3E4_new2_3}
\begin{aligned}
&~{}  \Big|\frac{a}{2} \int \psi'\partial_x^2 f  (1-\px^2)^{-1}(u^2) +\frac12\int \psi'f(1-\px^2)^{-1}(u^2) \Big|   \lesssim \frac1{L} \|\eta\|_{H^1}\int \psi_0' |u|^2.
\end{aligned}
\end{equation}
Fourth, using again \eqref{eq:nonlinear1-2},
\begin{equation}\label{eq:E1E2E3E4_new2_4}
\begin{aligned}
&~{}  \Big|c\int \psi'\partial_x^2 g(1-\px^2)^{-1}(u(\eta+h)) + \int \psi'g(1-\px^2)^{-1}(u(\eta+h))  \Big|  \\
& \qquad \lesssim \frac1{L} \| u \|_{H^1}\int \psi_0' (|u|^2 +|\eta|^2 + |h|^2). 
\end{aligned}
\end{equation}
Fifth, using again \eqref{eq:nonlinear1-2},
\begin{equation}\label{eq:E1E2E3E4_new2_5}
\begin{aligned}
&~{}  \Big| \frac12\int \psi'(1-\px^2)^{-1}(u(\eta+h))(1-\px^2)^{-1}(u^2) \Big|  \\
& \qquad \lesssim  \frac1{L} \| (1-\px^2)^{-1}(u^2) \|_{H^1}\int \psi_0' (|u|^2 +|\eta|^2 +|h|^2)\\
& \qquad \lesssim  \frac1{L} \| u \|_{L^2}^2 \int \psi_0' (|u|^2 +|\eta|^2 +|h|^2).
\end{aligned}
\end{equation}
Now, using \eqref{eq:nonlinear1-4},
\begin{equation}\label{eq:E1E2E3E4_new2_6}
\begin{aligned}
&~{}  \Big| \frac{a}{2} \int \psi'\partial_x^3 f (1-\px^2)^{-1}\partial_x (u^2) +\frac12\int \psi'\partial_x f (1-\px^2)^{-1}\partial_x (u^2) \Big| \\
& \qquad \lesssim \frac1{L} \| \eta \|_{H^1}\int \psi_0' (|u|^2  + |\partial_x u |^2).
\end{aligned}
\end{equation}
Similarly,
\begin{equation}\label{eq:E1E2E3E4_new2_7}
\begin{aligned}
&~{}  \Big| c\int \psi'\partial_x^3 g(1-\px^2)^{-1}\partial_x (u(\eta+h)) + \int \psi'\partial_x g (1-\px^2)^{-1}\partial_x (u(\eta+h))\Big| \\
& \qquad \lesssim \frac1{L} \| u \|_{H^1}\int \psi_0' (|u|^2  + |\partial_x u |^2 + |\eta |^2  + |\partial_x \eta |^2 +|h|^2  + |\partial_x h |^2).
\end{aligned}
\end{equation}
Similar to \eqref{eq:E1E2E3E4_new2_5}, and using estimates from \cite{KMM},
\begin{equation}\label{eq:E1E2E3E4_new2_8}
\begin{aligned}
&~{}  \Big|\frac12\int \psi'(1-\px^2)^{-1}\partial_x (u(\eta+h)) (1-\px^2)^{-1}\partial_x (u^2) \Big| \\
& \qquad \lesssim  \frac1{L} \| (1-\px^2)^{-1} \partial_x (u^2) \|_{H^1}\int \psi_0' (|u|^2 +|\partial_xu|^2 +|\eta|^2 +|\partial_x\eta |^2+|h|^2+|\partial_xh|^2)\\
& \qquad \lesssim  \frac1{L} \| u \|_{H^1}^2 \int \psi_0' (|u|^2 +|\partial_xu|^2 +|\eta|^2 +|\partial_x\eta |^2+|h|^2+|\partial_xh|^2).
\end{aligned}
\end{equation}
Similar as in \eqref{eq:E1E2E3E4_new2_7},
\begin{equation}\label{eq:E1E2E3E4_new2_9}
\begin{aligned}
&~{}  \Big| c\int \psi'\partial_x^3 g(1-\px^2)^{-1}\partial_x (u(\eta+h)) + \int \psi'\partial_x g (1-\px^2)^{-1}\partial_x (u(\eta+h)) \Big| \\
& \qquad \lesssim \frac1{L} \| u \|_{H^1}\int \psi_0' (|u|^2  + |\partial_x u |^2 + |\eta |^2  + |\partial_x \eta |^2 +|h|^2  + |\partial_x h |^2).
\end{aligned}
\end{equation}
Now we use \eqref{eq:nonlinear1-3},
\begin{equation}\label{eq:E1E2E3E4_new2_10}
\begin{aligned}
&~{}  \Big|\frac12 a\int \partial_x(\psi' \partial_xu)(1-\px^2)^{-1}(u^2)   + c\int \partial_x(\psi' \partial_x\eta)(1-\px^2)^{-1}(u(\eta+h))   \Big| \\
& \qquad \lesssim \frac1L  (\|u\|_{H^1} +\|\eta\|_{H^1}) \int \psi_0' (|u|^2  + |\partial_x u |^2 +|\eta|^2  + |\partial_x \eta |^2 + |h|^2  + |\partial_x h |^2).
\end{aligned}
\end{equation}
Finally, using H\"older and \cite{KMM}, 
\begin{equation}\label{eq:E1E2E3E4_new2_11}
\begin{aligned}
&~{}  \Big| ac_1 \int \psi' \partial_xu  (1-\partial^2_x)^{-1}  \partial_t^2 \partial_x h  + c\int \psi'   \partial_x \eta (1-\partial^2_x)^{-1}\left(-1+ a_1 \partial_x^2\right) \partial_t h\Big| \\
& \qquad \lesssim \frac1L  \int \psi_0' ( |\partial_x u |^2  + |\partial_x \eta |^2 +  |(1-\partial^2_x)^{-1}  \partial_t^2 \partial_xh|^2  + |\partial_t h |^2 + | (1-\partial^2_x)^{-1}\partial_t h|^2)\\
& \qquad \lesssim \frac1L  \int \psi_0' ( |\partial_x u |^2  + |\partial_x \eta |^2 +  |\partial_t^2 \partial_xh|^2  + |\partial_t h |^2 ).
\end{aligned}
\end{equation}
By gathering \eqref{eq:E1E2E3E4_new2_1}--\eqref{eq:E1E2E3E4_new2_11}, we obtain \eqref{eq:E1E2E3E4_new2_0}.
Thanks to \eqref{eq:E1E2E3E4_new2_0}, we deduce from   \eqref{eq:energy1} that
\begin{equation}\label{eq:energy1_2}
\begin{aligned}
&\left| \frac{d}{dt} E_\text{loc}(t)\right|
\\
& \quad  \lesssim  \frac1{L}\int \psi_0' (|\eta|^2 +|\partial_x \eta|^2 +|u|^2 +|\partial_x u|^2 +|h|^2 +|\partial_t h|^2 +|\partial_x h|^2 +|\partial_t^2 \partial_xh|^2).
\end{aligned}
\end{equation}
Integrating again \eqref{eq:energy1_2} in $\left[ -T_0, -\frac12 T_\varepsilon \right]$, we obtain
\[
\left| E_\text{loc}\left(-\frac12T_\varepsilon \right) \right| < \delta.
\]
Finally, making the decomposition \eqref{z1}, we establish \eqref{compa}. For full details, see \cite{Mu1} and references therein.


\section{The interaction regime}\label{Sec:3}

Recall that the interaction regime is defined as $[-T_\varepsilon, T_\varepsilon]$, with $T_\varepsilon = \varepsilon^{-1-\delta}.$

\subsection{Preliminaries}
Consider the modulated solitary wave $\bd{Q}_\omega$ introduced in \eqref{mod_Q}. Denote $z=x-\rho(t)$. If we introduce this object into \eqref{boussinesq}, we shall obtain \eqref{S_hQ}. Using the hypotheses on $h$ in \eqref{hypoH}, the system \eqref{S_hQ} reduces to:
\[
\begin{aligned}
{\bf S}_h(R_\omega,Q_\omega)= &~{} \omega '  (1-\partial^2_x)\begin{pmatrix}
\Lambda R_\omega \\
\Lambda Q_\omega
\end{pmatrix}
-(\rho'-\omega)(1-\partial^2_x)\partial_x
\begin{pmatrix}
R_\omega  \\
 Q_\omega
 \end{pmatrix}
\\
&~{} +
\varepsilon  \begin{pmatrix}
 \partial_x(h_0 Q_\omega)   \\
 0
 \end{pmatrix}
 +\varepsilon^2   \begin{pmatrix}
\partial_s h_0 \\
 0
 \end{pmatrix}
 - \varepsilon^4  \begin{pmatrix}
  a_1\partial^2_y\partial_s h_0 \\
 c_1 \partial^2_s\partial_y h_0
 \end{pmatrix} .
 \end{aligned}
\]
A further development reveals the modified structure of the error terms appearing by the interaction of the solitary wave and the variable bottom:
\begin{equation}\label{S_hQ_new}
\begin{aligned}
{\bf S}_h(R_\omega,Q_\omega)= &~{} \omega '  (1-\partial^2_z)\begin{pmatrix}
\Lambda R_\omega \\
\Lambda Q_\omega
\end{pmatrix}
-(\rho'-\omega)(1-\partial^2_z)\partial_z
\begin{pmatrix}
R_\omega  \\
 Q_\omega
 \end{pmatrix}
\\
&~{} + \varepsilon  h_0(\varepsilon t, \varepsilon \rho(t)) \partial_z \begin{pmatrix} Q_\omega   \\ 0 \end{pmatrix}
\\
&~{}   + \varepsilon^2  \left(  \partial_y h_0(\varepsilon t, \varepsilon \rho(t))  \partial_z \begin{pmatrix}  zQ_\omega \\ 0 \end{pmatrix}
+  \partial_s h_0 (\varepsilon t, \varepsilon x)  \begin{pmatrix}
1\\
 0
 \end{pmatrix}  \right)\\
&~{}  + \varepsilon^3 \left( \frac12  \partial_y^2 h_0(\varepsilon t, \varepsilon \rho(t))  \partial_z \begin{pmatrix}  z^2 Q_\omega  \\ 0 \end{pmatrix}  \right)
\\
&~{} +  \varepsilon^4 \left( -   \begin{pmatrix}
  a_1\partial^2_y\partial_s h_0 \\
 c_1 \partial^2_s\partial_y h_0
 \end{pmatrix} (\varepsilon t,\varepsilon x)  +\frac16   \partial_y^3 h_0(\varepsilon t, \varepsilon \rho(t)) \partial_z \begin{pmatrix}  z^3 Q_\omega \\ 0 \end{pmatrix} \right) \\
 &~{} +  \varepsilon^5 \partial_x \begin{pmatrix}  \tilde h_0(\varepsilon t, \varepsilon \xi_1(t,x)) z^4 Q_\omega   \\ 0 \end{pmatrix} .
 \end{aligned}
\end{equation}
Here $\xi_1$ represents a mean value function depending on $t$ and $x$ with values in the interval $x-\rho(t)$ and $x$. The third, fourth and fifth terms in \eqref{S_hQ_new} will represent the influence of the bottom on the dynamics, and they are divided in three different terms: a first one directly related with the interaction of the solitary wave with the varying bottom, a second one related to space variations of the bottom (no solitary wave influence) and a final one dealing with time corrections in the bottom. Other terms are essentially small in the slowly varying regime.

\subsection{Linear correction}
Following previous works \cite{Mu1,Mu2,Mu3,Mu4} dealing with the scalar gKdV and NLS dynamics, now we consider a vector correction term of the form
\begin{equation}\label{def_W0}
\bd{W} (t,x) : = \begin{pmatrix} W_1 \\ W_2 \end{pmatrix} (t,x)= \varepsilon \begin{pmatrix} A_1 \\ B_1 \end{pmatrix} (t, \omega(t),z)+ \varepsilon^2 \begin{pmatrix} A_2 \\  B_2\end{pmatrix}(t,\omega(t),z).
\end{equation}
Notice that we have separated the dependences in $W_1$ and $W_2$: one dealing with the scaling parameter $\omega(t)$, and another representing the rest of the time dependences. Then \eqref{S0} becomes
\begin{equation}\label{S0_new}
{\bf S}_h(\bd{Q}_\omega + \bd{W} ) = {\bf S}_h(\bd{Q}_\omega) + {\bf S}_h' (\bd{Q}_\omega)\bd{W} +\bd{R}_\text{aux},
\end{equation}
with ${\bf S}_h(\bd{Q}_\omega) $ given by \eqref{S_hQ_new},
\[
\begin{aligned}
 {\bf S}_h' (\bd{Q}_\omega)\bd{W}= &~{}
 \begin{pmatrix}
(1- \partial_z^2)\partial_t W_1  + \partial_z\!\left( a\, \partial_z^2 W_2 + W_2 +   Q_\omega W_1  + R_\omega W_2 \right)   \\
(1- \partial_z^2)\partial_t W_2  + \partial_z\! \left( c\, \partial_z^2 W_1  + W_1  + Q_\omega W_2 \right)
\end{pmatrix},
\end{aligned}
\]
and
\[
\begin{aligned}
\bd{R}_\text{aux} = &~{}   \begin{pmatrix}
 \partial_x\!\left( (W_1 + h) W_2 \right)   \\
\frac12  \partial_x( W_2^2 )
\end{pmatrix}.
\end{aligned}
\]
We will modify \eqref{S0_new} to better represent the influence of \eqref{def_W0} in the dynamics. First of all, we have
\be\label{dtW1}
\begin{aligned}
& (1- \partial_x^2)\partial_t W_1\\
& \quad = \varepsilon (1- \partial_z^2) \partial_t A_1 + \varepsilon \omega ' (1- \partial_z^2) \Lambda A_1\\
& \quad \quad  - \varepsilon (\rho'-\omega)(1- \partial_z^2) \partial_z A_1 - \varepsilon \omega (1- \partial_z^2) \partial_z A_1 \\
& \quad \quad + \varepsilon^2 (1- \partial_z^2) \partial_t A_2 + \varepsilon^2 \omega ' (1- \partial_z^2) \Lambda A_2 \\
& \quad \quad- \varepsilon^2 (\rho'-\omega)(1- \partial_z^2) \partial_z A_2 - \varepsilon^2 \omega (1- \partial_z^2) \partial_z A_2.
\end{aligned}
\ee
A completely similar output is obtained in the case of $(1- \partial_x^2)\partial_t W_2$:
\be\label{dtW2}
\begin{aligned}
& (1- \partial_x^2)\partial_t W_2\\
& \quad = \varepsilon (1- \partial_z^2) \partial_t B_1 + \varepsilon \omega ' (1- \partial_z^2) \Lambda B_1 \\
& \quad \quad- \varepsilon (\rho'-\omega)(1- \partial_z^2) \partial_z B_1 - \varepsilon \omega (1- \partial_z^2) \partial_z B_1 \\
& \quad \quad + \varepsilon^2 (1- \partial_z^2) \partial_t B_2 + \varepsilon^2 \omega ' (1- \partial_z^2) \Lambda B_2 \\
& \quad \quad- \varepsilon^2 (\rho'-\omega)(1- \partial_z^2) \partial_z B_2 - \varepsilon^2 \omega (1- \partial_z^2) \partial_z B_2.
\end{aligned}
\ee
On the other hand,
\[
\begin{aligned}
& \partial_x\!\left( (W_1 + h) W_2 \right)  \\
& \quad =\partial_x\!\left(\varepsilon hB_1+ \varepsilon^2 A_1B_1 +\varepsilon^2 hB_2 + \varepsilon^3 (A_1B_2+A_2B_1) +\varepsilon^4 A_2B_2 \right)\\
& \quad =\partial_z\!\left(\varepsilon^2 h_0(\varepsilon t, \varepsilon \rho(t)) B_1 + \varepsilon^2 A_1B_1  \right)\\
& \quad \quad +\partial_x\!\left( \varepsilon^3 \partial_y h_0(\varepsilon t, \varepsilon \xi_3(t,x)) z B_1 +\varepsilon^3 h_0 (\varepsilon t, \varepsilon x) B_2 + \varepsilon^3 (A_1B_2+A_2B_1) +\varepsilon^4 A_2B_2 \right),
\end{aligned}
\]
and
\[
\begin{aligned}
& \frac12  \partial_x( W_2^2 )  = \frac12  \partial_z( \varepsilon^2 B_1^2 +2\varepsilon^3 B_1 B_2 +\varepsilon^4 B_2^2).
\end{aligned}
\]
Therefore, gathering the previous results, we have the modified representation of \eqref{S0_new}:
\begin{equation}\label{S0_new_new}
{\bf S}_h(\bd{Q}_\omega + \bd{W} ) = {\bf S}_h^{\#}(\bd{Q}_\omega) +\partial_x J \mathcal L \bd{W} +\bd{R},
\end{equation}
where
\begin{equation}\label{S_hQ_new_new}
\begin{aligned}
 {\bf S}_h^{\#}(\bd{Q}_\omega) = &~{} \omega '  (1-\partial^2_z)\begin{pmatrix}
\Lambda (R_\omega +\varepsilon A_1 + \varepsilon^2 A_2)\\
\Lambda (Q_\omega+\varepsilon B_1 + \varepsilon^2 B_2)
\end{pmatrix} \\
&~{}
-(\rho'-\omega)(1-\partial^2_z)\partial_z
\begin{pmatrix}
R_\omega  +\varepsilon A_1 + \varepsilon^2 A_2\\
 Q_\omega+\varepsilon B_1 + \varepsilon^2 B_2
 \end{pmatrix}
\\
&~{} + \varepsilon  h_0(\varepsilon t, \varepsilon \rho(t)) \partial_z \begin{pmatrix} Q_\omega   \\ 0 \end{pmatrix}
\\
&~{}   + \varepsilon^2  \left(  \partial_y h_0(\varepsilon t, \varepsilon \rho(t))  \partial_z \begin{pmatrix}  zQ_\omega \\ 0 \end{pmatrix}
+  \partial_s h_0 (\varepsilon t, \varepsilon x )  \begin{pmatrix}
1\\
 0
 \end{pmatrix}  \right)
 \\
 & ~{}  + \varepsilon (1- \partial_z^2) \partial_t \begin{pmatrix} A_1 \\ B_1 \end{pmatrix}  \\
 &~{}+ \varepsilon^2 \partial_z \begin{pmatrix}  h_0(\varepsilon t, \varepsilon \rho(t)) B_1 + A_1B_1 \\   \frac12 B_1^2 \end{pmatrix};
 \end{aligned}
\end{equation}
from \eqref{def_L},
\begin{equation}\label{def_L_new}
\begin{aligned}
\partial_x J \mathcal L \bd{W} = &~{}
\begin{pmatrix}
-\omega (1- \partial_z^2)\partial_y W_1  + \partial_z\!\left( a\, \partial_z^2 W_2 + W_2 +   Q_\omega W_1  + R_\omega W_2 \right)   \\
-\omega  (1- \partial_z^2)\partial_y W_2  + \partial_z\! \left( c\, \partial_z^2 W_1  + W_1  + Q_\omega W_2 \right)
\end{pmatrix}
\\
=&~{}
\partial_z  \begin{pmatrix} 0 & 1  \\  1 & 0 \end{pmatrix}
\begin{pmatrix}
 -\omega  (1- \partial_z^2)W_2  +  c\, \partial_z^2 W_1  + W_1  + Q_\omega W_2   \\
-\omega (1- \partial_z^2) W_1  +  a\, \partial_z^2 W_2 + W_2 +   Q_\omega W_1  + R_\omega W_2
\end{pmatrix}
\\
=&~{}
\partial_z  \begin{pmatrix} 0 & 1  \\  1 & 0 \end{pmatrix}
\begin{pmatrix}
 c \partial_z^2 +1  &   -\omega (1-\partial^2_z) +Q_\omega   \\
  -\omega (1-\partial^2_z)  +Q_\omega & a \partial_z^2 +1 +R_\omega
\end{pmatrix}
\begin{pmatrix} W_1 \\ W_2 \end{pmatrix},
\end{aligned}
\end{equation}
and
\begin{equation}\label{R_new}
\begin{aligned}
\bd{R} =&~{} \frac12   \varepsilon^3   \partial_y^2 h_0(\varepsilon t, \varepsilon \rho(t))  \partial_z \begin{pmatrix}  z^2 Q_\omega  \\ 0 \end{pmatrix}
\\
 &~{} +\varepsilon^3 \partial_x\!\left(  \partial_y h_0(\varepsilon t, \varepsilon \xi_3(t,x))  \begin{pmatrix} z B_1 \\ 0 \end{pmatrix}  +  h_0 (\varepsilon t, \varepsilon x) \begin{pmatrix}  B_2 \\ 0 \end{pmatrix}  \right) \\
 &~{} +  \varepsilon^3 \partial_z \begin{pmatrix} A_1B_2+A_2B_1  \\ B_1 B_2 \end{pmatrix}
 \\
&~{} +  \varepsilon^4 \left( -   \begin{pmatrix}
  a_1\partial^2_y\partial_s h_0 \\
 c_1 \partial^2_s\partial_y h_0
 \end{pmatrix} (\varepsilon t,\varepsilon x)  +\frac16   \partial_y^3 h_0(\varepsilon t, \varepsilon \rho(t)) \partial_z \begin{pmatrix}  z^3 Q_\omega \\ 0 \end{pmatrix}+ \partial_z \begin{pmatrix} A_2B_2 \\ \frac12B_2^2 \end{pmatrix} \right)  \\
 &~{} +  \varepsilon^5 \partial_x \begin{pmatrix}  \tilde h_0(\varepsilon t, \varepsilon \xi_1(t,x)) z^4 Q_\omega   \\ 0 \end{pmatrix}.
 \end{aligned}
\end{equation}
Our next objective will be to reduce the error from $O(\varepsilon)$ to order $O(\varepsilon^3).$

\subsection{Resolution of linear systems} From \eqref{S_hQ_new_new} and \eqref{def_L_new} first we shall solve
\begin{equation}\label{Syst_1}
\mathcal L \begin{pmatrix} A_1 \\ B_1 \end{pmatrix}=
\begin{aligned}
 \begin{pmatrix}
 c \partial_z^2 +1  &   -\omega (1-\partial^2_z) +Q_\omega   \\
  -\omega (1-\partial^2_z)  +Q_\omega & a \partial_z^2 +1 +R_\omega
\end{pmatrix}
\begin{pmatrix} A_1 \\ B_1 \end{pmatrix} = - h_0(\varepsilon t, \varepsilon \rho(t))  \begin{pmatrix} 0 \\ Q_\omega    \end{pmatrix}.
\end{aligned}
\end{equation}

Since $(0,Q_\omega)^T$ is orthogonal to the $\bd Q_\omega'$,
by Lemma~\ref{lem:L-1} there is a unique \textit{even} function $(A_{0,\omega} , B_{0,\omega})$ orthogonal to $\textup {span}\{\bd Q_\omega'\}$,  solution to
 \begin{equation}\label{H2}
  \mathcal L ( A_{0,\omega} ,B_{0,\omega})^T  =   ( 0, Q_\omega)^T,
 \end{equation}
which, by Lemma~\ref{lem:exp:decay},  has an exponential decay:
 \[
 (|A_{0,\omega}| + | B_{0,\omega}|)(z) \lesssim e^{-\tilde \mu_0 |z|},
 \]
for some $\tilde \mu_0>0$. For simplicity, we denote $\tilde \mu_0$ by $\mu_0$.
In this manner,
\begin{equation}\label{Sol_1}
 \begin{pmatrix} A_1 \\ B_1 \end{pmatrix} (t,\omega(t),z)=   - h_0(\varepsilon t, \varepsilon \rho(t))  \begin{pmatrix} A_{0,\omega} \\ B_{0,\omega} \end{pmatrix}(z).
 \end{equation}
 is a solution to \eqref{Syst_1}, satisfying
%

\begin{equation}\label{est_A1B1 L2}
\begin{aligned}
\left\| \begin{pmatrix} A_1 \\ B_1 \end{pmatrix} (t) \right\|_{L^2_x} \lesssim e^{-k_0 \varepsilon |t| +l_0 \varepsilon |\rho(t)|},
\end{aligned}
\end{equation}
together with all its spatial partial derivatives.
We follow the convention that the partial time derivative does not consider $\omega(t)$,  since this is considered separately.
In order to solve a second linear system, we first need from \eqref{S_hQ_new_new} an estimate for $(1- \partial_z^2) \partial_t (A_1 , B_1)^T$. We deduce  from \eqref{Sol_1} that
\be\label{dtA1B1}
\begin{aligned}
(1- \partial_z^2) \partial_t \begin{pmatrix} A_1 \\ B_1 \end{pmatrix} = &~{} - \varepsilon \partial_s h_0(\varepsilon t, \varepsilon \rho(t)) (1- \partial_z^2) \begin{pmatrix} A_{0,\omega} \\ B_{0,\omega} \end{pmatrix}\\
&~{}    - \varepsilon \omega (t) \partial_y h_0(\varepsilon t, \varepsilon \rho(t)) (1- \partial_z^2) \begin{pmatrix} A_{0,\omega} \\ B_{0,\omega} \end{pmatrix}  \\
&~{} - \varepsilon (\rho'(t)-\omega (t)) \partial_y h_0(\varepsilon t, \varepsilon \rho(t)) (1- \partial_z^2) \begin{pmatrix} A_{0,\omega} \\ B_{0,\omega} \end{pmatrix} .
\end{aligned}
\ee
Having this last estimate into account, and the solution \eqref{Sol_1}, we rewrite \eqref{S0_new_new} in an updated from as follows:
\begin{equation}\label{S0_new_2000}
{\bf S}_h(\bd{Q}_\omega + \bd{W} ) = {\bf S}_h^{\#}(\bd{Q}_\omega) +\partial_x J \mathcal L \bd{W} +\bd{R},
\end{equation}
where \eqref{S_hQ_new_new} becomes now
\begin{equation}\label{S_hQ_new_2}
\begin{aligned}
& {\bf S}_h^{\#}(\bd{Q}_\omega)
\\
 &~{} =  \omega '  (1-\partial^2_z)\begin{pmatrix}
\Lambda (R_\omega +\varepsilon A_1 + \varepsilon^2 A_2)\\
\Lambda (Q_\omega+\varepsilon B_1 + \varepsilon^2 B_2)
\end{pmatrix} \\
&~{}
\quad -(\rho'-\omega)(1-\partial^2_z) \left( \partial_z
\begin{pmatrix}
R_\omega  +\varepsilon A_1 + \varepsilon^2 A_2\\
 Q_\omega+\varepsilon B_1 + \varepsilon^2 B_2
 \end{pmatrix}  - \varepsilon^2  \partial_y h_0(\varepsilon t, \varepsilon \rho(t)) \begin{pmatrix} A_{0,\omega} \\ B_{0,\omega} \end{pmatrix}  \right)
\\
&~{} \quad   + \varepsilon^2  \left(  \partial_y h_0(\varepsilon t, \varepsilon \rho(t))  \partial_z \begin{pmatrix}  zQ_\omega \\ 0 \end{pmatrix}
+  \partial_s h_0 (\varepsilon t, \varepsilon x)  \begin{pmatrix}
1\\
 0
 \end{pmatrix}  \right)  \\
 &~{} \quad + \varepsilon^2  h_0^2 (\varepsilon t, \varepsilon \rho(t)) \partial_z \begin{pmatrix}  - B_{0,\omega} + A_{0,\omega}B_{0,\omega} \\   \frac12 B_{0,\omega}^2 \end{pmatrix} \\
 &~{} \quad  -  \varepsilon^2 h_0(\varepsilon t, \varepsilon \rho(t))  \left( \partial_s h_0(\varepsilon t, \varepsilon \rho(t)) + \omega (t) \partial_y h_0(\varepsilon t, \varepsilon \rho(t)) \right) (1- \partial_z^2) \begin{pmatrix} A_{0,\omega} \\ B_{0,\omega} \end{pmatrix}; 
 \end{aligned}
\end{equation}
and from \eqref{def_L_new} is given now as
\begin{equation}\label{def_L_new_2}
\begin{aligned}
\partial_x J \mathcal L \bd{W} = &~{}
\partial_z  \begin{pmatrix} 0 & 1  \\  1 & 0 \end{pmatrix}  \begin{pmatrix}
 c \partial_z^2 +1  &   -\omega (1-\partial^2_z) +Q_\omega   \\
  -\omega (1-\partial^2_z)  +Q_\omega & a \partial_z^2 +1 +R_\omega
\end{pmatrix}
\begin{pmatrix} \varepsilon^2 A_2 \\ \varepsilon^2 B_2 \end{pmatrix}.
\end{aligned}
\end{equation}
Finally, \eqref{R_new} remains unchanged. From \eqref{S_hQ_new_2} and \eqref{def_L_new_2} we must solve now  
\begin{equation}\label{Syst_2}
\begin{aligned}
\mathcal L
\begin{pmatrix} A_2 \\ B_2 \end{pmatrix}
& = - \partial_y h_0(\varepsilon t, \varepsilon \rho(t)) \begin{pmatrix}  0 \\
zQ_\omega \end{pmatrix}
-  \partial_z^{-1} \partial_s h_0 (\varepsilon t, \varepsilon x )
\begin{pmatrix}
0 \\
 1
 \end{pmatrix}\\
 &  -h_0^2 (\varepsilon t, \varepsilon \rho(t))  \begin{pmatrix}    \frac12 B_{0,\omega}^2 \\ - B_{0,\omega} + A_{0,\omega}B_{0,\omega}  \end{pmatrix}
  \\
&   +  \varepsilon^2 h_0(\varepsilon t, \varepsilon \rho(t))  \Big( \partial_s h_0(\varepsilon t, \varepsilon \rho(t))
+ \omega (t) \partial_y h_0(\varepsilon t, \varepsilon \rho(t)) \Big)
(1- \partial_z^2) \partial_z^{-1} \begin{pmatrix} A_{0,\omega} \\ B_{0,\omega} \end{pmatrix}
\\
\end{aligned}
\end{equation}
Recall that $\partial_z^{-1}$ denotes the antiderivative operator $\int_z^\infty$, so that for $F \in \mathcal S$, $\mathcal S$ the Schwartz class, we have $\partial_z^{-1} F$ converging to zero as $z\to +\infty$, but only in $L^\infty$ if $z\to -\infty$.
In this manner, the terms $ \partial_z^{-1} \partial_s h_0 (\varepsilon t, \varepsilon x ) $ and $(1- \partial_z^2) \partial_z^{-1} ( B_{0,\omega}, A_{0,\omega})^T$ are not necessarily in $(L^2)^2$. The first term reveals a strong influence of the bottom in the interaction dynamics of the solitary wave at the second order in $\varepsilon$, making the analysis more difficult than in previous cases \cite{Mu1}. In order to correct this error, we will perturb \eqref{Syst_2} as follows: we rewrite \eqref{S0_new_2000} as
\begin{equation}\label{S0_new_2}
{\bf S}_h(\bd{Q}_\omega + \bd{W} ) = {\bf S}_h^{\dagger}(\bd{Q}_\omega) +\partial_x J \mathcal L \bd{W} +\bd{R}^\dagger,
\end{equation}
where \eqref{S_hQ_new_2} becomes now
\begin{equation}\label{S_hQ_new_3}
\begin{aligned}
& {\bf S}_h^{\dagger}(\bd{Q}_\omega)
\\
 &~{} = (\omega ' - \varepsilon^2 f_1(t)) (1-\partial^2_z)\begin{pmatrix}
\Lambda (R_\omega +\varepsilon A_1 + \varepsilon^2 A_2)\\
\Lambda (Q_\omega+\varepsilon B_1 + \varepsilon^2 B_2)
\end{pmatrix}
\\
&~{}
\quad -(\rho'-\omega -  \varepsilon^2 f_2(t))(1-\partial^2_z) \left( \partial_z
\begin{pmatrix}
R_\omega  +\varepsilon A_1 + \varepsilon^2 A_2\\
 Q_\omega+\varepsilon B_1 + \varepsilon^2 B_2
 \end{pmatrix}  \right)
\\
&~{}
\quad -(\rho'-\omega -  \varepsilon^2 f_2(t))(1-\partial^2_z) \left( -  \varepsilon^2  \partial_y h_0(\varepsilon t, \varepsilon \rho(t)) \begin{pmatrix} A_{0,\omega} \\ B_{0,\omega} \end{pmatrix}    \right)
\\
&~{} \quad   + \varepsilon^2  \left(  \partial_y h_0(\varepsilon t, \varepsilon \rho(t))  \partial_z \begin{pmatrix}  zQ_\omega \\ 0 \end{pmatrix}
+  \partial_s h_0 (\varepsilon t, \varepsilon x)  \begin{pmatrix}
1\\
 0
 \end{pmatrix} \right)
 \\
&~{}  \quad  + \varepsilon^2  \left(  f_1(t) (1-\partial^2_z)\begin{pmatrix}
\Lambda R_\omega \\
\Lambda Q_\omega
\end{pmatrix}  -  f_2(t)(1-\partial^2_z)  \partial_z
\begin{pmatrix} R_\omega  \\ Q_\omega  \end{pmatrix}  \right)  \\
 &~{} \quad + \varepsilon^2  h_0^2 (\varepsilon t, \varepsilon \rho(t)) \partial_z \begin{pmatrix}  - B_{0,\omega} + A_{0,\omega}B_{0,\omega} \\   \frac12 B_{0,\omega}^2 \end{pmatrix} \\
 &~{} \quad  -  \varepsilon^2 h_0(\varepsilon t, \varepsilon \rho(t))  \left( \partial_s h_0(\varepsilon t, \varepsilon \rho(t)) + \omega (t) \partial_y h_0(\varepsilon t, \varepsilon \rho(t)) \right) (1- \partial_z^2) \begin{pmatrix} A_{0,\omega} \\ B_{0,\omega} \end{pmatrix};
 \end{aligned}
\end{equation}
the linear system \eqref{def_L_new_2} on $\bd{W}$ remains the same, and \eqref{R_new} becomes now
\begin{equation}\label{R_new_2}
\begin{aligned}
& \bd{R}^\dagger \\
&~{} = \frac12   \varepsilon^3   \partial_y^2 h_0(\varepsilon t, \varepsilon \rho(t))  \partial_z \begin{pmatrix}  z^2 Q_\omega  \\ 0 \end{pmatrix}
\\
 &~{} \quad +\varepsilon^3 \partial_x\!\left(  \partial_y h_0(\varepsilon t, \varepsilon \xi_3(t,x))  \begin{pmatrix} z B_1 \\ 0 \end{pmatrix}  +  h_0 (\varepsilon t, \varepsilon x) \begin{pmatrix}  B_2 \\ 0 \end{pmatrix}  \right) \\
 &~{} \quad  +  \varepsilon^3 \partial_z \begin{pmatrix} A_1B_2+A_2B_1  \\ B_1 B_2 \end{pmatrix}
 \\
 &~{} \quad  + \varepsilon^3  f_1(t) (1-\partial^2_z)\begin{pmatrix}
\Lambda ( A_1 + \varepsilon A_2)\\
\Lambda (B_1 + \varepsilon B_2)
\end{pmatrix}
 \\
 &~{}\quad  -  \varepsilon^3  f_2(t)(1-\partial^2_z) \left( \partial_z
\begin{pmatrix}
 A_1 + \varepsilon A_2\\
 B_1 + \varepsilon B_2
 \end{pmatrix} - \varepsilon  \partial_y h_0(\varepsilon t, \varepsilon \rho(t)) \begin{pmatrix} A_{0,\omega} \\ B_{0,\omega} \end{pmatrix}  \right) \\
&~{}\quad  +  \varepsilon^4 \left( -   \begin{pmatrix}
  a_1\partial^2_y\partial_s h_0 \\
 c_1 \partial^2_s\partial_y h_0
 \end{pmatrix} (\varepsilon t,\varepsilon x)  +\frac16   \partial_y^3 h_0(\varepsilon t, \varepsilon \rho(t)) \partial_z \begin{pmatrix}  z^3 Q_\omega \\ 0 \end{pmatrix}+ \partial_z \begin{pmatrix} A_2B_2 \\ \frac12B_2^2 \end{pmatrix} \right)  \\
 &~{} \quad +  \varepsilon^5 \partial_x \begin{pmatrix}  \tilde h_0(\varepsilon t, \varepsilon \xi_1(t,x)) z^4 Q_\omega   \\ 0 \end{pmatrix}.
 \end{aligned}
\end{equation}
 The new coefficients $f_1$ and $f_2$ will be chosen such that \eqref{Syst_2}-\eqref{S_hQ_new_3} now become
\begin{equation}\label{Syst_3}
 \begin{aligned}
\mathcal L
\begin{pmatrix} A_2 \\ B_2 \end{pmatrix}
&~{} = - \partial_y h_0(\varepsilon t, \varepsilon \rho(t)) \begin{pmatrix}  0 \\ zQ_\omega \end{pmatrix}
-  \partial_z^{-1} \partial_s h_0 (\varepsilon t, \varepsilon x )  \begin{pmatrix}
0 \\
 1
 \end{pmatrix}\\
 &~{} \quad -h_0^2 (\varepsilon t, \varepsilon \rho(t))  \begin{pmatrix}    \frac12 B_{0,\omega}^2 \\ - B_{0,\omega} + A_{0,\omega}B_{0,\omega}  \end{pmatrix} \\
 &~{}\quad +   h_0(\varepsilon t, \varepsilon \rho(t))  \left( \partial_s h_0(\varepsilon t, \varepsilon \rho(t)) + \omega (t) \partial_y h_0(\varepsilon t, \varepsilon \rho(t)) \right) (1- \partial_z^2) \partial_z^{-1} \begin{pmatrix} B_{0,\omega} \\ A_{0,\omega}  \end{pmatrix}  \\
 &~{}  \quad   -  f_1(t) (1-\partial^2_z) \partial_z^{-1} \begin{pmatrix}
\Lambda Q_\omega \\
\Lambda R_\omega
\end{pmatrix}  +   f_2(t)(1-\partial^2_z)
\begin{pmatrix} Q_\omega  \\ R_\omega  \end{pmatrix}  .
\end{aligned}
\end{equation}

 Let us observe that on the right-hand side of \eqref{Syst_3}, the second, fourth, and fifth terms are just in $L^\infty(\mathbb{R})^2$. Let us observe that on the right-hand side of \eqref{Syst_3}, the second, fourth, and fifth terms are just in $L^\infty(\mathbb{R})^2$. Then Lemma 2.9 does not apply straightforwardly. We then proceed as follows.

\smallskip

\noindent {\bf Step 0}. First solving

\begin{equation}\label{Syst_3bis}
 \begin{aligned}
\mathcal L
\begin{pmatrix} A_{2,1}\\ B_{2,1} \end{pmatrix}=
 &~{}  -h_0^2 (\varepsilon t, \varepsilon \rho(t))  \begin{pmatrix}    \frac12 B_{0,\omega}^2 \\ - B_{0,\omega} + A_{0,\omega}B_{0,\omega}  \end{pmatrix}  
 +   f_2(t)(1-\partial^2_z)
\begin{pmatrix} Q_\omega  \\ R_\omega  \end{pmatrix} ,
\end{aligned}
\end{equation}
is straightforward since the right-hand side belongs
to (Ker${\mathcal L})^\perp$. We then seek
$$
\begin{pmatrix} A_{2,2}\\ B_{2,2} \end{pmatrix}=
\begin{pmatrix} A_2-A_{2,1}\\ B_2-B_{2,1} \end{pmatrix}. $$
Let us observe that
\[
\mathcal L= \begin{pmatrix} 1 & -\omega\\ -\omega & 1  \end{pmatrix}
+\begin{pmatrix} c\partial^2_z & \omega \partial^2_z +Q_\omega \\ \omega \partial^2_z + Q_\omega & a\partial^2_zR_\omega\end{pmatrix}.
\]
Set $M(\omega)=\begin{pmatrix} 1 & -\omega\\ -\omega & 1\end{pmatrix}^{-1}.$ 
Solving \eqref{Syst_3} reads now
$$\mathcal L \begin{pmatrix} A_{2,2} \\ B_{2,2} \end{pmatrix}= \begin{pmatrix} F \\ G\end{pmatrix},  $$
\noindent amounts to solving, setting 
$$ \begin{pmatrix} \tilde A_{2,2} \\ \tilde B_{2,2} \end{pmatrix}
=\begin{pmatrix} A_{2,2}\\ B_{2,2}\end{pmatrix} - M(\omega) \begin{pmatrix} F \\ G\end{pmatrix}$$
\begin{equation}\label{newsun1}
\mathcal L \begin{pmatrix} \tilde A_{2,2}\\ \tilde B_{2,2}\end{pmatrix} = - \begin{pmatrix} c\partial^2_z & \omega \partial^2_z +Q_\omega \\ \omega \partial^2_z + Q_\omega & a\partial^2_zR_\omega\end{pmatrix}
M(\omega) \begin{pmatrix} F \\ G\end{pmatrix}.
\end{equation}
This can be solved by appealing to Lemma 2.9 since the right-hand side of
\eqref{newsun1} belongs to $L^2(\mathbb{R})^2$. For this purpose,
we now choose $f_1(t)$ in order to ensure that the right-hand side
of \eqref{newsun1} belongs to (Ker${\mathcal L})^\perp$. 
To simplify the computations, we claim 
\begin{equation}\label{newsun2}
{\rm If} \: \left\langle \begin{pmatrix} F \\ G\end{pmatrix}, {\bd Q'_\omega} \right\rangle=0 \quad {\rm then} \quad \left\langle\begin{pmatrix} c\partial^2_z & \omega \partial^2_z +Q_\omega \\ \omega \partial^2_z + Q_\omega & a\partial^2_zR_\omega\end{pmatrix}
M(\omega) \begin{pmatrix} F \\ G\end{pmatrix}, {\bd  Q'_\omega} \right\rangle=0.
\end{equation}
 The proof of \eqref{newsun2} reads
\begin{equation}\label{newsun3}
\left\langle ({\mathcal L}-M(\omega)^{-1})M(\omega)\begin{pmatrix} F \\ G\end{pmatrix}, {\bd  Q'_\omega} \right\rangle=
\left\langle M(\omega)\begin{pmatrix} F \\ G\end{pmatrix}, {\mathcal L}{\bd  Q'_\omega} \right\rangle=0.
\end{equation}

We summarize the previous computations in the following statement

\begin{lemma}\label{relax} Consider ${\bd F}\in L^\infty(\R)^2$ such that $\partial^l_z {\bd F}$ has exponential decay for any $l\geq 1$ and such that $\langle{\bd F,  \bd Q'}\rangle=0$. Then there exists a unique solution
to ${\mathcal L}{\bd A}={\bd F}$ in $L^\infty(\mathbb{R})^2$ such that $\langle {\bd A,  Q'}\rangle=0$. Moreover, $\partial^l_z {\bd A}$
has exponential decay for any $l\geq 1$.
\end{lemma}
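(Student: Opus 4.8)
The plan is to reduce the lemma, via the algebraic splitting $\mathcal{L}=M(\omega)^{-1}+(\mathcal{L}-M(\omega)^{-1})$ already used in \eqref{newsun1}, to the $L^2$-solvability theory of Lemma~\ref{lem:L-1} together with the decay statements of Lemmas~\ref{lem:decay:solitons} and \ref{lem:exp:decay}. Recall that $M(\omega)^{-1}=\begin{pmatrix} 1 & -\omega \\ -\omega & 1 \end{pmatrix}$ is the zeroth-order part of $\mathcal{L}$, while $\mathcal{L}-M(\omega)^{-1}$ collects the operator $\partial_z^2$ (with constant coefficients) and multiplication by the exponentially decaying potentials $Q_\omega,R_\omega$. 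The point of the splitting is that conjugating by the constant invertible matrix $M(\omega)$ turns a non-$L^2$ right-hand side into an $L^2$ (in fact exponentially decaying) one.

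\emph{Existence.} First I would set $\bd{G}:=-(\mathcal{L}-M(\omega)^{-1})M(\omega)\bd{F}$. Since $M(\omega)$ is a fixed matrix, $\bd{G}$ is a linear combination of $\partial_z^2\bd{F}$, which decays exponentially by hypothesis (as $2\ge 1$), and of $Q_\omega\bd{F},R_\omega\bd{F}$, which decay exponentially by \eqref{decay_Q} and $\bd{F}\in L^\infty$; hence $\bd{G}$ has exponential decay, in particular $\bd{G}\in L^2\times L^2$. Next, as in the computation \eqref{newsun3} — using that $\mathcal{L}$ is self-adjoint, $\mathcal{L}\bd{Q}_\omega'=0$, and $\langle\bd{F},\bd{Q}_\omega'\rangle=0$ — one checks $\langle\bd{G},\bd{Q}_\omega'\rangle=0$, so $\bd{G}\in R(\mathcal{L})=(\ker\mathcal{L})^\perp$. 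Lemma~\ref{lem:L-1} then yields a unique $\tilde{\bd{A}}\in(H^2\times H^2)\cap(\ker\mathcal{L})^\perp$ with $\mathcal{L}\tilde{\bd{A}}=\bd{G}$, and Lemma~\ref{lem:exp:decay} (applied with right-hand side $\bd{G}$) shows $\tilde{\bd{A}}$ decays exponentially; differentiating the equation in $z$ and iterating this argument, as at the end of the proof of Lemma~\ref{lem:decay:solitons}, gives exponential decay of every $\partial_z^l\tilde{\bd{A}}$. I would then define $\bd{A}:=\tilde{\bd{A}}+M(\omega)\bd{F}-\theta\,\bd{Q}_\omega'$, with $\theta$ chosen so that $\langle\bd{A},\bd{Q}_\omega'\rangle=0$ (this normalization is necessary because $M(\omega)\bd{F}$ need not itself be orthogonal to $\bd{Q}_\omega'$). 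Using $\mathcal{L}M(\omega)\bd{F}=\bd{F}-\bd{G}$ and $\mathcal{L}\bd{Q}_\omega'=0$ one gets $\mathcal{L}\bd{A}=\bd{F}$; moreover $\bd{A}\in L^\infty\times L^\infty$ since $\tilde{\bd{A}},\bd{Q}_\omega'$ decay and $M(\omega)\bd{F}\in L^\infty$, and for $l\ge 1$ the identity $\partial_z^l\bd{A}=\partial_z^l\tilde{\bd{A}}+M(\omega)\partial_z^l\bd{F}-\theta\,\partial_z^l\bd{Q}_\omega'$ gives exponential decay of $\partial_z^l\bd{A}$ (by the previous step, by hypothesis, and by Lemma~\ref{lem:decay:solitons}, respectively).

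\emph{Uniqueness.} If $\bd{A}_1,\bd{A}_2$ are two admissible solutions, then $\bd{v}:=\bd{A}_1-\bd{A}_2\in L^\infty\times L^\infty$ solves $\mathcal{L}\bd{v}=0$ with $\langle\bd{v},\bd{Q}_\omega'\rangle=0$. The operator $\mathcal{L}$ is elliptic — its principal symbol is $-\xi^2\begin{pmatrix} c & \omega \\ \omega & a \end{pmatrix}$, invertible for $\xi\ne 0$ because $ac>\omega^2$ by \eqref{velo} — so a bootstrap gives $\bd{v}\in C^\infty$. Rewriting $\mathcal{L}\bd{v}=0$ as a first-order system in $(v_1,v_1',v_2,v_2')$, its coefficient matrix converges exponentially fast, as $|z|\to\infty$, to the constant matrix associated with $\bd{v}''=-\begin{pmatrix} c & \omega \\ \omega & a \end{pmatrix}^{-1}\begin{pmatrix} 1 & -\omega \\ -\omega & 1 \end{pmatrix}\bd{v}$; the latter matrix is a product of two positive-definite symmetric matrices, hence has only positive eigenvalues. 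By the exponential dichotomy of the limiting hyperbolic system, any solution bounded on $\R$ decays exponentially at $\pm\infty$, so $\bd{v}\in H^2\times H^2$; by the nondegeneracy hypothesis \eqref{kernel}, $\bd{v}\in\operatorname{span}\{\bd{Q}_\omega'\}$, and the orthogonality forces $\bd{v}=0$.

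The routine parts — the explicit decay bounds for $\bd{G}$, the $z$-derivative bookkeeping, and the identity $\mathcal{L}M(\omega)\bd{F}=\bd{F}-\bd{G}$ — are immediate. The one step that requires genuine care is the uniqueness: ruling out a bounded but non-decaying element of $\ker\mathcal{L}$. This is precisely where the combination of ellipticity, the exponential dichotomy of the limiting constant-coefficient ODE at $\pm\infty$, and the nondegeneracy assumption \eqref{kernel} is used.
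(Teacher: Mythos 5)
Your existence argument follows the paper's own route precisely: the splitting $\mathcal{L}=M(\omega)^{-1}+(\mathcal{L}-M(\omega)^{-1})$, the observation that $\bd{G}:=-(\mathcal{L}-M(\omega)^{-1})M(\omega)\bd{F}$ is exponentially decaying (hence $L^2$), the orthogonality check via \eqref{newsun3}, and then Lemmas~\ref{lem:L-1} and \ref{lem:exp:decay}. The explicit $\theta\,\bd{Q}_\omega'$ correction to enforce $\langle\bd{A},\bd{Q}_\omega'\rangle=0$ is a sound, slightly more careful touch than the paper's two-line sketch, which defers that normalization to a later step.

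Where you genuinely go beyond the paper is the uniqueness argument. The paper's proof says nothing about uniqueness, and this is not a vacuous omission: the nondegeneracy hypothesis \eqref{kernel} characterizes $\ker\mathcal{L}$ only in the natural operator domain $H^2\times H^2$, so it does not by itself rule out a bounded, non-decaying kernel element. Your route — ellipticity to get smoothness, then the exponential dichotomy of the asymptotically constant-coefficient first-order system (using that $-\begin{pmatrix} c & \omega\\ \omega & a\end{pmatrix}^{-1}\begin{pmatrix} 1 & -\omega\\ -\omega & 1\end{pmatrix}$ is a product of two symmetric positive-definite matrices, hence has positive spectrum) to force any bounded solution of $\mathcal{L}\bd{v}=0$ to decay exponentially, hence land in $H^2$, hence in $\mathrm{span}\{\bd{Q}_\omega'\}$, hence vanish by orthogonality — is correct and fills a real gap in the written proof. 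This is a worthwhile addition; the rest of your argument is a faithful, correctly fleshed-out version of the paper's.
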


\begin{proof}
For existence, we essentially carbon-copy the arguments in
\eqref{newsun1}-\eqref{newsun3}. For the extra decay
of the derivatives, simply differentiate the equation
and prove the result recursively on $l$.
\end{proof}

\noindent We now solve $\left\langle \begin{pmatrix} F \\ G\end{pmatrix}, {\bd  Q'_\omega} \right\rangle=0$ in the next step.
Additionally, it will be fixed such that \eqref{Syst_3} has a unique solution $\bd{W} \in L^\infty \times L^\infty$
satisfying  $ \langle \bd{Q}_\omega ,  \bd{W} \rangle =\langle \bd{Q}_\omega' ,  \bd{W} \rangle  =0$.

\medskip

\noindent {\bf Step 1:} In order to obtain solvability, we require
\begin{equation}\label{Syst_4}
\begin{aligned}
0 = &~{} -  \partial_y h_0(\varepsilon t, \varepsilon \rho(t))  \left\langle  \bd{Q}_\omega' , \begin{pmatrix}  0 \\ zQ_\omega \end{pmatrix} \right\rangle
-  \left\langle  \bd{Q}_\omega' ,  \partial_z^{-1} \partial_s h_0 (\varepsilon t, \varepsilon x )  \begin{pmatrix} 0 \\ 1 \end{pmatrix} \right\rangle \\
 &~{} \quad - h_0^2 (\varepsilon t, \varepsilon \rho(t))  \left\langle  \bd{Q}_\omega' ,  \begin{pmatrix}    \frac12 B_{0,\omega}^2 \\ - B_{0,\omega} + A_{0,\omega}B_{0,\omega}  \end{pmatrix} \right\rangle \\
 &~{} \quad   + h_0(\varepsilon t, \varepsilon \rho(t))  \left( \partial_s h_0(\varepsilon t, \varepsilon \rho(t)) + \omega (t) \partial_y h_0(\varepsilon t, \varepsilon \rho(t)) \right) \left\langle  \bd{Q}_\omega' ,  (1- \partial_z^2) \partial_z^{-1} \begin{pmatrix} B_{0,\omega} \\ A_{0,\omega}  \end{pmatrix}  \right\rangle \\
 &~{}  \quad   -  f_1(t)  \left\langle  \bd{Q}_\omega' ,  (1-\partial^2_z) \partial_z^{-1} \begin{pmatrix}
\Lambda Q_\omega \\
\Lambda R_\omega
\end{pmatrix}  \right\rangle +   f_2(t)
 \left\langle  \bd{Q}_\omega' , (1-\partial^2_z) \begin{pmatrix} Q_\omega  \\ R_\omega  \end{pmatrix} \right\rangle .
\end{aligned}
\end{equation}
A further simplification in \eqref{Syst_4} that uses the exact value of $\bd{Q}_\omega$, parity properties, and integration by parts gives
\begin{equation}\label{Syst_5}
\begin{aligned}
0 = &~{}  \frac12  \partial_y h_0(\varepsilon t, \varepsilon \rho(t)) \int Q_\omega^2
+ \int  \partial_s h_0 (\varepsilon t, \varepsilon x )  Q_\omega (z) \\
 &~{}  + h_0(\varepsilon t, \varepsilon \rho(t))  \left( \partial_s h_0(\varepsilon t, \varepsilon \rho(t)) + \omega (t) \partial_y h_0(\varepsilon t, \varepsilon \rho(t)) \right) \\
 &~{} \quad \times \int \left( R_\omega (1- \partial_z^2) B_{0,\omega} + Q_\omega (1-\partial_z^2 )A_{0,\omega}  \right) \\
 &~{}   +  f_1(t)  \partial_\omega \int R_\omega  (1-\partial^2_z)  Q_\omega . 
\end{aligned}
\end{equation}
Bearing in mind \eqref{mom}, we define
\[
\begin{aligned}
d_0(\omega):= &~{} \left(\partial_\omega \int R_\omega  (1-\partial^2_z)  Q_\omega \right)^{-1}, \\
d_2(\omega) := &~{} \int \left( R_\omega (1- \partial_z^2) B_{0,\omega} + Q_\omega (1-\partial_z^2 )A_{0,\omega}  \right),
\end{aligned}
\]
so  we have a unique $f_1$ of \eqref{Syst_5} given by:
one has
\begin{equation}\label{f1}
\begin{aligned}
f_1(t) = &~{} - d_0(\omega) \Bigg( \frac12  \partial_y h_0(\varepsilon t, \varepsilon \rho(t)) \int Q_\omega^2 + \int  \partial_s h_0 (\varepsilon t, \varepsilon (z+\rho(t)) )  Q_\omega (z)dz  \\
&~{} \qquad \qquad + d_2(\omega)  h_0(\varepsilon t, \varepsilon \rho(t))  \left( \partial_s h_0(\varepsilon t, \varepsilon \rho(t)) + \omega (t) \partial_y h_0(\varepsilon t, \varepsilon \rho(t)) \right)  \Bigg).
\end{aligned}
\end{equation}

\noindent  With this definition of $f_1$
applying Lemma \ref{relax} we have the existence and uniqueness of $\begin{pmatrix} A_2 \\ B_2 \end{pmatrix}$
that solves \eqref{Syst_3}.

Under the bootstrap assumption $|\omega(t)-\omega_0|\leq \frac1{100}$, one can get rid of terms involving $\omega$. Later we will improve this estimate by showing in Lemma \ref{DS_lem} that $|\omega(t)-\omega_0|\leq C\varepsilon$. Now we perform some estimates on $f_1$ in \eqref{f1} using the 1/100 assumption. First, using \eqref{hypoH},
\begin{equation}\label{f1_1}
\begin{aligned}
\left| \frac12  \partial_y h_0(\varepsilon t, \varepsilon \rho(t)) \int Q_\omega^2 \right| \lesssim e^{-k_0 \varepsilon |t|}e^{-l_0 \varepsilon |\rho(t)|}.
\end{aligned}
\end{equation}
Second,
\begin{equation}\label{f1_2}
\begin{aligned}
\left| \int  \partial_s h_0 (\varepsilon t, \varepsilon x )  Q_\omega (z)\right| \lesssim e^{-k_0 |\varepsilon t|} \int  Q_\omega (z) e^{-l_0 \varepsilon x} dx \lesssim e^{-k_0 \varepsilon |t|} e^{-l_0 \varepsilon |\rho(t)|}.
\end{aligned}
\end{equation}
Now, using the exponential decay of $Q_\omega$,
\begin{equation}\label{f1_4}
\begin{aligned}
& \left|   h_0(\varepsilon t, \varepsilon \rho(t))  \left( \partial_s h_0(\varepsilon t, \varepsilon \rho(t)) + \omega (t) \partial_y h_0(\varepsilon t, \varepsilon \rho(t)) \right) \right|
\\
& \quad \times \left|  \int \left( R_\omega (1- \partial_z^2) B_{0,\omega} + Q_\omega (1-\partial_z^2 )A_{0,\omega}  \right) \right| \lesssim e^{-2k_0 \varepsilon |t|} e^{-2l_0 \varepsilon |\rho(t)|}.
\end{aligned}
\end{equation}
Gathering in \eqref{Syst_5} the estimates \eqref{f1_1}, \eqref{f1_2}, and \eqref{f1_4}, we get
\begin{equation}\label{cota_f1}
|f_1(t)| \lesssim e^{-k_0 \varepsilon |t| -l_0 \varepsilon |\rho(t)|}.
\end{equation}

\noindent {\bf Step 2}. We have now
\begin{equation}\label{Syst_3_A2B2} 
\begin{aligned}
\begin{pmatrix} A_2 \\ B_2 \end{pmatrix} &~{} = \mathcal L^{-1}  \left[  - \partial_y h_0(\varepsilon t, \varepsilon \rho(t)) \begin{pmatrix}  0 \\ zQ_\omega \end{pmatrix} \right.
-  \partial_z^{-1} \partial_s h_0 (\varepsilon t, \varepsilon x )  \begin{pmatrix}
0 \\
 1
 \end{pmatrix}\\
 &~{}\qquad \qquad  +  h_0(\varepsilon t, \varepsilon \rho(t))  \left( \partial_s h_0(\varepsilon t, \varepsilon \rho(t)) + \omega (t) \partial_y h_0(\varepsilon t, \varepsilon \rho(t)) \right) (1- \partial_z^2) \partial_z^{-1} \begin{pmatrix} B_{0,\omega} \\ A_{0,\omega}  \end{pmatrix}  \\
 &~{}  \qquad\qquad   \left.  -  f_1(t) (1-\partial^2_z) \partial_z^{-1} \begin{pmatrix}
\Lambda Q_\omega \\
\Lambda R_\omega
\end{pmatrix}   \right]
\\
&~{} \quad +   f_2(t) \mathcal L^{-1} \left( (1-\partial^2_z)
\begin{pmatrix} Q_\omega  \\ R_\omega  \end{pmatrix} \right) \\
 &~{} \quad -h_0^2 (\varepsilon t, \varepsilon \rho(t))  \mathcal L^{-1} \begin{pmatrix}    \frac12 B_{0,\omega}^2 \\ - B_{0,\omega} + A_{0,\omega}B_{0,\omega}  \end{pmatrix} .
\end{aligned}
\end{equation}
Notice that $(A_2,B_2)$ are not in $L^2\times L^2$. We will see below that $\mathcal L^{-1}$ is composed of a term in $L^\infty\times L^\infty$ plus a term in $L^2\times L^2$. In that sense, $\mathcal L^{-1}$ must be understood as a generalized inverse of $\mathcal L$. In fact, there are three terms that are not in $L^2$
in the right-hand side of \eqref{Syst_3_A2B2}, namely the second, the third, and the fourth one. Let us observe also that we do not know
$f_2(t)$; we will chose $f_2$ in the sequel.

\medskip

\noindent We now specify the bounds on the second, third, and fourth terms in the right-hand side of \eqref{Syst_3_A2B2}. 

Let us define

\begin{equation}\label{A20}
\begin{aligned}
& \begin{pmatrix} A_{2,0} \\ B_{2,0} \end{pmatrix}
\\
&  :=   h_0(\varepsilon t, \varepsilon \rho(t))  \left( \partial_s h_0(\varepsilon t, \varepsilon \rho(t)) + \omega (t) \partial_y h_0(\varepsilon t, \varepsilon \rho(t)) \right)  M(\omega) (1- \partial_z^2) \partial_z^{-1} \begin{pmatrix} B_{0,\omega} \\ A_{0,\omega}  \end{pmatrix}
\\
& \qquad -  \frac1{1-\omega^2}\partial_z^{-1} \partial_s h_0 (\varepsilon t, \varepsilon x )  \begin{pmatrix} \omega \\ 1 \end{pmatrix}  -  f_1(t)  M(\omega) (1-\partial^2_z) \partial_z^{-1} \begin{pmatrix}
\Lambda Q_\omega \\
\Lambda R_\omega
\end{pmatrix}.
\end{aligned}
\end{equation}

An important point to be emphasized is that both $A_{2,0}$ and $B_{2,0}$ are just bounded functions in the $z$ variable. Special care requires the function $\partial_z^{-1} \partial_s h_0 (\varepsilon t, \varepsilon x )$. Indeed, one has
\begin{equation}\label{mejorada0}
\begin{aligned}
& |\partial_z^{-1} \partial_s h_0 (\varepsilon t, \varepsilon x )| \\
& \quad \lesssim   e^{-k_0 \varepsilon |t|} \int_{z+ \rho(t)}^\infty e^{-l_0 \varepsilon |\sigma|} d\sigma
\lesssim \begin{cases}  \frac1{\varepsilon}  e^{-k_0 \varepsilon |t|} e^{-l_0 \varepsilon (z+\rho(t))} , & z > -\rho(t) \\  \frac1{\varepsilon}  e^{-k_0 \varepsilon |t|}, & z\leq -\rho(t) \end{cases}
\\
& \quad \lesssim \frac1{\varepsilon} e^{-k_0 \varepsilon |t|}e^{-l_0 \varepsilon (z+\rho(t))_+ }.
\end{aligned}
\end{equation}
Similarly, for $\ell=1,2,3,\ldots$
\begin{equation}\label{mejorada}
| \partial_z^\ell \partial_z^{-1} \partial_s h_0 (\varepsilon t, \varepsilon x )| \lesssim   \varepsilon^{\ell-1} e^{-k_0 \varepsilon |t|} e^{-l_0 \varepsilon |x|}. 
\end{equation}
Gathering these estimates, we have from \eqref{A20} and \eqref{f1}-\eqref{cota_f1},
\[
 \lim_{z\to +\infty} \left( |A_{2,0}(z; \omega(t),\rho(t))| +|B_{2,0}(z; \omega(t),\rho(t))|  \right)=0,\\
\]
and more precisely, using \eqref{H2}, \eqref{hypoH}, \eqref{decay_Q} and \eqref{cota_f1},
\begin{equation}\label{ErrorA20}
\begin{aligned}
& |A_{2,0}(z; \omega(t),\rho(t))| +|B_{2,0}(z; \omega(t),\rho(t))| \\
& \quad \lesssim  \left|   h_0(\varepsilon t, \varepsilon \rho(t))  \left( \partial_s h_0(\varepsilon t, \varepsilon \rho(t)) + \omega (t) \partial_y h_0(\varepsilon t, \varepsilon \rho(t)) \right)  M(\omega) (1- \partial_z^2) \partial_z^{-1} \begin{pmatrix} B_{0,\omega} \\ A_{0,\omega}  \end{pmatrix}\right|
\\
& \quad \quad +\left|  \frac1{1-\omega^2}\partial_z^{-1} \partial_s h_0 (\varepsilon t, \varepsilon x )  \begin{pmatrix} \omega \\ 1 \end{pmatrix}  \right| + \left|  f_1(t)  M(\omega) (1-\partial^2_z) \partial_z^{-1} \begin{pmatrix}
\Lambda Q_\omega \\
\Lambda R_\omega
\end{pmatrix} \right| \\
& \quad \lesssim \frac1{\varepsilon} e^{-k_0 \varepsilon |t|}e^{-l_0 \varepsilon (z+\rho(t))_+ } + e^{-k_0 \varepsilon |t| -l_0 \varepsilon |\rho(t)|} e^{- \frac12\mu_0 z_+}.
\end{aligned}
\end{equation}
Similarly, for $\ell=1,2,3,\ldots$ we use \eqref{mejorada} to get the better estimate
\begin{equation}\label{ErrorA201}
\begin{aligned}
& | \partial_z^\ell A_{2,0}(z; \omega(t),\rho(t))| +| \partial_z^\ell B_{2,0}(z; \omega(t),\rho(t))|
\\
&  \lesssim  \left|  h_0(\varepsilon t, \varepsilon \rho(t))  \left( \partial_s h_0(\varepsilon t, \varepsilon \rho(t)) + \omega (t) \partial_y h_0(\varepsilon t, \varepsilon \rho(t)) \right)  M(\omega) (1- \partial_z^2) \partial_z^{\ell-1} \begin{pmatrix} B_{0,\omega} \\ A_{0,\omega}  \end{pmatrix}\right|
\\
&  \quad +\left|  \frac1{1-\omega^2}\partial_z^{\ell-1} \partial_s h_0 (\varepsilon t, \varepsilon x )  \begin{pmatrix} \omega \\ 1 \end{pmatrix}  \right| + \left|  f_1(t)  M(\omega) (1-\partial^2_z) \partial_z^{\ell-1} \begin{pmatrix}
\Lambda Q_\omega \\
\Lambda R_\omega
\end{pmatrix} \right| \\
&  \lesssim   \varepsilon^{\ell-1} e^{-k_0 \varepsilon |t|}e^{-l_0 \varepsilon |x|} +e^{-k_0 \varepsilon |t| -l_0 \varepsilon |\rho(t)|} e^{- \frac12\mu_0  |z|}.
\end{aligned}
\end{equation}

\medskip

{\bf Step 3.}
From \eqref{A20}, we have
\begin{equation}\label{LA20}
\begin{aligned}
& \mathcal L \begin{pmatrix} A_{2,0} \\ B_{2,0} \end{pmatrix}
\\
&\quad  = h_0(\varepsilon t, \varepsilon \rho(t))  \left( \partial_s h_0(\varepsilon t, \varepsilon \rho(t)) + \omega (t) \partial_y h_0(\varepsilon t, \varepsilon \rho(t)) \right) (1- \partial_z^2) \partial_z^{-1} \begin{pmatrix} B_{0,\omega} \\ A_{0,\omega}  \end{pmatrix}
\\
& \quad \qquad -  \frac1{1-\omega^2}\partial_z^{-1} \partial_s h_0 (\varepsilon t, \varepsilon x )  \begin{pmatrix} 0 \\ 1 \end{pmatrix}  -  f_1(t)  (1-\partial^2_z) \partial_z^{-1} \begin{pmatrix}
\Lambda Q_\omega \\
\Lambda R_\omega
\end{pmatrix}.
\end{aligned}
\end{equation}
Finally, following \eqref{Syst_3_A2B2} we decompose 
\begin{equation}\label{decoA2}
\begin{aligned}
\begin{pmatrix} A_{2} \\ B_{2} \end{pmatrix} = &~{} \mathcal L^{-1} F +   f_2(t) \mathcal L^{-1} \left( (1-\partial^2_z)  \begin{pmatrix} Q_\omega  \\ R_\omega  \end{pmatrix} \right)
\\
&~{}  -h_0^2 (\varepsilon t, \varepsilon \rho(t))  \mathcal L^{-1} \begin{pmatrix}    \frac12 B_{0,\omega}^2 \\ - B_{0,\omega} + A_{0,\omega}B_{0,\omega}  \end{pmatrix} ,
\end{aligned}
\end{equation}
where
\begin{equation}\label{decoA22}
\mathcal L^{-1} F  := \begin{pmatrix} A_{2,0} \\ B_{2,0} \end{pmatrix} + \begin{pmatrix} A_{2,1} \\ B_{2,1} \end{pmatrix},
\end{equation}
with $A_{2,0},B_{2,0}$ given in \eqref{A20} and $A_{2,1},B_{2,1}$ to be found.

\medskip

{\bf Step 4}. Then, replacing in \eqref{Syst_3_A2B2}, one gets from \eqref{decoA22} the better behaved problem for $A_{2,1},B_{2,1}$:
\[
\mathcal L  \begin{pmatrix} A_{2,1} \\ B_{2,1} \end{pmatrix} = F - \mathcal L \begin{pmatrix} A_{2,0} \\ B_{2,0} \end{pmatrix},
\]
which reads after using \eqref{LA20},
\begin{equation}\label{Syst_3_nuevo}
\begin{aligned}
& \begin{pmatrix}
 c \partial_z^2 +1  &   -\omega (1-\partial^2_z) +Q_\omega   \\
  -\omega (1-\partial^2_z)  +Q_\omega & a \partial_z^2 +1 +R_\omega
\end{pmatrix}
\begin{pmatrix} A_{2,1} \\ B_{2,1} \end{pmatrix} \\
&~{} = - \partial_y h_0(\varepsilon t, \varepsilon \rho(t)) \begin{pmatrix}  0 \\ zQ_\omega \end{pmatrix}  -  \begin{pmatrix}
 c \partial_z^2   &   \omega \partial^2_z +Q_\omega   \\
  \omega \partial^2_z  +Q_\omega & a \partial_z^2  +R_\omega
\end{pmatrix} \begin{pmatrix} A_{2,0} \\ B_{2,0} \end{pmatrix}.
\end{aligned}
\end{equation}
By construction, the right-hand side in \eqref{Syst_3_nuevo} is orthogonal to $\bd{Q}'_\omega.$  Therefore, the existence of $A_{2,1},B_{2,1}$ is guaranteed. Moreover, now the right-hand side in \eqref{Syst_3_nuevo} is exponentially decreasing. This is a consequence of the fact that $A_{2,0},B_{2,0}$ are merely bounded (see \eqref{ErrorA20}), with partial derivatives localized in space by \eqref{ErrorA201}. More precisely, thanks to \eqref{ErrorA20} and \eqref{ErrorA201},
\begin{equation}\label{CotaF}
\begin{aligned}
& \left|  - \partial_y h_0(\varepsilon t, \varepsilon \rho(t)) \begin{pmatrix}  0 \\ zQ_\omega \end{pmatrix}  -  \begin{pmatrix}
 c \partial_z^2   &   \omega \partial^2_z +Q_\omega   \\
  \omega \partial^2_z  +Q_\omega & a \partial_z^2  +R_\omega
\end{pmatrix} \begin{pmatrix} A_{2,0} \\ B_{2,0} \end{pmatrix} \right| \\
& \quad \lesssim e^{-k_0 \varepsilon |t| -l_0 \varepsilon |\rho(t)|} e^{- \frac12\mu_0  |z|}  + \left| \partial_z^2   \begin{pmatrix} A_{2,0} \\ B_{2,0} \end{pmatrix} \right| + \left| e^{-\mu_0 |z|} \begin{pmatrix} A_{2,0} \\ B_{2,0} \end{pmatrix} \right|  \\
& \quad \lesssim e^{-k_0 \varepsilon |t| -l_0 \varepsilon |\rho(t)|} e^{- \frac12\mu_0  |z|} + \varepsilon e^{-k_0 \varepsilon |t|}e^{-l_0 \varepsilon |x|}  
 +\frac1{\varepsilon} e^{-k_0 \varepsilon |t|} e^{- \mu_0  |z|}.
\end{aligned}
\end{equation}
By using Lemma~\ref{lem:exp:decay}, we conclude that for
\begin{equation}\label{CotaA2}
\begin{aligned}
& \left|  \begin{pmatrix} A_{2,1} \\ B_{2,1} \end{pmatrix} \right| (z; \omega(t),\rho(t))\\
& \quad \lesssim e^{-k_0 \varepsilon |t| -l_0 \varepsilon |\rho(t)|} e^{- \frac12\tilde \mu_0  |z|} + \varepsilon e^{-k_0 \varepsilon |t|}e^{-\tilde  l_0 \varepsilon |x|}  
 +\frac1{\varepsilon} e^{-k_0 \varepsilon |t|} e^{- \mu_0  |z|}.
\end{aligned}
\end{equation}
for some constants $\tilde \mu>0$, $\tilde l_0>0$. {For simplicity, we will continue to denote these constants by $\mu_0$ and $l_0$.}
Using that
\[
\mathcal L \partial_z  \begin{pmatrix} A_{2,1} \\ B_{2,1} \end{pmatrix} = \partial_z F - \mathcal L \partial_z \begin{pmatrix} A_{2,0} \\ B_{2,0} \end{pmatrix} - (\partial_z \mathcal L) \begin{pmatrix} A_{2,0} \\ B_{2,0} \end{pmatrix} -(\partial_z\mathcal L)  \begin{pmatrix} A_{2,1} \\ B_{2,1} \end{pmatrix},
\]
we get using the value of $F$,
\begin{equation}\label{cota_Fz}
\begin{aligned}
& |\partial_z F|(z,\varepsilon x; \omega(t),\rho(t)) \lesssim  e^{-k_0 \varepsilon|t| -l_0 \varepsilon |\rho(t)|}  e^{-\frac12 \mu_0 |z|}  + e^{-k_0 \varepsilon|t| -l_0 \varepsilon |x|} .
\end{aligned}
\end{equation}

{\bf Step 5.}
Estimate \eqref{cota_Fz} reveals that the $z$ derivatives of $F$ have better decay estimates than the original $F$. This translates to the estimates on the derivatives of $A_{2,1}$ and $B_{2,1}$ as follows:
\begin{equation}\label{CotaA2_2}
\begin{aligned}
& \left| \partial_z^{\ell}  \begin{pmatrix} A_{2,1} \\ B_{2,1} \end{pmatrix} \right| (z; \omega(t),\rho(t))\\
& \quad \lesssim e^{-k_0 \varepsilon |t| -l_0 \varepsilon |\rho(t)|} e^{- \frac12\mu_0  |z|} + \varepsilon^{\ell +1} e^{-k_0 \varepsilon |t|}e^{-l_0 \varepsilon |x|}  
 +\frac1{\varepsilon} e^{-k_0 \varepsilon |t|} e^{- \mu_0  |z|}.
\end{aligned}
\end{equation}
Finally, we recover from \eqref{decoA2} and \eqref{decoA22},
\begin{equation}\label{Sol_2}
\begin{aligned}
\begin{pmatrix} A_{2} \\ B_{2} \end{pmatrix} = &~{}  m_2  \bd{Q}'_\omega +  \begin{pmatrix} A_{2,0} \\ B_{2,0} \end{pmatrix} + \begin{pmatrix} A_{2,1} \\ B_{2,1} \end{pmatrix}  +   f_2(t) \mathcal L^{-1} \left( (1-\partial^2_z)  \begin{pmatrix} Q_\omega  \\ R_\omega  \end{pmatrix} \right)
\\
&~{}  -h_0^2 (\varepsilon t, \varepsilon \rho(t))  \mathcal L^{-1} \begin{pmatrix}    \frac12 B_{0,\omega}^2 \\ - B_{0,\omega} + A_{0,\omega}B_{0,\omega}  \end{pmatrix} ,
\end{aligned}
\end{equation}
with $A_{2,0}, B_{2,0}$ given in \eqref{A20}, and $A_{2,1}, B_{2,1}$ given by \eqref{Syst_3_nuevo} and \eqref{CotaA2}.
We choose now $m_2$ such that the solution $(A_2,B_2)$ is orthogonal to $\bd{Q}'_\omega $. This gives $m_2=0$.

\medskip

{\bf Step 6.} Now we choose $f_2(t)$ such that the solution is orthogonal to $J(1-\partial_x^2)\bd{Q}_\omega$, exactly as in \eqref{coer_10}. This uniquely determines $f_2$:
\begin{equation}\label{Sol_2_ortho}
\begin{aligned}
& \left\langle \begin{pmatrix} A_{2} \\ B_{2} \end{pmatrix}, (1-\partial^2_z)  \begin{pmatrix} Q_\omega  \\ R_\omega  \end{pmatrix}\right\rangle
\\
 &~{} =   \left\langle \begin{pmatrix} A_{2,0} \\ B_{2,0} \end{pmatrix}, (1-\partial^2_z)  \begin{pmatrix} Q_\omega  \\ R_\omega  \end{pmatrix}\right\rangle  +\left\langle \begin{pmatrix} A_{2,1} \\ B_{2,1} \end{pmatrix},  (1-\partial^2_z)  \begin{pmatrix} Q_\omega  \\ R_\omega  \end{pmatrix}\right\rangle
 \\
 &~{} \quad  +   f_2(t) \left\langle \mathcal L^{-1} \left( (1-\partial^2_z)  \begin{pmatrix} Q_\omega  \\ R_\omega  \end{pmatrix} \right) , (1-\partial^2_z)  \begin{pmatrix} Q_\omega  \\ R_\omega  \end{pmatrix}\right\rangle
\\
&~{}  \quad  -h_0^2 (\varepsilon t, \varepsilon \rho(t)) \left\langle \mathcal L^{-1} \begin{pmatrix}    \frac12 B_{0,\omega}^2 \\ - B_{0,\omega} + A_{0,\omega}B_{0,\omega}  \end{pmatrix} , (1-\partial^2_z)  \begin{pmatrix} Q_\omega  \\ R_\omega  \end{pmatrix}\right\rangle =0.
\end{aligned}
\end{equation}
Indeed, from \eqref{coer_10} we have
\[
\left\langle (1-\partial^2_z)  \begin{pmatrix} Q_\omega  \\ R_\omega  \end{pmatrix} ,   \mathcal L^{-1} \left( (1-\partial^2_z)  \begin{pmatrix} Q_\omega  \\ R_\omega  \end{pmatrix} \right) \right\rangle < 0.
\]
This fact allows us to isolate $f_2$. We get, using \eqref{CotaA2} and \eqref{Sol_2_ortho},
\begin{equation}\label{cota_f2}
|f_2(t)| \lesssim e^{-k_0 \varepsilon |t| -l_0 \varepsilon |\rho(t)|}.
\end{equation}

\medskip

{\bf Step 7.}
Finally, from \eqref{Sol_2}, \eqref{ErrorA20} and \eqref{CotaA2} we obtain the pointwise bound
\begin{equation}\label{Sol_2_final}
\begin{aligned}
& \left| \begin{pmatrix} A_{2} \\ B_{2} \end{pmatrix} \right| (z; \omega(t),\rho(t))\\
 &~{} \quad \lesssim   \frac1{\varepsilon} e^{-k_0 \varepsilon |t|}e^{-l_0 \varepsilon (z+\rho(t))_+ } + e^{-k_0 \varepsilon |t| -l_0 \varepsilon |\rho(t)|} e^{- \frac12\mu_0 z_+}  \\
 &~{} \quad \qquad + e^{-k_0 \varepsilon |t| -l_0 \varepsilon |\rho(t)|} e^{- \frac12\mu_0  |z|} + \varepsilon e^{-k_0 \varepsilon |t|}e^{-l_0 \varepsilon |x|}  
 +\frac1{\varepsilon} e^{-k_0 \varepsilon |t|} e^{- \mu_0  |z|}
 \\
 &~{} \quad \lesssim   \frac1{\varepsilon} e^{-k_0 \varepsilon |t|}e^{-l_0 \varepsilon (z+\rho(t))_+ } + e^{-k_0 \varepsilon |t| -l_0 \varepsilon |\rho(t)|} e^{- \frac12\mu_0 z_+}  \\
 &~{} \quad \qquad + \varepsilon e^{-k_0 \varepsilon |t|}e^{-l_0 \varepsilon |x|}  +\frac1{\varepsilon} e^{-k_0 \varepsilon |t|} e^{- \mu_0  |z|} .
\end{aligned}
\end{equation}
This estimate tells us that both $A_2$ and $B_2$ are exponentially decreasing on the right, but just bounded as $z\to -\infty$. Fortunately, their amplitude decays exponentially in time.
The situation for the derivatives is hopefully better: first of all we have from \eqref{Sol_2} and $\ell=1,2,3,\ldots$
\[
\begin{aligned}
\partial_z^\ell \begin{pmatrix} A_{2} \\ B_{2} \end{pmatrix} = &~{}  \partial_z^\ell \begin{pmatrix} A_{2,0} \\ B_{2,0} \end{pmatrix} +\partial_z^\ell \begin{pmatrix} A_{2,1} \\ B_{2,1} \end{pmatrix}  +   f_2(t) \partial_z^\ell\mathcal L^{-1} \left( (1-\partial^2_z)  \begin{pmatrix} Q_\omega  \\ R_\omega  \end{pmatrix} \right)
\\
&~{}  -h_0^2 (\varepsilon t, \varepsilon \rho(t)) \partial_z^\ell \mathcal L^{-1} \begin{pmatrix}    \frac12 B_{0,\omega}^2 \\ - B_{0,\omega} + A_{0,\omega}B_{0,\omega}  \end{pmatrix}.
\end{aligned}
\]
The estimates on the last two terms are not difficult to obtain. However, the first two require care. Therefore, from \eqref{ErrorA201}, \eqref{CotaA2_2}, and $\ell=1,2,3,\ldots$
\begin{equation}\label{Sol_2_final_2}
\begin{aligned}
& \left| \partial_z^\ell \begin{pmatrix} A_{2} \\ B_{2} \end{pmatrix} \right| (z; \omega(t),\rho(t))\\
 &~{} \quad \lesssim  \varepsilon^{\ell-1} e^{-k_0 \varepsilon |t|}e^{-l_0 \varepsilon |x|} +e^{-k_0 \varepsilon |t| -l_0 \varepsilon |\rho(t)|} e^{- \frac12\mu_0  |z|} \\
 &~{} \quad \quad + e^{-k_0 \varepsilon |t| -l_0 \varepsilon |\rho(t)|} e^{- \frac12\mu_0  |z|} + \varepsilon^{\ell +1} e^{-k_0 \varepsilon |t|}e^{-l_0 \varepsilon |x|}  
 +\frac1{\varepsilon} e^{-k_0 \varepsilon |t|} e^{- \mu_0  |z|} \\
  &~{} \quad \lesssim  \varepsilon^{\ell-1} e^{-k_0 \varepsilon |t|}e^{-l_0 \varepsilon |x|}  +\frac1{\varepsilon} e^{-k_0 \varepsilon |t|} e^{- \frac12\mu_0  |z|} .
\end{aligned}
\end{equation}
Here the only complicated terms are coming from the ones with the factor $e^{-l_0 \varepsilon |x|}$, which is a reminiscent of the existence of the non-flat bottom in the space variable.
%
From \eqref{Sol_2_final} we compute the following norms:
\begin{equation}\label{A2B2 Linfty}
\begin{aligned}
 \left\| \begin{pmatrix} A_{2} \\ B_{2} \end{pmatrix} \right\|_{L^\infty_x\times L^\infty_x }
&~{}  \lesssim  \frac1{\varepsilon} e^{-k_0 \varepsilon |t|} \left\|  e^{-l_0 \varepsilon (z+\rho(t))_+ }  \right\|_{L^\infty_x} + e^{-k_0 \varepsilon |t| -l_0 \varepsilon |\rho(t)|} \left\|  e^{- \frac12\mu_0 z_+}  \right\|_{L^\infty_x}  \\
 &~{} \quad \quad + \varepsilon e^{-k_0 \varepsilon |t|} \left\|  e^{-l_0 \varepsilon |x|} \right\|_{L^\infty_x}   +\frac1{\varepsilon} e^{-k_0 \varepsilon |t|}\left\|   e^{- \mu_0  |z|}  \right\|_{L^\infty_x}
\\
&~{}  \lesssim   \frac1{\varepsilon} e^{-k_0 \varepsilon |t|}.
\end{aligned}
\end{equation}
Also, using \eqref{Sol_2_final_2}, we have for $\ell=1,2,3,\ldots$
\begin{equation}\label{A2B2 DLinfty}
\begin{aligned}
 \left\|\partial_z^\ell  \begin{pmatrix} A_{2} \\ B_{2} \end{pmatrix} \right\|_{L^\infty_x \times L^\infty_x } \lesssim  &~{}  \varepsilon^{\ell-1} e^{-k_0 \varepsilon |t|}  \left\| e^{-l_0 \varepsilon |x|}  \right\|_{L^\infty_x}  +\frac1{\varepsilon} e^{-k_0 \varepsilon |t|}  \left\|e^{- \frac12\mu_0  |z|}  \right\|_{L^\infty_x}
\\
 \lesssim &~{}  \frac1{\varepsilon} e^{-k_0 \varepsilon |t|}.
\end{aligned}
\end{equation}

\subsection{Correction term}
Consider an even function $\chi \in C^\infty_0(\mathbb R)$ such that $0 \le \chi \le 1$, $\chi'\leq 0 $ for $s\geq 0$, and
\[
\chi(s)=
\begin{cases}
	1 & \text{if } |s| \leq 1 \\
	0 & \text{if } |s| \ge 2.
\end{cases}	
\]
Finally, for $\varepsilon>0$ consider
\begin{equation}\label{chi_ve}
\chi_\varepsilon(z):= \chi\left(\varepsilon z \right),
\end{equation}
so that $\chi_\varepsilon(z) =1$ if $|z| \leq \varepsilon^{-1}$ and $\chi_\varepsilon(z)=0$ if $|z|> 2\varepsilon^{-1}$. We define $\bd{W}^\sharp$ as the following modification function of $\bd{W}$ defined in \eqref{def_W0}:
\begin{equation}\label{def_W}
\bd{W}^\sharp (t,x) =\bd{W}^\sharp (t, \omega(t),z) = \varepsilon \begin{pmatrix} A_1 \\ B_1 \end{pmatrix} (t, \omega(t),z)+ \varepsilon^2 \begin{pmatrix} A_2 \\  B_2\end{pmatrix}(t,\omega(t),z) \chi_\varepsilon (z),
\end{equation}
with $A_1,B_1,A_2,B_2$ found in \eqref{Sol_1} and \eqref{Sol_2}. Notice that the dependence on $ (t, \omega(t),z) $ means that we separate dependences on $\omega$ and $\rho$ (through $z$) and explicit dependences on $t$. Quickly, using \eqref{Sol_2_final}, one has
\begin{equation}\label{A2B2 L2}
\begin{aligned}
& \left\|  \chi_\varepsilon \begin{pmatrix} A_{2} \\ B_{2} \end{pmatrix} \right\|_{L^2_x\times L^2_x}
\\
&~{} \lesssim   \frac1{\varepsilon} e^{-k_0 \varepsilon |t|} \left\| \chi_\varepsilon  e^{-l_0 \varepsilon (z+\rho(t))_+ }  \right\|_{L^2_x} + e^{-k_0 \varepsilon |t| -l_0 \varepsilon |\rho(t)|} \left\|  \chi_\varepsilon e^{- \frac12\mu_0 z_+}  \right\|_{L^2_x}  \\
 &~{} \quad \quad + \varepsilon e^{-k_0 \varepsilon |t|} \left\| \chi_\varepsilon  e^{-l_0 \varepsilon |x|} \right\|_{L^2_x}   +\frac1{\varepsilon} e^{-k_0 \varepsilon |t|}\left\|   \chi_\varepsilon e^{- \mu_0  |z|}  \right\|_{L^2_x}
\\
&~{} \lesssim  \frac{e^{-k_0 \varepsilon |t|} }{\varepsilon^{\frac32+\frac{\delta_0}2}} .
\end{aligned}
\end{equation}
This estimate reveals how dangerous the terms arising from the pure interaction with the variable bottom are, without any genuine interaction with the solitary wave. Similarly, for $\ell=1,2,3,\ldots$ one gets from \eqref{Sol_2_final_2},
\begin{equation}\label{A2B2 DL2}
\begin{aligned}
& \left\|  \chi_\varepsilon \partial_z^\ell \begin{pmatrix} A_{2} \\ B_{2} \end{pmatrix} \right\|_{L^2_x\times L^2_x} \\
& \quad \lesssim \varepsilon^{\ell-1} e^{-k_0 \varepsilon |t|}  \left\|  \chi_\varepsilon e^{-l_0 \varepsilon |x|}  \right\|_{L^2_x}  +\frac1{\varepsilon} e^{-k_0 \varepsilon |t|}  \left\|  \chi_\varepsilon e^{- \frac12\mu_0  |z|}  \right\|_{L^2_x} \lesssim \frac1{\varepsilon} e^{-k_0 \varepsilon |t|} .
\end{aligned}
\end{equation}
We conclude  from \eqref{def_W}, \eqref{est_A1B1 L2}, \eqref{A2B2 Linfty}-\eqref{A2B2 DLinfty} and \eqref{A2B2 L2}-\eqref{A2B2 DL2}  that
\begin{equation}\label{estW}
\begin{aligned}
& \| \bd{W}^\sharp (t) \|_{L^\infty_x\times L^\infty_x} \lesssim \varepsilon e^{-k_0 \varepsilon |t| -l_0 \varepsilon |\rho(t)|} +  \varepsilon e^{-k_0 \varepsilon |t|} \lesssim  \varepsilon e^{-k_0 \varepsilon |t|} ,\\
& \| \bd{W}^\sharp (t) \|_{L^2_x\times L^2_x} \lesssim \varepsilon e^{-k_0 \varepsilon |t| -l_0 \varepsilon |\rho(t)|} +  \varepsilon^{\frac12} e^{-k_0 \varepsilon |t|} \lesssim  \varepsilon^{\frac12} e^{-k_0 \varepsilon |t|} ,\\
& \| \partial_x^\ell \bd{W}^\sharp (t) \|_{L^2_x\times L^2_x} \lesssim \varepsilon e^{-k_0 \varepsilon |t| -l_0 \varepsilon |\rho(t)|} +  \varepsilon e^{-k_0 \varepsilon |t|} \lesssim  \varepsilon e^{-k_0 \varepsilon |t|},\quad \ell=1,2,3 .
\end{aligned}
\end{equation}
Similar estimates hold for all its spatial partial derivatives. Concerning the time derivative, we have
\begin{equation}\label{dtW}
\begin{aligned}
& \| \partial_t \bd{W}^\sharp (t) \|_{L^\infty_x\times L^\infty_x} \lesssim \varepsilon  e^{-k_0 \varepsilon |t|}, \\
& \| \partial_t \bd{W}^\sharp (t) \|_{L^2_x\times L^2_x} \lesssim  \varepsilon  e^{-k_0 \varepsilon |t|} .
\end{aligned}
\end{equation}
Let us prove this last fact. Following \eqref{def_W} and \eqref{dtW1}-\eqref{dtW2}
\be\label{dtW1s}
\begin{aligned}
& \partial_t W_1\\
& \quad = \varepsilon  \partial_t A_1 + \varepsilon \omega '  \Lambda A_1 - \varepsilon (\rho'-\omega) \partial_z A_1 - \varepsilon \omega \partial_z A_1 \\
& \quad \quad + \varepsilon^2 \chi_\varepsilon \partial_t  A_2 + \varepsilon^2 \omega ' \chi_\varepsilon \Lambda A_2 - \varepsilon^2 (\rho'-\omega)\partial_z( \chi_\varepsilon  A_2) - \varepsilon^2 \omega \partial_z(\chi_\varepsilon  A_2).
\end{aligned}
\ee
From \eqref{Sol_1}-\eqref{dtA1B1}, \eqref{DS} and the exponential decay of $A_1,B_1,$ we get
\[
\begin{aligned}
& \|\varepsilon  \partial_t A_1 + \varepsilon \omega '  \Lambda A_1 - \varepsilon \omega \partial_z A_1 \|_{L^\infty} \lesssim \varepsilon  e^{-k_0 \varepsilon |t| - \frac12l_0 \varepsilon |\rho(t)| },\\
& \|\varepsilon  \partial_t A_1 + \varepsilon \omega '  \Lambda A_1 - \varepsilon \omega \partial_z A_1 \|_{L^2} \lesssim \varepsilon  e^{-k_0 \varepsilon |t| - \frac12l_0 \varepsilon |\rho(t)| }.
\end{aligned}
\]
Similarly, using \eqref{Syst_3_A2B2}, \eqref{A2B2 Linfty}, \eqref{A2B2 DLinfty}, \eqref{A2B2 L2} and \eqref{A2B2 DL2},
\[
\begin{aligned}
 & \| \varepsilon^2 \chi_\varepsilon \partial_t  A_2 + \varepsilon^2 \omega ' \chi_\varepsilon \Lambda A_2 - \varepsilon^2 \omega \partial_z(\chi_\varepsilon  A_2)\|_{L^\infty} \lesssim  \varepsilon  e^{-k_0 \varepsilon |t|}, \\
 & \| \varepsilon^2 \chi_\varepsilon \partial_t  A_2 + \varepsilon^2 \omega ' \chi_\varepsilon \Lambda A_2  - \varepsilon^2 \omega \partial_z(\chi_\varepsilon  A_2)\|_{L^2} \lesssim  \varepsilon  e^{-k_0 \varepsilon |t|}.\\
 \end{aligned}
\]
Therefore, from \eqref{dtW1s} and the previous estimates we obtain the first half part of \eqref{dtW}. A completely similar output is obtained in the case of $\partial_t W_2$, using the fact that
\be\label{dtW2s}
\begin{aligned}
& \partial_t W_2\\
& \quad = \varepsilon \partial_t B_1 + \varepsilon \omega ' \Lambda B_1 - \varepsilon (\rho'-\omega) \partial_z B_1 - \varepsilon \omega  \partial_z B_1 \\
& \quad \quad + \varepsilon^2 \chi_\varepsilon   \partial_t B_2 + \varepsilon^2 \omega '  \chi_\varepsilon  \Lambda B_2 - \varepsilon^2 (\rho'-\omega) \partial_z(\chi_\varepsilon  B_2) - \varepsilon^2 \omega  \partial_z(\chi_\varepsilon  B_2).
\end{aligned}
\ee
Similar estimates performed on \eqref{dtW2s} complete the proof of \eqref{dtW}.

Also, we have in \eqref{S0_new_2},
\begin{equation}\label{S0_final}
{\bf S}_h(\bd{Q}_\omega + \bd{W}^\sharp) = {\bf S}_h^{\sharp}(\bd{Q}_\omega) +\partial_x J \mathcal L \bd{W}^\sharp +\bd{R}^\sharp,
\end{equation}
where
where \eqref{S_hQ_new_3} and the linear system \eqref{def_L_new_2} on $\bd{W}^\sharp$ become now
\[
\begin{aligned}
& {\bf S}_h^{\sharp}(\bd{Q}_\omega)+\partial_x J \mathcal L \bd{W}^\sharp \\
  &~{} = (\omega ' - \varepsilon^2 f_1(t)) (1-\partial^2_z)\begin{pmatrix}
\Lambda (R_\omega +\varepsilon A_1 + \varepsilon^2 A_2\chi_\varepsilon )\\
\Lambda (Q_\omega+\varepsilon B_1 + \varepsilon^2 B_2\chi_\varepsilon)
\end{pmatrix} \\
&~{} \quad
-(\rho'-\omega -  \varepsilon^2 f_2(t))(1-\partial^2_z) \left( \partial_z
\begin{pmatrix}
R_\omega  +\varepsilon A_1 + \varepsilon^2 A_2\chi_\varepsilon\\
 Q_\omega+\varepsilon B_1 + \varepsilon^2 B_2\chi_\varepsilon
 \end{pmatrix}   \right)
 \\
 &~{} \quad
-(\rho'-\omega -  \varepsilon^2 f_2(t))(1-\partial^2_z) \left( - \varepsilon  h_0(\varepsilon t, \varepsilon \rho(t)) \begin{pmatrix} A_1 \\ B_1 \end{pmatrix}  \right);
 \end{aligned}
\]
 and \eqref{R_new_2} becomes now
\begin{equation}\label{R_new_3}
\begin{aligned}
\bd{R}^\sharp =&~{}  \varepsilon^2  \left(  \partial_y h_0(\varepsilon t, \varepsilon \rho(t))  \partial_z \begin{pmatrix}  zQ_\omega \\ 0 \end{pmatrix}
+  \partial_s h_0 (\varepsilon t, \varepsilon x)  \begin{pmatrix}
1\\
 0
 \end{pmatrix} \right) (1-\chi_\varepsilon)
 \\
&~{}   + \varepsilon^2  \left(  f_1(t) (1-\partial^2_z)\begin{pmatrix}
\Lambda R_\omega \\
\Lambda Q_\omega
\end{pmatrix}  -  f_2(t)(1-\partial^2_z)  \partial_z
\begin{pmatrix} R_\omega  \\ Q_\omega  \end{pmatrix}  \right)  (1-\chi_\varepsilon)\\
 &~{}+ \varepsilon^2  h_0^2 (\varepsilon t, \varepsilon \rho(t)) \partial_z \begin{pmatrix}  - B_{0,\omega} + A_{0,\omega}B_{0,\omega} \\   \frac12 B_{0,\omega}^2 \end{pmatrix} (1-\chi_\varepsilon)\\
 &~{} + \varepsilon^2  h_0(\varepsilon t, \varepsilon \rho(t))  \left( \partial_s h_0(\varepsilon t, \varepsilon \rho(t)) + \omega (t) \partial_y h_0(\varepsilon t, \varepsilon \rho(t)) \right) (1- \partial_z^2) \begin{pmatrix} A_{0,\omega} \\ B_{0,\omega} \end{pmatrix} (1-\chi_\varepsilon)  \\
 &~{}  + \varepsilon^2 \left[ \partial_x J \mathcal L  \begin{pmatrix}
A_2 \\
B_2
\end{pmatrix}, \chi_\varepsilon  \right]\\
&~{} + \frac12   \varepsilon^3   \partial_y^2 h_0(\varepsilon t, \varepsilon \rho(t))  \partial_z \begin{pmatrix}  z^2 Q_\omega  \\ 0 \end{pmatrix}
\\
 &~{} +\varepsilon^3 \partial_x\!\left(  \partial_y h_0(\varepsilon t, \varepsilon \xi_3(t,x))  \begin{pmatrix} z B_1 \\ 0 \end{pmatrix}  +  h_0 (\varepsilon t, \varepsilon x) \begin{pmatrix}   \chi_\varepsilon B_2 \\ 0 \end{pmatrix}  \right) \\
 &~{}  +  \varepsilon^3 \partial_z \begin{pmatrix} A_1 \chi_\varepsilon  B_2+A_2 \chi_\varepsilon  B_1  \\  \chi_\varepsilon  B_1 B_2 \end{pmatrix}
 \\
 &~{} + \varepsilon^3  f_1(t) (1-\partial^2_z)\begin{pmatrix}
\Lambda ( A_1 + \varepsilon A_2 \chi_\varepsilon )\\
\Lambda (B_1 + \varepsilon B_2 \chi_\varepsilon )
\end{pmatrix}
 \\
 &~{} -  \varepsilon^3  f_2(t)(1-\partial^2_z) \left( \partial_z
\begin{pmatrix}
 A_1 + \varepsilon A_2 \chi_\varepsilon \\
 B_1 + \varepsilon B_2 \chi_\varepsilon
 \end{pmatrix}  - \varepsilon  \partial_y h_0(\varepsilon t, \varepsilon \rho(t)) \begin{pmatrix} A_{0,\omega} \\ B_{0,\omega} \end{pmatrix}  \right) \\
&~{} +  \varepsilon^4 \Bigg( -   \begin{pmatrix}
  a_1\partial^2_y\partial_s h_0 \\
 c_1 \partial^2_s\partial_y h_0
 \end{pmatrix} (\varepsilon t,\varepsilon x)  +\frac16   \partial_y^3 h_0(\varepsilon t, \varepsilon \rho(t)) \partial_z \begin{pmatrix}  z^3 Q_\omega \\ 0 \end{pmatrix} \\
 &~{} \qquad \quad + \partial_z \begin{pmatrix}  \chi_\varepsilon^2  A_2B_2 \\ \frac12 \chi_\varepsilon^2 B_2^2 \end{pmatrix} \Bigg) +  \varepsilon^5 \partial_x \begin{pmatrix}  \tilde h_0(\varepsilon t, \varepsilon \xi_1(t,x)) z^4 Q_\omega   \\ 0 \end{pmatrix}.
 \end{aligned}
\end{equation}
The long term \eqref{R_new_3} contains all the previous error terms plus the new ones appearing from the broken symmetries appearing when introducing the cut-off function $\chi_\varepsilon.$

\subsection{Dynamical system} Let $\omega_0>0$ be a fixed parameter. In what follows we shall assume the validity of the dynamical system
\begin{equation}\label{DS}
\begin{aligned}
& \omega' = \varepsilon^2 f_1(t), \quad \rho'-\omega =  \varepsilon^2 f_2(t),
\\
& (\omega,\rho)(-T_\varepsilon)=(\omega_0, -\omega_0 T_\varepsilon).
\end{aligned}
\end{equation}
Under this choice we obtain in \eqref{S0_final},
\begin{equation}\label{S0_final_final}
{\bf S}_h(\bd{Q}_\omega + \bd{W}^\sharp) = \bd{R}^\sharp.
\end{equation}
Additionally, we have
\begin{lemma}\label{DS_lem}
Let $(\omega,\rho)$ be the local solution to \eqref{DS}. Then $(\omega,\rho)$ is globally defined for all $t\geq -T_\varepsilon$ and one has
\begin{equation}\label{final_wp}
\lim_{t\to+\infty} \omega (t) = \omega_+ >0, \quad \lim_{t\to+\infty} \rho (t) = + \infty .
\end{equation}
Moreover, we have for some fixed $C>0$
\begin{equation}\label{final_wp_2}
|\omega_+ -\omega_0| \leq C \varepsilon. 
\end{equation}
\end{lemma}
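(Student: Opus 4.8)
The plan is to treat \eqref{DS} as an ODE and close a continuity (bootstrap) argument. Observe first that $f_1$ and $f_2$, built in Step~1 and Step~6 above, are smooth functions of $(\omega,\rho)$ on the neighborhood $\Omega\subset(0,\omega^\ast)$ of $\omega_0$ on which $\bd Q_\omega$, $(A_{0,\omega},B_{0,\omega})$, the coefficient $d_0(\omega)$ of \eqref{f1}, and the generalized inverse $\mathcal L^{-1}$ are defined and depend smoothly on $\omega$ (hypotheses $(i)$--$(iii)$); they depend on $t$ only through $h_0(\varepsilon t,\varepsilon\rho)$ and its derivatives, which are bounded for all real $(t,\rho)$. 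Hence \eqref{DS} has a unique maximal solution on some interval $[-T_\varepsilon,T^\ast)$ with $\omega(t)\in\Omega$, and it suffices to upgrade the working assumption $|\omega(t)-\omega_0|\le\tfrac1{100}$ (used throughout Section~\ref{Sec:3}) to the sharp bound $|\omega(t)-\omega_0|\le C\varepsilon$, from which everything follows.

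For the bootstrap I would let $T^{\ast\ast}$ be the supremum of the $T\in[-T_\varepsilon,T^\ast)$ such that $|\omega(t)-\omega_0|\le\tfrac1{100}$ on $[-T_\varepsilon,T]$; this set is nonempty since $\omega(-T_\varepsilon)=\omega_0$. On $[-T_\varepsilon,T^{\ast\ast}]$ the working assumption holds, so \eqref{cota_f1} gives $|f_1(t)|\lesssim e^{-k_0\varepsilon|t|-l_0\varepsilon|\rho(t)|}\le e^{-k_0\varepsilon|t|}$, and integrating the first equation of \eqref{DS} from $-T_\varepsilon$,
\[
|\omega(t)-\omega_0|\ \le\ \varepsilon^2\int_{-T_\varepsilon}^{t}|f_1(\tau)|\,d\tau\ \lesssim\ \varepsilon^2\int_{\R}e^{-k_0\varepsilon|\tau|}\,d\tau\ =\ \frac{2}{k_0}\,\varepsilon ,
\]
for all $t\in[-T_\varepsilon,T^{\ast\ast}]$. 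For $\varepsilon$ small this is $<\tfrac1{200}$, strictly improving the working assumption, so a standard continuity argument forces $T^{\ast\ast}=T^\ast$. The same bound keeps $\omega(t)$ in a fixed compact subset of $\Omega$, and $\rho'=\omega+\varepsilon^2 f_2$ with $|\omega|+\varepsilon^2|f_2|\le C$ keeps $\rho$ finite on bounded time intervals, so no blow-up is possible and $T^\ast=+\infty$. This gives global existence for $t\ge-T_\varepsilon$ together with $|\omega(t)-\omega_0|\le C\varepsilon$, and in particular $\omega(t)\ge\omega_0/2>0$ for $\varepsilon$ small.

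The asymptotics are then immediate. Integrability of $f_1$ on $[-T_\varepsilon,+\infty)$, now unconditional, gives the existence of $\omega_+:=\lim_{t\to+\infty}\omega(t)=\omega_0+\varepsilon^2\int_{-T_\varepsilon}^{+\infty}f_1(\tau)\,d\tau$, with $|\omega_+-\omega_0|\le\varepsilon^2\int_{\R}|f_1|\lesssim\varepsilon$, which is \eqref{final_wp_2}; hence $\omega_+\ge\omega_0-C\varepsilon>0$. Since $\omega(t)\to\omega_+>0$ and $|f_2(t)|\lesssim e^{-k_0\varepsilon|t|}\to0$ by \eqref{cota_f2}, one has $\rho'(t)=\omega(t)+\varepsilon^2 f_2(t)\ge\tfrac12\omega_+$ for $t$ large, so $\rho(t)\to+\infty$; this proves \eqref{final_wp}.

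The one point requiring care is the apparent circularity: the bounds \eqref{cota_f1}--\eqref{cota_f2} were themselves derived under $|\omega-\omega_0|\le\tfrac1{100}$, the estimate one is trying to prove. The reason the bootstrap closes --- and, I expect, the main thing to get right --- is quantitative: the prefactor $\varepsilon^2$ in \eqref{DS} beats the time integral $\int_{\R}e^{-k_0\varepsilon|\tau|}\,d\tau\sim\varepsilon^{-1}$ produced by the exponential decay of $h_0$ in its first slot (hypothesis \eqref{hypoH}), leaving a net $O(\varepsilon)$, i.e.\ a full power of $\varepsilon$ to spare against the $O(1)$ threshold. No analogous issue arises for $\rho$, since $f_1$ and $f_2$ are defined for all real values of $\rho$.
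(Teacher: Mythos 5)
Your proof is correct and follows essentially the same route as the paper: integrate \eqref{DS} using \eqref{cota_f1}--\eqref{cota_f2}, observe that $\varepsilon^2\int e^{-k_0\varepsilon|s|}\,ds\lesssim\varepsilon$, and deduce global existence, the limits in \eqref{final_wp}, and the bound \eqref{final_wp_2}. The only substantive difference is that you make explicit the continuity/bootstrap argument needed to justify invoking \eqref{cota_f1}--\eqref{cota_f2} (which were derived under the standing assumption $|\omega-\omega_0|\le\tfrac1{100}$) on the whole existence interval; the paper uses the same logic but leaves that step implicit, so your version is a mild but welcome sharpening in rigor rather than a different proof.
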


\begin{proof}
Recall \eqref{cota_f1} and \eqref{cota_f2}. Let $(\omega,\rho)$ be the local solution to \eqref{DS}. Let $-T_\varepsilon <t<T_*$, where $T_* \leq +\infty$ is the maximal time of existence of the solution. Notice that
\[
|\omega(t) -\omega(-T_\varepsilon)| \le \varepsilon^2 \int_{-T_\varepsilon}^t  |f_1(s)|ds \lesssim \varepsilon.
\]
Therefore, $\omega(t)$ is globally defined. It also proves \eqref{final_wp_2} provided $\omega_+$ exists. A similar argument works for $\rho(t)$:
\[
|\rho(t)  -\rho(-T_\varepsilon) -\omega (t + T_\varepsilon)| \le \varepsilon^2 \int_{-T_\varepsilon}^t  |f_2(s)|ds \lesssim \varepsilon.
\]
This proves the second property in \eqref{final_wp}. 
The limit of $\omega(t)$ when $t$ diverges to $+\infty$ is 
$$\omega_+= \varepsilon^2 \int_{-T_\varepsilon}^{+\infty} f_1(s)ds
+\omega(-T_\varepsilon).$$
\noindent This integral converges thanks to \eqref{cota_f1}. 
The fact that $\omega_+>0$ is a consequence of \eqref{final_wp_2}
if $\varepsilon$ is small enough.
\end{proof}

\subsection{Error estimates} Now we estimate the term $\bd{R}^\sharp $ in \eqref{R_new_3}.

\begin{lemma}
We have the estimate
\begin{equation}\label{Cota_R}
\| \bd{R}^\sharp \|_{H^2\times H^2} \lesssim \varepsilon^{\frac32} e^{-k_0 \varepsilon |t|} +\varepsilon^{10}.
\end{equation}
\end{lemma}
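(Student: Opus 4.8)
\medskip

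\textbf{Strategy.} The plan is to estimate the long expression \eqref{R_new_3} for $\bd{R}^\sharp$ term by term in $H^2\times H^2$, grouping the pieces by their order in $\varepsilon$ and their spatial localisation, and keeping careful track of two competing effects: every $x$- or $z$-derivative that falls on a slowly varying factor ($h_0(\varepsilon t,\varepsilon x)$ and its derivatives, $\chi_\varepsilon$, $\tilde h_0$) produces a favourable power of $\varepsilon$, while every $L^2_x$ norm taken of a function of the shape $e^{-l_0\varepsilon|x|}$ produces a loss $\varepsilon^{-1/2}$, and a loss $\varepsilon^{-1/2}$ likewise arises whenever $L^2_x$ is taken of $\chi_\varepsilon$ itself (its support has $x$-measure $\sim\varepsilon^{-1}$). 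Throughout I would use the bootstrap hypothesis that $\omega(t)$ stays in a fixed compact subinterval of $(0,\omega^*)$, so that $Q_\omega,R_\omega,\Lambda Q_\omega,\Lambda R_\omega,A_{0,\omega},B_{0,\omega}$ and their derivatives have uniform exponential decay in $z$ and $\mu_0,d_0(\omega),d_2(\omega)$ are uniformly bounded, together with the pointwise bounds \eqref{Sol_1}, \eqref{ErrorA20}, \eqref{ErrorA201}, \eqref{cota_f1}, \eqref{cota_f2}, \eqref{Sol_2_final} and \eqref{Sol_2_final_2} on the correction profiles $A_1,B_1,A_2,B_2$. No use of canonical variables is needed, since \eqref{R_new_3} is an explicit expression.

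\textbf{Exponentially small and harmless terms.} First I would collect all contributions in \eqref{R_new_3} that carry the cut-off $(1-\chi_\varepsilon)$ and multiply a factor decaying exponentially in $z$ — those built from $\partial_y h_0(\varepsilon t,\varepsilon\rho)\partial_z(zQ_\omega,0)^T$, from $f_1,f_2$ against $(1-\partial_z^2)\Lambda\bd{Q}_\omega$ and $(1-\partial_z^2)\partial_z\bd{Q}_\omega$, from $h_0^2(\varepsilon t,\varepsilon\rho)\partial_z(-B_{0,\omega}+A_{0,\omega}B_{0,\omega},\tfrac12 B_{0,\omega}^2)^T$, and from $h_0(\partial_s h_0+\omega\partial_y h_0)(1-\partial_z^2)(A_{0,\omega},B_{0,\omega})^T$: on $|z|\ge\varepsilon^{-1}$ each of these, together with all its derivatives, is bounded in $H^2\times H^2$ by $C\varepsilon^{2}e^{-k_0\varepsilon|t|}e^{-\mu_0/(2\varepsilon)}\lesssim\varepsilon^{10}$. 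Next, the remainders $\varepsilon^3\partial_y^2 h_0(\varepsilon t,\varepsilon\rho)\partial_z(z^2 Q_\omega,0)^T$, $\varepsilon^4\partial_y^3 h_0(\varepsilon t,\varepsilon\rho)\partial_z(z^3 Q_\omega,0)^T$, $\varepsilon^4(a_1\partial_y^2\partial_s h_0,c_1\partial_s^2\partial_y h_0)^T(\varepsilon t,\varepsilon x)$ and $\varepsilon^5\partial_x(\tilde h_0(\varepsilon t,\varepsilon\xi_1)z^4 Q_\omega,0)^T$ are either localised in $z$ by a power of $z$ times $Q_\omega$ or decay in $x$ like $e^{-l_0\varepsilon|x|}$, all carry a factor $e^{-k_0\varepsilon|t|}$, and are therefore $\lesssim\varepsilon^{5/2}e^{-k_0\varepsilon|t|}$. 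The only genuinely $O(\varepsilon^2)$ term that survives is $\varepsilon^2\,\partial_s h_0(\varepsilon t,\varepsilon x)(1,0)^T(1-\chi_\varepsilon)$: differentiating $h_0(\varepsilon t,\varepsilon x)$ costs only powers of $\varepsilon$, while $\|e^{-l_0\varepsilon|x|}\|_{L^2_x}\lesssim\varepsilon^{-1/2}$, so its $H^2$ norm is $\lesssim\varepsilon^2\cdot\varepsilon^{-1/2}e^{-k_0\varepsilon|t|}=\varepsilon^{3/2}e^{-k_0\varepsilon|t|}$ — this is the dominant contribution and it pins the exponent $3/2$ in \eqref{Cota_R}.

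\textbf{The interaction terms and the commutator.} It remains to treat the cubic and quartic terms involving $A_2,B_2$ and the commutator $\varepsilon^2[\partial_x J\mathcal L(A_2,B_2)^T,\chi_\varepsilon]$. The purely soliton-localised ones — $\varepsilon^3\partial_y h_0(\varepsilon t,\varepsilon\xi_3)(zB_1,0)^T$, $\varepsilon^3\partial_z(A_1\chi_\varepsilon B_2+A_2\chi_\varepsilon B_1,\chi_\varepsilon B_1 B_2)^T$, and the $\varepsilon^3 f_1(1-\partial_z^2)\Lambda(\cdot)$ and $\varepsilon^3 f_2(1-\partial_z^2)\partial_z(\cdot)$ pieces — carry an exponentially decaying companion ($B_1=-h_0(\varepsilon t,\varepsilon\rho)B_{0,\omega}$, $A_1$, or $\Lambda\bd{Q}_\omega$) that localises them in $z$, plus at least one $e^{-k_0\varepsilon|t|}$, hence are $\lesssim\varepsilon^3 e^{-k_0\varepsilon|t|}$. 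The delicate terms are $\varepsilon^3\partial_x(h_0(\varepsilon t,\varepsilon x)\chi_\varepsilon B_2,0)^T$ and $\varepsilon^4\partial_z(\chi_\varepsilon^2 A_2 B_2,\tfrac12\chi_\varepsilon^2 B_2^2)^T$: here one must \emph{not} insert the crude bound \eqref{A2B2 L2} (which costs $\varepsilon^{-3/2-\delta_0/2}$ and would only yield $\varepsilon^{3/2-\delta_0/2}$, too large), but rather split $A_2,B_2$ and their derivatives into the pieces of \eqref{Sol_2_final}--\eqref{Sol_2_final_2}: the $e^{-\mu_0|z|}$-pieces are localised in $z$ and estimated directly ($\lesssim\varepsilon^2$), the $\varepsilon e^{-l_0\varepsilon|x|}$-pieces are even smaller, and for the leading $\tfrac1\varepsilon e^{-l_0\varepsilon(z+\rho)_+}$-piece one uses that the companion factor ($h_0$, another $A_2/B_2$, or $\chi_\varepsilon$) either decays in $x$ or has $x$-support of measure $\sim\varepsilon^{-1}$, in each case producing exactly one loss $\varepsilon^{-1/2}$ and the balance $\varepsilon^{3}\cdot\varepsilon^{-1}\cdot\varepsilon^{-1/2}=\varepsilon^{3/2}$ (resp.\ $\varepsilon^4\cdot\varepsilon^{-2}\cdot\varepsilon^{-1/2}=\varepsilon^{3/2}$), always with a factor $e^{-k_0\varepsilon|t|}$. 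For the commutator, I would expand $[\partial_x J\mathcal L\,\cdot\,,\chi_\varepsilon]$ into a finite sum of terms $\chi_\varepsilon^{(j)}\partial_z^{k}(A_2,B_2)^T$ with $j\ge1$: each $\chi_\varepsilon^{(j)}$ is $O(\varepsilon^j)$ supported on the annulus $\varepsilon^{-1}\le|z|\le 2\varepsilon^{-1}$, the $e^{-\mu_0|z|}$-parts of $\partial_z^k(A_2,B_2)$ are exponentially small there, and the remaining parts, bounded pointwise by \eqref{Sol_2_final_2}, contribute in $L^2$ at most $\varepsilon^2\cdot\varepsilon^{j}\cdot\varepsilon^{k-1}\cdot\varepsilon^{-1/2}e^{-k_0\varepsilon|t|}\lesssim\varepsilon^{3/2}e^{-k_0\varepsilon|t|}$. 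Summing the finitely many contributions yields \eqref{Cota_R}.

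\textbf{Main obstacle.} The computations are elementary once the decompositions of $A_2,B_2$ from the resolution of \eqref{Syst_3} are available; the real difficulty — and the only place requiring care — is precisely the delicate group above. Because $(A_2,B_2)$ degrade by a power $\varepsilon^{-1}$ in $L^\infty$ and by a further $\varepsilon^{-1/2}$ (indeed $\varepsilon^{-1/2-\delta_0/2}$) once multiplied by $\chi_\varepsilon$ and measured in $L^2$, a black-box estimate would overshoot the target $\varepsilon^{3/2}e^{-k_0\varepsilon|t|}$; one genuinely has to separate the $z$-localised and the $x$-localised components of $A_2,B_2$ and of all their derivatives, and to place each $L^2_x$ norm on the factor that actually decays in $x$ (or on $\chi_\varepsilon$), never on a factor that is merely bounded as $z\to-\infty$. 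Carrying out this bookkeeping uniformly over the window $[-T_\varepsilon,T_\varepsilon]$, where $\rho(t)$ grows from $\sim-\omega_0 T_\varepsilon$ to $\gg\varepsilon^{-1}$ but always with $e^{-l_0\varepsilon|\rho(t)|}\le1$, is the technical heart of the lemma.
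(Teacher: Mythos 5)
Your proof is correct and follows essentially the same strategy as the paper's: the paper numbers the lines of \eqref{R_new_3} as $\bd{R}^\sharp_1,\ldots,\bd{R}^\sharp_{12}$ and estimates each in $H^2\times H^2$ with exactly the decay/localisation bookkeeping you describe, and it too finds that the dominant $\varepsilon^{3/2}e^{-k_0\varepsilon|t|}$ contributions come from $\bd{R}^\sharp_1$ (the $\varepsilon^2\partial_s h_0(1-\chi_\varepsilon)$ piece), the commutator $\bd{R}^\sharp_5$, $\bd{R}^\sharp_7$ and $\bd{R}^\sharp_{11}$. The only organisational differences are that you group terms by difficulty rather than line-by-line, and that you flag explicitly (and correctly) that the naive $L^2$ bound \eqref{A2B2 L2} must be replaced by the pointwise decomposition \eqref{Sol_2_final}--\eqref{Sol_2_final_2} with the $L^2_x$ loss placed on the $x$-decaying companion factor, a precaution the paper uses implicitly in its estimates of $\bd{R}^\sharp_7$ and $\bd{R}^\sharp_{11}$.
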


\begin{proof}
We decompose $\bd{R}^\sharp $ in \eqref{R_new_3} as follows:
\[
\bd{R}^\sharp  = \sum_{j=1}^{12}\bd{R}^\sharp_j,
\]
where each $j$ represents a line in \eqref{R_new_3}. Using \eqref{hypoH},
\[
\begin{aligned}
 |\bd{R}^\sharp_1| = &~{} \left|
\varepsilon^2  \left(  \partial_y h_0(\varepsilon t, \varepsilon \rho(t))  \partial_z \begin{pmatrix}  zQ_\omega \\ 0 \end{pmatrix}
+  \partial_s h_0 (\varepsilon t, \varepsilon x)  \begin{pmatrix}
1\\
 0
 \end{pmatrix} \right) (1-\chi_\varepsilon) \right|
\\
 \lesssim & ~{} \varepsilon^2 e^{-k_0 \varepsilon |t| -l_0 \varepsilon |\rho(t)|} |zQ_\omega(z)| + \varepsilon^2 e^{-k_0 \varepsilon |t| -l_0 \varepsilon |x|}(1-\chi_\varepsilon) .
\end{aligned}
\]
Therefore, from \eqref{chi_ve},
\begin{equation}\label{R_new_3_1}
\begin{aligned}
 \|\bd{R}^\sharp_1\|_{L^2\times L^2} \lesssim  &~{}  \varepsilon^2 e^{-k_0 \varepsilon |t| -l_0 \varepsilon |\rho(t)|}  + \varepsilon^2 e^{-k_0 \varepsilon |t|} \| e^{ -l_0 \varepsilon |x|}(1-\chi_\varepsilon) \|_{L^2}
\\
 \lesssim & ~{}  \varepsilon^2 e^{-k_0 \varepsilon |t| -l_0 \varepsilon |\rho(t)|}  + \varepsilon^{\frac32} e^{-k_0 \varepsilon |t|}.
\end{aligned}
\end{equation}
The $H^2\times H^2$ is computed in similar terms, giving better or equal results. Now, using the exponential decay of $\Lambda R_\omega$ and $\Lambda Q_\omega$ \eqref{decay_Q}, and \eqref{cota_f1}-\eqref{cota_f2},
\begin{equation}\label{R_new_3_2}
\begin{aligned}
 \| \bd{R}^\sharp_2\|_{H^2\times H^2} =&~{} \left\|
 \varepsilon^2  \left(  f_1(t) (1-\partial^2_z)\begin{pmatrix}
\Lambda R_\omega \\
\Lambda Q_\omega
\end{pmatrix}  -  f_2(t)(1-\partial^2_z)  \partial_z
\begin{pmatrix} R_\omega  \\ Q_\omega  \end{pmatrix}  \right)  (1-\chi_\varepsilon)
\right\|_{H^2}
\\
 \lesssim & ~{} \varepsilon^2  e^{-k_0 \varepsilon |t| -l_0 \varepsilon |\rho(t)|} e^{-\frac12\mu_0 \varepsilon^{-1}} \ll \varepsilon^{10}.
\end{aligned}
\end{equation}
Similarly,
\begin{equation}\label{R_new_3_3}
\begin{aligned}
\| \bd{R}^\sharp_3 \|_{H^2\times H^2} =&~{} \left\|
 \varepsilon^2  h_0^2 (\varepsilon t, \varepsilon \rho(t)) \partial_z \begin{pmatrix}  - B_{0,\omega} + A_{0,\omega}B_{0,\omega} \\   \frac12 B_{0,\omega}^2 \end{pmatrix} (1-\chi_\varepsilon)
 \right\|_{H^2}
\\
\lesssim & ~{} \varepsilon^2  e^{-k_0 \varepsilon |t| -l_0 \varepsilon |\rho(t)|} e^{-\frac 12\mu_0 \varepsilon^{-1}} \ll \varepsilon^{10}.
\end{aligned}
\end{equation}
Since from \eqref{H2} we have $A_{0,\omega}, B_{0,\omega} \in H^\infty(\mathbb R)$,
\begin{equation}\label{R_new_3_4}
\begin{aligned}
 \| \bd{R}^\sharp_4\|_{H^2\times H^2}
 = &~{} \varepsilon^2 |  h_0(\varepsilon t, \varepsilon \rho(t))  \left( \partial_s h_0(\varepsilon t, \varepsilon \rho(t)) + \omega (t) \partial_y h_0(\varepsilon t, \varepsilon \rho(t)) \right) |\\
&~{} \quad \times   \left\|  (1- \partial_z^2) \begin{pmatrix} A_{0,\omega} \\ B_{0,\omega} \end{pmatrix} (1-\chi_\varepsilon)\right\|_{H^2}
\\
 \lesssim & ~{}  \varepsilon^2  e^{-2k_0 \varepsilon |t| -2l_0 \varepsilon |\rho(t)|} e^{-\frac12\mu_0 \varepsilon^{-1}} \ll \varepsilon^{10}.
\end{aligned}
\end{equation}
From \eqref{Syst_3_A2B2}, 
\begin{equation}\label{Syst_3_A2B2_new}
\begin{aligned}
& \partial_x \left( J \mathcal L \left( \begin{pmatrix} A_2 \\ B_2 \end{pmatrix}  \chi_\varepsilon \right)\right) -  \chi_\varepsilon \partial_x J \mathcal L \begin{pmatrix} A_2 \\ B_2 \end{pmatrix}
 = \left[ J\mathcal L  \begin{pmatrix}  A_2 \\ B_2 \end{pmatrix} \right] \partial_x \chi_\varepsilon
\\
&~{} =    - \partial_y h_0(\varepsilon t, \varepsilon \rho(t))  \partial_x \chi_\varepsilon \begin{pmatrix}   zQ_\omega \\ 0 \end{pmatrix} -  \partial_x \chi_\varepsilon \partial_z^{-1} \partial_s h_0 (\varepsilon t, \varepsilon x )  \begin{pmatrix}
1 \\ 0 \end{pmatrix}\\
 &~{}\quad  -\left( h_0(\varepsilon t, \varepsilon \rho(t)) \partial_s h_0(\varepsilon t, \varepsilon \rho(t)) + \omega (t) h_0^2 (\varepsilon t, \varepsilon \rho(t)) \right)\partial_x \chi_\varepsilon (1- \partial_z^2) \partial_z^{-1} \begin{pmatrix}  A_{0,\omega} \\B_{0,\omega}   \end{pmatrix}  \\
 &~{}  \quad   -  f_1(t) \partial_x \chi_\varepsilon(1-\partial^2_z) \partial_z^{-1} \begin{pmatrix}
\Lambda R_\omega \\
\Lambda Q_\omega
\end{pmatrix} +   f_2(t) \partial_x \chi_\varepsilon (1-\partial^2_z)
\begin{pmatrix} R_\omega  \\ Q_\omega  \end{pmatrix}\\
 &~{} \quad -h_0^2 (\varepsilon t, \varepsilon \rho(t))   \begin{pmatrix}   - B_{0,\omega} + A_{0,\omega}B_{0,\omega} \\  \frac12 B_{0,\omega}^2  \end{pmatrix}.
 \\
\end{aligned}
\end{equation}
Now we bound the terms in \eqref{Syst_3_A2B2_new} as follows: using \eqref{mejorada0}, \eqref{mejorada} and \eqref{ErrorA20}, together with \eqref{cota_f1} and \eqref{cota_f2},
\[
\begin{aligned}
&\left| \partial_x \left( J \mathcal L \left( \begin{pmatrix} A_2 \\ B_2 \end{pmatrix}  \chi_\varepsilon \right)\right) -  \chi_\varepsilon \partial_x J \mathcal L \begin{pmatrix} A_2 \\ B_2 \end{pmatrix}  \right|
\\
&~{} \lesssim   e^{-k_0\varepsilon  |t|} e^{-l_0 \varepsilon |x|}e^{-\frac12\mu_0 |z|}  +  |\chi_0'|(\varepsilon x)  e^{-k_0 \varepsilon |t|}e^{-l_0 \varepsilon (z+\rho(t))_+ } \\
 &~{}\quad  + \varepsilon   |\chi_0'|(\varepsilon x)    e^{-k_0 \varepsilon |t| -l_0 \varepsilon |\rho(t)|} e^{- \frac12\mu_0 z_+}  \\
&~{} \quad   +\left| \partial_x \left( J\left(2\partial_x \chi_\varepsilon  \begin{pmatrix} c\partial_z  & \omega \partial_z  \\ \omega \partial_z  & a \partial_z  \end{pmatrix}   \begin{pmatrix} A_2 \\ B_2 \end{pmatrix} +  \partial_x^2 \chi_\varepsilon \begin{pmatrix} cA_2 +\omega B_2 \\ \omega A_2 +a B_2 \end{pmatrix}  \right) \right) \right|.
\end{aligned}
\]
Therefore,
\begin{equation}\label{Syst_3_A2B2_new2}
\begin{aligned}
&\left\| \partial_x \left( J \mathcal L \left( \begin{pmatrix} A_2 \\ B_2 \end{pmatrix}  \chi_\varepsilon \right)\right) -  \chi_\varepsilon \partial_x J \mathcal L \begin{pmatrix} A_2 \\ B_2 \end{pmatrix}  \right\|_{H^2}
\\
&~{} \lesssim   e^{-k_0\varepsilon  |t|}  +\varepsilon^{-\frac12}  e^{-k_0 \varepsilon |t|}   + \varepsilon^{\frac12}  e^{-k_0 \varepsilon |t| -l_0 \varepsilon |\rho(t)|}.  
\end{aligned}
\end{equation}
Using \eqref{Syst_3_A2B2_new2} and the previous estimate,
\begin{equation}\label{R_new_3_5}
\begin{aligned}
 \left\| \bd{R}^\sharp_5 \right\|_{H^2\times H^2}  \lesssim &~{} \left\|
\varepsilon^2 \left[ \partial_x J \mathcal L  \begin{pmatrix}
A_2 \\
B_2
\end{pmatrix}, \chi_\varepsilon  \right]
\right\|_{H^2}  \lesssim  \varepsilon^\frac32 e^{-k_0 \varepsilon |t|}.
\end{aligned}
\end{equation}
Now, using \eqref{hypoH} and the exponential decay of $Q_\omega$,
\begin{equation}\label{R_new_3_6}
\begin{aligned}
 \| \bd{R}^\sharp_6 \|_{H^2\times H^2} =&~{} \left\|
\frac12   \varepsilon^3   \partial_y^2 h_0(\varepsilon t, \varepsilon \rho(t))  \partial_z \begin{pmatrix}  z^2 Q_\omega  \\ 0 \end{pmatrix}  \right\|_{H^2}  \lesssim  \varepsilon^3 e^{-k_0 \varepsilon |t| -l_0 \varepsilon |\rho(t)|}.
\end{aligned}
\end{equation}
We use now \eqref{hypoH} and \eqref{Sol_1} to get
\[
\begin{aligned}
 \| \bd{R}^\sharp_7\|_{H^2\times H^2} =&~{} \left\|
 \varepsilon^3 \partial_x\!\left(  \partial_y h_0(\varepsilon t, \varepsilon \xi_3(t,x))  \begin{pmatrix} z B_1 \\ 0 \end{pmatrix}  +  h_0 (\varepsilon t, \varepsilon \cdot) \begin{pmatrix}   \chi_\varepsilon B_2 \\ 0 \end{pmatrix}  \right)
 \right\|_{H^2}
\\
 \lesssim & ~{} \varepsilon^3 e^{-2k_0 \varepsilon |t|}e^{-l_0 \varepsilon |\rho(t)|} +  \varepsilon^3 \left\|
 \partial_x\!\left(  h_0 (\varepsilon t, \varepsilon \cdot)   \chi_\varepsilon B_2   \right)
 \right\|_{H^2} .
\end{aligned}
\]
Now, notice that from \eqref{Sol_2_final}-\eqref{Sol_2_final_2}, \eqref{A2B2 Linfty}-\eqref{A2B2 DLinfty}, and \eqref{A2B2 L2}-\eqref{A2B2 DL2},
\[
\begin{aligned}
 \left\|  \partial_x\!\left(  h_0 (\varepsilon t, \varepsilon \cdot)   \chi_\varepsilon B_2   \right)
 \right\|_{L^2}
 \lesssim &~{}  \varepsilon \left\|   \partial_y h_0 (\varepsilon t, \varepsilon \cdot)   \chi_\varepsilon B_2\right\|_{L^2}  + \left\|   h_0 (\varepsilon t, \varepsilon \cdot)   \partial_x \chi_\varepsilon B_2  \right\|_{L^2}
 \\
 &~{}  + \left\|  h_0 (\varepsilon t, \varepsilon \cdot)   \chi_\varepsilon \partial_x B_2  \right\|_{L^2} \\
 \lesssim &~{}  \varepsilon^{-3/2} e^{-k_0 \varepsilon |t|} +\varepsilon^{-1} e^{-k_0 \varepsilon |t|} \lesssim  \varepsilon^{-3/2} e^{-k_0 \varepsilon |t|}.
\end{aligned}
\]
The remaining two derivatives are handled in the same fashion, noting that pointwise bounds for derivatives in 
$L^2$ are, at least, better suited than those for the function itself. Therefore,
\begin{equation}\label{R_new_3_7}
\begin{aligned}
 \| \bd{R}^\sharp_7\|_{H^2\times H^2}
 \lesssim & ~{} \varepsilon^3 e^{-2k_0 \varepsilon |t|}e^{-l_0 \varepsilon |\rho(t)|} +  \varepsilon^{\frac32} e^{-k_0 \varepsilon |t|} \lesssim \varepsilon^{\frac32} e^{-k_0 \varepsilon |t|} .
\end{aligned}
\end{equation}
Now we use the exponential decay of $A_1,B_1$,  \eqref{Sol_1} and the polynomial growth of $A_2,B_2$ \eqref{A2B2 Linfty}-\eqref{A2B2 DLinfty} to get
 \begin{equation}\label{R_new_3_8}
\begin{aligned}
 \| \bd{R}^\sharp_8\|_{H^2\times H^2} =&~{} \left\|
\varepsilon^3 \partial_z \begin{pmatrix} A_1 \chi_\varepsilon  B_2+A_2 \chi_\varepsilon  B_1  \\  \chi_\varepsilon  B_1 B_2 \end{pmatrix}
\right\|_{H^2}
\\
\lesssim  &~{} \varepsilon^3 (\left\| \partial_z \chi_\varepsilon  A_1 B_2 \right\|_{H^2}  + \left\| \chi_\varepsilon \partial_z A_1 B_2 \right\|_{H^2} +\left\|  \chi_\varepsilon  A_1 \partial_z B_2 \right\|_{H^2} ) \\
&~{} + \varepsilon^3 (\left\| \partial_z \chi_\varepsilon  A_2 B_1 \right\|_{H^2}  + \left\| \chi_\varepsilon \partial_z A_2 B_1 \right\|_{H^2} +\left\|  \chi_\varepsilon  A_2 \partial_z B_1 \right\|_{H^2} )\\
&~{} + \varepsilon^3 (\left\| \partial_z \chi_\varepsilon  B_1 B_2 \right\|_{H^2}  + \left\| \chi_\varepsilon \partial_z B_1 B_2 \right\|_{H^2} +\left\|  \chi_\varepsilon  B_1 \partial_z B_2 \right\|_{H^2} )
\\
\lesssim  &~{}  \varepsilon^2 e^{-k_0 \varepsilon |t|}e^{-l_0 \varepsilon |\rho(t)|}  .
\end{aligned}
\end{equation}
Similarly, using additionally \eqref{cota_f1},
\[
\begin{aligned}
  \|  \bd{R}^\sharp_9\|_{H^2\times H^2} =&~{} \left\|
 \varepsilon^3  f_1(t) (1-\partial^2_z)\begin{pmatrix}
\Lambda ( A_1 + \varepsilon A_2 \chi_\varepsilon )\\
\Lambda (B_1 + \varepsilon B_2 \chi_\varepsilon )
\end{pmatrix}
\right\|_{H^2}
\\
 \lesssim & ~{}  \varepsilon^3 e^{-2k_0 \varepsilon |t| -2l_0 \varepsilon |\rho(t)|}( \| (1-\partial^2_z)\Lambda  A_{1,0}  \|_{H^2} +\| (1-\partial^2_z)\Lambda  B_{1,0}  \|_{H^2} )
 \\
 & ~{} + \varepsilon^4 e^{-k_0 \varepsilon |t| -l_0 \varepsilon |\rho(t)|}( \| (1-\partial^2_z)\Lambda  ( \chi_\varepsilon A_{2})  \|_{H^2} +\| (1-\partial^2_z) \Lambda  ( \chi_\varepsilon B_{2})  \|_{H^2} ).
\end{aligned}
\]
Since $A_{1,0}$ and $B_{1,0}$ are exponentially decreasing, we get
\begin{equation}\label{R_new_3_91}
\begin{aligned}
  \|  \bd{R}^\sharp_9\|_{H^2\times H^2} \lesssim & ~{}  \varepsilon^3 e^{-2k_0 \varepsilon |t| -2l_0 \varepsilon |\rho(t)|}
  \\
  &~{} + \varepsilon^4 e^{-k_0 \varepsilon |t| -l_0 \varepsilon |\rho(t)|} \\
  &~{} \quad \times ( \| (1-\partial^2_z)\Lambda  ( \chi_\varepsilon A_{2})  \|_{H^2} +\| (1-\partial^2_z) \Lambda  ( \chi_\varepsilon B_{2})  \|_{H^2} ).
\end{aligned}
\end{equation}
Since the dependence on $\omega$ in $A_2$ and $B_2$ is present through dependence on $Q_\omega$ and $R_\omega$, we readily have $|\Lambda  ( \chi_\varepsilon A_{2})| = |\chi_\varepsilon \Lambda   A_{2}| \lesssim  \chi_\varepsilon |A_2| $, and similar for $B_2$. Then from \eqref{A2B2 L2} and \eqref{A2B2 DL2},
\[
\begin{aligned}
&  \| (1-\partial^2_z)\Lambda  ( \chi_\varepsilon A_{2})  \|_{H^2} +\| (1-\partial^2_z) \Lambda  ( \chi_\varepsilon B_{2})  \|_{H^2} \\
& \quad \lesssim  \| (1-\partial^2_z) \chi_\varepsilon A_{2}  \|_{H^2} +\| (1-\partial^2_z)\chi_\varepsilon B_{2}   \|_{H^2} \lesssim  \frac{e^{-k_0 \varepsilon |t|} }{\varepsilon^{\frac32}} .
\end{aligned}
\]
We conclude from \eqref{R_new_3_91} that
\begin{equation}\label{R_new_3_9}
\begin{aligned}
  \|  \bd{R}^\sharp_9\|_{H^2\times H^2} \lesssim & ~{}  \varepsilon^3 e^{-2k_0 \varepsilon |t| -2l_0 \varepsilon |\rho(t)|}   + \varepsilon^{\frac52} e^{-2k_0 \varepsilon |t| -l_0 \varepsilon |\rho(t)|}
  \\
  \lesssim &~{}  \varepsilon^{\frac52} e^{-2k_0 \varepsilon |t| -l_0 \varepsilon |\rho(t)|}.
\end{aligned}
\end{equation}
Now we use \eqref{cota_f2}, \eqref{Sol_1} and \eqref{hypoH} to obtain
\begin{equation}\label{R_new_3_100}
\begin{aligned}
 \| \bd{R}^\sharp_{10}\|_{H^2\times H^2} =&~{} \left\|
 -  \varepsilon^3  f_2(t) (1-\partial^2_z) \left( \partial_z
\begin{pmatrix}
 A_1 + \varepsilon A_2 \chi_\varepsilon \\
 B_1 + \varepsilon B_2 \chi_\varepsilon
 \end{pmatrix}  - \varepsilon \partial_y h_0(\varepsilon t, \varepsilon \rho(t)) \begin{pmatrix} A_{0,\omega} \\ B_{0,\omega} \end{pmatrix}  \right)
 \right\|_{H^2\times H^2}
\\
 \lesssim & ~{} \varepsilon^3 e^{-k_0 \varepsilon |t| -l_0 \varepsilon |\rho(t)|} \left\| (1-\partial^2_z)  \partial_z
\begin{pmatrix}
 A_1 + \varepsilon A_2 \chi_\varepsilon \\
 B_1 + \varepsilon B_2 \chi_\varepsilon
 \end{pmatrix}  \right\|_{H^2\times H^2} \\
 &~{} +  \varepsilon^3  e^{-2k_0 \varepsilon |t| -2l_0 \varepsilon |\rho(t)|}
 \\
 \lesssim & ~{} \varepsilon^3 e^{-2k_0 \varepsilon |t| -2l_0 \varepsilon |\rho(t)|} +\varepsilon^4 e^{-k_0 \varepsilon |t| -l_0 \varepsilon |\rho(t)|} \left\| (1-\partial^2_z)  \partial_z
\begin{pmatrix}
  A_2 \chi_\varepsilon \\
  B_2 \chi_\varepsilon
 \end{pmatrix}  \right\|_{H^2 \times H^2}  \\
 &~{} +  \varepsilon^3  e^{-2k_0 \varepsilon |t| -2l_0 \varepsilon |\rho(t)|} .
\end{aligned}
\end{equation}
We estimate the second term in the last line above. We compute the $L^2$ norm, knowing that the remaining two derivatives have at least better properties. We have
\[
\begin{aligned}
& \left\| (1-\partial^2_z)  \partial_z
\begin{pmatrix}
  A_2 \chi_\varepsilon \\
  B_2 \chi_\varepsilon
 \end{pmatrix}  \right\|_{L^2\times L^2}
\\
&\quad  \lesssim \left\| \partial_z(A_2 \chi_\varepsilon)  \right\|_{L^2} + \left\| \partial_z(  B_2 \chi_\varepsilon ) \right\|_{L^2}  + \left\|  \partial_z^3 ( A_2 \chi_\varepsilon)  \right\|_{L^2} + \left\| \partial_z^3(  B_2 \chi_\varepsilon ) \right\|_{L^2}\\
&\quad  \lesssim \left\| A_2 \partial_z\chi_\varepsilon  \right\|_{L^2} +\left\| \partial_zA_2 \chi_\varepsilon \right\|_{L^2}+ \left\| B_2 \partial_z\chi_\varepsilon  \right\|_{L^2} +\left\| \partial_zB_2 \chi_\varepsilon \right\|_{L^2}\\
&\quad  \quad + \left\|  \partial_z^3 A_2 \chi_\varepsilon \right\|_{L^2} + \left\|  \partial_z^2 A_2  \partial_z\chi_\varepsilon \right\|_{L^2} + \left\|  \partial_z A_2  \partial_z^2 \chi_\varepsilon \right\|_{L^2}  + \left\|   A_2  \partial_z^3 \chi_\varepsilon \right\|_{L^2}  \\
&\quad  \quad + \left\|  \partial_z^3 B_2 \chi_\varepsilon \right\|_{L^2} + \left\|  \partial_z^2 B_2  \partial_z\chi_\varepsilon \right\|_{L^2} + \left\|  \partial_z B_2  \partial_z^2 \chi_\varepsilon \right\|_{L^2}  + \left\|   B_2  \partial_z^3 \chi_\varepsilon \right\|_{L^2} .
\end{aligned}
\]
First, notice that  %
\[
\begin{aligned}
&\left\|  \partial_z^2 A_2  \partial_z\chi_\varepsilon \right\|_{L^2} + \left\|  \partial_z A_2  \partial_z^2 \chi_\varepsilon \right\|_{L^2}  + \left\|   A_2  \partial_z^3 \chi_\varepsilon \right\|_{L^2}
\\
& \quad + \left\|  \partial_z^2 B_2  \partial_z\chi_\varepsilon \right\|_{L^2} + \left\|  \partial_z B_2  \partial_z^2 \chi_\varepsilon \right\|_{L^2}  + \left\|   B_2  \partial_z^3 \chi_\varepsilon \right\|_{L^2} \lesssim  e^{-k_0 \varepsilon |t|}.
\end{aligned}
\]
Second, using that $|\partial_x^\ell \chi_\varepsilon| \lesssim \varepsilon^\ell 1_{\{|x|\leq \varepsilon^{-1-\delta_0} \} }$, from \eqref{A2B2 L2} and \eqref{A2B2 DL2} we have
\[
\begin{aligned}
& \left\| A_2 \partial_z\chi_\varepsilon  \right\|_{L^2} +\left\| \partial_zA_2 \chi_\varepsilon \right\|_{L^2}+ \left\| B_2 \partial_z\chi_\varepsilon  \right\|_{L^2} +\left\| \partial_zB_2 \chi_\varepsilon \right\|_{L^2}
\\
& \quad \lesssim \frac{1}{\varepsilon^{\frac12}} e^{-k_0 \varepsilon |t|}  +\frac1{\varepsilon} e^{-k_0 \varepsilon |t|}  \lesssim \frac1{\varepsilon} e^{-k_0 \varepsilon |t|},
\end{aligned}
\]
and
\[
\begin{aligned}
&\left\|  \partial_z^3 A_2 \chi_\varepsilon \right\|_{L^2} + \left\|  \partial_z^3 B_2 \chi_\varepsilon \right\|_{L^2}   \lesssim \frac1{\varepsilon} e^{-k_0 \varepsilon |t|}.
\end{aligned}
\]
We conclude that
\[
\left\| (1-\partial^2_z)  \partial_z
\begin{pmatrix}
  A_2 \chi_\varepsilon \\
  B_2 \chi_\varepsilon
 \end{pmatrix}  \right\|_{L^2\times L^2}  \lesssim \frac1{\varepsilon} e^{-k_0 \varepsilon |t|}.
\]
The remaining estimates for the two derivatives in $L^2$ are similar and have at least better behavior. Therefore, we get in \eqref{R_new_3_100}
\begin{equation}\label{R_new_3_10}
\begin{aligned}
 \| \bd{R}^\sharp_{10}\|_{H^2\times H^2}  \lesssim & ~{} \varepsilon^3 e^{-2k_0 \varepsilon |t| -2l_0 \varepsilon |\rho(t)|} +\varepsilon^3 e^{-2k_0 \varepsilon |t| -l_0 \varepsilon |\rho(t)|}  +  \varepsilon^3  e^{-2k_0 \varepsilon |t| -2l_0 \varepsilon |\rho(t)|}
 \\
  \lesssim & ~{} \varepsilon^3 e^{-2k_0 \varepsilon |t| -l_0 \varepsilon |\rho(t)|} .
\end{aligned}
\end{equation}
We use again \eqref{hypoH} and \eqref{decay_Q} to obtain
\[
\begin{aligned}
& \|\bd{R}^\sharp_{11}\|_{H^2\times H^2} \\
&~{} \leq \left\|
\varepsilon^4 \left( -   \begin{pmatrix}
  a_1\partial^2_y\partial_s h_0 \\
 c_1 \partial^2_s\partial_y h_0
 \end{pmatrix} (\varepsilon t,\varepsilon x)  +\frac16   \partial_y^3 h_0(\varepsilon t, \varepsilon \rho(t)) \partial_z \begin{pmatrix}  z^3 Q_\omega \\ 0 \end{pmatrix}  \right)
 \right\|_{H^2\times H^2} \\
&~{} \qquad + \left\|
\varepsilon^4  \partial_z \begin{pmatrix}  \chi_\varepsilon^2  A_2B_2 \\ \frac12 \chi_\varepsilon^2 B_2^2 \end{pmatrix}
 \right\|_{H^2\times H^2}
\\
 & ~{}  \lesssim \varepsilon^4 e^{-k_0 \varepsilon |t|} \| e^{-l_0 \varepsilon |x|} \|_{H^2} +\varepsilon^4 e^{-k_0 \varepsilon |t|}e^{-l_0 \varepsilon |\rho(t)|}  \| |z|^3 Q_\omega \|_{H^2}
 \\
 &~{} \qquad +\varepsilon^4 \left\|  \partial_z \begin{pmatrix}  \chi_\varepsilon^2  A_2B_2 \\ \frac12 \chi_\varepsilon^2 B_2^2 \end{pmatrix}  \right\|_{H^2\times H^2}
 \\
  & ~{}  \lesssim \varepsilon^{7/2} e^{-k_0 \varepsilon |t|}  +\varepsilon^4 e^{-k_0 \varepsilon |t|}e^{-l_0 \varepsilon |\rho(t)|} +\varepsilon^4 \left\|  \partial_z \begin{pmatrix}  \chi_\varepsilon^2  A_2B_2 \\ \frac12 \chi_\varepsilon^2 B_2^2 \end{pmatrix}  \right\|_{H^2\times H^2} .
\end{aligned}
\]
The last term above is bounded as follows:
\[
\begin{aligned}
& \left\|  \partial_z \begin{pmatrix}  \chi_\varepsilon^2  A_2B_2 \\ \frac12 \chi_\varepsilon^2 B_2^2 \end{pmatrix}  \right\|_{H^2\times H^2}
\\
&~{} \lesssim  \left\|  \partial_z (\chi_\varepsilon^2  A_2B_2)  \right\|_{H^2}  + \left\|  \partial_z (\chi_\varepsilon^2 B_2^2) \right\|_{H^2} \\
&~{} \lesssim    \left\| | \partial_z \chi_\varepsilon| \chi_\varepsilon  (A_2^2 +B_2^2) \right\|_{H^2} + \left\|  \chi_\varepsilon^2 ( |\partial_z  A_2 ||B_2| +|A_2 ||\partial_z   B_2|)     \right\|_{H^2}  + \left\| \chi_\varepsilon^2 B_2 \partial_z B_2  \right\|_{H^2}.
\end{aligned}
\]
Recall \eqref{A2B2 Linfty}. From this estimate, we get
\[
 \left\| | \partial_z \chi_\varepsilon| \chi_\varepsilon  (A_2^2 +B_2^2) \right\|_{L^2} \lesssim  \frac1{\varepsilon^2} e^{-2k_0 \varepsilon |t|} \left\| | \partial_z \chi_\varepsilon| \right\|_{L^2} \lesssim  \frac1{\varepsilon^{\frac32}} e^{-2k_0 \varepsilon |t|}.
\]
A similar estimate holds for higher derivatives. Now, using \eqref{A2B2 DLinfty} and \eqref{A2B2 Linfty},
\[
\begin{aligned}
& \left\|  \chi_\varepsilon^2 ( |\partial_z  A_2 ||B_2| +|A_2 ||\partial_z   B_2|)     \right\|_{L^2} + \left\| \chi_\varepsilon^2 B_2 \partial_z B_2  \right\|_{L^2}
\\
&\quad \lesssim  \frac1{\varepsilon^2} e^{-2k_0 \varepsilon |t|} \left\| \chi_\varepsilon \right\|_{L^2} \lesssim  \frac1{\varepsilon^{\frac52}} e^{-2k_0 \varepsilon |t|}.
\end{aligned}
\]
Finally,
\begin{equation}\label{R_new_3_11}
\begin{aligned}
& \|\bd{R}^\sharp_{11}\|_{H^2\times H^2} \\
& ~{} \lesssim \varepsilon^{\frac72} e^{-k_0 \varepsilon |t|}  +\varepsilon^4 e^{-k_0 \varepsilon |t|}e^{-l_0 \varepsilon |\rho(t)|} +\varepsilon^{\frac32} e^{-2k_0 \varepsilon |t|} \lesssim \varepsilon^{\frac32} e^{-k_0 \varepsilon |t|}.
\end{aligned}
\end{equation}
Now
\begin{equation}\label{R_new_3_12}
\begin{aligned}
 \|\bd{R}^\sharp_{12}\|_{H^2} =&~{} \left \|
\varepsilon^5 \partial_x \begin{pmatrix}  \tilde h_0(\varepsilon t, \varepsilon \xi_1(t,x)) z^4 Q_\omega   \\ 0 \end{pmatrix}
\right\|_{H^2\times H^2}  \lesssim \varepsilon^5 \| z^4 Q_\omega \|_{H^2} \lesssim \varepsilon^5.
 \end{aligned}
\end{equation}
Finally, gathering \eqref{R_new_3_1}, \eqref{R_new_3_2}, \eqref{R_new_3_3}, \eqref{R_new_3_4}, \eqref{R_new_3_5}, \eqref{R_new_3_6}, \eqref{R_new_3_7}, \eqref{R_new_3_8}, \eqref{R_new_3_9}, \eqref{R_new_3_10}, \eqref{R_new_3_11} and \eqref{R_new_3_12}, we get \eqref{Cota_R}. The proof is complete.
\end{proof}

In the next section, our main objective is to make the previous construction rigorous, in the sense that the actual solution constructed in the pre-interaction region will be matched with the function $\bd{Q}_\omega +\bd{W}^\sharp$  defined above.

\section{Stability estimates}\label{Sec:4}

\subsection{Preliminaries} Recall $ \bd{Q}_{\omega} (z)$ and $\bd{W}^\sharp$ defined in \eqref{sol_wave} and \eqref{def_W}. Let us define 
\begin{equation}\label{defU}
\bd{U}(t,x) := \begin{pmatrix} U_1\\ U_2\end{pmatrix} (t,x) := \bd{Q}_{\omega(t)} (z) +  \bd{W}^\sharp(t,\omega(t),z) , \quad z= x-\rho(t),
\end{equation}
such that \eqref{DS}, \eqref{S0_final_final} and \eqref{Cota_R} are satisfied. Then Lemma \ref{DS_lem} also holds. We have from \eqref{S0},
 for $\bd{\eta_2}=( \eta , u)^T$ that will be specified in the sequel,
 
\[
\begin{aligned}
& 0= {\bf S}_h(\bd{U} + \bd{\eta_2} ) \\
 &~{} = \begin{pmatrix}
(1- \partial_x^2)\partial_t (U_1+ \eta)  + \partial_x\!\left( a\, \partial_x^2 (U_2+ u_2) + (U_2+ u) + (U_1+ \eta+ h) (U_2+ u) \right)   \\
(1- \partial_x^2)\partial_t (U_2+ u)  + \partial_x\! \left( c\, \partial_x^2 (U_1+ \eta_2 ) + U_1+ \eta  + \frac12 (U_2 +u )^2 \right)
\end{pmatrix} \\
 &~{}  \quad + \begin{pmatrix}
 (1 - a_1 \partial_x^2) \partial_th  \\
- c_1  \partial_t^2 \partial_x h
\end{pmatrix} \\
&~{} = {\bf S}_h(\bd{U}  ) \\
&~{}
\quad + \begin{pmatrix}
(1- \partial_x^2)\partial_t  \eta  + \partial_x\!\left( a\, \partial_x^2  u + u +  (U_1+h) u +  U_2\eta   \right)  \\
(1- \partial_x^2)\partial_t u  + \partial_x\! \left( c\, \partial_x^2 \eta  + \eta  + U_2 u  \right)
\end{pmatrix}
+ \begin{pmatrix}
 \partial_x\!\left( \eta u  \right)  \\
 \partial_x\!\left( \frac12 u^2 \right)
\end{pmatrix}
\\
 & ~{} = \bd{R}^\dagger +  {\bf S}_h'(\bd{U} )\bd{\eta_2}  +  \bd{N}(\bd{U},\bd{\eta_2}).
\end{aligned}
\]
Here, following \eqref{def_L_new},
 \begin{equation}\label{def_L_U}
\begin{aligned}
{\bf S}_h'(\bd{U} )\bd{\eta_2}   = &~{}
 \begin{pmatrix}
(1- \partial_x^2)\partial_t  \eta  + \partial_x\!\left( a\, \partial_x^2  u + u +  (U_1+h) u +  U_2\eta   \right)  \\
(1- \partial_x^2)\partial_t u  + \partial_x\! \left( c\, \partial_x^2 \eta  + \eta  + U_2 u  \right) ,
\end{pmatrix}
\end{aligned}
\end{equation}
and
\begin{equation}\label{NN}
 \bd{N}(\bd{U},\bd{\eta_2}) =  \begin{pmatrix}
 \partial_x\!\left( \eta u  \right)  \\
 \partial_x\!\left( \frac12 u^2 \right)
\end{pmatrix}.
\end{equation}
From \eqref{S0_new_2}, \eqref{def_L_U} and \eqref{NN}, it is clear that $\bd{\eta}$ satisfies the equation
\begin{equation}\label{eq_eta}
{\bf S}_h'(\bd{U} )\bd{\eta_2} = - \bd{N}(\bd{U},\bd{\eta_2})  -\bd{R}^\dagger.
\end{equation}
Equation \eqref{eq_eta} reveals the main linear dependence for perturbations of the approximate solutions already constructed.

\subsection{Modulation}

Let $(\omega,\rho)=(\omega,\rho)(t)$ be given by Lemma \ref{DS_lem} and $\bd{U}$ defined in \eqref{defU}, depending on the variables $(t,\omega,x-\rho)$. Notice that $(\omega,\rho)$ are globally defined for $t\geq -T_\varepsilon$. Let $t\in [-T_\varepsilon, T^*]$, with $T^*$ the maximal time of existence of the solution $\bd{\eta}$ constructed in the pre-interaction regime. Recall that \eqref{PreInteraction} is satisfied at time $t=-T_\varepsilon.$ For $K_2>1$ to be fixed later, let us define
\begin{equation}\label{T2}
\begin{aligned}
  T_2(K_2) &~{} :=  \sup \Big\{ T \in \left(- T_\varepsilon, 3T^*/4 \right) ~ : ~ \hbox{for all } t\in [-T_\varepsilon, T],  \hbox{ there is a $C^1$ shift $\widetilde\rho_2(t)\in\mathbb R$} \\
&~{} \qquad \qquad   \hbox{ such that }    \|  \bd{\eta}(t, \cdot ) - {\bf U}(t + \widetilde \rho_2, \omega(t + \widetilde \rho_2), \cdot-\rho(\cdot + \widetilde \rho_2)) \|_{H^1\times H^1} \leq K_2 \varepsilon^{\frac12} \\
&~{} \qquad  \qquad \hbox{ and  }  |\widetilde\rho_2'(t)| \leq 1/4 \Big\}.
\end{aligned}
\end{equation}
By continuity of the flow one has $T_2>-T_\varepsilon.$ The objective is to show that for $K_2$ large but fixed, $0<\varepsilon <\varepsilon_2<\varepsilon_1$ sufficiently small ($\varepsilon_1$ coming from the pre-interaction part), we have $T_2= T_\varepsilon.$ This will prove that $T^*>T_\varepsilon.$ Let us assume, by contradiction, that for all $K_2>0$ large, $\varepsilon>0$ small, we have $T_2<T_\varepsilon.$

\begin{lemma}\label{LemMod2}
There exists $C,\overline\varepsilon>0$ such that the following holds. For $0<\varepsilon< \overline \varepsilon$, there exists a unique time-dependent function $\rho_2 \in C^1([-T_\varepsilon, T_2(K_2)])$ such that, for all $t \in [-T_\varepsilon, T_2(K_2)]$,
\begin{equation}\label{ModulationTesis1}
 \left\langle
 \bd{\eta} (t) - {\bf U}( t + \rho_2(t), \omega(t+ \rho_2(t)), \cdot -\rho(t +\rho_2(t))) ,  {\bd Q}'_{\omega (t+ \rho_2(t))}(\cdot -\rho(t+\rho_2(t)))\right\rangle= 0.
\end{equation}
\end{lemma}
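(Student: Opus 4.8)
\emph{Approach.} The plan is to produce $\rho_2$ by the Implicit Function Theorem applied to the single scalar relation \eqref{ModulationTesis1}, following the classical modulation scheme of \cite{MM,MM1,MM2,Mu1}; the only point requiring care in the present time-dependent, variable-bottom setting is the uniform non-degeneracy of the associated Jacobian. Throughout I view $\bd U(s,w,y):=\bd Q_w(y)+\bd W^\sharp(s,w,y)$ as a $C^1$ function of three independent variables $(s,w,y)$ — admissible because $w\mapsto\bd Q_w\in H^2\times H^2$ is differentiable by hypothesis $(iii)$, $\omega(t)\in\Omega$ by \eqref{final_wp_2}, $(\omega,\rho)\in C^1$ by \eqref{DS}, and $\bd W^\sharp$ is built from objects depending smoothly on $(s,w,y)$. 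For $t\in[-T_\varepsilon,T_2(K_2)]$, $r$ in a fixed (small) interval $(-r_0,r_0)$, and $v\in H^1\times H^1$, I introduce
\[
\Gamma(t,r,v):=\big\langle\, v-\bd U\big(t+r,\omega(t+r),\cdot-\rho(t+r)\big),\ \bd Q'_{\omega(t+r)}\big(\cdot-\rho(t+r)\big)\,\big\rangle,
\]
which is $C^1$ in $(t,r,v)$ and affine in $v$, and I seek the root $\rho_2(t)$ of $\Gamma(t,\cdot,\bd\eta(t))$ near the a priori shift $\widetilde\rho_2(t)$ of \eqref{T2}. Since $\langle\bd Q_w,\bd Q'_w\rangle=0$ and $\bd W^\sharp$ is orthogonal to $\bd Q'_\omega$ up to the exponentially small defect created by the cutoff $\chi_\varepsilon$, one has $\Gamma(t,\widetilde\rho_2(t),\bd\eta(t))=\langle\bd\eta(t)-\bd U(t+\widetilde\rho_2(t),\dots),\bd Q'\rangle+O(e^{-\mu_0/\varepsilon})=O(K_2\varepsilon^{1/2})$ by \eqref{T2}.

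\emph{The main point: Jacobian lower bound.} Differentiating $\Gamma$ in $r$ and using $\partial_y\bd U=\bd Q'_{\omega(t+r)}(\cdot-\rho(t+r))+\partial_y\bd W^\sharp$, the bounds $\|\partial_s\bd W^\sharp\|_{L^2\times L^2}+\|\partial_y\bd W^\sharp\|_{L^2\times L^2}\lesssim\varepsilon$ from \eqref{dtW}, \eqref{estW}, and $|\omega'|+|\rho'-\omega|\lesssim\varepsilon^2$ from \eqref{DS}, \eqref{cota_f1}, \eqref{cota_f2}, I would establish
\[
\partial_r\Gamma(t,r,v)=\rho'(t+r)\,\|\bd Q'_{\omega(t+r)}\|_{L^2\times L^2}^2+O\big(\varepsilon+\|v-\bd U(t+r,\dots)\|_{L^2\times L^2}\big).
\]
Here $\rho'(t+r)=\omega(t+r)+\varepsilon^2 f_2(t+r)\geq\omega_0/2>0$ by \eqref{final_wp_2} and $|f_2|\lesssim1$, while $\|\bd Q'_{\omega(t+r)}\|^2$ is bounded below uniformly for $\omega(t+r)$ near $\omega_0$. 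Taking $v=\bd\eta(t)$ and $r\in(\widetilde\rho_2(t)-r_0,\widetilde\rho_2(t)+r_0)$, the remainder is $O(\varepsilon+K_2\varepsilon^{1/2})+O(r_0)$, so after fixing $r_0$ small and then $\varepsilon<\overline\varepsilon(K_2)$ small one obtains $\partial_r\Gamma\big(t,r,\bd\eta(t)\big)\geq c_1>0$ uniformly in $t$ and $r$ on that interval.

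\emph{Conclusion of the IFT and regularity.} Combining the uniform bound $\partial_r\Gamma\geq c_1$ with the smallness $|\Gamma(t,\widetilde\rho_2(t),\bd\eta(t))|\lesssim K_2\varepsilon^{1/2}$ from the first paragraph, the IFT yields, for each $t$, a unique $\rho_2(t)$ in the fixed interval solving \eqref{ModulationTesis1}, with $|\rho_2(t)-\widetilde\rho_2(t)|\lesssim K_2\varepsilon^{1/2}\ll r_0$ so that the root lies in the interior; at $t=-T_\varepsilon$ the initial value is supplied by \eqref{PreInteraction} together with $\|\bd W^\sharp(-T_\varepsilon)\|\lesssim\varepsilon$. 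Continuity of $\rho_2$ follows from the continuity of $t\mapsto\bd\eta(t)\in H^1\times H^1$ and of $\widetilde\rho_2$, together with the uniqueness just obtained. For $C^1$ regularity I would use that $t\mapsto\bd\eta(t)$ is $C^1$ with values in $L^2\times L^2$ (since $\bd\eta$ solves \eqref{boussinesq}) and that $\bd\eta(t)$ enters $\Gamma$ only through a pairing against the Schwartz vector $\bd Q'_{\omega(t+r)}(\cdot-\rho(t+r))$; hence $t\mapsto\Gamma(t,r,\bd\eta(t))$ is $C^1$, and differentiating $\Gamma(t,\rho_2(t),\bd\eta(t))=0$ and dividing by $\partial_r\Gamma\geq c_1$ gives $\rho_2\in C^1([-T_\varepsilon,T_2(K_2)])$.

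\emph{Where the difficulty lies.} The only genuinely nontrivial step is the non-degeneracy bound in the second paragraph: it relies on the positivity $\rho'(t)\geq\omega_0/2>0$, which encodes that the solitary wave has a fixed positive speed and that the modulation law \eqref{DS} perturbs it only by $O(\varepsilon^2)$, and on absorbing the $O(K_2\varepsilon^{1/2})$ and $O(r_0)$ cross-terms, which is legitimate precisely because of the quantifier order in \eqref{T2}, where $K_2$ is fixed before $\varepsilon\to0$. Everything else is the routine implicit-function-theorem bookkeeping already used in Lemma~\ref{mod1}.
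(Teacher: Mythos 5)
Your proposal is correct and follows essentially the same route as the paper's proof: apply the Implicit Function Theorem to the scalar orthogonality constraint, with the nondegeneracy of the Jacobian coming from $\rho'\,\|\bd Q_\omega'\|^2>0$ after using parity to kill $\langle\Lambda\bd Q_\omega,\bd Q_\omega'\rangle$ and the $O(\varepsilon)$ smallness of $\bd W^\sharp$ and its derivatives to absorb the remaining terms. The only (harmless) presentational difference is that you solve $\Gamma(t,\cdot,\bd\eta(t))=0$ near the approximate root $\widetilde\rho_2(t)$, carrying the resulting $O(K_2\varepsilon^{1/2})$ cross-term $\langle v-\bd U,\partial_r\bd Q_\omega'\rangle$ into the error, whereas the paper applies the IFT around the exact zero at the reference point $(\bd U(t+\widetilde\rho_2,\dots),\widetilde\rho_2(t))$ where that term vanishes.
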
	

\begin{proof}
Let $\bd{\eta} (t)$ be the solution constructed in the pre-interaction region. Let $\tilde \rho_2(t)$ be such that the ``tubular neighborhood'' property \eqref{T2} is satisfied. We will invoke the Implicit Function Theorem to modify $\widetilde \rho_2(t)$ by a new $\rho_2(t)$ in order to prove \eqref{ModulationTesis1}  for any fixed time $t \in [-T_\varepsilon, T_2(K_2)]$. Indeed, fix $t \in [-T_\varepsilon, T_2(K_2)]$. Consider the functional $\Phi=\Phi_t$,
$\Phi: H^{1}(\R)\times H^1(\R)\times \R$ given by
\begin{equation}\label{ModAux1}
\begin{aligned}
& \Phi(\bd{\overline{\eta}} ,\overline \rho_2)
\\
&\quad := \left\langle  	
 \bd{\overline{\eta}} \! \left(x\right)-{\bf U}\left( t+ \overline \rho_2, \omega(t+\overline \rho_2), x-\rho\right(t+\overline \rho_2)) ,  {\bd Q}_{\omega(t+\overline \rho_2)}'\left(x-\rho(t+\overline \rho_2)\right) \right\rangle.
\end{aligned}
\end{equation}
Notice that for any $t\in [-T_\varepsilon, T_2(K_2)]$ fixed, approximate solution ${\bf U}(t,\omega(t),x-\rho(t))$ as above and shift given as $\overline \rho_2=\tilde \rho_2(t)$, we have
\begin{equation}\label{ModAux1b}
\Phi({\bf U}\left( t+ \widetilde \rho_2(t), \omega(t+\widetilde \rho_2(t)), \cdot -\rho(t+\widetilde \rho_2(t))\right),\widetilde \rho_2(t)) =0.
\end{equation}
Now, taking into account \eqref{T2}, the idea is to work in a vicinity of
\begin{equation}\label{ModAux2}
(\overline {\bd \eta},\overline \rho_2) = ({\bf U}\left( t+ \tilde \rho_2(t), \omega(t+\tilde \rho_2(t)), \cdot -\rho(t+\tilde \rho_2(t))\right),\tilde \rho_2(t)).
\end{equation}
We compute the functional derivative with respect to $\overline \rho_2$ at the point $(\overline {\bd \eta},\overline \rho_2) $ defined as in \eqref{ModAux2}.  We get from \eqref{ModAux1}, \begin{align*}
&(D_{\overline \rho_2}\Phi)|_{ \left( {\bf U}\left( t+ \tilde \rho_2(t), \omega(t+\tilde \rho_2(t)), \cdot -\rho(t+\tilde \rho_2(t))\right),\tilde \rho_2(t)\right)} \\
&=- \left\langle (\partial_t {\bf U})(t+ \widetilde \rho_2, \omega(t + \widetilde \rho_2), \cdot - \rho(t + \widetilde \rho_2)) ,   {\bd Q}_{\omega(t+\widetilde \rho_2)}'(\cdot -\rho(t+\widetilde \rho_2)) \right\rangle \\
&\quad - \omega'(t+\widetilde \rho_2)  \left\langle ( \Lambda  {\bf U})(t+\widetilde \rho_2, \omega(t+\widetilde \rho_2), \cdot - \rho(t+\widetilde \rho_2)) , {\bd Q}_{\omega(t+\widetilde \rho_2)}'(\cdot -\rho(t+\widetilde \rho_2)) \right\rangle \\
&\quad + \rho'\left(t +\widetilde \rho_2\right)  \left\langle (\partial_x {\bf U})(t+\widetilde \rho_2, \omega(t+\widetilde \rho_2), \cdot - \rho(t+\widetilde \rho_2)) , {\bd Q}_{\omega(t+\widetilde \rho_2)}'(\cdot -\rho(t+\widetilde \rho_2)) \right\rangle. 
\end{align*}
Taking into account the definition of $\bd U$ in \eqref{defU}, parity properties, and integrating by parts, we have that
\begin{align*}
&D_{\overline \rho_2}\Phi\left( {\bf U}\left( t+ \tilde \rho_2(t), \omega(t+\tilde \rho_2(t)), \cdot -\rho(t+\tilde \rho_2(t))\right),\tilde \rho_2(t)\right) \\
&= -  \left\langle (\partial_t {\bf W}^\sharp)(t+ \widetilde \rho_2, \omega(t + \widetilde \rho_2), \cdot - \rho(t + \widetilde \rho_2)) , {\bd Q}_{\omega(t+\widetilde \rho_2)}'(\cdot -\rho(t+\widetilde \rho_2)) \right\rangle \\
& \quad - \omega'(t+\widetilde \rho_2) \left\langle (\partial_\omega {\bf W}^\sharp)(t+ \widetilde \rho_2, \omega(t + \widetilde \rho_2), \cdot - \rho(t + \widetilde \rho_2)),  {\bd Q}_{\omega(t+\widetilde \rho_2)}'(\cdot -\rho(t+\widetilde \rho_2)) \right\rangle \\
&\quad +\omega\left(t +\widetilde \rho_2\right)  \left\langle (\partial_x {\bf W}^\sharp)(t+\widetilde \rho_2, \omega(t+\widetilde \rho_2), \cdot  - \rho(t+\widetilde \rho_2)) , {\bd Q}_{\omega(t+\widetilde \rho_2)}'(\cdot  -\rho(t+\widetilde \rho_2)) \right\rangle \\
& \quad + \rho'\left( t+\widetilde \rho_2\right) \left\langle \bd{Q}_{\omega(t+\widetilde \rho_2)}' , \bd{Q}_{\omega(t+\widetilde \rho_2)}'  \right\rangle.
\end{align*}
Notice that $\left\langle \bd{Q}_{\omega}' , \bd{Q}_{\omega}'  \right\rangle \neq 0$. Then, for $\varepsilon$ small enough, thanks to the estimations for ${\bf W}^\sharp$ and $\partial_x {\bf W}^\sharp$ \eqref{estW}, and since $\rho' = \omega + \varepsilon ^2 f_2$ (see \eqref{DS}), with $f_2$ satisfying the exponential decay in time given by \eqref{cota_f2}, 
\[
|D_{\overline \rho_2}\Phi  \left( {\bf U}\left( t+ \tilde \rho_2(t), \omega(t+\tilde \rho_2(t)), \cdot -\rho(t+\tilde \rho_2(t))\right),\tilde \rho_2(t)\right) |>0,
\]
uniformly in time $t \in [-T_\varepsilon, T_2(K_2)]$. Consequently, since \eqref{ModAux1b} holds, using the Implicit Function Theorem (IFT) and the definition of $T_2(K_2)$ (see \eqref{T2}), we can conclude the existence of a smooth perturbation $ \rho_2(t)$ of $\widetilde \rho_2(t)$. The fact that $\rho_2$ is bounded is also a consequence of the IFT. The fact that $\rho_2$ is of class $C^1$ comes from the fact that the functional $\Phi$ itself is of class $C^1$. The proof is complete.
\end{proof}	

Recall that $\rho_2'(t)$ satisfies the estimate
\begin{equation}\label{boot}
|\rho_2'(t)| \leq \frac14, \quad  t\in [-T_\varepsilon, T_2(K_2)].
\end{equation}
Later, we shall improve this bootstrap estimate. Under this assumption, if we consider
\[
 \tau: [-T_\varepsilon, T_2(K_2)] \longmapsto  \tau([-T_\varepsilon, T_2(K_2)]), \quad   \tau(t):= t+\rho_2(t),
\]
then $\tau$ defines an increasing bijection. Then, there exists some $\widehat \rho_2$ such that we can write $t=:\tau + \widehat \rho_2(\tau)$. We also have
\begin{equation}\label{rhotau}
|\widehat \rho_2'(\tau)| = |1-t'(\tau)| = \left| \frac{\rho_2'(t(\tau))}{1+\rho_2' (t(\tau))}\right| \leq \frac13.
\end{equation}
Redefining
\begin{equation}\label{eta_2}
{\bd \eta}_2(\tau, x):= {\bd \eta}(\tau + \widehat \rho_2(\tau))- {\bf U}(\tau, \omega(\tau), x-\rho(\tau)), \quad \tau \in \tau([-T_\varepsilon, T_2(K_2)]),
\end{equation}
we can consider ${\bd \eta}_2$ in \eqref{eta_2} as a perturbation defined in an interval of the form $[-\widehat T_\varepsilon, \widehat T_2(K_2)]$.  Moreover, as a direct consequence of the definition of $T_2(K_2)$ \eqref{T2} and  Lemma \ref{LemMod2}, we have that ${\bd \eta}_2$ satisfies the bound
\[
\Vert {\bd \eta}_2(\tau , \cdot )\Vert_{H^1(\R)\times H^1(\R)}\le C K_2\varepsilon^{\frac12}, \quad \tau \in [-\widehat T_\varepsilon, \widehat T_2(K_2)].
\]
Now we discuss the meaning and validity of the interval $[-\widehat T_\varepsilon, \widehat T_2(K_2)]$ in terms of the local existence of ${\bd \eta}$.First, thanks to the bound on the derivative of $\widehat \rho_2$, we have
\[
\frac23T_\varepsilon \leq \widehat T_\varepsilon \leq \frac43 T_\varepsilon,  \quad \widehat T_2(K_2)\leq \frac43 T_2(K_2) < T^*.
\]
These estimates will be improved below, but now are sufficient to ensure that  ${\bd \eta}$ is still well-defined in the interval $[-\widehat T_\varepsilon, \widehat T_2(K_2)]$. If there is no confusion, from now on, we will drop the hat on $\widehat\rho_2$. Notice that \eqref{ModulationTesis1} reads now
\begin{equation}\label{ModulationTesis1a}
\begin{aligned}
& \left\langle   \bd{\eta}_2 (\tau, \cdot) , {\bd Q}'_{\omega (\tau)}(\cdot -\rho(\tau))\right\rangle
\\
& \quad =\left\langle \bd{\eta} (\tau + \rho_2(\tau)) - {\bf U}(\tau, \omega(\tau), \cdot -\rho(\tau)) , {\bd Q}'_{\omega (\tau)}(\cdot -\rho(\tau))\right\rangle= 0.
\end{aligned}
\end{equation}
Before differentiating \eqref{ModulationTesis1a}, we need an expression for the equation satisfied by $\bd{\eta}_2$ in \eqref{eta_2} in the $\tau$ variable. This is simple but somehow cumbersome. First, one has
\begin{equation}\label{deco_eta2}
 {\bd \eta}(\tau +  \rho_2(\tau)) = \begin{pmatrix} \eta \\ u  \end{pmatrix} (\tau +\rho_2(\tau)) = {\bf U}(\tau, \omega(\tau), x-\rho(\tau)) +{\bd \eta}_2(\tau, x).
\end{equation}
We will just write $ {\bf U}$ if no confusion is present. Since ${\bd \eta}$ is an exact solution to \eqref{boussinesq}, we will have
\[
\begin{aligned}
&(1-\partial_x^2 ) \partial_\tau [ {\bd \eta}(\tau +  \rho_2(\tau))]  \\
&\quad = (1+ \rho_2'(\tau))(1-\partial_x^2 ) (\partial_\tau{\bd \eta})(\tau +  \rho_2(\tau)) \\
&\quad = -(1+ \rho_2'(\tau))\partial_x \begin{pmatrix} a\, \partial_x^2 u +u + u (\eta+ h)  \\ c\, \partial_x^2 \eta + \eta  + \frac12 u^2 \end{pmatrix} (\tau +  \rho_2(\tau))  \\
&\quad\quad +(1+ \rho_2'(\tau)) \begin{pmatrix} (-1 +a_1 \partial_x^2) \partial_\tau h \\ c_1  \partial_\tau^2 \partial_x h \end{pmatrix}(\tau +  \rho_2(\tau)) .
\end{aligned}
\]
Using the decomposition for $\bd{\eta}_2$,
\[
\begin{aligned}
&(1-\partial_x^2 ) \partial_\tau {\bf U} +  (1-\partial_x^2 ) \partial_\tau {\bd \eta}_2  \\
&\quad = -(1+ \rho_2'(\tau))\partial_x \begin{pmatrix} a\, \partial_x^2 U_2 + U_2 + U_2 (U_1 + h (\tau +  \rho_2(\tau)))  \\ c\, \partial_x^2 U_1 + U_1  + \frac12 U_2^2 \end{pmatrix}  \\
&\quad \quad  -(1+ \rho_2'(\tau))\partial_x \begin{pmatrix} a\, \partial_x^2 u_2 +u_2 +U_2 \eta_2  + u_2 (U_1 + h (\tau +  \rho_2(\tau))) + u_2\eta_2  \\ c\, \partial_x^2 \eta_2 + \eta_2  +  U_2 u_2 \end{pmatrix}  \\
&\quad\quad +(1+ \rho_2'(\tau)) \begin{pmatrix} (-1 +a_1 \partial_x^2) \partial_\tau h \\ c_1  \partial_\tau^2 \partial_x h \end{pmatrix}(\tau +  \rho_2(\tau)),
\end{aligned}
\]
and therefore, from  \eqref{S0_final_final} and \eqref{def_L_U},
\begin{equation}\label{boussinesq_final}
\begin{aligned}
& \bd{R}^\sharp +(1+ \rho_2'(\tau)) \left(  {\bf S}_h'(\bd{U} )\bd{\eta}_2 + {\bf N}\left( {\bd \eta}_2\right) \right)- \rho_2'(\tau)   (1-\partial_x^2 ) \partial_\tau {\bd \eta}_2  \\
&\quad = - \left( 1+ \rho_2'(\tau)\right)\partial_x\left( \begin{matrix} U_2 (h(\tau+\rho_2(\tau))-h (\tau) )  \\ 0\end{matrix}\right) \\
& \quad \quad - \rho_2'(\tau) \partial_x \begin{pmatrix} a\, \partial_x^2 U_2 + U_2 + U_2 (U_1 + h (\tau))  \\ c\, \partial_x^2 U_1 + U_1  + \frac12 U_2^2 \end{pmatrix}  \\
&\quad \quad  -(1+ \rho_2'(\tau))\partial_x \begin{pmatrix}  u_2  (h (\tau +  \rho_2(\tau)) - h (\tau))  \\ 0 \end{pmatrix}  \\
&\quad \quad + (1+  \rho_2'(\tau)) \left(  \begin{pmatrix} a_1 \partial_x^2 \partial_\tau h \\ c_1  \partial_\tau^2 \partial_x h \end{pmatrix} (\tau +  \rho_2(\tau)) - \begin{pmatrix} a_1 \partial_x^2 \partial_\tau h \\ c_1  \partial_\tau^2 \partial_x h \end{pmatrix} (\tau)\right),
\end{aligned}
\end{equation}
where
\[
{\bf N}\left( {\bd \eta}_2\right)= \partial_x \begin{pmatrix}  u_2\eta_2  \\ \frac12u_2^2 \end{pmatrix} .
\]

Recalling \eqref{def_L_U} and \eqref{boussinesq_final}, we find that
\begin{equation}\label{boussinesq_final_22}
\begin{aligned}
&\partial_\tau
 \begin{pmatrix}
  \eta_2   \\
u_2
\end{pmatrix}
\\
&\quad  = - (1+ \rho_2'(\tau))  (1- \partial_x^2)^{-1}\partial_x  J \begin{pmatrix}
  c\, \partial_x^2 \eta_2  + \eta_2  + U_2 u_2 + \frac12u_2^2 \\
a\, \partial_x^2  u_2 + u_2 +  (U_1+h) u_2 +  U_2\eta_2   + u_2\eta_2  \end{pmatrix}    \\
&\quad \quad  - \left( 1+ \rho_2'(\tau)\right)  (1- \partial_x^2)^{-1} \partial_x\left( \begin{matrix} U_2 (h(\tau+\rho_2(\tau))-h (\tau) )  \\ 0\end{matrix}\right) \\
& \quad \quad - \rho_2'(\tau)  (1- \partial_x^2)^{-1}\partial_x \begin{pmatrix} a\, \partial_x^2 U_2 + U_2 + U_2 (U_1 + h (\tau))  \\ c\, \partial_x^2 U_1 + U_1  + \frac12 U_2^2 \end{pmatrix}  \\
&\quad \quad  -(1+ \rho_2'(\tau))  (1- \partial_x^2)^{-1}\partial_x \begin{pmatrix}  u_2  (h (\tau +  \rho_2(\tau)) - h (\tau))  \\ 0 \end{pmatrix}  \\
&\quad \quad + (1+  \rho_2'(\tau))  (1- \partial_x^2)^{-1} \left(  \begin{pmatrix} a_1 \partial_x^2 \partial_\tau h \\ c_1  \partial_\tau^2 \partial_x h \end{pmatrix} (\tau +  \rho_2(\tau)) - \begin{pmatrix} a_1 \partial_x^2 \partial_\tau h \\ c_1  \partial_\tau^2 \partial_x h \end{pmatrix} (\tau)\right)
\\
& \quad\quad -  (1- \partial_x^2)^{-1}\bd{R}^\sharp .
\end{aligned}
\end{equation}
%
Denote
\begin{equation}\label{def_M}
M := \begin{pmatrix}
  c\, \partial_x^2 \eta_2  + \eta_2  + U_2 u_2 + \frac12u_2^2 \\
a\, \partial_x^2  u_2 + u_2 +  (U_1+h) u_2 +  U_2\eta_2   + u_2\eta_2  \end{pmatrix} .
\end{equation}
First, we perform some estimates on $M$ introduced in \eqref{def_M}. We have
\[
\begin{aligned}
& \left| (1- \partial_x^2)^{-1} M \right|
\\
&\quad  = \left| \begin{pmatrix}
  -c \eta_2  + (1+c) (1- \partial_x^2)^{-1}\eta_2  + (1- \partial_x^2)^{-1}\left(U_2 u_2 + \frac12u_2^2 \right)\\
- a  u_2 + (1+a) (1- \partial_x^2)^{-1} u_2 + (1- \partial_x^2)^{-1}\left(  (U_1+h) u_2 +  U_2\eta_2   + u_2\eta_2 \right) \end{pmatrix} \right|.
\end{aligned}
\]
From this identity, the boundedness of $\bd{U}$ and the $L^\infty$ bound on $H^1$ solutions we get
\begin{equation}\label{boundM}
\begin{aligned}
& \left\| (1- \partial_x^2)^{-1} M \right\|_{L^2\times L^2} \lesssim \| \bd{\eta}_2 \|_{L^2\times L^2} + \| \bd{\eta}_2 \|_{H^1\times H^1}^2.
\end{aligned}
\end{equation}
Also, notice that from \eqref{defU}, \eqref{def_W} and \eqref{exact0},
\begin{equation}\label{decoeqU}
\begin{aligned}
\begin{pmatrix} a\, \partial_x^2 U_2 + U_2 + U_1U_2  \\ c\, \partial_x^2 U_1 + U_1  + \frac12 U_2^2 \end{pmatrix} = &~{} \omega (1-\partial_x^2) \begin{pmatrix} R_\omega  \\ Q_\omega \end{pmatrix}
\\
&~{} + \begin{pmatrix} a\, \partial_x^2 W_2 + W_2 + W_1 Q_\omega + W_2 R_\omega  +W_1W_2  \\ c\, \partial_x^2 W_1 + W_1 + Q_\omega W_2  + \frac12 W_2^2 \end{pmatrix} .
\end{aligned}
\end{equation}
This decomposition reveals that at first order part of $M$ in \eqref{def_M} is deeply related to $(1-\partial_x^2)\bd{Q}_\omega$. This fact will later be used to get better estimates on the evolution of related energy functionals.

\begin{lemma}\label{ModulationInTime}
There exists $\varepsilon_2>0$ such that, if $0<\varepsilon<\varepsilon_2$, the following are satisfied:
\begin{enumerate}
\item[\emph{(i)}] We have 
\begin{equation}\label{ModulationTesis2}
	\Vert {\bd\eta }_2(\tau) \Vert_{H^1\times H^1}\lesssim K_2\varepsilon^{\frac12}, \quad \text{for}\quad t \in [-T_\varepsilon, T_2(K_2)],
\end{equation}
and
\begin{equation}\label{ModulationTesis3}
\vert \rho_2 (\tau(-T_\varepsilon))\vert +\Vert {\bd \eta}_2(\tau(- T_\varepsilon)) \Vert_{H^1\times H^1}\le C\varepsilon^{\frac12},
\end{equation}	
where $C>0$ is independent of $K_2$.
\smallskip
\item[\emph{(ii)}] There exist $C>0$ such that
\begin{equation}\label{estimationForRho1}
	\vert \rho'_2(\tau)\vert\le C\left( \varepsilon + \Vert {\bd \eta}_2(\tau)\Vert_{H^1\times H^1} + \Vert {\bd \eta}_2 (\tau)\Vert_{H^1\times H^1}^2  \right),
\end{equation}
for all $t \in [-T_\varepsilon, T_2(K_2)]$.
\end{enumerate}
\end{lemma}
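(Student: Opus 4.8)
The plan is to prove Lemma~\ref{ModulationInTime} in three stages: first the bootstrap-type estimates in (i), then the derivative estimate in (ii).

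\textbf{Step 1: the bound \eqref{ModulationTesis2}.} This is essentially a consequence of the definition of $T_2(K_2)$ in \eqref{T2} together with Lemma~\ref{LemMod2}. Indeed, the ``tubular neighborhood'' property in \eqref{T2} gives a shift $\widetilde\rho_2$ with $\|\bd\eta(t)-{\bf U}(t+\widetilde\rho_2,\cdot)\|_{H^1\times H^1}\le K_2\varepsilon^{1/2}$, and Lemma~\ref{LemMod2} produces, via the Implicit Function Theorem, a nearby $\rho_2$ enforcing the orthogonality \eqref{ModulationTesis1a}. Since $\rho_2$ is obtained as a small perturbation of $\widetilde\rho_2$ (the IFT gives continuity of the implicit function, and the defect functional $\Phi$ is controlled using the smallness of $\bd W^\sharp$ from \eqref{estW}), after changing variables to $\tau=t+\rho_2(t)$ and recalling \eqref{rhotau}, the redefined error $\bd\eta_2$ in \eqref{eta_2} still satisfies $\|\bd\eta_2(\tau)\|_{H^1\times H^1}\lesssim K_2\varepsilon^{1/2}$ on $[-\widehat T_\varepsilon,\widehat T_2(K_2)]$. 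This is just a restatement absorbing the constant from the change of variables.

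\textbf{Step 2: the initial bound \eqref{ModulationTesis3}.} At $\tau$ corresponding to $t=-T_\varepsilon$, one compares $\bd\eta(-T_\varepsilon)$ to ${\bf U}(-T_\varepsilon,\omega_0,\cdot+\omega_0 T_\varepsilon)=\bd Q_\omega(\cdot+\omega_0T_\varepsilon)+\bd W^\sharp(-T_\varepsilon)$. By \eqref{PreInteraction} from the pre-interaction construction, $\|\bd\eta(-T_\varepsilon)-\bd Q_\omega(\cdot-\omega(-T_\varepsilon))\|_{H^1\times H^1}\le C_0 e^{-\nu_0\varepsilon T_\varepsilon}$, which is super-polynomially small (since $T_\varepsilon=\varepsilon^{-1-\delta_0}$), hence $\ll\varepsilon^{1/2}$; and $\|\bd W^\sharp(-T_\varepsilon)\|_{H^1\times H^1}\lesssim\varepsilon^{1/2}e^{-k_0\varepsilon T_\varepsilon}$ by \eqref{estW}, also negligible. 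Combined with the fact that the IFT shift $\rho_2(\tau(-T_\varepsilon))$ is then forced to be $O(\varepsilon^{1/2})$ (since the orthogonality condition is already almost satisfied), this gives \eqref{ModulationTesis3} with a constant independent of $K_2$, because none of these bounds involve $K_2$.

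\textbf{Step 3: the derivative estimate \eqref{estimationForRho1}.} This is the main technical point. I would differentiate the orthogonality relation \eqref{ModulationTesis1a} in $\tau$. The $\tau$-derivative hits three objects: $\bd\eta_2$ (whose equation \eqref{boussinesq_final_22} is available), the translation of $\bd Q'_{\omega(\tau)}$ through $\rho'(\tau)=\omega+\varepsilon^2 f_2$, and the scaling through $\omega'(\tau)=\varepsilon^2 f_1$. Pairing \eqref{boussinesq_final_22} against $\bd Q'_{\omega}(\cdot-\rho)$ and isolating the coefficient of $\rho_2'(\tau)$: the leading term produces $\rho_2'(\tau)\langle (1-\partial_x^2)\partial_x\bd Q'_\omega,\dots\rangle$-type quantities that combine with $\langle \bd Q'_\omega,\bd Q'_\omega\rangle\neq0$-type coefficients to give an invertible prefactor for $\varepsilon$ small (using the exponential decay of $\bd Q_\omega$ from Lemma~\ref{lem:decay:solitons} and the smallness of $\bd W^\sharp$ from \eqref{estW}). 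The remaining terms on the right are: the linear-in-$\bd\eta_2$ contribution from ${\bf S}_h'(\bd U)\bd\eta_2$ and the $M$-term, bounded by $\|\bd\eta_2\|_{H^1\times H^1}$ (after integration by parts moving derivatives onto the smooth, exponentially localized $\bd Q'_\omega$, and using \eqref{boundM}); the nonlinear term ${\bf N}(\bd\eta_2)$, bounded by $\|\bd\eta_2\|_{H^1\times H^1}^2$; the bottom-difference terms $U_2(h(\tau+\rho_2)-h(\tau))$ and the $\partial_\tau h$ time-correction differences, bounded by $C\varepsilon$ using \eqref{hypoH}, the bound $|\rho_2'|\le1/4$, the mean value theorem in $\tau$, and the exponential localization; and finally the $(1-\partial_x^2)^{-1}\bd R^\sharp$ term, bounded by $\|\bd R^\sharp\|_{H^2\times H^2}\lesssim\varepsilon^{3/2}e^{-k_0\varepsilon|\tau|}\lesssim\varepsilon$ from \eqref{Cota_R}. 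Collecting these and dividing by the invertible prefactor yields \eqref{estimationForRho1}.

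The hard part will be Step~3: organizing the many error terms in \eqref{boussinesq_final_22} so that each falls into one of the three allowed buckets ($\varepsilon$, $\|\bd\eta_2\|_{H^1\times H^1}$, $\|\bd\eta_2\|_{H^1\times H^1}^2$), and in particular verifying that the bottom-difference terms genuinely contribute only $O(\varepsilon)$ and not $O(1)$ — this relies crucially on the slow ($\varepsilon$-scaled) time dependence of $h$ in \eqref{hypoH} and on the exponential spatial localization of $U_2$ and $\bd Q'_\omega$, which confines the region where $h(\tau+\rho_2(\tau))-h(\tau)$ is evaluated. Care is also needed because $\partial_z^{-1}\partial_s h_0$ and related objects inside $\bd W^\sharp$ are merely bounded (not $L^2$) as $z\to-\infty$, so the pairings with $\bd Q'_\omega$ must be shown to converge using the soliton decay.
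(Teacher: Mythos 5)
Your proposal follows essentially the same route as the paper's proof: parts (i) follow from the definition of $T_2(K_2)$ together with \eqref{PreInteraction} and \eqref{estW}, and part (ii) is obtained by differentiating the orthogonality identity \eqref{ModulationTesis1a}, substituting \eqref{boussinesq_final_22}, and isolating the $\rho_2'$ coefficient while sorting the remaining terms into the $\varepsilon$, $\|\bd\eta_2\|$, and $\|\bd\eta_2\|^2$ buckets. The one place where you are slightly imprecise is the identification of the invertible prefactor: the paper extracts it from the modulation term $-\rho_2'(\tau)\langle(1-\partial_x^2)^{-1}\partial_x(\cdots),\bd Q'_\omega\rangle$ by using the decomposition \eqref{decoeqU}, whose leading piece is $\omega(1-\partial_x^2)\bd Q_\omega$; after $(1-\partial_x^2)^{-1}\partial_x$ this collapses to $\omega\bd Q'_\omega$, and pairing with $\bd Q'_\omega$ gives $-\rho_2'\rho'\|\bd Q'_\omega\|_{L^2\times L^2}^2$ with $\rho'=\omega+\varepsilon^2 f_2\approx\omega>0$ from \eqref{DS}, which is the precise source of nondegeneracy rather than an abstract ``$\langle(1-\partial_x^2)\partial_x\bd Q'_\omega,\dots\rangle$-type'' quantity. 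With that clarification, your argument matches the paper's.
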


\begin{remark}
Notice that \eqref{ModulationTesis2}-\eqref{estimationForRho1} strictly improve \eqref{rhotau}, and by consequence \eqref{boot}:
\[
|\rho_2'(t)| =|1-\tau'(t)|  = \left| \frac{\hat\rho_2'(\tau(t))}{1+\hat\rho_2'(\tau(t))}\right| \leq 2K \varepsilon^{\frac12}.
\]
(Notice that we came back to the hat notation to avoid confusion.)
\end{remark}

\begin{proof}[Proof of Lemma \ref{ModulationInTime}]
The proof of \eqref{ModulationTesis3} is classical and it is related to the fact that at time $t=-T_\varepsilon$ one has independent bounds coming from the pre-interaction region. This is precisely what one gets in \eqref{PreInteraction} after choosing $t= -T_\varepsilon$, and the dynamical system \eqref{DS} was chosen to respect this estimate. On the other hand, \eqref{ModulationTesis2} is a direct consequence of \eqref{T2}.

Let us prove the more involved estimate \eqref{estimationForRho1}. Directly from Lemma \ref{LemMod2} and since the operator $\left( 1-\partial_x^2\right)$ is self-adjoint, taking derivative in \eqref{ModulationTesis1a}, we have that
\[
 \left\langle   \partial_\tau \bd{\eta}_2 , {\bd Q}'_{\omega (\tau)}(\cdot -\rho(\tau))\right\rangle + \left\langle   \bd{\eta}_2 ,  \partial_\tau {\bd Q}'_{\omega (\tau)}(\cdot -\rho(\tau))\right\rangle  =0.
\]
Using \eqref{boussinesq_final_22}, 
\begin{equation}\label{estRho2}
\begin{aligned}
0 = &~{}  - (1+ \rho_2'(\tau)) \left\langle (1- \partial_x^2)^{-1}\partial_x  JM , {\bd Q}'_{\omega (\tau)}(\cdot -\rho(\tau))\right\rangle \\
&~{}- \left( 1+ \rho_2'(\tau)\right)  \left\langle   (1- \partial_x^2)^{-1} \partial_x\left( \begin{matrix} U_2 (h(\tau+\rho_2(\tau))-h (\tau) )  \\ 0\end{matrix}\right) , {\bd Q}'_{\omega (\tau)}(\cdot -\rho(\tau))\right\rangle \\
&~{} - \rho_2'(\tau)   \left\langle  (1- \partial_x^2)^{-1}\partial_x \begin{pmatrix} a\, \partial_x^2 U_2 + U_2 + U_2 (U_1 + h (\tau))  \\ c\, \partial_x^2 U_1 + U_1  + \frac12 U_2^2 \end{pmatrix} , {\bd Q}'_{\omega (\tau)}(\cdot -\rho(\tau))\right\rangle \\
&~{}-(1+ \rho_2'(\tau))  \left\langle  (1- \partial_x^2)^{-1}\partial_x \begin{pmatrix}  u_2  (h (\tau +  \rho_2(\tau)) - h (\tau))  \\ 0 \end{pmatrix}  , {\bd Q}'_{\omega (\tau)}(\cdot -\rho(\tau))\right\rangle \\
&~{}+  (1+  \rho_2'(\tau))  \Bigg\langle   (1- \partial_x^2)^{-1} \left(  \begin{pmatrix} a_1 \partial_x^2 \partial_\tau h \\ c_1  \partial_\tau^2 \partial_x h \end{pmatrix} (\tau +  \rho_2(\tau)) - \begin{pmatrix} a_1 \partial_x^2 \partial_\tau h \\ c_1  \partial_\tau^2 \partial_x h \end{pmatrix} (\tau)\right) \\
&~{} \qquad  \qquad \qquad \quad , {\bd Q}'_{\omega (\tau)}(\cdot -\rho(\tau)) \Bigg\rangle \\
&~{} - \left\langle (1- \partial_x^2)^{-1}\bd{R}^\sharp , {\bd Q}'_{\omega (\tau)}(\cdot -\rho(\tau))\right\rangle \\
&~{} + \omega' \left\langle   \bd{\eta}_2 , \Lambda {\bd Q}'_{\omega (\tau)}(\cdot -\rho(\tau))\right\rangle - \rho'\left\langle   \bd{\eta}_2 , {\bd Q}''_{\omega (\tau)}(\cdot -\rho(\tau))\right\rangle=: \sum_{j=1}^8 \text{M}_j. 
\end{aligned}
\end{equation}
Now, we estimate each term $\text{M}_j$ in \eqref{estRho2}, starting with the first term in the RHS. Following the estimates \eqref{estW}, we have that for $l=0, 1,2$, the norms $\Vert \partial_x^l {\bf U}\Vert_{L^2\times L^2}$ and $\Vert {\bf} {\bf U}\Vert_{L^\infty\times L^\infty}$ are uniformly bounded. Then, we see that
\begin{equation}\label{estRho2.1}
\begin{aligned}
& |\text{M}_1| +|\text{M}_4| +|\text{M}_5|
 \lesssim \Vert {\bd \eta}_2\Vert_{H^1\times H^1} + \Vert {\bd \eta}_2\Vert_{H^1\times H^1}^2.
\end{aligned}
\end{equation}
Next, we have that
\begin{equation}\label{estRho2.3}
\begin{aligned}
|\text{M}_2| \lesssim &~{}  \left\langle  \partial_x \left( U_2 (h(\tau+\rho_2(\tau))-h (\tau) ) \right) , (1-\partial_x^2)^{-1}{\bd Q}_{\omega(t+\rho_2)}'\right\rangle\\
 \lesssim &~{} \|h(\tau+\rho_2(\tau))-h (\tau) )  \|_{L^\infty} \lesssim  \varepsilon.
\end{aligned}
\end{equation}
Similarly, we also have from \eqref{hypoH} that
\begin{equation}\label{estRho2.2}
|\text{M}_6| \lesssim \varepsilon^4. 
\end{equation}
Now we deal with $\text{M}_3$. Using \eqref{decoeqU}, 
\[
\begin{aligned}
& - \rho_2'(\tau)   \left\langle  (1- \partial_x^2)^{-1}\partial_x \begin{pmatrix} a\, \partial_x^2 U_2 + U_2 + U_2 (U_1 + h (\tau))  \\ c\, \partial_x^2 U_1 + U_1  + \frac12 U_2^2 \end{pmatrix} , {\bd Q}'_{\omega (\tau)}(\cdot -\rho(\tau))\right\rangle
\\
& = - \rho_2'(\tau)\rho'  \left\langle  \bd{Q}'_\omega , {\bd Q}'_{\omega}  \right\rangle \\
& \quad - \rho_2'(\tau)   \left\langle  (1- \partial_x^2)^{-1}\partial_x\begin{pmatrix} a\, \partial_x^2 W_2 + W_2 + W_1 Q_\omega + W_2 R_\omega  +W_1W_2  \\ c\, \partial_x^2 W_1 + W_1 + Q_\omega W_2  + \frac12 W_2^2 \end{pmatrix}  , {\bd Q}'_{\omega}\right\rangle\\
& \quad - \rho_2'(\tau)   \left\langle  (1- \partial_x^2)^{-1}\partial_x\begin{pmatrix} (Q_\omega + W_2) h  \\ 0 \end{pmatrix}  , {\bd Q}'_{\omega}\right\rangle.
\end{aligned}
\]
Therefore, thanks to \eqref{estW} and \eqref{hypoH},
\begin{equation}\label{estRho2.4}
\left| \text{M}_3 +\rho_2'(\tau)  \omega \|\bd{Q}_\omega' \|^2_{L^2\times L^2} \right| \lesssim \varepsilon^{\frac12} e^{-k_0 \varepsilon |t|} |\rho_2'(\tau)| .
\end{equation}
From \eqref{Cota_R}, we have that
\begin{equation}\label{estRho2.6}
|\text{M}_7| = \left\vert \left\langle (1- \partial_x^2)^{-1}\bd{R}^\sharp , {\bd Q}'_{\omega (\tau)}(\cdot -\rho(\tau))\right\rangle \right\vert \le \varepsilon^{\frac32}e^{-k_0\varepsilon \vert t \vert} + \varepsilon^{10}.
\end{equation}
Finally,
\begin{equation}\label{estRho2.7}
\begin{aligned}
& |\text{M}_8| +|\text{M}_9|  \lesssim \Vert {\bd \eta}_2\Vert_{H^1\times H^1}.
\end{aligned}
\end{equation}
Then, gathering \eqref{estRho2.1}, \eqref{estRho2.3}, \eqref{estRho2.2}, \eqref{estRho2.4}, \eqref{estRho2.6} and \eqref{estRho2.7}, we conclude \eqref{estimationForRho1}.

\end{proof}

\subsection{Energy and Momentum estimates}
Let us consider the following functional in the variable $\tau$:
\begin{equation}\label{def_F2}
\begin{aligned}
\mathfrak F_2(\tau):= &~{} \frac12\int \left( -a (\partial_x u_2)^2 -c (\partial_x \eta_2)^2  + u_2^2+ \eta_2^2 \right) (\tau,x)dx \\
&~{} + \frac12\int \left( 2U_2 \eta_2 u_2 +  U_1 u_2^2 \right)(\tau,x) dx + \frac12\int u_2^2  \left( \eta_2 + h\right)(\tau,x)dx\\
&~{} - \omega  \int \left( \partial_x \eta_2 \partial_x u_2 + \eta_2 u_2 \right)(\tau,x)dx  -m_0(\tau) \int Q_\omega u_2(\tau,x)dx. 
\end{aligned}
\end{equation}
This functional is reminiscent of the Hamiltonian and Momentum functionals described in \eqref{Energy}. Here $U_1,U_2, Q_\omega$ are functions evaluated at the variable $z=x-\rho(\tau)$, and $m_0(\tau)$ is the coefficient
\be\label{def_m0}
m_0(\tau) : =  - \varepsilon^2   \rho_2(\tau)  \int_0^1   (\partial_s h_0) (\varepsilon(\tau+\sigma \rho_2(\tau)) , \varepsilon \rho(\tau)) d \sigma,
\ee
with small time variation:
\be\label{def_m0p}
\begin{aligned}
m_0'(\tau) = &~{} - \varepsilon^2   \rho_2'(\tau)  \int_0^1   (\partial_s h_0) (\varepsilon(\tau+\sigma \rho_2(\tau)) , \varepsilon \rho(\tau)) d \sigma \\
&~{} - \varepsilon^3 \rho_2(\tau)  \int_0^1  (1 +\sigma \rho_2'(\tau) ) (\partial_s^2 h_0) (\varepsilon(\tau+\sigma \rho_2(\tau)) , \varepsilon \rho(\tau)) d \sigma \\
 &~{} - \varepsilon^3 \rho'(\tau)  \rho_2(\tau)  \int_0^1   (\partial_y \partial_s h_0) (\varepsilon(\tau+\sigma \rho_2(\tau)) , \varepsilon \rho(\tau)) d \sigma.
\end{aligned}
\ee
The term $m_0$ is chosen to cancel out some bad terms in \eqref{muerete1} below.  Using  \eqref{estimationForRho1} we obtain, assuming $K_2$ large but to be fixed later,
\[
\vert \rho_2(\tau) -\rho_2(-\hat T_\varepsilon)  \vert\le C\left( \varepsilon + K_2  \varepsilon^{\frac12}  +K_2^2  \varepsilon  \right) (\tau + \hat T_\varepsilon) \leq C K_2 (1+K_2\varepsilon^{\frac12}) \varepsilon^{-\frac12-\delta}.
\]
Therefore, thanks to \eqref{ModulationTesis3}, a crude estimate for $\rho_2(\tau) $ is
\be\label{cotas_rho2}
\begin{aligned}
&|\rho_2(\tau) | \lesssim C K_2 (1+K_2\varepsilon^{\frac12}) \varepsilon^{-\frac12-\delta},\\
&\varepsilon^2 \left( 1+\varepsilon | \rho_2(\tau)| +| \rho_2(\tau)| \| \bd{\eta}_2 \|_{L^2\times L^2} \right) \lesssim CK_2 \varepsilon^{2-\delta},
\end{aligned}
\ee
provided $\varepsilon$ is chosen small, depending on a fixed $K_2$. Finally, using \eqref{hypoH} and \eqref{cotas_rho2} in \eqref{def_m0} and \eqref{def_m0p},
\be\label{cotas_m0}
\begin{aligned}
|m_0(\tau) | \lesssim  &~{}  K_2 (1+K_2\varepsilon^{\frac12}) \varepsilon^{\frac32-\delta} e^{-k_0 \varepsilon |\tau+\sigma \rho_2(\tau)| -l_0 \varepsilon |\rho_2(\tau)|} \\
\lesssim &~{}  K_2\varepsilon^{\frac32-\delta} e^{- \frac12k_0 \varepsilon |\tau|}, \\
|m_0'(\tau) | \lesssim &~{}  K_2\varepsilon^{\frac52-\delta} e^{- \frac12k_0 \varepsilon |\tau|} .
\end{aligned}
\ee
Notice that $\mathfrak F_2$ stays bounded in $\tau$. Indeed, we have in \eqref{def_F2},

\begin{lemma}[Boundedness and coercivity]
Let $\mathfrak F_2(\tau)$ be defined as in \eqref{def_F2}. There exist $\varepsilon_2,c_2>0$ such that, for all  $\varepsilon \in(0,\varepsilon_2)$, we have the follwong estiamates
\begin{gather}
    \label{Bound_F2}
|\mathfrak F_2(\tau)| \lesssim \| \bd{\eta}_2(\tau)\|_{H^1\times H^1}^2 + \frac1{c_2} K_2^2\varepsilon^{2-\delta},
\\
\label{Coer_F2}
\mathfrak F_2(\tau) \geq c_2\| \bd{\eta}_2(\tau)\|_{H^1\times H^1}^2- \frac1{c_2} \left|  \langle \bd{\eta}_2(\tau) , J(1-\partial_x^2)\bd{Q}_\omega\rangle \right|^2- \frac1{c_2} K_2^2\varepsilon^{2-\delta} .
\end{gather}
\end{lemma}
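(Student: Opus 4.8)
The plan is to establish the two estimates \eqref{Bound_F2} and \eqref{Coer_F2} by first isolating the purely quadratic part of $\mathfrak F_2$ and recognizing it, up to controllable error, as the quadratic form $\langle \bd{\eta}_2,\mathcal L \bd{\eta}_2\rangle$ studied in Lemma~\ref{coercivitydetailled}, and then handling the remaining non-quadratic and lower-order contributions (the cubic terms, the terms involving $\bd{W}^\sharp$ through $U_1,U_2$, the $h$-dependent terms, and the modulation term $-m_0(\tau)\int Q_\omega u_2$) by brute-force bounds using the smallness estimates already at our disposal. I would first rewrite the sum of the first, second, third and fourth lines of \eqref{def_F2}, excluding the cubic terms $\frac12\int u_2^2(\eta_2+h)$, as $\langle \bd{\eta}_2,\mathcal L \bd{\eta}_2\rangle$ plus an error: indeed, by \eqref{def_L}, $\langle\bd{\eta}_2,\mathcal L\bd{\eta}_2\rangle=\frac12\int(-a(\partial_x u_2)^2-c(\partial_x\eta_2)^2+u_2^2+\eta_2^2)+\frac12\int(2Q_\omega\eta_2 u_2+R_\omega u_2^2)-\omega\int(\partial_x\eta_2\partial_x u_2+\eta_2 u_2)$, so the difference between this and the quadratic part of $\mathfrak F_2$ is $\frac12\int(2(U_2-Q_\omega)\eta_2 u_2+(U_1-R_\omega)u_2^2)=\frac12\int(2W_2^\sharp\eta_2 u_2 + W_1^\sharp u_2^2)$, where $W_1^\sharp,W_2^\sharp$ are the components of $\bd{W}^\sharp$. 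By \eqref{estW}, $\|\bd{W}^\sharp(\tau)\|_{L^\infty}\lesssim\varepsilon e^{-k_0\varepsilon|\tau|}\lesssim\varepsilon$, so this term is $\lesssim\varepsilon\|\bd{\eta}_2\|_{L^2\times L^2}^2\leq\varepsilon\|\bd{\eta}_2\|_{H^1\times H^1}^2$, absorbable into the coercivity constant for $\varepsilon$ small.

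Next I would dispose of the remaining pieces. The cubic term $\frac12\int u_2^2(\eta_2+h)$ is bounded by $\|\eta_2+h\|_{L^\infty}\|u_2\|_{L^2}^2\lesssim(\|\bd{\eta}_2\|_{H^1\times H^1}+\varepsilon)\|\bd{\eta}_2\|_{H^1\times H^1}^2$ using the Sobolev embedding $H^1\hookrightarrow L^\infty$ and \eqref{hypoH}; under the bootstrap bound $\|\bd{\eta}_2\|_{H^1\times H^1}\lesssim K_2\varepsilon^{1/2}$ from \eqref{ModulationTesis2}, this is $\lesssim K_2\varepsilon^{1/2}\|\bd{\eta}_2\|_{H^1\times H^1}^2$, again absorbable. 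The modulation term is handled with Cauchy--Schwarz and \eqref{cotas_m0}: $|m_0(\tau)\int Q_\omega u_2|\lesssim|m_0(\tau)|\|Q_\omega\|_{L^2}\|u_2\|_{L^2}\lesssim K_2\varepsilon^{3/2-\delta}\|\bd{\eta}_2\|_{L^2\times L^2}$. To turn this linear-in-$\bd{\eta}_2$ term into acceptable form, I would apply Young's inequality $ab\leq\frac{c_2}{2}a^2+\frac{1}{2c_2}b^2$, producing $\frac{c_2}{2}\|\bd{\eta}_2\|^2_{L^2\times L^2}+\frac{1}{2c_2}K_2^2\varepsilon^{3-2\delta}$; the first half is absorbed into coercivity and the second is $\ll K_2^2\varepsilon^{2-\delta}$, hence fits the stated remainder. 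For the upper bound \eqref{Bound_F2} the same estimates apply without needing Young's inequality in the reverse direction, since every term is trivially bounded by $\|\bd{\eta}_2\|_{H^1\times H^1}^2$ plus $K_2^2\varepsilon^{2-\delta}$ (the $h$-term in $\frac12\int u_2^2 h$ contributes only $\varepsilon\|\bd{\eta}_2\|^2$; there is no purely $h^2$ term in $\mathfrak F_2$, so no additive $O(\varepsilon)$ constant appears beyond what is claimed).

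The one genuine ingredient that makes \eqref{Coer_F2} work is that the orthogonality condition $\langle\bd{\eta}_2,\bd{Q}_\omega'\rangle=0$ holds — this is exactly \eqref{ModulationTesis1a} — so Lemma~\ref{coercivitydetailled}, in the form of the conditional coercivity \eqref{coer_1}, gives $\langle\bd{\eta}_2,\mathcal L\bd{\eta}_2\rangle\geq c_0\|\bd{\eta}_2\|_{H^1\times H^1}^2-\frac1{c_0}|\langle\bd{\eta}_2,J(1-\partial_x^2)\bd{Q}_\omega\rangle|^2$, which is where the second term on the right-hand side of \eqref{Coer_F2} comes from. Assembling: $\mathfrak F_2\geq\langle\bd{\eta}_2,\mathcal L\bd{\eta}_2\rangle-(\text{errors})\geq c_0\|\bd{\eta}_2\|^2-\frac1{c_0}|\langle\bd{\eta}_2,J(1-\partial_x^2)\bd{Q}_\omega\rangle|^2-C\varepsilon^{1/2}K_2\|\bd{\eta}_2\|^2-CK_2^2\varepsilon^{2-\delta}$, and choosing $\varepsilon_2$ small enough that $C\varepsilon^{1/2}K_2\leq c_0/2$ (for the fixed but large $K_2$) yields \eqref{Coer_F2} with $c_2=c_0/2$. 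The main obstacle, such as it is, is purely bookkeeping: one must be careful that every error term is controlled by a quantity that is either $o(1)\cdot\|\bd{\eta}_2\|_{H^1\times H^1}^2$ (hence absorbable once $\varepsilon$ is small relative to $K_2$) or $\lesssim K_2^2\varepsilon^{2-\delta}$, and in particular that the $\varepsilon^{3/2-\delta}$-sized modulation term, after Young's inequality, lands inside the claimed $\varepsilon^{2-\delta}$ remainder — which it does since $3-2\delta>2-\delta$ for $\delta$ small. No PDE analysis is required here beyond invoking the already-established coercivity lemma and the pointwise bounds on $\bd{W}^\sharp$ and $m_0$.
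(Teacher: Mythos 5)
Your proof is correct and follows essentially the same route as the paper's: separate out the quadratic form, recognize it (modulo the replacement $U_i\to$ its leading term) as the quadratic form generated by $\mathcal L$, absorb the $\bd{W}^\sharp$, cubic, and $h$ contributions as $o(1)\cdot\|\bd{\eta}_2\|_{H^1\times H^1}^2$ errors, and treat the $m_0$ term via Cauchy--Schwarz and the bound \eqref{cotas_m0}, finally invoking the conditional coercivity \eqref{coer_1} under the orthogonality \eqref{ModulationTesis1a}.

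Two small remarks, neither affecting validity. First, there is a factor-of-two slip: with the weights $\frac12,\frac12,-\omega$ that appear in $\mathfrak F_2$, the relevant identity is
$\frac12\langle\mathcal L\bd{\eta}_2,\bd{\eta}_2\rangle=\frac12\int(-a(\partial_x u_2)^2-c(\partial_x\eta_2)^2+u_2^2+\eta_2^2)+\frac12\int(2Q_\omega\eta_2 u_2+R_\omega u_2^2)-\omega\int(\partial_x\eta_2\partial_x u_2+\eta_2 u_2)$,
not $\langle\mathcal L\bd{\eta}_2,\bd{\eta}_2\rangle$; since the lemma only asserts existence of some $c_2>0$, this just renames the constant. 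Second, for the $m_0$ term you pass through Young's inequality, whereas the paper simply multiplies $|m_0(\tau)|\lesssim K_2\varepsilon^{3/2-\delta}$ by the bootstrap bound $\|\bd{\eta}_2\|\lesssim K_2\varepsilon^{1/2}$ from \eqref{ModulationTesis2} to land directly on $K_2^2\varepsilon^{2-\delta}$. Your route gives the sharper remainder $K_2^2\varepsilon^{3-2\delta}$ plus an absorbable $\frac{c_2}{2}\|\bd{\eta}_2\|^2$, which is slightly cleaner since it does not require the bootstrap assumption for this particular term — a minor but genuine improvement in bookkeeping.
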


\begin{proof}
The proof of \eqref{Bound_F2} is direct from the definition of $\mathfrak F_2$ given in \eqref{def_F2} and the boundedness in $L^\infty$ in time of $\bd{U}$. Now we prove \eqref{Coer_F2}. This is direct from the identity
\[
\begin{aligned}
\mathfrak F_2(\tau) \geq &~{} \frac12\int \left( -a (\partial_x u_2)^2 -c (\partial_x \eta_2)^2  + u_2^2+ \eta_2^2 \right) \\
&~{} + \frac12\int \left( 2Q_\omega \eta_2 u_2 +  R_\omega u_2^2 \right) dx + \frac12\int u_2^2  \left( \eta_2 + h\right)\\
&~{} - \omega  \int \left( \partial_x \eta_2 \partial_x u_2 + \eta_2 u_2 \right) - C \|\bd{W}^\sharp(\tau) \|_{L^\infty}\| \bd{\eta}_2(\tau)\|_{H^1\times H^1}^2\\
&~{} - C|m_0(\tau)| \|\bd{\eta}_2\|_{L^2\times L^2}\\
= &~{} \frac12 \langle \mathcal L  \bd{\eta}_2 , \bd{\eta}_2 \rangle- C \|\bd{W}^\sharp(\tau) \|_{L^\infty}\| \bd{\eta}_2(\tau)\|_{H^1\times H^1}^2- C|m_0(\tau)| \|\bd{\eta}_2\|_{L^2\times L^2}.
\end{aligned}
\]
From here, \eqref{ModulationTesis2}, \eqref{coer_0} and \eqref{estW} give
\[
\begin{aligned}
\mathfrak F_2(\tau) \geq &~{} \left( c_0 - C\varepsilon \right) \| \bd{\eta} (\tau)\|_{H^1\times H^1}^2 - \frac1{c_0} \left|  \langle \bd{\eta}_2 (\tau), J(1-\partial_x^2)\bd{Q}_\omega\rangle \right|^2 - C K_2^2\varepsilon^{2-\delta}
\\
\geq &~{} c_2  \| \bd{\eta}(\tau) \|_{H^1\times H^1}^2- \frac1{c_2} \left|  \langle \bd{\eta}_2 (\tau), J(1-\partial_x^2)\bd{Q}_\omega\rangle \right|^2 - C K_2^2\varepsilon^{2-\delta} .
\end{aligned}
\]
Above, we have used \eqref{cotas_m0} to bound $m_0$.  The proof is complete.
\end{proof}

Now we perform some estimates on the term $ \left|  \langle \bd{\eta}_2 (\tau), J(1-\partial_x^2)\bd{Q}_\omega\rangle \right|^2$. First of all, recall the decomposition on $\bd{\eta}_2$ given in \eqref{deco_eta2}. Recall the energy from \eqref{Energy_new},
\[
H_h[\bd{\eta}(t)] = \frac12\int \left( -a (\partial_x u)^2 -c (\partial_x \eta)^2  + u^2+ \eta^2 + u^2(\eta + h) \right)(t,x)dx.
\]
Notice that following \eqref{est_E}, we have for $t=  \tau + \rho_2(\tau)),$
\be\label{est_E_new2}
\begin{aligned}
& \left| \frac{d}{d\tau} H_h[\bd{\eta}( \tau + \rho_2(\tau)) ] \right|
\\
& ~{} \lesssim (1+|\rho_2'(\tau)|) \left| \frac{d}{dt} H_h[\eta,u ]( \tau + \rho_2(\tau))) \right|
\\
  &~{} \lesssim \varepsilon^2  e^{-k_0\varepsilon | \tau + \rho_2(\tau))| }   \int ( (U_2+ u_2)^2 +(U_1+\eta_2)^2) e^{-l_0\varepsilon |x| }  \\
&~{} \quad +  \varepsilon^{2}  e^{-k_0\varepsilon | \tau + \rho_2(\tau)) | } \int ( |U_1+\eta_2 | +|U_2+ u_2 | ) e^{-l_0\varepsilon |x| } \\
&~{} \lesssim  \varepsilon^2  e^{- \frac12 k_0\varepsilon |\tau| }  \int ( U_2^2 +U_1^2) e^{-l_0\varepsilon |x| }  + \varepsilon^2  e^{-\frac12 k_0\varepsilon |\tau | }  \| \bd{\eta}_2(\tau ) \|_{L^2\times L^2}^2 \\
&~{} \quad    + \varepsilon^2  e^{- \frac12k_0\varepsilon |\tau | }  \int  \left(  |U_1| +|U_2| \right) e^{-l_0\varepsilon |x| }   +  \varepsilon^{\frac32}  e^{-k_0\varepsilon |\tau | } \| \bd{\eta}_2 (\tau )\|_{L^2\times L^2}.
\end{aligned}
\ee
Now we perform the remaining estimates on $U_1$ and $U_2$. First, using \eqref{estW} and the fact that $\rho' = \omega + \varepsilon ^2 f_2$ (see \eqref{DS}), with $f_2$ satisfying the exponential decay in time given by \eqref{cota_f2}, by choosing $k_0$ smaller if necessary,
\[
\begin{aligned}
 \int  \left(  |U_1| +|U_2| \right) e^{-l_0\varepsilon |x| }  \lesssim & ~{} \int  \left(  Q_\omega  +| R_\omega | \right) e^{-l_0\varepsilon |x| }   + \int  \left(  |W_1| +|W_2| \right) e^{-l_0\varepsilon |x| } \\
  \lesssim & ~{}   e^{-k_0 \varepsilon |\rho(\tau)|} +  \varepsilon^{\frac12} e^{-k_0 \varepsilon |\tau |} \lesssim  e^{-\frac12k_0 \varepsilon |\tau |},
\end{aligned}
\]
and
\[
\begin{aligned}
 \int  \left(  U_1^2 + U_2^2 \right) e^{-l_0\varepsilon |x| }  \lesssim & ~{} \int  \left(  Q_\omega^2  + R_\omega^2 \right) e^{-l_0\varepsilon |x| }   + \int  \left(  W_1^2 + W_2^2 \right) e^{-l_0\varepsilon |x| } \\
  \lesssim & ~{}  e^{- k_0 \varepsilon |\rho(\tau)|} +  \varepsilon^{\frac32} e^{-k_0 \varepsilon |\tau |} \lesssim  e^{-\frac12k_0 \varepsilon |\tau |}.
\end{aligned}
\]
Coming back to \eqref{est_E_new2},
\be\label{est_E_new30}
\begin{aligned}
& \left| \frac{d}{d\tau} H_h[\bd{\eta}( \tau + \rho_2(\tau)) ] \right|
\\
&~{} \lesssim  \varepsilon^2  e^{-k_0\varepsilon |\tau| }  + \varepsilon^2  e^{-\frac12 k_0\varepsilon |\tau | }  \| \bd{\eta}_2(\tau ) \|_{L^2\times L^2}^2  +  \varepsilon^{\frac32}  e^{-k_0\varepsilon |\tau | } \| \bd{\eta}_2 (\tau )\|_{L^2\times L^2}.
\end{aligned}
\ee
As above defined, let $\hat T_\varepsilon$ be such that $-\hat T_\varepsilon + \rho_2(-\hat T_\varepsilon) =-T_\varepsilon.$ It is clear that $-\hat T_\varepsilon  \sim T_\varepsilon$, with a minor  relative error.  Now we use \eqref{ModulationTesis2} and integrate in time $-T_\varepsilon \leq \tau + \rho_2(\tau) \leq T^* <T_\varepsilon$ to conclude in \eqref{est_E_new30} that
\be\label{est_E_new3}
\begin{aligned}
&  \left|  H_h[\bd{\eta}( \tau + \rho_2(\tau) )] -  H_h[\bd{\eta}(- T_\varepsilon)]  \right|
 \lesssim  \varepsilon   + K_2^2 \varepsilon^{2} +K_2 \varepsilon. 
\end{aligned}
\ee
Now, following a similar argument as in \eqref{Energy_new_new}, we get
\be\label{Energy_new4}
\begin{aligned}
& H_h[{\bf U} +  \bd{\eta}_2 ](\tau + \rho_2(\tau)) \\
 &~{} \quad + \int \left( a \partial_x^2 U_2 u_2  + c \partial_x^2 U_1 \eta_2   + U_2 u_2 + U_1 \eta_2 + \frac12 U_2^2 \eta_2 + U_2 u_2 (U_1 +h(\tau + \rho_2(\tau)))   \right) \\
&~{} \quad + \frac12\int \left( -a (\partial_x u_2)^2 -c (\partial_x \eta_2)^2  + u_2^2+ \eta_2^2 + 2U_2 \eta_2 u_2 + u_2^2(U_1 + \eta_2 + h(\tau + \rho_2(\tau))) \right)\\
&~{}=: \text{H}_1 + \text{H}_2+ \text{H}_3 .
\end{aligned}
\ee
We treat each term $\text{H}_j$ as follows. First,
\[
\begin{aligned}
& H_h[{\bf U}(\tau) ] = \\
&~{} = \frac12\int \left( -a (\partial_x Q_\omega)^2 -c (\partial_x R_\omega)^2  + Q_\omega^2+ R_\omega^2 + Q_\omega^2(R_\omega + h(\tau + \rho_2(\tau))) \right) \\
&~{} \quad +  \int \left( a \partial_x^2 Q_\omega W_2  + c \partial_x^2 R_\omega W_1   + Q_\omega W_2 + R_\omega W_1 + \frac12 Q_\omega^2 W_1 + Q_\omega W_2 (R_\omega +h(\tau + \rho_2(\tau)))   \right) \\
&~{} \quad + \frac12\int \left( -a (\partial_x W_2)^2 -c (\partial_x W_1)^2  + W_2^2+ W_1^2 + 2 Q_\omega W_1 W_2 + W_2^2(R_\omega + W_1 + h(\tau + \rho_2(\tau))) \right).
\end{aligned}
\]
Using \eqref{estW} and \eqref{hypoH}, we obtain
\[
\begin{aligned}
 H_h[{\bf U}(\tau) ] = &~{} \frac12\int \left( -a (\partial_x Q_\omega)^2 -c (\partial_x R_\omega)^2  + Q_\omega^2+ R_\omega^2 + Q_\omega^2 R_\omega  \right)
 \\
&~{} +  O\left(  \varepsilon e^{-k_0 \varepsilon |\tau|}  +\varepsilon e^{-2k_0 \varepsilon |\tau |}  \right).
\end{aligned}
\]
Finally, we use \eqref{DS} to obtain
\[
 |\text{H}_1(\tau) -\text{H}_1(\tau(-T_\varepsilon))| \lesssim |\omega(\tau) - \omega (\tau(-T_\varepsilon))| + \varepsilon e^{-2k_0 \varepsilon |\tau |} \lesssim \varepsilon.
\]
As for the second term $\text{H}_2$, we have
\[
\begin{aligned}
&\text{H}_2 = \int \left( a \partial_x^2 U_2 u_2  + c \partial_x^2 U_1 \eta_2   + U_2 u_2 + U_1 \eta_2 + \frac12 U_2^2 \eta_2 + U_2 u_2 (U_1 +h(\tau + \rho_2(\tau)))   \right)
\\
& = \int \left( a Q_\omega'' u_2  + c R_\omega'' \eta_2   + Q_\omega u_2 + R_\omega \eta_2 + \frac12 Q_\omega^2 \eta_2 + Q_\omega R_\omega u_2   \right) \\
&\quad +\int \left( a \partial_x^2 W_2 u_2  + c \partial_x^2 W_1 \eta_2   + W_2 u_2 + W_1 \eta_2 +\left( Q_\omega W_2+ \frac12 W_2^2 \right) \eta_2\right)\\
&\quad+\int \left(    W_2 u_2 (R_\omega +W_1 +h(\tau + \rho_2(\tau))) +Q_\omega  u_2 (W_1 +h(\tau + \rho_2(\tau)))  \right) \\
& =: \text{H}_{2,1} +\text{H}_{2,2} +\text{H}_{2,3}.
\end{aligned}
\]
To estimate $\text{H}_{2,1} $, we use \eqref{exact} again to find ($\omega>0$)
\[
\text{H}_{2,1} = \omega \langle(1-\partial_x^2) J \bd{Q}_\omega ,\bd{\eta}_2\rangle.
\]
To bound $\text{H}_{2,2}$, we use \eqref{estW} as follows:
\[
|\text{H}_{2,2}| \lesssim \| \bd{W}^\sharp(\tau )\|_{H^2\times H^2}  \| \bd{\eta}_2(\tau )\|_{H^1\times H^1} \lesssim  \varepsilon^{\frac12} e^{-k_0 \varepsilon |\tau |} \| \bd{\eta}_2(\tau )\|_{H^1\times H^1}.
\]
Very similar, but also using \eqref{hypoH}, we get
\[
\begin{aligned}
|\text{H}_{2,3}| \lesssim &~{}  \left(  \| \bd{W}^\sharp(\tau )\|_{H^2\times H^2} + \|h(\tau + \rho_2(\tau))) \|_{L^\infty} \right) \| \bd{\eta}_2(\tau )\|_{H^1\times H^1}
\\
\lesssim &~{}  \varepsilon^{\frac12} e^{- \frac12 k_0 \varepsilon |\tau |} \| \bd{\eta}_2(\tau )\|_{H^1\times H^1}.
\end{aligned}
\]
Therefore,
\[
\text{H}_2 (\tau )= \omega \langle(1-\partial_x^2) J \bd{Q}_\omega ,\bd{\eta}_2(\tau )\rangle + O\left( \varepsilon^{\frac12} e^{- \frac12 k_0 \varepsilon |\tau |} \| \bd{\eta}_2(\tau )\|_{H^1\times H^1}\right).
\]
Finally, the term $\text{H}_3$ is simply bounded as follows:
\[
| \text{H}_3 (\tau)| \lesssim \| \bd{\eta}_2(\tau)\|_{H^1\times H^1}^2.
\]
Collecting the previous estimates, and \eqref{est_E_new3}, we conclude from \eqref{Energy_new4} that
\begin{equation}\label{cotaQ}
\begin{aligned}
|\langle(1-\partial_x^2) J \bd{Q}_\omega ,\bd{\eta}_2(\tau )\rangle| \lesssim &~{} |\langle(1-\partial_x^2) J \bd{Q}_\omega ,\bd{\eta}_2(\tau(-T_\varepsilon))\rangle |  \\
&~{}  + \varepsilon
  +\varepsilon^{\frac12} e^{- \frac12 k_0 \varepsilon |\tau |} \| \bd{\eta}_2(\tau )\|_{H^1\times H^1}\\
&~{}  +\varepsilon + K_2^2 \varepsilon^{2} +  K_2 \varepsilon  + \sup_{\tau }\| \bd{\eta}_2(\tau)\|_{H^1\times H^1}^2\\
 \lesssim &~{} \varepsilon^{\frac12} + (K_2+K_2^2)\varepsilon.
\end{aligned}
\end{equation}
Notice that the constant involved in the first term $ \varepsilon^{\frac12}$ on the right-hand side above does not depend on $K_2$. Now we use \eqref{cotaQ} to conclude in \eqref{Coer_F2} the improved bound
\begin{equation}\label{Coer_F2_new}
\mathfrak F_2(\tau) \geq c_2\| \bd{\eta}_2(\tau)\|_{H^1\times H^1}^2-  C(\varepsilon + (K_2+K_2^2)^2\varepsilon^{2-\delta}). 
\end{equation}
This estimate will be combined with a suitable upper bound on $\mathfrak F_2$, which is obtained from the following result.

\begin{proposition}[Bound on evolution]
There exist $\varepsilon_2,C_2>0$ such that, for  all $\varepsilon \in (0,\varepsilon_2)$, we have  
\begin{equation}\label{boussinesq_final_25}
\begin{aligned}
|\mathfrak F_2'(\tau) | \lesssim &~{} \varepsilon^{2}(1+\varepsilon | \rho_2(\tau)|) e^{- \frac12k_0 \varepsilon |\tau|} \left( \| \bd{\eta}_2 \|_{L^2\times L^2} + \| \bd{\eta}_2 \|_{H^1\times H^1}^2\right) \\
&~{} +  \varepsilon  |\rho_2'(\tau)|  e^{-k_0 \varepsilon |\tau |}\|\bd{\eta}_2\|_{H^1\times H^1}  + \varepsilon  e^{- \frac12 k_0 \varepsilon |\tau|}  \left( \| \bd{\eta}_2 \|_{H^1\times H^1}^2 + \| \bd{\eta}_2 \|_{H^1\times H^1}^3\right) \\
&~{} +\left( \varepsilon^{\frac32} e^{-k_0 \varepsilon |\tau|} +\varepsilon^{10} \right)\left(\| \bd{\eta}_2 \|_{L^2\times L^2} + \| \bd{\eta}_2 \|_{H^1\times H^1}^2 \right) .
\end{aligned}
\end{equation}
\end{proposition}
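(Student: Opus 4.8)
The plan is to differentiate $\mathfrak F_2$ in $\tau$, substituting the evolution equation \eqref{boussinesq_final_22} wherever $\partial_\tau\bd\eta_2$ appears and the chain rule $\partial_\tau U_j=\omega'\Lambda U_j-\rho'\partial_x U_j+\partial_t W_j^\sharp$ (with the bounds \eqref{estW}--\eqref{dtW}) wherever a background quantity is differentiated, and then to check that every surviving term carries one of the small factors $\varepsilon$, $|\rho_2'|$, $\|\bd R^\sharp\|$, or a quantity tied to $m_0$. The conceptual reason this works is that $\mathfrak F_2(\tau)$ is, up to the correction $-m_0(\tau)\int Q_\omega u_2$, the quadratic-and-higher Taylor remainder of $H_{h(\tau)}-\omega(\tau)P$ at the approximate solution $\bd U(\tau)$ in the direction $\bd\eta_2$: its quadratic part equals $\tfrac12\langle\mathcal L_U\bd\eta_2,\bd\eta_2\rangle$, where $\mathcal L_U$ is the operator $\mathcal L$ of \eqref{def_L} with $Q_\omega$ replaced by $U_2$ and $R_\omega$ by $U_1+h$, and the vector $M$ in \eqref{def_M} satisfies $M=\mathcal L_U\bd\eta_2+\omega(1-\partial_x^2)J\bd\eta_2+\bd q$ with $\bd q=(\tfrac12 u_2^2,u_2\eta_2)^{T}$ the purely nonlinear part. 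Since $\bd U+\bd\eta_2=\bd\eta(\tau+\rho_2(\tau))$ is an exact solution of \eqref{boussinesq}, the flat-bottom near-conservation of $H_h-\omega P$ (Lemma~\ref{Lem2p3}), the near-solution identity \eqref{S0_final_final}, and $\omega'=\varepsilon^2 f_1$ are precisely what force the ``bare'' part of $\mathfrak F_2'$ --- the part with no $\varepsilon$-, $\rho_2'$-, or $\bd R^\sharp$-weight --- to cancel.

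The first step is to isolate the principal quadratic piece. Differentiating $\tfrac12\langle\mathcal L_U\bd\eta_2,\bd\eta_2\rangle$ gives $\langle\mathcal L_U\bd\eta_2,\partial_\tau\bd\eta_2\rangle+\tfrac12\langle(\partial_\tau\mathcal L_U)\bd\eta_2,\bd\eta_2\rangle$; inserting \eqref{boussinesq_final_22}, the contribution $-\langle\mathcal L_U\bd\eta_2,(1-\partial_x^2)^{-1}\partial_x J\,\mathcal L_U\bd\eta_2\rangle$ vanishes because $(1-\partial_x^2)^{-1}\partial_x J$ is skew-adjoint on $L^2\times L^2$ (it is the scalar skew-adjoint operator $\partial_x(1-\partial_x^2)^{-1}$ acting componentwise, composed with the constant symmetric matrix $J$, which commutes with it). The transport contribution $-\omega\langle\mathcal L_U\bd\eta_2,\partial_x\bd\eta_2\rangle$ coming from the $\omega(1-\partial_x^2)J\bd\eta_2$ part of $M$ (for which $(1-\partial_x^2)^{-1}\partial_x J\cdot\omega(1-\partial_x^2)J=\omega\partial_x$) recombines with the leading part of $\tfrac12\langle(\partial_\tau\mathcal L_U)\bd\eta_2,\bd\eta_2\rangle$ into a total $x$-derivative --- up to an $O(\varepsilon e^{-\frac12 k_0\varepsilon|\tau|}\|\bd\eta_2\|_{H^1\times H^1}^2)$ error, since $U_1,U_2$ are $\rho$-translates of $R_\omega,Q_\omega$ modulo $\bd W^\sharp=O(\varepsilon)$ and $\rho'=\omega+\varepsilon^2 f_2$ --- hence integrates to zero. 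Likewise $\langle\bd q,(1-\partial_x^2)^{-1}\partial_x J\bd q\rangle=0$ by skew-adjointness, and the remaining cubic and quartic terms (from $\tfrac12\int u_2^2\eta_2$ in $\mathfrak F_2$, from $\bd N$, and from the cross terms) reorganize --- exploiting once more that $\bd U+\bd\eta_2$ solves the exact equation --- so as to appear multiplied by one of $h(\tau+\rho_2)-h(\tau)=\varepsilon^2\rho_2\int_0^1(\partial_s h_0)(\varepsilon(\tau+\sigma\rho_2),\varepsilon\cdot)\,d\sigma$, $\rho_2'$, $\partial_\tau h=O(\varepsilon^2)$, $\partial_t W^\sharp=O(\varepsilon e^{-k_0\varepsilon|\tau|})$, or $\bd R^\sharp$; Lemma~\ref{Dika1}, the Sobolev embedding $H^1\hookrightarrow L^\infty$, and the $L^\infty$-boundedness of $\bd U$ then deliver the bounds $O(\varepsilon e^{-\frac12 k_0\varepsilon|\tau|}(\|\bd\eta_2\|_{H^1\times H^1}^2+\|\bd\eta_2\|_{H^1\times H^1}^3))$ and $O(\varepsilon^2(1+\varepsilon|\rho_2|)e^{-\frac12 k_0\varepsilon|\tau|}(\|\bd\eta_2\|_{L^2\times L^2}+\|\bd\eta_2\|_{H^1\times H^1}^2))$ appearing in \eqref{boussinesq_final_25}.

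It remains to handle the linear-in-$\bd\eta_2$ terms and the role of $m_0$. Pairing $\bd R^\sharp$ through $(1-\partial_x^2)^{-1}$ against $\mathcal L_U\bd\eta_2$ and using \eqref{Cota_R}, \eqref{boundM} gives $O((\varepsilon^{3/2}e^{-k_0\varepsilon|\tau|}+\varepsilon^{10})(\|\bd\eta_2\|_{L^2\times L^2}+\|\bd\eta_2\|_{H^1\times H^1}^2))$, and the $\rho_2'$-lines of \eqref{boussinesq_final_22} give $O(\varepsilon|\rho_2'(\tau)|e^{-k_0\varepsilon|\tau|}\|\bd\eta_2\|_{H^1\times H^1})$ via \eqref{estW}. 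The remaining linear term, the one of size $O(\varepsilon^2|\rho_2|)$ --- too large to absorb directly --- is the one in which, over the exponentially localized soliton core, $h(\tau+\rho_2)-h(\tau)$ is replaced by its value $-m_0(\tau)$ at $x=\rho(\tau)$; this is exactly the term that $-m_0(\tau)\int Q_\omega u_2$ is designed to kill. Indeed, differentiating the $m_0$ term yields $-m_0'\int Q_\omega u_2$ (which is $O(\varepsilon^{5/2-\delta}e^{-\frac12 k_0\varepsilon|\tau|}\|\bd\eta_2\|_{L^2\times L^2})$ by \eqref{cotas_m0}), $-m_0\int(\partial_\tau Q_\omega)u_2=O(|m_0|\|\bd\eta_2\|_{L^2\times L^2})$, and $-m_0\int Q_\omega\partial_\tau u_2$, whose leading part --- after substituting \eqref{boussinesq_final_22} and integrating by parts --- is, by the very definition $m_0(\tau)=-\varepsilon^2\rho_2(\tau)\int_0^1(\partial_s h_0)(\varepsilon(\tau+\sigma\rho_2(\tau)),\varepsilon\rho(\tau))\,d\sigma$, the negative of that term; the mismatch $h_0(\varepsilon x)-h_0(\varepsilon\rho)=O(\varepsilon|z|)$ tested against $Q_\omega(z)$ leaves only a harmless $O(\varepsilon^2(1+\varepsilon|\rho_2|)e^{-\frac12 k_0\varepsilon|\tau|}\|\bd\eta_2\|_{H^1\times H^1})$. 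Collecting all contributions yields \eqref{boussinesq_final_25}.

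The main obstacle is the bookkeeping that makes the cancellation explicit: one must obtain the raw intermediate expression for $\mathfrak F_2'$ and regroup the ``bare'' quadratic and cubic terms either into total $x$-derivatives of $\langle\mathcal L_U\bd\eta_2,\bd\eta_2\rangle$-type densities or into terms carrying one of the small prefactors --- which forces a careful matching, via \eqref{decoeqU}, of the soliton potentials hidden inside $M$ with those in $\mathfrak F_2$ --- and then verify that $m_0$, built from the mean value of $\partial_s h_0$ over the segment $[\tau,\tau+\rho_2]$ evaluated at the soliton center, removes precisely the non-$\tau$-integrable piece of the residual first-order term. Because $\rho_2(\tau)$ is only controlled by $\varepsilon^{-1/2-\delta}$ (see \eqref{cotas_rho2}), no power of $\rho_2$ may be wasted, so the exponential localization of $Q_\omega$ must be used to convert $h_0(\varepsilon x)-h_0(\varepsilon\rho)$ into an $O(\varepsilon|z|)$ factor; this, rather than any single estimate, is the delicate point. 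A secondary technicality is that the cut-off $\chi_\varepsilon$ buried in $\bd W^\sharp$ generates commutator terms that might threaten the skew-adjointness cancellation, but these have already been folded into $\bd R^\sharp$ and are controlled by \eqref{Cota_R}.
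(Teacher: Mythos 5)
Your proposal is correct and follows essentially the same strategy as the paper's proof; the differences are organizational, not substantive. The paper differentiates $\mathfrak F_2$, substitutes \eqref{boussinesq_final_22} to rewrite $\mathfrak F_2'$ as a sum $\sum_{j=1}^{12}\mathcal F_j$, and then estimates each block by hand, pairing up specific cancellations: $\mathcal F_1=0$ by the factorization $J_0^TJ_0=J$ and skew-adjointness of $\partial_x$; the cubic $\int(\tfrac12 R_\omega'u_2^2+Q_\omega'\eta_2u_2)$ cancels between $\mathcal F_{7,2}$ and $\mathcal F_{9,1}$ and similarly for the $\rho_2'\langle(1-\partial_x^2)J\bd Q_\omega',\bd\eta_2\rangle$ piece between $\mathcal F_{3,1}$ and $\mathcal F_{9,3}$; the $O(\varepsilon^2\rho_2)$ linear remainder in \eqref{muerete1} cancels against $\mathcal F_{10}$--$\mathcal F_{12}$ via the identity \eqref{LtQ} and the choice of $m_0$. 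Your version packages these same facts into the statement that $\mathfrak F_2$ is, up to $-m_0\int Q_\omega u_2$, the quadratic-plus-cubic Taylor remainder of $H_h-\omega P$ at $\bd U$, that its quadratic part is $\tfrac12\langle\mathcal L_U\bd\eta_2,\bd\eta_2\rangle$ with $M=\mathcal L_U\bd\eta_2+\omega(1-\partial_x^2)J\bd\eta_2+\bd q$, and that the ``bare'' part of $\mathfrak F_2'$ then drops out via skew-adjointness and transport--invariance. This $\mathcal L_U$-based framing makes the paper's term-by-term cancellations look less coincidental, which is a real expository gain; the estimates themselves, and the role of $m_0$ together with the $O(\varepsilon|z|)$ gain from testing $h_0(\varepsilon x)-h_0(\varepsilon\rho)$ against the exponentially localized $Q_\omega$, are identified exactly as in the paper. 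Two small points where your sketch elides work that the paper does explicitly: the leading $m_0\omega\int Q_\omega'u_2$ produced by $-m_0\int(\partial_\tau Q_\omega)u_2$ is not itself negligible (it is $O(\varepsilon^{3/2-\delta}\|\bd\eta_2\|)$) and must be cancelled against $-\omega m_0\int Q_\omega'u_2$ coming from $\mathcal L\,(1-\partial_x^2)^{-1}\partial_xQ_\omega(1,0)^T=\widetilde{\bd Q}_\omega$ inside $-m_0\int Q_\omega\partial_\tau u_2$, which you fold into ``whose leading part \ldots is \ldots the negative of that term''; and the identification of the surviving $\langle\widetilde{\bd Q}_\omega,\bd\eta_2\rangle$ term with the piece to be killed by $m_0$ requires the explicit computation behind \eqref{LtQ}. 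Neither omission is a gap in the argument, only in its presentation.
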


\begin{proof}
We compute: 
\[
\begin{aligned}
\mathfrak F_2'(\tau)= &~{} \int \left( -a \partial_x u_2\partial_x \partial_\tau u_2  - c \partial_x \eta_2 \partial_x \partial_\tau \eta_2  +  u_2  \partial_\tau u_2+  \eta_2 \partial_\tau \eta_2 \right)dx \\
&~{} + \frac12\int \left( 2 \partial_\tau U_2 \eta_2 u_2 + 2U_2 \partial_\tau \eta_2 u_2 + 2U_2 \eta_2 \partial_\tau u_2 + \partial_\tau U_1 u_2^2   +  2U_1 u_2 \partial_\tau u_2 \right) dx\\
&~{} + \int u_2 \partial_\tau u_2  \left( \eta_2 + h\right) + \frac12\int u_2^2  \left( \partial_\tau \eta_2 +  \partial_\tau h\right)dx\\
&~{} - \omega'  \int \left( \partial_x \eta_2 \partial_x u_2 + \eta_2 u_2 \right)dx \\
&~{} - \rho'\int \left( \partial_x\partial_\tau \eta_2 \partial_x u_2 +\partial_x \eta_2 \partial_\tau\partial_x u_2 + \partial_\tau\eta_2 u_2 +\eta_2 \partial_\tau u_2 \right)dx\\
&~{}  -m_0'(\tau) \int Q_\omega u_2dx  -m_0(\tau) \int \partial_\tau Q_\omega u_2 dx -m_0(\tau) \int Q_\omega \partial_\tau u_2 dx.
\end{aligned}
\]
After integration by parts and rearranging terms,
\[
\begin{aligned}
\mathfrak F_2'(\tau)= &~{} \int  \left(a  \partial_x^2 u_2 +u_2 + U_2 \eta_2 + U_1 u_2 + u_2 \left( \eta_2 + h\right) \right) \partial_\tau u_2
\\
&~{}  +\int  \left( c\partial_x^2 \eta_2 +\eta_2 + U_2u_2+ \frac12 u_2^2 \right) \partial_\tau \eta_2 \\
&~{} +  \int \left[\partial_\tau U_2 \eta_2 u_2 + \frac12 \left( \partial_\tau  U_1 + \partial_\tau h\right) u_2^2  \right] - \omega'  \int \left( \partial_x \eta_2 \partial_x u_2 + \eta_2 u_2 \right)dx \\
&~{} - \rho'\int \left( (1-\partial_x^2) \partial_\tau\eta_2 u_2 + (1-\partial_x^2)\partial_\tau u_2 \eta_2 \right)dx\\
&~{}  -m_0' \int Q_\omega u_2dx  -m_0(\tau) \int \partial_\tau Q_\omega u_2 dx -m_0(\tau) \int Q_\omega \partial_\tau u_2 dx \\
= &~{} \left\langle  \partial_\tau \bd{\eta}_2 , \begin{pmatrix} c\partial_x^2 \eta_2 +\eta_2 + U_2u_2+ \frac12 u_2^2 \\ a  \partial_x^2 u_2 +u_2 + U_2 \eta_2 + U_1 u_2 + u_2 \left( \eta_2 + h\right) \end{pmatrix} \right\rangle
\\
&~{} +  \int \left[\partial_\tau U_2 \eta_2 u_2 + \frac12 \left( \partial_\tau  U_1 + \partial_\tau h\right) u_2^2  \right]\\
&~{} - \omega'  \int \left( \partial_x \eta_2 \partial_x u_2 + \eta_2 u_2 \right)  - \rho'  \left\langle  (1-\partial_x^2) \partial_\tau \bd{\eta}_2 , J \bd{\eta}_2\right\rangle \\
&~{}  -m_0' \int Q_\omega u_2dx  -m_0 \int \partial_\tau Q_\omega u_2 dx -m_0 \int Q_\omega \partial_\tau u_2 dx.
\end{aligned}
\]
Therefore, replacing \eqref{boussinesq_final_22} in $\mathfrak F_2'$, taking into account the defintion of $M$ in \eqref{def_M}:
\begin{equation}\label{boussinesq_final_23}
\begin{aligned}
& \mathfrak F_2'(\tau)
\\
&  =  - (1+ \rho_2'(\tau))  \left\langle  (1- \partial_x^2)^{-1}\partial_x  JM , M\right\rangle    \\
&\quad  - \left( 1+ \rho_2'(\tau)\right)  \left\langle  (1- \partial_x^2)^{-1} \partial_x\left( \begin{matrix} U_2 (h(\tau+\rho_2(\tau))-h (\tau) )  \\ 0\end{matrix}\right) , M \right\rangle \\
& \quad - \rho_2'(\tau)  \left\langle  (1- \partial_x^2)^{-1}\partial_x \begin{pmatrix} a\, \partial_x^2 U_2 + U_2 + U_2 (U_1 + h (\tau))  \\ c\, \partial_x^2 U_1 + U_1  + \frac12 U_2^2 \end{pmatrix} , M \right\rangle  \\
&\quad  -(1+ \rho_2'(\tau))   \left\langle (1- \partial_x^2)^{-1}\partial_x \begin{pmatrix}  u_2  (h (\tau +  \rho_2(\tau)) - h (\tau))  \\ 0 \end{pmatrix}  ,M \right\rangle \\
& \quad + (1+  \rho_2'(\tau))  \left\langle  (1- \partial_x^2)^{-1} \left(  \begin{pmatrix} a_1 \partial_x^2 \partial_\tau h \\ c_1  \partial_\tau^2 \partial_x h \end{pmatrix} (\tau +  \rho_2(\tau)) - \begin{pmatrix} a_1 \partial_x^2 \partial_\tau h \\ c_1  \partial_\tau^2 \partial_x h \end{pmatrix} (\tau)\right) , M \right\rangle
\\
&\quad -  \left\langle  (1- \partial_x^2)^{-1}\bd{R}^\sharp , M\right\rangle
\\
&\quad +  \int \left[\partial_\tau U_2 \eta_2 u_2 + \frac12 \left( \partial_\tau  U_1 + \partial_\tau h\right) u_2^2  \right] \\
&\quad  - \omega'  \int \left( \partial_x \eta_2 \partial_x u_2 + \eta_2 u_2 \right)  -  \rho'\left\langle  (1-\partial_x^2) \partial_\tau \bd{\eta}_2 , J \bd{\eta}_2\right\rangle\\
&\quad  -m_0'(\tau) \int Q_\omega u_2dx  -m_0(\tau) \int \partial_\tau Q_\omega u_2 dx -m_0(\tau) \int Q_\omega \partial_\tau u_2 dx\\
& =: \sum_{j=1}^{12} \mathcal F_{j}. 
\end{aligned}
\end{equation}
Now we bound each term $\mathcal F_j$, $j=1,2,\ldots, 12$.  We consider the bound on $\mathcal F_1$. Let
\[
J_0:= \frac1{\sqrt{2}}\begin{pmatrix} i & -i \\ 1 & 1\end{pmatrix}, \quad \hbox{so that} \quad J_0^T J_0 = J.
\]
Then, setting $\bd M=\begin{pmatrix} M_1 \\ M_2 \end{pmatrix} $, 
\begin{equation}\label{mF1}
\begin{aligned}
\mathcal F_{1}  = &~{} - (1+ \rho_2'(\tau))  \left\langle  (1- \partial_x^2)^{-1}\partial_x  J\bd M ,\bd M\right\rangle \\
= &~{} - (1+ \rho_2'(\tau)) \left\langle  \partial_x  J_0 (1- \partial_x^2)^{-1/2} \bd M , J_0(1- \partial_x^2)^{-1/2} \bd M\right\rangle =0.
\end{aligned}
\end{equation}
Now we consider $\mathcal F_2.$ First of all, notice that,
\[
\begin{aligned}
\mathcal F_{2}  = &~{}  -\left( 1+ \rho_2'(\tau)\right)  \left\langle  \partial_x\left(U_2 (h(\tau+\rho_2(\tau))-h (\tau) ) \right) ,  (1- \partial_x^2)^{-1}M_1 \right\rangle
 \\
= &~{}   -  \left( 1+ \rho_2'(\tau)\right)\left\langle  (\partial_x U_2) (h(\tau+\rho_2(\tau))-h (\tau) )  ,  (1- \partial_x^2)^{-1}M_1 \right\rangle
\\
&~{} -   \left( 1+ \rho_2'(\tau)\right)  \left\langle  U_2 (  \partial_x h(\tau+\rho_2(\tau))-  \partial_xh (\tau) ) ,  (1- \partial_x^2)^{-1}M_1 \right\rangle
\\
=: &~{} \mathcal F_{2,1} + \mathcal F_{2,2}.
\end{aligned}
\]
We begin estimating $\mathcal F_{2,1}$. We need some simplifications first. Notice that
\[
\begin{aligned}
 \mathcal F_{2,1}
 = &~{}- \left( 1+ \rho_2'(\tau)\right) \left\langle  Q_\omega' (h(\tau+\rho_2(\tau))-h (\tau) )  ,  (1- \partial_x^2)^{-1}M_1 \right\rangle  \\
& ~{}+ O\left( \left| \left\langle  (\partial_x W_2) (h(\tau+\rho_2(\tau))-h (\tau) )  ,  (1- \partial_x^2)^{-1}M_1 \right\rangle \right| \right)\\
 = : &~{} \mathcal F_{2,1,1} +\mathcal F_{2,1,2}.
\end{aligned}
\]
We have from \eqref{hypoH}, \eqref{boundM} and \eqref{estW},
\[
\begin{aligned}
&\mathcal F_{2,1,1}= -  \left( 1+ \rho_2'(\tau)\right) \left\langle  Q_\omega' (h(\tau+\rho_2(\tau))-h (\tau) )  ,  (1- \partial_x^2)^{-1}M_1 \right\rangle  \\
&\quad = - \left( 1+ \rho_2'(\tau)\right) \varepsilon \int_0^1\frac{d}{d\sigma} \left\langle  Q_\omega'  h_0 (\varepsilon(\tau+\sigma \rho_2(\tau)) , \varepsilon \cdot )  ,  (1- \partial_x^2)^{-1}M_1 \right\rangle d\sigma
\\
&\quad = - \left( 1+ \rho_2'(\tau)\right)\varepsilon^2  \rho_2(\tau) \int_0^1 \left\langle  Q_\omega'  (\partial_s h_0) (\varepsilon(\tau+\sigma \rho_2(\tau)) , \varepsilon \cdot )  ,  (1- \partial_x^2)^{-1}M_1 \right\rangle d \sigma  \\
&\quad =- \left( 1+ \rho_2'(\tau)\right) \varepsilon^2   \rho_2(\tau)  \int_0^1   (\partial_s h_0) (\varepsilon(\tau+\sigma \rho_2(\tau)) , \varepsilon \rho(\tau)) d \sigma \left\langle  Q_\omega'   ,  (1- \partial_x^2)^{-1}M_1 \right\rangle  \\
&\quad \quad+  O\left( \varepsilon^3  \left| \int_0^1 \rho_2(\tau) (\partial_y \partial_s h_0) (\varepsilon(\tau+\sigma \rho_2(\tau)) , \varepsilon \rho(\tau))   \left\langle  z Q_\omega'    ,  (1- \partial_x^2)^{-1}M_1 \right\rangle d \sigma \right| \right)\\
&\quad \quad+ O\left(\varepsilon^4 | \rho_2(\tau)   | \left| \int_0^1 \left\langle  z^2 Q_\omega'   (\partial_y^2 \partial_s h_0) (\varepsilon(\tau+\sigma \rho_2(\tau)) , \varepsilon \rho_{\tau,x}) ,  (1- \partial_x^2)^{-1}M_1 \right\rangle d \sigma \right| \right)
\\
& \quad = - \left( 1+ \rho_2'(\tau)\right) \varepsilon^2   \rho_2(\tau)  \int_0^1   (\partial_s h_0) (\varepsilon(\tau+\sigma \rho_2(\tau)) , \varepsilon \rho(\tau)) d \sigma \left\langle  Q_\omega'   ,  (1- \partial_x^2)^{-1}M_1 \right\rangle \\
& \quad \quad +O\left(  \varepsilon^3 \vert \rho_2(\tau)\vert e^{- \frac12 k_0 \varepsilon \vert \tau\vert } \left( e^{-\frac12 l_0 \varepsilon |\rho(t)|} +\varepsilon \right) \left( \Vert {\bd \eta}_2 \Vert_{L^2\times L^2} +  \Vert {\bd \eta}_2 \Vert_{H^1\times H^1}^2\right) \right)\\
& \quad =  - \left( 1+ \rho_2'(\tau)\right)\varepsilon^2   \rho_2(\tau)  \int_0^1   (\partial_s h_0) (\varepsilon(\tau+\sigma \rho_2(\tau)) , \varepsilon \rho(\tau)) d \sigma \left\langle  Q_\omega'   ,  (1- \partial_x^2)^{-1}M_1 \right\rangle \\
& \quad \quad +O\left(  \varepsilon^3 \vert \rho_2(\tau)\vert e^{- \frac12 k_0 \varepsilon \vert \tau\vert }  \left( \Vert {\bd \eta}_2 \Vert_{L^2\times L^2} +  \Vert {\bd \eta}_2 \Vert_{H^1\times H^1}^2\right) \right)
\end{aligned}
\]
In the last estimates we have used that $\tau+\sigma \rho_2(\tau) \geq \frac12\tau$ and $\| Q_\omega' (z) (\partial_s \partial_y h_0) (t , \varepsilon \cdot ) \|_{L^2_x} \lesssim e^{-|t| - \frac12l_0 \varepsilon |\rho(\tau)|}$. On the other hand,
\[
\begin{aligned}
\mathcal F_{2,1,2} \lesssim &~{} |\left\langle  (\partial_x W_2) (h(\tau+\rho_2(\tau))-h (\tau) )  ,  (1- \partial_x^2)^{-1}M_1 \right\rangle| \\
\lesssim  &~{} \varepsilon e^{-k_0 \varepsilon |\tau| }\| \partial_x \bd{W}^\sharp\|_{L^2 \times L^2} \left( \| \bd{\eta}_2 \|_{L^2\times L^2} + \| \bd{\eta}_2 \|_{H^1\times H^1}^2\right) \\
\lesssim  &~{} \varepsilon^2  e^{-2k_0 \varepsilon |\tau| }  \left( \| \bd{\eta}_2 \|_{L^2\times L^2} + \| \bd{\eta}_2 \|_{H^1\times H^1}^2\right).
\end{aligned}
\]
We conclude that
\begin{equation}\label{mF21}
\begin{aligned}
\mathcal F_{2,1} = &~{} - \left( 1+ \rho_2'(\tau)\right) \varepsilon^2   \rho_2(\tau)  \int_0^1   (\partial_s h_0) (\varepsilon(\tau+\sigma \rho_2(\tau)) , \varepsilon \rho(\tau)) d \sigma \left\langle  Q_\omega'   ,  (1- \partial_x^2)^{-1}M_1 \right\rangle
\\
&~{} + O\left( \varepsilon^{2}(1+  \varepsilon | \rho_2(\tau)|) e^{- \frac12k_0 \varepsilon |\tau|}   \left( \| \bd{\eta}_2 \|_{L^2\times L^2} + \| \bd{\eta}_2 \|_{H^1\times H^1}^2\right) \right).
\end{aligned}
\end{equation}
Concerning $\mathcal F_{2,2}$, we get from \eqref{hypoH}, \eqref{boundM} and the boundedness in $H^1\times H^1$ of $\bd{U},$
\begin{equation}\label{mF22}
\begin{aligned}
 \left| \mathcal F_{2,2} \right|  \lesssim &~{} \| \partial_x h(\tau+\rho_2(\tau))- \partial_x h (\tau)  \|_{L^\infty}  \|  U_2\|_{L^2} \|  (1- \partial_x^2)^{-1}M_1 \|_{L^2} \\
 \lesssim &~{} \varepsilon^2  ( e^{-k_0 \varepsilon |\tau|} +e^{-k_0 \varepsilon |\tau + \rho_2(\tau)|}) \left( \| \bd{\eta}_2 \|_{L^2\times L^2} + \| \bd{\eta}_2 \|_{H^1\times H^1}^2\right)\\
 \lesssim &~{} \varepsilon^2  e^{- \frac12k_0 \varepsilon |\tau|}  \left( \| \bd{\eta}_2 \|_{L^2\times L^2} + \| \bd{\eta}_2 \|_{H^1\times H^1}^2\right).
\end{aligned}
\end{equation}
We conclude from \eqref{mF21} and \eqref{mF22} that
\begin{equation}\label{mF2}
\begin{aligned}
\mathcal F_{2} = &~{} \mathcal F_{2,1} +\mathcal F_{2,2}  \\
= &~{}  - \left( 1+ \rho_2'(\tau)\right) \varepsilon^2   \rho_2(\tau)  \int_0^1   (\partial_s h_0) (\varepsilon(\tau+\sigma \rho_2(\tau)) , \varepsilon \rho(\tau)) d \sigma \left\langle  Q_\omega'   ,  (1- \partial_x^2)^{-1}M_1 \right\rangle\\
  &~{} + O\left( \varepsilon^{2}(1+\varepsilon  | \rho_2(\tau)|) e^{- \frac12k_0 \varepsilon |\tau|} \left( \| \bd{\eta}_2 \|_{L^2\times L^2} + \| \bd{\eta}_2 \|_{H^1\times H^1}^2\right) \right).
\end{aligned}
\end{equation}
Now we consider $\mathcal F_3$. Using \eqref{decoeqU},
\[
\begin{aligned}
\mathcal F_3 = &~{}  - \rho_2'(\tau)  \left\langle  (1- \partial_x^2)^{-1}\partial_x \begin{pmatrix} a\, \partial_x^2 U_2 + U_2 + U_2 (U_1 + h (\tau))  \\ c\, \partial_x^2 U_1 + U_1  + \frac12 U_2^2 \end{pmatrix} , M \right\rangle  \\
 = &~{} - \omega \rho_2'(\tau)  \left\langle \partial_x\begin{pmatrix} R_\omega  \\ Q_\omega \end{pmatrix} , M \right\rangle  \\
 &~{} - \rho_2'(\tau)  \left\langle  (1- \partial_x^2)^{-1}\partial_x \begin{pmatrix} a\, \partial_x^2 W_2 + W_2 + W_1 Q_\omega + W_2 R_\omega  +W_1W_2  \\ c\, \partial_x^2 W_1 + W_1 + Q_\omega W_2  + \frac12 W_2^2 \end{pmatrix},  M \right\rangle
 \\
 =: &~{} \mathcal F_{3,1} +\mathcal F_{3,2} .
\end{aligned}
\]
Recall that $M$ is given in \eqref{def_M}. To bound $\mathcal F_{3,1}$, we proceed using the fact that $M$ is at first order the linearized operator associated with $\bd{Q}_\omega'$. Indeed, from \eqref{kernel} and \eqref{ModulationTesis1a},
\[
\begin{aligned}
\left\langle \partial_x\begin{pmatrix} R_\omega  \\ Q_\omega \end{pmatrix} , M \right\rangle  = &~{}  \left\langle \partial_x\begin{pmatrix} R_\omega  \\ Q_\omega \end{pmatrix} , \begin{pmatrix}
   c\, \partial_x^2 \eta_2  + \eta_2 +  Q_\omega u_2  \\
a\, \partial_x^2  u_2 + u_2 +  R_\omega u_2 +  Q_\omega \eta_2    \end{pmatrix} \right\rangle
\\
&~{}  + \left\langle \partial_x\begin{pmatrix} R_\omega  \\ Q_\omega \end{pmatrix} , \begin{pmatrix}
    W_2 u_2  \\
  (W_1+h) u_2 +  W_2\eta_2     \end{pmatrix} \right\rangle +\left\langle \partial_x\begin{pmatrix} R_\omega  \\ Q_\omega \end{pmatrix} , \begin{pmatrix}
   \frac12u_2^2 \\
  u_2\eta_2  \end{pmatrix} \right\rangle \\
= &~{} \left\langle \bd{Q}_\omega' , \mathcal L \begin{pmatrix} \eta_2 \\ u_2 \end{pmatrix} \right\rangle +\omega  \left\langle \bd{Q}_\omega' , (1-\partial_x^2) \begin{pmatrix} u_2 \\ \eta_2 \end{pmatrix} \right\rangle \\
&~{}  + \left\langle  \bd{Q}_\omega'  , \begin{pmatrix}
    W_2 u_2  \\
  (W_1+h) u_2 +  W_2\eta_2     \end{pmatrix} \right\rangle + \frac12 \int R_\omega' u_2^2 + \int Q_\omega' \eta_2 u_2
 \\
= &~{} \omega  \left\langle (1-\partial_x^2) J\bd{Q}_\omega' , \bd{\eta}_2 \right\rangle +  \left\langle  \bd{Q}_\omega'  , \begin{pmatrix}
    W_2 u_2  \\
  (W_1+h) u_2 +  W_2\eta_2     \end{pmatrix} \right\rangle
  \\
  &~{} + \frac12 \int R_\omega' u_2^2 + \int Q_\omega' \eta_2 u_2 .
\end{aligned}
\]
It is not difficult to see that from \eqref{estW} and \eqref{hypoH},
\[
\begin{aligned}
\left|  \left\langle  \bd{Q}_\omega'  , \begin{pmatrix}
    W_2 u_2  \\
  (W_1+h) u_2 +  W_2\eta_2     \end{pmatrix} \right\rangle \right| \lesssim &~{} \left( \| \bd{W}^\sharp\|_{L^\infty \times L^\infty}  + \|h\|_{L^\infty}\right) \|\bd{\eta}_2\|_{L^2\times L^2} \\
  \lesssim &~{} \varepsilon e^{-k_0 \varepsilon |\tau|}\|\bd{\eta}_2\|_{L^2\times L^2}.
\end{aligned}
\]
From here, we find that
\begin{equation}\label{mF31}
\begin{aligned}
 \mathcal F_{3,1}  = &~{} - \omega \rho_2'(\tau)  \left\langle \partial_x\begin{pmatrix} R_\omega  \\ Q_\omega \end{pmatrix} , M \right\rangle
\\
= &~{} -\omega^2 \rho_2'(\tau) \left\langle (1-\partial_x^2) J\bd{Q}_\omega' , \bd{\eta}_2 \right\rangle -\omega\rho'(\tau) \left( \frac12 \int R_\omega' u_2^2 + \int Q_\omega' \eta_2 u_2 \right) \\
&~{} + O\left(  |\rho_2'(\tau)|  \varepsilon e^{-k_0 \varepsilon |\tau |}\|\bd{\eta}_2\|_{L^2\times L^2}  \right).
\end{aligned}
\end{equation}
Now we deal with $\mathcal F_{3,2}$. First of all, notice that
\begin{equation}\label{cotaF3}
\begin{aligned}
& \left|   \left\langle \partial_x \begin{pmatrix} a\, \partial_x^2 W_2 + W_2 + W_1 Q_\omega + W_2 R_\omega  +W_1W_2  \\ c\, \partial_x^2 W_1 + W_1 + Q_\omega W_2  + \frac12 W_2^2 \end{pmatrix} ,  (1-\partial_x^2)^{-1} M \right\rangle \right|
\\
& \quad \leq \left|   \left\langle \begin{pmatrix} a\, \partial_x^3 W_2 +  \partial_x W_2    \\ c\, \partial_x^3 W_1 +  \partial_x W_1    \end{pmatrix} ,  (1-\partial_x^2)^{-1} M\right\rangle \right|
\\
& \quad \quad + \left|   \left\langle \begin{pmatrix}   \partial_x W_1 Q_\omega + W_1 \partial_x Q_\omega  +  \partial_xW_2 R_\omega+W_2 \partial_x R_\omega   \\   \partial_xQ_\omega W_2  + Q_\omega \partial_x W_2    \end{pmatrix} ,   (1-\partial_x^2)^{-1} M \right\rangle \right|
\\
& \quad \quad + \left|   \left\langle \begin{pmatrix}  \partial_x W_1W_2+  W_1\partial_x W_2  \\  W_2  \partial_x W_2 \end{pmatrix} ,  (1-\partial_x^2)^{-1} M \right\rangle \right| .
\end{aligned}
\end{equation}
Consequently, let us estimate $\mathcal F_{3,2}$ into three main parts as
\begin{align*}
|\mathcal F_{3,2}| & \leq | \rho_2'(\tau) | \left|   \left\langle \begin{pmatrix} a\, \partial_x^3 W_2 +  \partial_x W_2    \\ c\, \partial_x^3 W_1 +  \partial_x W_1    \end{pmatrix} ,  (1-\partial_x^2)^{-1} M\right\rangle \right|
\\ & \quad +| \rho_2'(\tau) | \left|   \left\langle \begin{pmatrix}   \partial_x W_1 Q_\omega + W_1 \partial_x Q_\omega  +  \partial_xW_2 R_\omega+W_2 \partial_x R_\omega   \\   \partial_xQ_\omega W_2  + Q_\omega \partial_x W_2    \end{pmatrix} ,   (1-\partial_x^2)^{-1} M \right\rangle \right|
\\ & \quad +| \rho_2'(\tau) |  \left|   \left\langle \begin{pmatrix}  \partial_x W_1W_2+  W_1\partial_x W_2  \\  W_2  \partial_x W_2 \end{pmatrix} ,  (1-\partial_x^2)^{-1} M \right\rangle \right|
\\ &= : \mathcal F_{3,2,1} + \mathcal F_{3,2,2} + \mathcal F_{3,2,3}.
\end{align*}
It is not difficult to see from \eqref{estW} and \eqref{boundM} that
\[
\begin{aligned}
|\mathcal F_{3,2,1}| \lesssim &~{}  | \rho_2'(\tau) |\|\partial_x \bd{W}^\sharp(\tau)\|_{H^2\times H^2} \left( \| \bd{\eta}_2 \|_{L^2\times L^2} + \| \bd{\eta}_2 \|_{H^1\times H^1}^2 \right) \\
 \lesssim &~{} \varepsilon  | \rho_2'(\tau) | e^{-k_0 \varepsilon |\tau|}\left( \| \bd{\eta}_2 \|_{L^2\times L^2} + \| \bd{\eta}_2 \|_{H^1\times H^1}^2 \right).
\end{aligned}
\]
Also,
\[
\begin{aligned}
|\mathcal F_{3,2,2}| \lesssim &~{} | \rho_2'(\tau) | \|  \bd{W}^\sharp(\tau)\|_{L^\infty \times L^\infty} \| \bd{Q}_\omega \|_{L^2\times L^2} \left( \| \bd{\eta}_2 \|_{L^2\times L^2} + \| \bd{\eta}_2 \|_{H^1\times H^1}^2 \right)\\
&~{} + \|  \partial_x \bd{W}^\sharp(\tau)\|_{L^2 \times L^2} \| \bd{Q}_\omega \|_{L^\infty\times L^\infty}\left( \| \bd{\eta}_2 \|_{L^2\times L^2} + \| \bd{\eta}_2 \|_{H^1\times H^1}^2 \right) \\
 \lesssim &~{}  \varepsilon  | \rho_2'(\tau) | e^{-k_0 \varepsilon |\tau|} \left( \| \bd{\eta}_2 \|_{L^2\times L^2} + \| \bd{\eta}_2 \|_{H^1\times H^1}^2 \right).
\end{aligned}
\]
Finally,
\[
\begin{aligned}
|\mathcal F_{3,2,3}| \lesssim &~{}   | \rho_2'(\tau) |\|\bd{W}^\sharp(\tau)\|_{L^\infty \times L^\infty} \|\partial_x \bd{W}^\sharp(\tau)\|_{L^2\times L^2} \left( \| \bd{\eta}_2 \|_{L^2\times L^2} + \| \bd{\eta}_2 \|_{H^1\times H^1}^2 \right)\\
 \lesssim &~{}  \varepsilon^2  | \rho_2'(\tau) | e^{-2k_0 \varepsilon |\tau|} \left( \| \bd{\eta}_2 \|_{L^2\times L^2} + \| \bd{\eta}_2 \|_{H^1\times H^1}^2 \right).
\end{aligned}
\]
We conclude
\begin{equation}\label{mF32}
|\mathcal F_{3,2}| \lesssim \varepsilon  | \rho_2'(\tau) | e^{-k_0 \varepsilon |\tau|}\left( \| \bd{\eta}_2 \|_{L^2\times L^2} + \| \bd{\eta}_2 \|_{H^1\times H^1}^2 \right).
\end{equation}
Gathering \eqref{mF31} and \eqref{mF32}, we obtain
\begin{equation}\label{mF3}
\begin{aligned}
 \mathcal F_{3} =&~{} \mathcal F_{3,1} + \mathcal F_{3,2} \\
= &~{}  -\omega^2 \rho_2'(\tau) \left\langle (1-\partial_x^2) J\bd{Q}_\omega' , \bd{\eta}_2 \right\rangle -\omega\rho'_2(\tau) \left( \frac12 \int R_\omega' u_2^2 + \int Q_\omega' \eta_2 u_2 \right)\\
 &~{}  + O\left(  |\rho_2'(\tau)|  \varepsilon e^{-k_0 \varepsilon |\tau |}\|\bd{\eta}_2\|_{L^2\times L^2}  \right) .
\end{aligned}
\end{equation}
Now we are concerned with $\mathcal F_4$. First,
\[
\begin{aligned}
 \left| \mathcal F_{4} \right|  \lesssim &~{} \left|   \left\langle \partial_x \begin{pmatrix}  u_2  (h (\tau +  \rho_2(\tau)) - h (\tau))  \\ 0 \end{pmatrix}  , (1- \partial_x^2)^{-1}M \right\rangle \right| \\
 \lesssim &~{}  \left|   \left\langle  \partial_x u_2  (h (\tau +  \rho_2(\tau)) - h (\tau))  , (1- \partial_x^2)^{-1}M_1 \right\rangle \right| \\
&~{}  +  \left|   \left\langle   u_2  (  \partial_x h (\tau +  \rho_2(\tau)) - \partial_x h (\tau))    , (1- \partial_x^2)^{-1}M_1 \right\rangle \right| \\
 =: &~{} \mathcal F_{4,1} + \mathcal F_{4,2}.
\end{aligned}
\]
Now we deal with $\mathcal F_{4,1}$.
\begin{equation}\label{mF41}
\begin{aligned}
 \left| \mathcal F_{4,1} \right|  \lesssim &~{} \| h(\tau+\rho_2(\tau))-h (\tau)  \|_{L^\infty}  \| \partial_x u_2\|_{L^2} \|  (1- \partial_x^2)^{-1}M_1 \|_{L^2} \\
  \lesssim &~{} \varepsilon  ( e^{-k_0 \varepsilon |\tau|} +e^{-k_0 \varepsilon |\tau + \rho_2(\tau)|}) \left( \| \bd{\eta}_2 \|_{L^2\times L^2}^2 + \| \bd{\eta}_2 \|_{H^1\times H^1}^3\right)\\
   \lesssim &~{} \varepsilon  e^{- \frac12k_0 \varepsilon |\tau|}  \left( \| \bd{\eta}_2 \|_{L^2\times L^2}^2 + \| \bd{\eta}_2 \|_{H^1\times H^1}^3\right).
\end{aligned}
\end{equation}
Concerning $\mathcal F_{4,2}$, we get
\begin{equation}\label{mF42}
\begin{aligned}
 \left| \mathcal F_{4,2} \right|  \lesssim &~{}  \| \partial_x h(\tau+\rho_2(\tau))- \partial_x h (\tau)  \|_{L^\infty}  \|  u_2\|_{L^2} \|  (1- \partial_x^2)^{-1}M_1 \|_{L^2} \\
 \lesssim &~{} \varepsilon^2  ( e^{-k_0 \varepsilon |\tau|} +e^{-k_0 \varepsilon |\tau + \rho_2(\tau)|}) \left( \| \bd{\eta}_2 \|_{L^2\times L^2}^2 + \| \bd{\eta}_2 \|_{H^1\times H^1}^3\right)\\
 \lesssim &~{} \varepsilon^2  e^{- \frac12k_0 \varepsilon |\tau|}  \left( \| \bd{\eta}_2 \|_{L^2\times L^2}^2 + \| \bd{\eta}_2 \|_{H^1\times H^1}^3\right) .
\end{aligned}
\end{equation}
Gathering \eqref{mF41} and \eqref{mF42},
\begin{equation}\label{mF4}
\begin{aligned}
 \left| \mathcal F_{4} \right|  \lesssim &~{}  |\mathcal F_{4,1}| + |\mathcal F_{4,2}| \lesssim  \varepsilon  e^{- \frac12 k_0 \varepsilon |\tau|}  \left( \| \bd{\eta}_2 \|_{L^2\times L^2}^2 + \| \bd{\eta}_2 \|_{H^1\times H^1}^3\right) .
\end{aligned}
\end{equation}
To bound $\mathcal F_5$, we proceed as follows. First,
\[
\begin{aligned}
 \left| \mathcal F_{5} \right|  \lesssim &~{}  \left| \left\langle   \left(  \begin{pmatrix} a_1 \partial_x^2 \partial_\tau h \\ c_1  \partial_\tau^2 \partial_x h \end{pmatrix} (\tau +  \rho_2(\tau)) - \begin{pmatrix} a_1 \partial_x^2 \partial_\tau h \\ c_1  \partial_\tau^2 \partial_x h \end{pmatrix} (\tau)\right) , (1- \partial_x^2)^{-1} M \right\rangle \right| \\
\lesssim &~{}  \left\|   \begin{pmatrix} a_1 \partial_x^2 \partial_\tau h \\ c_1  \partial_\tau^2 \partial_x h \end{pmatrix} (\tau +  \rho_2(\tau)) - \begin{pmatrix} a_1 \partial_x^2 \partial_\tau h \\ c_1  \partial_\tau^2 \partial_x h \end{pmatrix} (\tau) \right\|_{L^2\times L^2 } \left\| (1- \partial_x^2)^{-1} M \right\|_{L^2\times L^2}.
\end{aligned}
\]
Now we use \eqref{hypoH} and \eqref{boundM} to conclude that
\begin{equation}\label{mF5}
\begin{aligned}
\left| \mathcal F_{5} \right| \lesssim &~{}\varepsilon^{\frac72} \left( e^{-k_0 \varepsilon |\tau|} + e^{-k_0 \varepsilon |\tau +\rho_2(\tau)|} \right) \left(  \| \bd{\eta}_2 \|_{L^2\times L^2} + \| \bd{\eta}_2 \|_{H^1\times H^1}^2 \right)\\
\lesssim &~{} \varepsilon^{\frac72}  e^{- \frac12 k_0 \varepsilon |\tau|}  \left(  \| \bd{\eta}_2 \|_{L^2\times L^2} + \| \bd{\eta}_2 \|_{H^1\times H^1}^2 \right).
\end{aligned}
\end{equation}
Notice that we have used the bound $|\tau +\rho_2(\tau)| \geq \frac12|\tau|$. Now we bound $\mathcal F_6$. We have from \eqref{boundM} and \eqref{Cota_R},
\begin{equation}\label{mF6}
\begin{aligned}
 \left| \mathcal F_{6} \right| \lesssim &~{} \left|  \left\langle  \bd{R}^\sharp , (1- \partial_x^2)^{-1} M\right\rangle  \right| \\
 \lesssim &~{} \left( \varepsilon^{\frac32} e^{-k_0 \varepsilon |\tau|} +\varepsilon^{10} \right)\left(\| \bd{\eta}_2 \|_{L^2\times L^2} + \| \bd{\eta}_2 \|_{H^1\times H^1}^2 \right).
\end{aligned}
\end{equation}
We deal with $\mathcal F_7.$ First, recall that from \eqref{defU} we have 
\[
 \partial_\tau  \bd{U} = \omega' \Lambda \bd{Q}_{\omega} -\rho'\bd{Q}_{\omega}' + \partial_\tau \bd{W}^\sharp.
\]
Therefore,
\[
\begin{aligned}
\mathcal F_7 =  &~{} \omega'  \int \left[\Lambda Q_\omega \eta_2 u_2 + \frac12  \Lambda  R_\omega u_2^2  \right] -\rho' \int \left[ Q_\omega' \eta_2 u_2 + \frac12 R_\omega' u_2^2  \right]  \\
&~{}+ \int \left[\partial_\tau W_2 \eta_2 u_2 + \frac12 \left( \partial_\tau  W_1 + \partial_\tau h\right) u_2^2  \right] \\
= : &~{} \mathcal F_{7,1} +\mathcal F_{7,2} +\mathcal F_{7,3} 
\end{aligned}
\]
Note that $\mathcal F_{7,2}$ is a large term and needs to cancel out with another term. Also, from \eqref{DS} and the estimate
on $f_1(t)$,
\begin{equation}\label{mF71}
\begin{aligned}
& \left| \mathcal F_{7,1} \right| \lesssim  \varepsilon |f_1(\tau)| \| \bd{\eta}_2(\tau) \|_{H^1\times H^1}^2 \lesssim  \varepsilon e^{-k_0 \varepsilon |\tau| -l_0 \varepsilon |\rho(\tau)|}  \| \bd{\eta}_2(\tau) \|_{H^1\times H^1}^2.
\end{aligned}
\end{equation}
Additionally, from \eqref{dtW} and \eqref{hypoH},
\begin{equation}\label{mF73}
\begin{aligned}
 \left| \mathcal F_{7,3} \right| \lesssim &~{} \left( \| \partial_\tau \bd{W}(\tau) \|_{L^\infty\times L^\infty} + \| \partial_\tau h(\tau)\|_{L^\infty} \right) \| \bd{\eta}_2(\tau) \|_{H^1\times H^1}^2  \\
 \lesssim &~{} \left( \| \partial_\tau \bd{W}(\tau) \|_{L^\infty\times L^\infty} + \varepsilon^2 e^{-k_0 \varepsilon |\tau| }  \right) \| \bd{\eta}_2(\tau) \|_{H^1\times H^1}^2 \\
 \lesssim &~{}\varepsilon e^{-k_0 \varepsilon |\tau| } \| \bd{\eta}_2(\tau) \|_{H^1\times H^1}^2 .
\end{aligned}
\end{equation}
Concluding, using \eqref{mF71} and \eqref{mF73},
\begin{equation}\label{mF7}
\begin{aligned}
& \left| \mathcal F_{7} + \omega \int \left[ Q_\omega' \eta_2 u_2 + \frac12 R_\omega' u_2^2  \right]  \right| \lesssim \varepsilon e^{-k_0 \varepsilon |\tau| }  \| \bd{\eta}_2(\tau) \|_{H^1\times H^1}^2. 
\end{aligned}
\end{equation}
We deal now with $\mathcal F_8.$ Using \eqref{DS}, we have
\begin{equation}\label{mF8}
\begin{aligned}
& \left| \mathcal F_{8}  \right| = \left| \omega'  \int \left( \partial_x \eta_2 \partial_x u_2 + \eta_2 u_2 \right)  \right| \lesssim \varepsilon e^{-k_0 \varepsilon |\tau| -l_0 \varepsilon |\rho(\tau)|}  \| \bd{\eta}_2(\tau) \|_{H^1\times H^1}^2.
\end{aligned}
\end{equation}
Dealing with $\mathcal F_9$ is very similar to handling $\mathcal F_1$--$ \mathcal F_6$ above, with some important differences in the cases of $\mathcal F_1$ and $\mathcal F_3$. We have from \eqref{boussinesq_final_22},
\[
\begin{aligned}
&  \left\langle  (1-\partial_x^2) \partial_\tau \bd{\eta}_2 , J \bd{\eta}_2\right\rangle
\\
&  =  - (1+ \rho_2'(\tau))  \left\langle  \partial_x  JM , J \bd{\eta}_2 \right\rangle    \\
&\quad  - \left( 1+ \rho_2'(\tau)\right)  \left\langle   \partial_x\left( \begin{matrix} U_2 (h(\tau+\rho_2(\tau))-h (\tau) )  \\ 0\end{matrix}\right) , J \bd{\eta}_2 \right\rangle \\
& \quad - \rho_2'(\tau)  \left\langle  \partial_x \begin{pmatrix} a\, \partial_x^2 U_2 + U_2 + U_2 (U_1 + h (\tau))  \\ c\, \partial_x^2 U_1 + U_1  + \frac12 U_2^2 \end{pmatrix} , J \bd{\eta}_2 \right\rangle  \\
&\quad  -(1+ \rho_2'(\tau))   \left\langle \partial_x \begin{pmatrix}  u_2  (h (\tau +  \rho_2(\tau)) - h (\tau))  \\ 0 \end{pmatrix}  , J \bd{\eta}_2 \right\rangle \\
& \quad + (1+  \rho_2'(\tau))  \left\langle  \left(  \begin{pmatrix} a_1 \partial_x^2 \partial_\tau h \\ c_1  \partial_\tau^2 \partial_x h \end{pmatrix} (\tau +  \rho_2(\tau)) - \begin{pmatrix} a_1 \partial_x^2 \partial_\tau h \\ c_1  \partial_\tau^2 \partial_x h \end{pmatrix} (\tau)\right) , J \bd{\eta}_2 \right\rangle
\\
&\quad -  \left\langle    \bd{R}^\sharp , J \bd{\eta}_2 \right\rangle
\\
& =: \mathcal F_{9,1}+\mathcal F_{9,2}+\mathcal F_{9,3}+\mathcal F_{9,4}+\mathcal F_{9,5}+\mathcal F_{9,6}.
\end{aligned}
\]
The terms $\mathcal F_{9,2}$, $\mathcal F_{9,4}$, $\mathcal F_{9,5}$ and $\mathcal F_{9,6}$ are treated in a similar form to previous computations for $\mathcal F_{2}$, $\mathcal F_{4}$, $\mathcal F_{5}$ and $\mathcal F_{6}$, respectively. We get, following the proof of estimates \eqref{mF2}, \eqref{mF4}, \eqref{mF5} and \eqref{mF6},
\[
\begin{aligned}
& \mathcal F_{9,2} + |\mathcal F_{9,4}|+|\mathcal F_{9,5}|+|\mathcal F_{9,6}| \\
&~{} =   - \left( 1+ \rho_2'(\tau)\right) \varepsilon^2   \rho_2(\tau)  \int_0^1   (\partial_s h_0) (\varepsilon(\tau+\sigma \rho_2(\tau)) , \varepsilon \rho(\tau)) d \sigma \left\langle  Q_\omega'   ,  u_2 \right\rangle\\
&~{} \quad +O\left(\varepsilon^{2}(1+\varepsilon | \rho_2(\tau)|) e^{- \frac12k_0 \varepsilon |\tau|} \| \bd{\eta}_2 \|_{L^2\times L^2}  \right)\\
&~{} \quad  +O\left( \varepsilon  e^{- \frac12l_0 \varepsilon |\tau|}   \| {\bd\eta}_2 \|_{L^2\times L^2}^2 \right) \\
&~{} \quad +O\left(  \varepsilon^{\frac72} \left( e^{-k_0 \varepsilon |\tau|} + e^{-k_0 \varepsilon |\tau +\rho_2(\tau)|} \right)  \| \bd{\eta}_2 \|_{L^2\times L^2} \right)
 \\
& ~{} \quad  +O\left(  \left( \varepsilon^{\frac32} e^{-k_0 \varepsilon |\tau|} +\varepsilon^{10} \right)\| \bd{\eta}_2 \|_{L^2\times L^2} \right) \\
&~{} =  - \left( 1+ \rho_2'(\tau)\right) \varepsilon^2   \rho_2(\tau)  \int_0^1   (\partial_s h_0) (\varepsilon(\tau+\sigma \rho_2(\tau)) , \varepsilon \rho(\tau)) d \sigma \left\langle  Q_\omega'   ,  u_2 \right\rangle\\
&~{} \quad +O\left( \varepsilon^2 \left( 1+  \varepsilon \vert \rho_2(\tau)\vert \right) e^{- \frac12 k_0 \varepsilon |\tau|}    \| \bd{\eta}_2 \|_{L^2\times L^2}  +  \left( \varepsilon^{\frac32} e^{-k_0 \varepsilon |\tau|} +\varepsilon^{10} \right)\| \bd{\eta}_2 \|_{L^2\times L^2}\right)
\\
&~{} \quad + O\left( \varepsilon  e^{- \frac12 k_0 \varepsilon |\tau|}   \| {\bd\eta}_2 \|_{L^2\times L^2}^2 \right).
\end{aligned}
\]
Finally, we use \eqref{cotas_rho2} to get the simplified expression
\begin{equation}\label{mF9conj}
\begin{aligned}
& \mathcal F_{9,2} + |\mathcal F_{9,4}|+|\mathcal F_{9,5}|+|\mathcal F_{9,6}| \\
&~{} =  - \left( 1+ \rho_2'(\tau)\right) \varepsilon^2   \rho_2(\tau)  \int_0^1   (\partial_s h_0) (\varepsilon(\tau+\sigma \rho_2(\tau)) , \varepsilon \rho(\tau)) d \sigma \left\langle  Q_\omega'   ,  u_2 \right\rangle\\
&~{} \quad +O\left( \varepsilon^2 \left( 1+  K_2 \varepsilon^{\frac12 -\delta}  \right) e^{- \frac12 k_0 \varepsilon |\tau|}    \| \bd{\eta}_2 \|_{L^2\times L^2}  +  \left( \varepsilon^{\frac32} e^{-k_0 \varepsilon |\tau|} +\varepsilon^{10} \right)\| \bd{\eta}_2 \|_{L^2\times L^2}\right)
\\
&~{} \quad + O\left( \varepsilon  e^{- \frac12 k_0 \varepsilon |\tau|}   \| {\bd\eta}_2 \|_{L^2\times L^2}^2 \right)\\
&~{} =  - \left( 1+ \rho_2'(\tau)\right) \varepsilon^2   \rho_2(\tau)  \int_0^1   (\partial_s h_0) (\varepsilon(\tau+\sigma \rho_2(\tau)) , \varepsilon \rho(\tau)) d \sigma \left\langle  Q_\omega'   ,  u_2 \right\rangle\\
&~{} \quad +O\left( \left( \varepsilon^{\frac32} e^{-\frac12k_0 \varepsilon |\tau|} +\varepsilon^{10} \right)\| \bd{\eta}_2 \|_{L^2\times L^2} +\varepsilon  e^{- \frac12 k_0 \varepsilon |\tau|}   \| {\bd\eta}_2 \|_{L^2\times L^2}^2 \right).
\end{aligned}
\end{equation}
Now we deal with $\mathcal F_{9,1}$. We have from \eqref{def_M}
\[
\begin{aligned}
\left\langle  \partial_x  J \bd M , J \bd{\eta}_2 \right\rangle = &~{} \left\langle  \partial_x  J  \begin{pmatrix}   c\, \partial_x^2 \eta_2  + \eta_2 +  Q_\omega u_2  \\  a\, \partial_x^2  u_2 + u_2 +  R_\omega u_2 +  Q_\omega \eta_2    \end{pmatrix}  , J \bd{\eta}_2 \right\rangle \\
&~{} +\left\langle  \partial_x  J  \begin{pmatrix}    W_2 u_2  \\    (W_1 +h)u_2 + W_2 \eta_2    \end{pmatrix}  , J \bd{\eta}_2 \right\rangle\\
= &~{}  -\left\langle     \begin{pmatrix}     a\, \partial_x^2  u_2 + u_2 +  R_\omega u_2 +  Q_\omega \eta_2  \\ c\, \partial_x^2 \eta_2  + \eta_2 +  Q_\omega u_2   \end{pmatrix}  ,   \partial_x  \begin{pmatrix}     u_2  \\ \eta_2   \end{pmatrix}  \right\rangle \\
&~{} + O\left( \left(\|\bd{W}^\sharp\|_{L^\infty\times L^\infty} +\|h\|_{L^\infty}  \right) \| \bd{\eta}_2\|_{H^1\times H^1}^2 \right)\\
= &~{} -\int \left( R_\omega u_2 \partial_x u_2 +  Q_\omega \partial_x (\eta_2  u_2 )\right) + O\left( \varepsilon e^{-l_0 \varepsilon |\tau|} \| \bd{\eta}_2\|_{H^1\times H^1}^2 \right) \\
= &~{} \int \left( \frac12R_\omega' u_2^2 + Q_\omega' \eta_2u_2 \right) + O\left( \varepsilon e^{-l_0 \varepsilon |\tau|} \| \bd{\eta}_2\|_{H^1\times H^1}^2 \right).
\end{aligned}
\]
Therefore,
\begin{equation}\label{mF91}
\begin{aligned}
\mathcal F_{9,1}  = &~{} -(1+ \rho_2'(\tau))  \int \left( \frac12R_\omega' u_2^2 + Q_\omega' \eta_2u_2 \right) + O\left( \varepsilon e^{-l_0 \varepsilon |\tau|} \| \bd{\eta}_2\|_{H^1\times H^1}^2 \right).
\end{aligned}
\end{equation}
Notice that the largest part of this term, multiplied by $-\rho'$, will cancel out with the one with opposite sign appearing in $\mathcal F_7$ in \eqref{mF7}. Finally, we deal with $\mathcal F_{9,3}$. Using \eqref{decoeqU},
\[
\begin{aligned}
& \left\langle  \partial_x \begin{pmatrix} a\, \partial_x^2 U_2 + U_2 + U_2 (U_1 + h (\tau))  \\ c\, \partial_x^2 U_1 + U_1  + \frac12 U_2^2 \end{pmatrix} , \begin{pmatrix} u_2 \\ \eta_2 \end{pmatrix}  \right\rangle
\\
& =  \omega  \left\langle (1-\partial_x^2) \partial_x \begin{pmatrix} R_\omega  \\ Q_\omega \end{pmatrix} ,\begin{pmatrix} u_2 \\ \eta_2 \end{pmatrix}  \right\rangle
\\
&\quad +   \left\langle \partial_x \begin{pmatrix} a\, \partial_x^2 W_2 + W_2 + W_1 Q_\omega + W_2 R_\omega  +W_1W_2  \\ c\, \partial_x^2 W_1 + W_1 + Q_\omega W_2  + \frac12 W_2^2 \end{pmatrix} ,   \begin{pmatrix} u_2 \\ \eta_2 \end{pmatrix}  \right\rangle.
\end{aligned}
\]
Similar to \eqref{cotaF3}, it is not difficult to see that
\[
\begin{aligned}
& \left|   \left\langle \partial_x \begin{pmatrix} a\, \partial_x^2 W_2 + W_2 + W_1 Q_\omega + W_2 R_\omega  +W_1W_2  \\ c\, \partial_x^2 W_1 + W_1 + Q_\omega W_2  + \frac12 W_2^2 \end{pmatrix} ,  \begin{pmatrix} u_2 \\ \eta_2 \end{pmatrix} \right\rangle \right|
\\
& \quad \leq \left|   \left\langle \begin{pmatrix} a\, \partial_x^3 W_2 +  \partial_x W_2    \\ c\, \partial_x^3 W_1 +  \partial_x W_1    \end{pmatrix} ,   \begin{pmatrix} u_2 \\ \eta_2 \end{pmatrix} \right\rangle \right|
\\
& \quad \quad + \left|   \left\langle \begin{pmatrix}   \partial_x W_1 Q_\omega + W_1 \partial_x Q_\omega  +  \partial_xW_2 R_\omega+W_2 \partial_x R_\omega   \\   \partial_xQ_\omega W_2  + Q_\omega \partial_x W_2    \end{pmatrix} ,   \begin{pmatrix} u_2 \\ \eta_2 \end{pmatrix} \right\rangle \right|
\\
& \quad \quad + \left|   \left\langle \begin{pmatrix}  \partial_x W_1W_2+  W_1\partial_x W_2  \\  W_2  \partial_x W_2 \end{pmatrix} ,   \begin{pmatrix} u_2 \\ \eta_2 \end{pmatrix} \right\rangle \right|
\\
& \quad =: \mathcal F_{9,3,1} +\mathcal F_{9,3,2} +\mathcal F_{9,3,3}.
\end{aligned}
\]
Now we shall use \eqref{estW}:
\[
|\mathcal F_{9,3,1}| \lesssim \|\partial_x \bd{W}^\sharp(\tau)\|_{H^2\times H^2} \|\bd{\eta}_2(\tau)\|_{L^2\times L^2} \lesssim  \varepsilon e^{-k_0 \varepsilon |\tau|} \|\bd{\eta}_2(\tau)\|_{L^2\times L^2}.
\]
\[
\begin{aligned}
|\mathcal F_{9,3,2}| \lesssim &~{} \|  \bd{W}^\sharp(\tau)\|_{L^\infty \times L^\infty} \| \bd{Q}_\omega \|_{L^2\times L^2} \|\bd{\eta}_2(\tau)\|_{L^2\times L^2}\\
&~{} + \|  \partial_x \bd{W}^\sharp(\tau)\|_{L^2 \times L^2} \| \bd{Q}_\omega \|_{L^\infty\times L^\infty} \|\bd{\eta}_2(\tau)\|_{L^2\times L^2}
\\
 \lesssim &~{}  \varepsilon e^{-k_0 \varepsilon |\tau|} \|\bd{\eta}_2(\tau)\|_{L^2\times L^2}.
\end{aligned}
\]
Finally,
\[
\begin{aligned}
|\mathcal F_{9,3,3}| \lesssim &~{}  \|\bd{W}^\sharp(\tau)\|_{L^\infty \times L^\infty} \|\partial_x \bd{W}^\sharp(\tau)\|_{L^2\times L^2} \|\bd{\eta}_2(\tau)\|_{L^2\times L^2}\\
 \lesssim &~{}  \varepsilon^2 e^{-2k_0 \varepsilon |\tau|} \|\bd{\eta}_2(\tau)\|_{L^2\times L^2}.
\end{aligned}
\]
We conclude that
\begin{equation}\label{mF93}
\begin{aligned}
 \mathcal F_{9,3} = &~{} - \rho_2'(\tau)  \left\langle  \partial_x \begin{pmatrix} a\, \partial_x^2 U_2 + U_2 + U_2 (U_1 + h (\tau))  \\ c\, \partial_x^2 U_1 + U_1  + \frac12 U_2^2 \end{pmatrix} , J \bd{\eta}_2 \right\rangle \\
 =&~{}   - \rho_2'(\tau)  \omega  \left\langle (1-\partial_x^2)J\bd{Q}'_\omega ,  \bd{\eta}_2 \right\rangle + O\left( \varepsilon e^{-k_0 \varepsilon |\tau|} |\rho_2'(t)| \left\| \bd{\eta}_2 \right\|_{H^1\times H^1} \right).
\end{aligned}
\end{equation}
 Gathering estimates \eqref{mF9conj}, \eqref{mF91} and \eqref{mF93}, we obtain
\begin{equation}\label{mF9}
\begin{aligned}
\mathcal F_{9} =&~{}  -\omega \left\langle  (1-\partial_x^2) \partial_\tau \bd{\eta}_2 , J \bd{\eta}_2\right\rangle \\
=  &~{}  \omega  (1+ \rho_2'(\tau))  \int \left( \frac12R_\omega' u_2^2 + Q_\omega' \eta_2u_2 \right) +\rho_2'(\tau)  \omega^2  \left\langle (1-\partial_x^2)J\bd{Q}'_\omega ,  \bd{\eta}_2 \right\rangle \\
&~{} +\omega \left( 1+ \rho_2'(\tau)\right) \varepsilon^2   \rho_2(\tau)  \int_0^1   (\partial_s h_0) (\varepsilon(\tau+\sigma \rho_2(\tau)) , \varepsilon \rho(\tau)) d \sigma \left\langle  Q_\omega'   ,  u_2 \right\rangle\\
&~{} +O\left( \left( \varepsilon^{\frac32} e^{-\frac12k_0 \varepsilon |\tau|} +\varepsilon^{10} \right)\| \bd{\eta}_2 \|_{L^2\times L^2} +\varepsilon  e^{- \frac12 k_0 \varepsilon |\tau|}   \| {\bd\eta}_2 \|_{L^2\times L^2}^2 \right) \\
& ~{} + O\left(  \varepsilon e^{-k_0 \varepsilon |\tau|} |\rho_2'(t)| \left\| \bd{\eta}_2 \right\|_{H^1\times H^1} \right).
\end{aligned}
\end{equation}
Notice that the first term in the last computation above cancels with a term coming from $\mathcal F_7$ in \eqref{mF7} and $\mathcal F_3$ in \eqref{mF3}. Additionally, the second term in the last computation above cancels with a term related to $\mathcal F_{3,1}$ in \eqref{mF31}. Adding the $O(\varepsilon^2)$ terms in $\mathcal F_2$ and $\mathcal F_9$ we get the problematic term
\begin{equation}\label{muerete1}
\begin{aligned}
&- \left( 1+ \rho_2'(\tau)\right) \varepsilon^2   \rho_2(\tau)  \int_0^1   (\partial_s h_0) (\varepsilon(\tau+\sigma \rho_2(\tau)) , \varepsilon \rho(\tau)) d \sigma \\
& \quad \times  \left( - \left\langle  Q_\omega'   ,  (1- \partial_x^2)^{-1}M_1 \right\rangle +\omega \left\langle  Q_\omega'   ,  u_2 \right\rangle  \right) \\
& \quad =: m_{-1}(\tau) \langle \widetilde{\bf{Q}}_\omega, \bd{\eta}_2\rangle + O\left( |\rho_2'(\tau)| |m_{-1}(\tau) | \|\bd{\eta}_2\|_{H^1\times H^1} +  |m_{-1}(\tau) | \|\bd{\eta}_2\|_{H^1\times H^1}^2\right).
\end{aligned}
\end{equation}
Indeed, to be precise (see also \eqref{def_m0}),
\[
m_{-1} := -  \varepsilon^2   \rho_2(\tau)  \int_0^1   (\partial_s h_0) (\varepsilon(\tau+\sigma \rho_2(\tau)) , \varepsilon \rho(\tau)) d \sigma = m_0 .
\]
Also, if $\widetilde Q_\omega:= (1-\partial_x^2)^{-1} Q_\omega,$
\[
\ba
 \widetilde{\bf{Q}}_\omega:= &~{}  \partial_x \begin{pmatrix} (1+c\partial_x^2) \widetilde Q_\omega  \\  -\omega Q_\omega+  \frac12  \widetilde Q_\omega^2 -\frac12(\partial_x  \widetilde Q_\omega)^2 \end{pmatrix} \\
 = &~{}  \begin{pmatrix} (1+c\partial_x^2)  (1-\partial_x^2)^{-1} \partial_x Q_\omega  \\  -\omega(1-\partial_x^2) (1-\partial_x^2)^{-1}  \partial_x Q_\omega+ Q_\omega   (1-\partial_x^2)^{-1} \partial_x Q_\omega  \end{pmatrix}.
\ea
\]
Notice also that
\begin{equation}\label{LtQ}
\begin{aligned}
& \mathcal L (1-\partial_x^2)^{-1}  \partial_x Q_\omega \begin{pmatrix} 1\\0  \end{pmatrix}
\\ &~{} =
 \begin{pmatrix}
 c \partial_x^2 +1  &   -\omega (1-\partial^2_x) +Q_\omega   \\
  -\omega (1-\partial^2_x)  +Q_\omega & a \partial_x^2 +1 +R_\omega
\end{pmatrix} (1-\partial_x^2)^{-1}  \partial_x Q_\omega \begin{pmatrix} 1\\0  \end{pmatrix}
=  \widetilde{ \bd{Q}}_\omega.
\end{aligned}
\end{equation}
Now we treat $\mathcal F_{10}$ in \eqref{boussinesq_final_23}. Using \eqref{cotas_m0},
\begin{equation}\label{mF10}
|\mathcal F_{10}| \lesssim \left| m_0'(\tau) \int Q_\omega u_2 \right| \lesssim  K_2\varepsilon^{\frac52-\delta} e^{- \frac12k_0 \varepsilon |\tau|} \| \bd{\eta}_2 \|_{H^1\times H^1}. 
\end{equation}
Now we treat $\mathcal F_{11}$. We have from \eqref{cotas_m0} and \eqref{DS},
\begin{equation}\label{mF11}
\begin{aligned}
\mathcal F_{11} = &~{} -m_0(\tau) \int \partial_\tau Q_\omega u_2 = -m_0(\tau) \omega' \int  \Lambda Q_\omega u_2 + m_0(\tau)  \rho' \int Q_\omega' u_2\\
= &~{}  m_0(\tau)\omega \int Q_\omega' u_2 +O\left(  K_2\varepsilon^{\frac52-\delta}  e^{-k_0 \varepsilon |\tau|} \| \bd{\eta}_2\|_{L^2\times L^2} \right).
\end{aligned}
\end{equation}
Finally, we treat $\mathcal F_{12}$. Using again \eqref{boussinesq_final_22}, and denoting by $(\cdot)_2$ the second component of a vector,  we have
\[
\begin{aligned}
\mathcal F_{12} = &~{} -m_0(\tau) \int  \partial_\tau u_2 Q_\omega \\
= &~{}    (1+ \rho_2'(\tau)) m_0(\tau) \left\langle \left( (1- \partial_x^2)^{-1}\partial_x  JM \right)_2,  Q_\omega \right\rangle    \\
&~{} + \rho_2'(\tau) m_0(\tau) \left\langle  (1- \partial_x^2)^{-1}\partial_x \left( c\, \partial_x^2 U_1 + U_1  + \frac12 U_2^2\right) , Q_\omega \right\rangle  \\
&~{} - (1+  \rho_2'(\tau))  c_1  m_0(\tau) \left\langle  (1- \partial_x^2)^{-1} \left(  \partial_\tau^2 \partial_x h (\tau +  \rho_2(\tau)) - \partial_\tau^2 \partial_x h (\tau)\right) , Q_\omega \right\rangle
\\
&~{} +  m_0(\tau) \left\langle  (1- \partial_x^2)^{-1}\bd{R}^\sharp_2 , Q_\omega \right\rangle \\
= :&~{} \mathcal F_{12,1} +\mathcal F_{12,2} +\mathcal F_{12,3}+\mathcal F_{12,4}.
\end{aligned}
\]
Notice that $\mathcal F_{12,3}$ and $\mathcal F_{12,4}$ can be quickly estimated: using \eqref{hypoH}, \eqref{cotas_m0} and \eqref{Cota_R},
\begin{equation}\label{mF123mF124}
\ba
|\mathcal F_{12,3}|+|\mathcal F_{12,4}| \lesssim &~{} C K_2\varepsilon^{4+\frac32-\delta} e^{- \frac12k_0 \varepsilon |\tau|}+ C K_2\varepsilon^{\frac32-\delta} e^{- \frac12k_0 \varepsilon |\tau|} (\varepsilon^{\frac32} e^{-k_0 \varepsilon |t|} +\varepsilon^{10})\\
\lesssim &~{} C K_2\varepsilon^{3-\delta} e^{- \frac12k_0 \varepsilon |\tau|}.
\ea
\end{equation}
Now we deal with $\mathcal F_{12,1}$ and $\mathcal F_{12,2}$. First, using \eqref{def_M},
\[
\ba
\mathcal F_{12,1} =&~{}  (1+ \rho_2'(\tau)) m_0(\tau) \left\langle (1- \partial_x^2)^{-1}\partial_x  \left( c\, \partial_x^2 \eta_2  + \eta_2  + U_2 u_2 + \frac12u_2^2 \right),  Q_\omega \right\rangle \\
=&~{} - (1+ \rho_2'(\tau)) m_0(\tau) \left\langle   c\, \partial_x^2 \eta_2  + \eta_2  + U_2 u_2 + \frac12u_2^2 ,   (1- \partial_x^2)^{-1}\partial_x Q_\omega \right\rangle \\
=&~{} - (1+ \rho_2'(\tau)) m_0(\tau) \left\langle   c\, \partial_x^2 \eta_2  + \eta_2  + Q_\omega u_2  ,   (1- \partial_x^2)^{-1}\partial_x Q_\omega \right\rangle \\
 &~{} - (1+ \rho_2'(\tau)) m_0(\tau) \left\langle    W_2 u_2 + \frac12u_2^2 ,   (1- \partial_x^2)^{-1}\partial_x Q_\omega \right\rangle\\
 =: &~{} \mathcal F_{12,1,1} + \mathcal F_{12,1,2}.
\ea
\]
First, from \eqref{cotas_m0}, \eqref{estW} and \eqref{ModulationTesis2},
\begin{equation}\label{F1212}
\ba
|\mathcal F_{12,1,2}| \lesssim &~{} K_2^2 \varepsilon^{3-\delta} e^{- \frac12k_0 \varepsilon |\tau|} + K_2^3 \varepsilon^{\frac52-\delta} e^{- \frac12k_0 \varepsilon |\tau|} \lesssim K_2^3 \varepsilon^{\frac52-\delta} e^{- \frac12k_0 \varepsilon |\tau|}.
\ea
\end{equation}
Second, from \eqref{def_L} and \eqref{LtQ},
\begin{equation}\label{F1211}
\ba
\mathcal F_{12,1,1}= &~{}  - (1+ \rho_2'(\tau)) m_0(\tau) \left\langle   (\mathcal L \bd{\eta}_2)_1 +\omega  (1- \partial_x^2)u_2 ,   (1- \partial_x^2)^{-1}\partial_x Q_\omega \right\rangle \\
= &~{} - (1+ \rho_2'(\tau)) m_0(\tau) \left\langle   \mathcal L \bd{\eta}_2  ,   (1- \partial_x^2)^{-1}\partial_x Q_\omega \bp 1 \\ 0 \ep \right\rangle \\
&~{} - \omega (1+ \rho_2'(\tau)) m_0(\tau) \left\langle    u_2 ,  \partial_x Q_\omega \right\rangle \\
= &~{} - (1+ \rho_2'(\tau)) m_0(\tau) \left\langle  \bd{\eta}_2  ,  \bd{\widetilde{Q}}_\omega \right\rangle  - \omega (1+ \rho_2'(\tau)) m_0(\tau) \left\langle    u_2 ,  \partial_x Q_\omega \right\rangle.
\ea
\end{equation}
Notice that the second term above cancels out at first order with the first term in \eqref{mF11}. Also, given the choice of $m_0$ in \eqref{def_m0}, the first term above cancels at first order with the dangerous term \eqref{muerete1}.
Finally, using \eqref{decoeqU},
\begin{align*}
\mathcal F_{12,2} =&~{} \rho_2'(\tau) m_0(\tau) \left\langle  (1- \partial_x^2)^{-1}\partial_x \left( c\, \partial_x^2 U_1 + U_1  + \frac12 U_2^2\right) , Q_\omega \right\rangle \\
=&~{} \rho_2'(\tau) \omega m_0(\tau) \left\langle  \partial_x   Q_\omega , Q_\omega \right\rangle \\
&~{} +  \rho_2'(\tau) m_0(\tau) \left\langle  (1- \partial_x^2)^{-1}\partial_x \left( c\, \partial_x^2 W_1 + W_1 + Q_\omega W_2  + \frac12 W_2^2 \right) , Q_\omega \right\rangle \\
= &~{} \rho_2'(\tau) m_0(\tau)   \left\langle   \left( c\, \partial_x^3 W_1 + \partial_x W_1 + \partial_x(Q_\omega W_2)  +  W_2\partial_x W_2 \right) , (1- \partial_x^2)^{-1} Q_\omega \right\rangle .
\end{align*}
Now we estimate using \eqref{estimationForRho1}-\eqref{ModulationTesis2}, \eqref{cotas_m0}, and \eqref{estW}:
\begin{align}
|\mathcal F_{12,2} | \lesssim &~{} |\rho_2'(\tau) || m_0(\tau)| \nonumber \\
&~{} \qq  \times \left| \left\langle   \left( c\, \partial_x^3 W_1 + \partial_x W_1 + \partial_x(Q_\omega W_2)  +  W_2\partial_x W_2 \right) , (1- \partial_x^2)^{-1} Q_\omega \right\rangle \right| \label{mF122} \\
 \lesssim &~{} K_2^2 \varepsilon^{3-\delta} e^{- \frac12k_0 \varepsilon |\tau|}.\nonumber 
\end{align}
It is not difficult to see using \eqref{mF123mF124}, \eqref{F1212}, \eqref{F1211} and \eqref{mF122} that
\begin{align}
\mathcal F_{12} = &~{} 
- (1+ \rho_2'(\tau)) m_0(\tau) \left\langle  \bd{\eta}_2  ,  \bd{\widetilde{Q}}_\omega \right\rangle  - \omega (1+ \rho_2'(\tau)) m_0(\tau) \left\langle    u_2 ,  \partial_x Q_\omega \right\rangle \nonumber \\
& ~{} +O\left( K_2\varepsilon^{3-\delta} e^{- \frac12k_0 \varepsilon |\tau|}
+ K_2^3 \varepsilon^{\frac52-\delta} e^{- \frac12k_0 \varepsilon |\tau|}
+ K_2^2 \varepsilon^{3-\delta} e^{- \frac12k_0 \varepsilon |\tau|} \right) \nonumber \\
= &~{} 
-  m_0(\tau) \left\langle  \bd{\eta}_2  ,  \bd{\widetilde{Q}}_\omega \right\rangle  - \omega   m_0(\tau) \left\langle    u_2 ,  \partial_x Q_\omega \right\rangle \label{mF12} \\
& ~{} + O\left( |\rho_2'(\tau)|  K_2\varepsilon^{\frac32-\delta} e^{- \frac12k_0 \varepsilon |\tau|} \| \bd{\eta}_2\|_{L^2\times L^2} \right)\nonumber \\
&~{}  +O\left( K_2\varepsilon^{3-\delta} e^{- \frac12k_0 \varepsilon |\tau|} + K_2^3 \varepsilon^{\frac52-\delta} e^{- \frac12k_0 \varepsilon |\tau|}+ K_2^2 \varepsilon^{3-\delta} e^{- \frac12k_0 \varepsilon |\tau|} \right) . \nonumber 
\end{align}
Finally, gathering \eqref{mF1}, \eqref{mF2}, \eqref{mF3}, \eqref{mF4}, \eqref{mF5}, \eqref{mF6}, \eqref{mF7}, \eqref{mF8}, \eqref{mF9}, \eqref{mF10}, \eqref{mF11} and \eqref{mF12} in \eqref{boussinesq_final_23}, we obtain
\begin{align*}
 |\mathfrak F_2'(\tau) |\lesssim &~{} \left| \sum_{j=1}^{12} \mathcal F_j(\tau) \right|
 \\
 \lesssim &~{}
 \varepsilon^{2}(1+\varepsilon | \rho_2(\tau)| +| \rho_2(\tau)| \| \bd{\eta}_2 \|_{L^2\times L^2} ) e^{- \frac12k_0 \varepsilon |\tau|} \left( \| \bd{\eta}_2 \|_{L^2\times L^2} + \| \bd{\eta}_2 \|_{H^1\times H^1}^2\right) \\
&~{} +  \varepsilon  |\rho_2'(\tau)|  e^{-k_0 \varepsilon |\tau |}\|\bd{\eta}_2\|_{L^2\times L^2}
\\
&~{} + \varepsilon  e^{- \frac12 k_0 \varepsilon |\tau|}  \left( \| \bd{\eta}_2 \|_{L^2\times L^2}^2 + \| \bd{\eta}_2 \|_{H^1\times H^1}^3\right) \\
& ~{} + \varepsilon^{\frac72}  e^{- \frac12 k_0 \varepsilon |\tau|}  \left(  \| \bd{\eta}_2 \|_{L^2\times L^2} + \| \bd{\eta}_2 \|_{H^1\times H^1}^2 \right)\\
&~{} +\left( \varepsilon^{\frac32} e^{-k_0 \varepsilon |\tau|} +\varepsilon^{10} \right)\left(\| \bd{\eta}_2 \|_{L^2\times L^2} + \| \bd{\eta}_2 \|_{H^1\times H^1}^2 \right)\\
&~{} +\varepsilon e^{-k_0 \varepsilon |\tau| }  \| \bd{\eta}_2(\tau) \|_{H^1\times H^1}^2\\
&~{} + \varepsilon e^{-k_0 \varepsilon |\tau| -l_0 \varepsilon |\rho(\tau)|}  \| \bd{\eta}_2(\tau) \|_{H^1\times H^1}^2\\
&~{} +  \left( \varepsilon^{\frac32} e^{-\frac12k_0 \varepsilon |\tau|} +\varepsilon^{10} \right)\| \bd{\eta}_2 \|_{L^2\times L^2} +\varepsilon  e^{- \frac12 k_0 \varepsilon |\tau|}   \| {\bd\eta}_2 \|_{L^2\times L^2}^2
%
\\ &~{}+  \varepsilon e^{-k_0 \varepsilon |\tau|} |\rho_2'(t)| \left\| \bd{\eta}_2 \right\|_{H^1\times H^1}  \\
&~{} + K_2\varepsilon^{\frac52-\delta} e^{- \frac12k_0 \varepsilon |\tau|} \| \bd{\eta}_2 \|_{H^1\times H^1} + K_2\varepsilon^{\frac52-\delta}  e^{-k_0 \varepsilon |\tau|} \| \bd{\eta}_2\|_{L^2\times L^2}\\
& ~{} +  |\rho_2'(\tau)|  K_2\varepsilon^{\frac32-\delta} e^{- \frac12k_0 \varepsilon |\tau|} \| \bd{\eta}_2\|_{L^2\times L^2}  \\
&~{}  + K_2\varepsilon^{3-\delta} e^{- \frac12k_0 \varepsilon |\tau|} + K_2^3 \varepsilon^{\frac52-\delta} e^{- \frac12k_0 \varepsilon |\tau|}+ K_2^2 \varepsilon^{3-\delta} e^{- \frac12k_0 \varepsilon |\tau|} .
\end{align*}
Simplifying, we get
\begin{equation*}
\begin{aligned}
 |\mathfrak F_2'(\tau) |  \lesssim &~{}
 \varepsilon^{2}(1+\varepsilon | \rho_2(\tau)| +| \rho_2(\tau)| \| \bd{\eta}_2 \|_{L^2\times L^2} )  e^{- \frac12k_0 \varepsilon |\tau|} \left( \| \bd{\eta}_2 \|_{L^2\times L^2} + \| \bd{\eta}_2 \|_{H^1\times H^1}^2\right) \\
&~{} +  \varepsilon  |\rho_2'(\tau)|  e^{-k_0 \varepsilon |\tau |}\|\bd{\eta}_2\|_{H^1\times H^1}  + \varepsilon  e^{- \frac12 k_0 \varepsilon |\tau|}  \left( \| \bd{\eta}_2 \|_{H^1\times H^1}^2 + \| \bd{\eta}_2 \|_{H^1\times H^1}^3\right) \\
&~{} +\left( \varepsilon^{\frac32} e^{-k_0 \varepsilon |\tau|} +\varepsilon^{10} \right)\left(\| \bd{\eta}_2 \|_{L^2\times L^2} + \| \bd{\eta}_2 \|_{H^1\times H^1}^2 \right) .
\end{aligned}
\end{equation*}
With this estimate, we finally get \eqref{boussinesq_final_25}.
\end{proof}

\subsection{Description of the interaction region} Taking into account Lemma \ref{ModulationInTime}, in particular \eqref{estimationForRho1}, we conclude from \eqref{boussinesq_final_25} that
\begin{equation*}
\begin{aligned}
 |\mathfrak F_2'(\tau) |\lesssim &~{}
  \varepsilon^{2} (1+\varepsilon | \rho_2(\tau)| +| \rho_2(\tau)| \| \bd{\eta}_2 \|_{L^2\times L^2} )  e^{- \frac12k_0 \varepsilon |\tau|} \left( \| \bd{\eta}_2 \|_{L^2\times L^2} + \| \bd{\eta}_2 \|_{H^1\times H^1}^2\right) \\
&~{}+ \varepsilon  e^{- \frac12 k_0 \varepsilon |\tau|}  \left( \| \bd{\eta}_2 \|_{H^1\times H^1}^2 + \| \bd{\eta}_2 \|_{H^1\times H^1}^3\right) \\
&~{} +\left( \varepsilon^{\frac32} e^{-k_0 \varepsilon |\tau|} +\varepsilon^{10} \right)\left(\| \bd{\eta}_2 \|_{H^1\times H^1} + \| \bd{\eta}_2 \|_{H^1\times H^1}^2 \right) .
\end{aligned}
\end{equation*}
Now we use \eqref{ModulationTesis2} in linear and cubic terms:
\begin{equation}\label{boussinesq_final_24}
\begin{aligned}
 |\mathfrak F_2'(\tau) |\lesssim &~{}
  \varepsilon^{2}(1+\varepsilon | \rho_2(\tau)| +| \rho_2(\tau)| \| \bd{\eta}_2 \|_{L^2\times L^2} ) e^{- \frac12k_0 \varepsilon |\tau|} \left( \| \bd{\eta}_2 \|_{L^2\times L^2} + \| \bd{\eta}_2 \|_{H^1\times H^1}^2\right) \\
&~{}+ \varepsilon  e^{- \frac12 k_0 \varepsilon |\tau|}  \left( 1 +K_2  \varepsilon^{\frac12}  \right)\| \bd{\eta}_2 \|_{H^1\times H^1}^2  \\
&~{} +\varepsilon^{10}  \| \bd{\eta}_2 \|_{H^1\times H^1}^2 + K_2 \varepsilon^{2}  \left( e^{-k_0 \varepsilon |\tau|} +\varepsilon^{8} \right) .
\end{aligned}
\end{equation}
Now we use \eqref{Coer_F2_new}, \eqref{Bound_F2} and \eqref{ModulationTesis3} to conclude that
\begin{equation}\label{boussinesq_final_26}
\begin{aligned}
 c_2\| \bd{\eta}_2(\tau)\|_{H^1\times H^1}^2 \leq  &~{}  \mathfrak F_2(\tau) +C(\varepsilon + (K_2+K_2^2)^2\varepsilon^2) +K_2^2\varepsilon^{2-\delta} \\
 \leq &~{}\mathfrak F_2(\tau) -\mathfrak F_2(-\hat T_\varepsilon) + C \varepsilon + C (K_2+K_2^2)^2\varepsilon^{2-\delta} \\
 \leq &~{}  C(  \varepsilon +  (K_2+K_2^2)^2\varepsilon^{2-\delta} )+ \int_{-\hat T_\varepsilon}^{\tau} |\mathfrak F_2'(\sigma)| d\sigma .
\end{aligned}
\end{equation}
Now we shall use estimates \eqref{cotas_rho2} in some easy parts in the estimate \eqref{boussinesq_final_24}. We get
\begin{equation}\label{boussinesq_final_27}
\begin{aligned}
 |\mathfrak F_2'(\tau) |\lesssim &~{}
  \varepsilon^{\frac52-\delta} K_2^2  e^{- \frac12k_0 \varepsilon |\tau|} \\
 &~{} +  K_2 \varepsilon^{\frac32-\delta} e^{- \frac12k_0 \varepsilon |\tau|} \| \bd{\eta}_2 \|_{H^1\times H^1}^2 + \varepsilon  e^{- \frac12 k_0 \varepsilon |\tau|} \| \bd{\eta}_2 \|_{H^1\times H^1}^2  \\
&~{} +\varepsilon^{10}  \| \bd{\eta}_2 \|_{H^1\times H^1}^2 + K_2 \varepsilon^{2}  \left( e^{- \frac12k_0 \varepsilon |\tau|} +\varepsilon^{8} \right) .
\end{aligned}
\end{equation}
Now we integrate in time using \eqref{boussinesq_final_27}:
\[
\begin{aligned}
\int_{-\hat T_\varepsilon}^{\tau} |\mathfrak F_2'(\sigma)| d\sigma \leq  &~{} K_2^2 \varepsilon^{\frac32-\delta} + K_2 \varepsilon  \\
&~{}  + \int_{-\hat T_\varepsilon}^{\tau} \left( K_2\varepsilon^{10} + \left(\varepsilon + K_2 \varepsilon^{\frac32-\delta} \right) e^{- \frac12 k_0 \varepsilon |\sigma|}  \right) \| \bd{\eta}_2 (\sigma)\|_{H^1\times H^1}^2 d\sigma
\end{aligned}
\]
Notice that $\int_{-\hat T_\varepsilon}^{\tau}   \left(K_2\varepsilon^{10} + \left(\varepsilon + K_2 \varepsilon^{\frac32-\delta} \right) e^{- \frac12 k_0 \varepsilon |\sigma|} \right) d\sigma \leq C $, independent of $K_2$, provided $K_2$ is chosen large and then $\varepsilon$ is chose sufficiently small. Coming back to \eqref{boussinesq_final_26},
\begin{equation}\label{boussinesq_final_28}
\begin{aligned}
 c_2\| \bd{\eta}_2(\tau)\|_{H^1\times H^1}^2 \leq  &~{}  C(  \varepsilon +  (K_2+K_2^2)^2\varepsilon^{2-\delta} )
+ \int_{-\hat T_\varepsilon}^{\tau} m(\sigma)  \| \bd{\eta}_2 (\sigma)\|_{H^1\times H^1}^2 d\sigma,
\end{aligned}
\end{equation}
with $\int_{-\hat T_\varepsilon}^{\tau} |m(\sigma)| \leq C$.
Notice that we can bound $\vert \mathfrak F_2(-T_\varepsilon)\vert$ by $K\varepsilon$  ($K$ independent of $K_2$) using \eqref{Bound_F2} and \eqref{ModulationTesis3}. Finally, thanks to Gronwall's inequality applied to \eqref{boussinesq_final_28}, taking $K_2$ greater if necessary, and then $\varepsilon$ smaller, 
\[
\Vert {\bd \eta}_2\Vert_{H^1\times H^1}^2\le \frac14 K_2^2 \varepsilon. 
\]
Therefore, in the variable $\tau$, we obtain $\Vert {\bd \eta}_2(\tau(T_\varepsilon))\Vert_{H^1\times H^1} \le \frac12 K_2 \varepsilon^{\frac12}$. Coming back to the $t$ variable, $\Vert {\bd \eta}_2(t=T_\varepsilon )\Vert_{H^1\times H^1}^2\le \frac12 K_2 \varepsilon^{\frac12}$, a contradiction with the definition of $T_2$ in \eqref{T2}. Therefore, $T_2= T_\varepsilon$ and $T^* >T_\varepsilon $. Now we prove estimate \eqref{Interaction} in Theorem \ref{MT}. From \eqref{T2}, \eqref{DS} and \eqref{estW} we have
\[
 \|  \bd{\eta}(T_\varepsilon) - \bd{Q}_{\omega} (\cdot -\rho_\varepsilon) \|_{H^1\times H^1} \leq K_2 \varepsilon^{\frac12}.
\]
Here $\omega$ is the original speed of the solitary wave and $\rho_\varepsilon := \rho (T_\varepsilon + \widetilde \rho_2(T_\varepsilon))$. This proves estimate \eqref{Interaction} in Theorem \ref{MT}.

\section{End of proof of Main Theorem}\label{Sec:5}

Now we are ready to finish the proof of Theorem \ref{MT}, specifically the long-time stability estimate \eqref{Exit}.  Recall that \eqref{Interaction} is satisfied at time $t=T_\varepsilon.$ For $K_3>1$ to be fixed later, let us define
\begin{equation}\label{T3}
\begin{aligned}
T_3(K_3) := &~{}\sup \Big\{ T>T_\varepsilon ~ : ~ \hbox{for all } t\in [T_\varepsilon, T], ~ \hbox{ there exists $\tilde\rho_3(t)\in\mathbb R$} \\
&~{} \qquad  \hbox{ such that } ~   \|  \bd{\eta}(t) - {\bd Q}_{\omega_2}(\cdot - \tilde \rho_3(t)) \|_{H^1\times H^1} \leq K_3 \varepsilon^{\frac32}\Big\}.
\end{aligned}
\end{equation}
The objective is to show that for $K_3$ large but fixed, $0<\varepsilon_3<\varepsilon_2$ sufficiently small and $0<\varepsilon<\varepsilon_3$, we have $T_3=+\infty.$ Let us assume, by contradiction, that for all $K_3>0$ large, $\varepsilon>0$ small, we have $T_3<+\infty.$

\subsection{Modulation} Since $T_3<+\infty$, we have from \eqref{T3} that
\[
 \|  \bd{\eta}(t) - {\bd Q}_{\omega_2}(\cdot - \tilde \rho_3(t)) \|_{H^1\times H^1} \leq K_3 \varepsilon^{\frac32}
\]
is valid for all $T_\varepsilon <t \leq T_3$, and some  $\tilde \rho_3(t) \in\mathbb R$. In particular, the solution $ \bd{\eta}(t)$ is well-defined up to time $T_3$. Notice that the maximal time of existence of the solution is bounded below by $T_3$, thanks to \eqref{T3}. Using this boundedness, we can find a particular shift $\rho_3(t)$ satisfying an additional orthogonality condition.

\begin{lemma}\label{mod3}
There exists $C_3,\mu_3,\varepsilon_3>0$ such that, for all $0<\varepsilon<\varepsilon_3$ the following is satisfied. Let $(\eta,u)\in C(I_n, H^1\times H^1)$ be the solution to \eqref{Cauchy} constructed in Theorem \ref{MT} such that \eqref{Construction}, \eqref{PreInteraction} and \eqref{Interaction} are satisfied. Assume that $T_3$ in \eqref{T3} is finite. Then, for all $t \in [T_\varepsilon,T_3]$, there exists a $C^1$ modulation shift $\rho_3: [T_\varepsilon,T_3] \to \mathbb R$ such that
\begin{equation}\label{z3}
\bd{\eta}_3(t) := (\eta_3,u_3)(t)=\bd{\eta}(t) - {\bd Q}_{\omega_3}(\cdot - \rho_3(t)),
\end{equation}
satisfies
\begin{equation}\label{orthoz3}
\langle \bd{\eta}_3, (1- \partial_x^2){\bd Q}'_{\omega_3}(\cdot - \rho_3(t)) \rangle =0, \quad \|\bd{\eta}_3(t) \|_{H^1\times H^1}\leq  K_3 \varepsilon^{\frac32}.
\end{equation}
Moreover, one has
\begin{equation}\label{eq_z3}
\begin{cases}
&(1- \partial_x^2)\partial_t \eta_{3}  + \partial_x \! \left( a\, \partial_x^2 u_{3} +u_{3} + u_{3} R_\omega + \eta_3 Q_\omega + (Q_\omega + u_3) h \right) \\
& \qquad = (-1 +a_1 \partial_x^2) \partial_th  + \rho_3' (1- \partial_x^2)R_\omega' , \\
& (1- \partial_x^2)\partial_t u_{3}  + \partial_x \! \left( c\, \partial_x^2 \eta_3 + \eta_3 +  Q_\omega u_3 + \frac12 u_3^2 \right)
\\
& \qquad = c_1  \partial_t^2 \partial_x h + \rho_3' (1- \partial_x^2)Q_\omega' ,
\end{cases}
\end{equation}
and
\begin{equation}\label{der_rho3}
| \rho_3'(t)| \leq C \|\bd{\eta}_{3}\|_{H^1\times H^1} + C \varepsilon e^{-\varepsilon ( k_0 t + \frac9{10}l_0 t )}.
\end{equation}
\end{lemma}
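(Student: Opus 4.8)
This is the exit-region analogue of Lemma~\ref{mod1}, so I only describe the structure and flag the one genuinely non-mechanical point. \emph{Construction of the shift.} Fix $t\in[T_\varepsilon,T_3]$. By the definition \eqref{T3}, $\bd\eta(t)$ lies within $K_3\varepsilon^{3/2}$ (in $H^1\times H^1$) of the soliton manifold $\{\bd Q_{\omega_3}(\cdot-\rho):\rho\in\R\}$, so I would apply the Implicit Function Theorem to the $C^1$ map
\[
(\bd v,\rho)\ \longmapsto\ \big\langle \bd v-\bd Q_{\omega_3}(\cdot-\rho),\ (1-\partial_x^2)\bd Q_{\omega_3}'(\cdot-\rho)\big\rangle ,
\]
whose $\rho$-derivative at $(\bd Q_{\omega_3}(\cdot-\rho),\rho)$ is $-\langle(1-\partial_x^2)\bd Q_{\omega_3}',\bd Q_{\omega_3}'\rangle=-\big(\|\bd Q_{\omega_3}'\|_{L^2\times L^2}^2+\|\bd Q_{\omega_3}''\|_{L^2\times L^2}^2\big)\neq 0$ after integration by parts. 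This produces, for $\varepsilon$ small, a unique $\rho_3(t)$ near the $\widetilde\rho_3(t)$ of \eqref{T3} satisfying the first identity in \eqref{orthoz3}; the bound $\|\bd\eta_3(t)\|_{H^1\times H^1}\lesssim K_3\varepsilon^{3/2}$ then follows from \eqref{T3} and the Lipschitz estimate $|\rho_3(t)-\widetilde\rho_3(t)|\lesssim K_3\varepsilon^{3/2}$ delivered by the Implicit Function Theorem (the extra constant being absorbed into $K_3$). Uniqueness for each $t$ and the $C^1$ regularity of $\rho_3$ in $t$ are inherited from the $C^1$ dependence of the above functional on $(t,\rho)$ through $\bd\eta(t)$, exactly as in \cite{MM,MM1,MM2}.

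\emph{The modulation equations.} Equation \eqref{eq_z3} is obtained by substituting \eqref{z3} into \eqref{boussinesq} and using that $\bd Q_{\omega_3}$ solves the profile system \eqref{exact0}: all stationary-profile terms cancel, leaving the part linear and quadratic in $\bd\eta_3$, the bottom contributions carrying a factor $Q_\omega$ or none (the terms $(Q_\omega+u_3)h$, $(-1+a_1\partial_x^2)\partial_t h$, $c_1\partial_t^2\partial_x h$), and the modulation term proportional to $\rho_3'$. To obtain \eqref{der_rho3} I would differentiate the orthogonality condition in \eqref{orthoz3} in $t$, use that $1-\partial_x^2$ is self-adjoint, substitute $\partial_t\bd\eta_3$ from \eqref{eq_z3}, and isolate $\rho_3'(t)$; this yields an identity of the form
\[
\rho_3'(t)\,\Big(\|\bd Q_{\omega_3}'\|_{L^2\times L^2}^2+\|\bd Q_{\omega_3}''\|_{L^2\times L^2}^2+O\big(\|\bd\eta_3(t)\|_{H^1\times H^1}\big)\Big)=O\big(\|\bd\eta_3(t)\|_{H^1\times H^1}\big)+O\big(\|\,\bd Q_{\omega_3}(\cdot-\rho_3(t))(|h|+|\partial_t h|+|\partial_t^2 h|)\,\|_{L^1\times L^1}\big),
\]
where, by \eqref{hypoH} and the exponential decay \eqref{decay_Q}, the last term is $\lesssim \varepsilon\, e^{-k_0\varepsilon|t|}e^{-l_0\varepsilon\,d(t)}$, with $d(t)$ the distance from the soliton center $\omega_3 t+\rho_3(t)$ to the region where the bottom is non-negligible (localized near the origin).

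\emph{The only delicate point.} Dividing by the uniformly positive bracket gives $|\rho_3'(t)|\lesssim\|\bd\eta_3(t)\|_{H^1\times H^1}+\varepsilon e^{-k_0\varepsilon|t|}e^{-l_0\varepsilon\,d(t)}$, and to turn this into \eqref{der_rho3} one must know $d(t)\gtrsim t$, i.e.\ that the soliton has genuinely left the interaction zone. This is the single step that is not purely routine: it is closed by a continuity/bootstrap argument on $[T_\varepsilon,T_3]$. At $t=T_\varepsilon$ one has $\rho_\varepsilon\gg\varepsilon^{-1}$ from \eqref{Interaction}, while $|\rho_3(T_\varepsilon)|$ is at most polynomially large in $\varepsilon^{-1}$ (inherited from the interaction-region end-point estimates), and $\|\bd\eta_3\|_{H^1\times H^1}\lesssim K_3\varepsilon^{3/2}$ by \eqref{orthoz3}; integrating the differential inequality for $\rho_3$ keeps $|\rho_3(t)|\ll\frac{9}{10}\omega t$ throughout, so $d(t)\ge\frac{9}{10}\omega t$ and \eqref{der_rho3} follows (after a harmless adjustment of the constant $l_0$). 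Everything else parallels the pre-interaction modulation of Lemma~\ref{mod1}; I expect the trajectory bootstrap to be the only part requiring attention, the existence of the modulated shift being standard.
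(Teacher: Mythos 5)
Your proposal is correct and follows the same route the paper intends: the paper's own proof of Lemma~\ref{mod3} is a single line ("very similar to the proof of Lemma~\ref{mod1}, we omit the details; \eqref{der_rho3} is obtained from \eqref{eq_z3} and \eqref{orthoz3}"), and your outline is exactly the unpacking of that remark — the Implicit Function Theorem on the orthogonality functional to produce $\rho_3$ near $\widetilde\rho_3$, substitution into \eqref{boussinesq} using \eqref{exact0} to derive \eqref{eq_z3}, and time-differentiation of \eqref{orthoz3} together with a self-adjointness reduction and testing of \eqref{eq_z3} against $\bd Q'_{\omega_3}$ to isolate $\rho_3'$. You correctly work with the $(1-\partial_x^2)\bd Q'_{\omega_3}$-orthogonality of \eqref{orthoz3}, whereas Lemma~\ref{mod1} used the plain $\bd Q'_\omega$-orthogonality; that choice is precisely what makes the leading coefficient after testing \eqref{eq_z3} against $\bd Q'_{\omega_3}$ collapse cleanly to $\|\bd Q'_{\omega_3}\|_{L^2}^2+\|\bd Q''_{\omega_3}\|_{L^2}^2$, so nothing is lost.

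The one place where you should be a bit more careful is the soliton-center convention, which you slightly misstate. In the exit-region ansatz \eqref{z3} the soliton sits at $x=\rho_3(t)$ — the drift $\omega t$ is not split off as it was in \eqref{z1}, so the relevant quantity is $\rho_3(t)$ itself and not "$\omega_3 t+\rho_3(t)$". The trajectory bootstrap you flag as the "only delicate point" is indeed the piece the paper leaves entirely implicit, and your closing of it is fine in substance: from \eqref{Interaction}, $\rho_3(T_\varepsilon)\approx\rho_\varepsilon\gg\varepsilon^{-1}$ with $\rho_3'\approx\omega>0$ thereafter (by \eqref{der_rho3} itself, self-consistently), so $\rho_3(t)\ge\tfrac{9}{10}\omega t$ on $[T_\varepsilon,T_3]$, whence $\|\bd Q_{\omega_3}(\cdot-\rho_3(t))\,(|h|+|\partial_t h|+|\partial_t^2 h|)\|_{L^1}\lesssim\varepsilon\,e^{-k_0\varepsilon t}e^{-\frac{9}{10}l_0\omega\varepsilon t}$, which is the exponential term in \eqref{der_rho3} (the paper's display of the exponent drops the harmless $\omega$ factor, as it also does in \eqref{der_rho1}). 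Your minor sign in the $\rho$-derivative of the IFT functional is immaterial; only non-vanishing is used.
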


\begin{proof}
The proof of Lemma \ref{mod3} is very similar to the proof of Lemma \ref{mod1}, and we omit the details. The proof of \eqref{der_rho3} is also obtained from \eqref{eq_z3} and \eqref{orthoz3}.
\end{proof}

\subsection{Energy and momentum estimates} Recall \eqref{z3}. Based on \eqref{Energy_new_new} and \eqref{exact0}, one has 
\[
\begin{aligned}
& H_h[{\bd Q}_\omega +  \bd{\eta}_3 ](t) \\
 &~{} =  H_h[{\bd Q}_\omega] + \omega \int \left( (1-\partial^2_x)R_{\omega }  u_3  + (1-\partial^2_x)Q_{\omega }  \eta_3  \right)  +  \int  Q_\omega u_3 h  \\
&~{}  \quad + \frac12\int \left( -a (\partial_x u_3)^2 -c (\partial_x \eta_3)^2  + u_3^2+ \eta_3^2 + 2Q_\omega \eta_3 u_3 + u_3^2(R_\omega + \eta_3 + h) \right) .
\end{aligned}
\]
Notice that $ \int \left( (1-\partial^2_x)R_{\omega }  u_3  + (1-\partial^2_x)Q_{\omega }  \eta_3  \right) =  \langle \bd{\eta}_3, J(1-\partial_x^2)\bd{Q}_\omega \rangle (t)$. Recall that $\omega>0$. Consequently,
\[
\begin{aligned}
& \left| \langle \bd{\eta}_3 , J(1-\partial_x^2)\bd{Q}_\omega \rangle (t) \right| \\
&~{} \leq  \left| \langle \bd{\eta}_3 , J(1-\partial_x^2)\bd{Q}_\omega \rangle (T_\varepsilon) \right|  + \left| H_h[{\bd Q}_\omega +  \bd{\eta}_3 ](t) -H_h[{\bd Q}_\omega +  \bd{\eta}_3 ](T_\varepsilon)\right|\\
&~{} \quad   +\left|\int Q_\omega u_1 h(t)\right|+\left|\int Q_\omega u_1 h(T_\varepsilon)\right|+ \left| H_h[{\bd Q}_\omega](T_\varepsilon)-  H_h[{\bd Q}_\omega](t)\right| \\
&~{} \quad  + \left|   \frac12\int \left( -a (\partial_x u_3)^2 -c (\partial_x \eta_3)^2  + u_3^2+ \eta_3^2 + 2Q_\omega \eta_3 u_3 + u_3^2(R_\omega + \eta_3 + h) \right)(t) \right| \\
&~{} \quad  + \left| \frac12\int \left( -a (\partial_x u_3)^2 -c (\partial_x \eta_3)^2  + u_3^2+ \eta_3^2 + 2Q_\omega \eta_3 u_3 + u_3^2(R_\omega + \eta_3 + h) \right)( T_\varepsilon) \right|.
\end{aligned}
\]
Using that $\| \bd{\eta}_3(T_\varepsilon)\|_{H^1\times H^1} \leq C \varepsilon^{\frac32}$, $\| \bd{\eta}_3(t)\|_{H^1\times H^1} \leq K_3 \varepsilon^{\frac32}$, and $\|\eta_3(t)\|_{L^\infty} \leq C K_3 \varepsilon^{\frac32}$, $t>T_\varepsilon$,
\begin{equation}\label{intermedia_3}
\begin{aligned}
& \left| \langle \bd{\eta}_3, J(1-\partial_x^2)\bd{Q}_\omega \rangle (t) \right| \\
&~{} \leq  \left| H_h[{\bd Q}_\omega +  \bd{\eta}_3 ](t) -H_h[{\bd Q}_\omega +  \bd{\eta}_3 ](T_\varepsilon)\right|\\
&~{} \quad + \left| H_h[{\bd Q}_\omega](T_\varepsilon)-  H_h[{\bd Q}_\omega](t)\right| + C K_3^2 \varepsilon^3 +  C K_3^3 \varepsilon^{9/2}+C\varepsilon^3.
\end{aligned}
\end{equation}
Now we apply Lemma \ref{Lem2p3}, more specifically, \eqref{est_E} and \eqref{est_E_Qc} in the particular case of $t_2=t$, $t_1=T_\varepsilon$, and $\bd{\eta}_3$ replaced by ${\bd Q}_\omega +  \bd{\eta}_3$. We have
\begin{lemma}
Assume that $t\geq T_\varepsilon$. Then one has
\be\label{est_E_final}
 \left| H_h[{\bd Q}_\omega +  \bd{\eta}_3 ](t) -H_h[{\bd Q}_\omega +  \bd{\eta}_3 ](T_\varepsilon)\right| \leq  CK_3^2 \varepsilon^4,
\ee
\be\label{est_P_final}
 \left| P[{\bd Q}_\omega +  \bd{\eta}_3 ](t) - P[{\bd Q}_\omega +  \bd{\eta}_3 ](T_\varepsilon)\right| \leq  CK_3^2 \varepsilon^4,
\ee
and
\be\label{est_E_Qc_final}
\left|  H_h[{\bd Q}_\omega](T_\varepsilon)-  H_h[{\bd Q}_\omega](t) \right|  \leq C\varepsilon^3.
\ee
\end{lemma}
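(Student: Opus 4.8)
The plan is to obtain all three bounds \eqref{est_E_final}, \eqref{est_P_final}, \eqref{est_E_Qc_final} by integrating in time, over the interval $[T_\varepsilon,t]$, the pointwise-in-time estimates of Lemma~\ref{Lem2p3}, and to exploit the single decisive fact that in the exit regime $t\ge T_\varepsilon=\varepsilon^{-1-\delta_0}$, so that $\varepsilon|t|\ge \varepsilon^{-\delta_0}$ and hence the temporal weight satisfies $e^{-k_0\varepsilon|t|}\le e^{-k_0\varepsilon^{-\delta_0}}$, which is super-exponentially small in $\varepsilon$. Equivalently, for $t\ge T_\varepsilon$ the bottom $h(t,x)=\varepsilon h_0(\varepsilon t,\varepsilon x)$ and all its space-time derivatives are bounded in $L^\infty$ by $\varepsilon^N e^{-k_0\varepsilon^{-\delta_0}}$; this is why the almost-conservation laws effectively become genuine conservation laws after $T_\varepsilon$, with error far below the polynomial thresholds stated.

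\textbf{The energy and momentum estimates \eqref{est_E_final}, \eqref{est_P_final}.} First I would record that, by the definition of $T_3(K_3)$ in \eqref{T3} and the decomposition \eqref{z3}, on $[T_\varepsilon,t]$ the solution $(\eta,u)={\bd Q}_{\omega_3}(\cdot-\rho_3)+\bd{\eta}_3$ stays within $K_3\varepsilon^{3/2}$ of a solitary wave in $H^1\times H^1$, hence $\|(\eta,u)(s)\|_{H^1\times H^1}\le C$ uniformly on the interval. Inserting this into \eqref{est_E} (resp. \eqref{est_P}) and bounding the spatial integral crudely by $\int(u^2+\eta^2+|u|+|\eta|)e^{-l_0\varepsilon|x|}\lesssim \|u\|_{L^2}^2+\|\eta\|_{L^2}^2+\varepsilon^{-1/2}(\|u\|_{L^2}+\|\eta\|_{L^2})\lesssim \varepsilon^{-1/2}$, using $\|e^{-l_0\varepsilon|\cdot|}\|_{L^2}\lesssim\varepsilon^{-1/2}$, gives $|\frac{d}{ds}H_h|+|\frac{d}{ds}P|\lesssim \varepsilon^{3/2}e^{-k_0\varepsilon|s|}$ for $s\in[T_\varepsilon,t]$. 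Integrating from $T_\varepsilon$ to $t$ and using $\int_{T_\varepsilon}^{\infty}\varepsilon^{3/2}e^{-k_0\varepsilon s}\,ds=k_0^{-1}\varepsilon^{1/2}e^{-k_0\varepsilon^{-\delta_0}}$ yields a total bound of order $\varepsilon^{1/2}e^{-k_0\varepsilon^{-\delta_0}}$, which for $\varepsilon$ small is $\le CK_3^2\varepsilon^4$. One may of course isolate the quadratic-in-$\bd{\eta}_3$ contribution ($\lesssim \varepsilon^2 e^{-k_0\varepsilon|s|}\|\bd{\eta}_3\|_{L^2}^2\lesssim K_3^2\varepsilon^5 e^{-k_0\varepsilon|s|}$ by \eqref{orthoz3}), but its time integral is again super-small; the factor $K_3^2\varepsilon^4$ is merely a convenient uniform bound.

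\textbf{The estimate \eqref{est_E_Qc_final}.} Here I would invoke \eqref{est_E_Qc} (equivalently \eqref{der:Hh}): since the $h$-independent part of $H_h[{\bd Q}_\omega]$ is translation invariant and the speed $\omega$ is fixed, the difference $H_h[{\bd Q}_\omega](t)-H_h[{\bd Q}_\omega](T_\varepsilon)$ reduces to $\frac12\big(\int Q_\omega^2\,h(t)-\int Q_\omega^2\,h(T_\varepsilon)\big)$ with $Q_\omega^2$ at the relevant soliton center. Each term is bounded by $\|h(s,\cdot)\|_{L^\infty}\|Q_\omega\|_{L^2}^2\lesssim \varepsilon e^{-k_0\varepsilon|s|}\le \varepsilon e^{-k_0\varepsilon^{-\delta_0}}$ for $s\in\{T_\varepsilon,t\}$ by \eqref{hypoH}, hence $\le C\varepsilon^3$ for $\varepsilon$ small.

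\textbf{Main difficulty.} There is in fact no genuine obstacle here; this is the ``soft'' step of the whole argument, and the true decay of all three quantities is super-exponential in $\varepsilon$, well beyond the stated polynomial bounds. The only points requiring minor care are: confirming $\|\bd{\eta}(s)\|_{H^1\times H^1}$ stays bounded on $[T_\varepsilon,t]$, immediate from \eqref{T3}; handling the translated soliton when applying \eqref{est_E_Qc}, which is stated for an un-translated profile but whose $h$-free part is translation invariant, so only the explicit $\int Q_\omega^2 h$ term changes and has just been estimated; and a little bookkeeping of $\varepsilon$-powers. Conceptually, the content is that by time $T_\varepsilon$ the bottom perturbation — its amplitude $\sim\varepsilon e^{-k_0\varepsilon^{-\delta_0}}$, and, viewed spatially, the overlap between the by-then far-traveled soliton (near $x=\rho_\varepsilon\gg\varepsilon^{-1}$ by \eqref{Interaction}) and the region $\{|x|\lesssim\varepsilon^{-1}\}$ where $h$ is not already tiny — is negligible, so the modified energy and momentum are conserved up to errors far below what is needed in \eqref{intermedia_3}.
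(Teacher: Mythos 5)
Your proposal is correct and follows essentially the same route as the paper: integrate the almost-conservation estimates \eqref{est_E}, \eqref{est_P} of Lemma~\ref{Lem2p3} over $[T_\varepsilon,t]$ and observe that the factor $e^{-k_0\varepsilon T_\varepsilon}=e^{-k_0\varepsilon^{-\delta_0}}$ is super-exponentially small, so any polynomial prefactor is irrelevant; and for \eqref{est_E_Qc_final}, isolate the $h$-dependent part of $H_h[\bd{Q}_\omega]$ and bound it directly by $\|h\|_{L^\infty}\lesssim\varepsilon e^{-k_0\varepsilon^{-\delta_0}}$. Your intermediate derivative bound $\varepsilon^{3/2}e^{-k_0\varepsilon s}$ is cruder than the paper's (which keeps track of the $K_3^2\varepsilon^3$ size of $\|\bd{\eta}_3\|_{L^2}^2$ and the spatial separation of $\bd{Q}_\omega(\cdot-\rho_3)$ from the support of $e^{-l_0\varepsilon|x|}$), but the super-exponential suppression renders this distinction immaterial; and your treatment of \eqref{est_E_Qc_final}, bounding the two translated $\int Q_\omega^2 h(s)$ contributions separately rather than forming a difference, is if anything a touch cleaner than the paper's, since \eqref{est_E_Qc} was derived for a fixed profile.
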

\begin{proof}
Let $t\geq T_\varepsilon$. From \eqref{est_E} one has
\[
\begin{aligned}
\left| \frac{d}{dt} H_h[ {\bd Q}_\omega +  \bd{\eta}_3 ](t) \right| \lesssim  &~{}  \varepsilon^2  e^{-k_0\varepsilon |t| }   \int (u_3^2+\eta_3^2) e^{-l_0\varepsilon |x| }  \\
&~{} +  \varepsilon^{2}  e^{-k_0\varepsilon |t| } \int ( |\eta_3 | +|u_3| ) e^{-l_0\varepsilon |x| } \lesssim  K_3^2 \varepsilon^2  e^{-k_0\varepsilon t }.
\end{aligned}
\]
Therefore \eqref{est_E_final} is obtained after integration in time since $\varepsilon  e^{-k_0\varepsilon T_\varepsilon } \ll \varepsilon^4$. Similarly, from \eqref{est_P} it holds
\[
\begin{aligned}
\left| \frac{d}{dt} P[{\bd Q}_\omega +  \bd{\eta}_3 ](t) \right| \lesssim  &~{}  \varepsilon^2  e^{-k_0\varepsilon |t| }   \int u_3^2 e^{-l_0\varepsilon |x| }  \\
&~{} +  \varepsilon^{2}  e^{-k_0\varepsilon |t| } \int ( |\eta_3 | +|u_3| ) e^{-l_0\varepsilon |x| } \lesssim  \varepsilon^2 K_3^2 e^{-k_0\varepsilon t }.
\end{aligned}
\]
One easily concludes \eqref{est_P_final}.  Now we prove \eqref{est_E_Qc_final}. From \eqref{est_E_Qc} one has
\[
\begin{aligned}
& \left| H_h[\bd{Q}_\omega ](t) -H_h[\bd{Q}_\omega ](T_\varepsilon)  \right|
\\
& \quad \lesssim \varepsilon  \int  Q_\omega^2(x-\rho_3(t)) | h_0 (\varepsilon t, \varepsilon x)-h_0 (\varepsilon T_\varepsilon , \varepsilon x) |dx
\\
& \quad \lesssim \varepsilon  e^{-k_0 \varepsilon T_\varepsilon } \int  Q_\omega^2(x-\rho_3(t)) e^{-l_0 \varepsilon |x|} dx \ll \varepsilon^3.
\end{aligned}
\]
The proof is complete.
\end{proof}

Now we return to \eqref{intermedia_3}. From \eqref{est_E_final} and \eqref{est_E_Qc_final} we get
\begin{equation}\label{intermedia_3_final}
\begin{aligned}
& \left| \langle \bd{\eta}_3, J(1-\partial_x^2)\bd{Q}_\omega \rangle (t) \right| \leq C K_3^2 \varepsilon^3 +  C K_3^3 \varepsilon^{9/2}+C\varepsilon^3.
\end{aligned}
\end{equation}
Now, for a fixed constant $C>0$, \eqref{coer_1} leads to
\[
\begin{aligned}
&\| \bd{\eta}_3(t) \|_{H^1\times H^1}^2  \\
& \quad \leq C \left\langle  \bd{\eta}_3 ,  \mathcal L \bd{\eta}_3  \right\rangle (t) + C \left| \langle \bd{\eta}_3 , J(1-\partial_x^2)\bd{Q}_\omega \rangle (t) \right|^2\\
& \quad \leq C \left( \left\langle  \bd{\eta}_3   ,  \mathcal L \bd{\eta}_3  \right\rangle(t)  - \left\langle  \bd{\eta}_3  ,  \mathcal L \bd{\eta}_3  \right\rangle (T_\varepsilon )  \right)  +C \left\langle  \bd{\eta}_3 ,  \mathcal L \bd{\eta}_3  \right\rangle (T_\varepsilon )    +C \left| \langle \bd{\eta}_3 , J(1-\partial_x^2)\bd{Q}_\omega \rangle (t)\right|^2.
\end{aligned}
\]
Using  \eqref{Energy_new2},
\[
\begin{aligned}
&\| \bd{\eta}_3(t) \|_{H^1\times H^1}^2  \\
& \quad \leq C\left| H_h[{\bd Q}_\omega + \bd{\eta}_3 ](t) -\omega P[{\bd Q}_\omega +  \bd{\eta}_3](t)  -H_h[{\bd Q}_\omega + \bd{\eta}_3 ](T_\varepsilon) + \omega P[{\bd Q}_\omega +  \bd{\eta}_3](T_\varepsilon)  \right| \\
& \quad \quad + C\left| H_h[{\bd Q}_\omega](T_\varepsilon ) -\omega P[{\bd Q}_\omega] (T_\varepsilon ) -H_h[{\bd Q}_\omega](t) + \omega P[{\bd Q}_\omega] (t) \right|  \\
& \quad \quad + C\left|     \int  Q_\omega u_3 h (T_\varepsilon )  -  \int  Q_\omega u_3 h (t) \right|  \\
& \quad \quad  + C \left| \int u_3^2  \left( \eta_3 + h\right)(T_\varepsilon ) -\int u_3^2  \left( \eta_3 + h\right)(t) \right| \\
& \quad \quad  +C \left\langle  \bd{\eta}_3 ,  \mathcal L \bd{\eta}_3  \right\rangle (T_\varepsilon )    +C \left| \langle \bd{\eta}_3 , J(1-\partial_x^2)\bd{Q}_\omega \rangle (t)\right|^2.
\end{aligned}
\]
From \eqref{est_E_final}, \eqref{est_P_final}, \eqref{est_E_Qc_final} and \eqref{P_ind},
\[
\begin{aligned}
\| \bd{\eta}_3(t) \|_{H^1\times H^1}^2   \leq &~{}  C\left|     \int  Q_\omega u_3 h (T_\varepsilon )  -  \int  Q_\omega u_3 h (t) \right|  \\
& ~{}  + C \left| \int u_3^2  \left( \eta_3 + h\right)(T_\varepsilon ) -\int u_3^2  \left( \eta_3 + h\right)(t) \right| \\
& ~{} +C \left\langle  \bd{\eta}_3 ,  \mathcal L \bd{\eta}_3  \right\rangle (T_\varepsilon )    +C \left| \langle \bd{\eta}_3 , J(1-\partial_x^2)\bd{Q}_\omega \rangle (t)\right|^2 + C\varepsilon^3 +CK_3^2\varepsilon^4.
\end{aligned}
\]
Using \eqref{intermedia_3_final} and $| \left\langle  \bd{\eta}_3 ,  \mathcal L \bd{\eta}_3  \right\rangle (T_\varepsilon )| \leq C\varepsilon^3$, we get
\[
\begin{aligned}
\| \bd{\eta}_3(t) \|_{H^1\times H^1}^2   \leq &~{}  C\left|     \int  Q_\omega u_3 h (T_\varepsilon )  -  \int  Q_\omega u_3 h (t) \right|  \\
& ~{}  + C \left| \int u_3^2  \left( \eta_3 + h\right)(T_\varepsilon ) -\int u_3^2  \left( \eta_3 + h\right)(t) \right|  + C\varepsilon^3 +CK_3^2\varepsilon^4.
\end{aligned}
\]
We also have, for $t\geq T_\varepsilon,$
\[
\left| \int  Q_\omega u_3 h (t) \right| \lesssim \varepsilon e^{-k_0 \varepsilon t }  \int  Q_\omega |u_3| e^{-l_0 \varepsilon |x| } \lesssim K_3 \varepsilon e^{-k_0 \varepsilon t } \ll K_3 \varepsilon^4.
\]
Hence
\[
\begin{aligned}
\| \bd{\eta}_3(t) \|_{H^1\times H^1}^2   \leq &~{}  C \left| \int u_3^2  \left( \eta_3 + h\right)(T_\varepsilon ) -\int u_3^2  \left( \eta_3 + h\right)(t) \right|  + C\varepsilon^3 +CK_3^2\varepsilon^4.
\end{aligned}
\]
On the other hand,
\[
 \left| \int u_3^2  \eta_3 (T_\varepsilon ) -\int u_3^2  \eta_3 (t) \right| \lesssim K_3^3 \varepsilon^{9/2},
\]
and
\[
\left| \int u_3^2  h\right| \lesssim K_3^2 \varepsilon^4  e^{-k_0 \varepsilon t } \ll K_3 \varepsilon^4.
\]
We conclude that
\[
 \left| \int u_3^2  \left( \eta_3 + h\right)(T_\varepsilon ) -\int u_3^2  \left( \eta_3 + h\right)(t) \right|  \lesssim K_3^2 \varepsilon^4 +K_3^3 \varepsilon^{9/2}.
\]
Hence
\[
\begin{aligned}
\| \bd{\eta}_3(t) \|_{H^1\times H^1}^2   \leq &~{}    C\varepsilon^3 +CK_3^2\varepsilon^4 +CK_3^3 \varepsilon^{9/2}.
\end{aligned}
\]
Finally, by making $K_3$ larger if necessary, and $\varepsilon$ smaller, we obtain
\[
\| \bd{\eta}_3(t) \|_{H^1\times H^1}^2 \le \frac14 K_3^2 \varepsilon^{3},
\]
improving the previous estimate \eqref{T3}. Therefore, $T_3=+\infty$ and we conclude the proof of Theorem \ref{MT}, specifically the long time stability estimate \eqref{Exit}, by defining $\rho(t)=\rho_3(t)$. Finally, \eqref{est_rho} follows from \eqref{der_rho3} with $\rho(t)=\rho_3(t)$.

\subsection*{Data availability statement} The authors certify that data sharing is not applicable to this article as no new data were created or analyzed in this study.

\subsection*{Conflict of interest} The authors declare no conflict of interest in any of the procedures related to the construction and publication of this manuscript.

\providecommand{\bysame}{\leavevmode\hbox to3em{\hrulefill}\thinspace}
\providecommand{\MR}{\relax\ifhmode\unskip\space\fi MR }
\providecommand{\MRhref}[2]{%
  \href{http://www.ams.org/mathscinet-getitem?mr=#1}{#2}
}
\providecommand{\href}[2]{#2}

%
%
%
%
%
%
%
%

\end{document}